\newcommand{\ZZ}{{\mathbb Z}}
\newcommand{\NN}{{\mathbb N}}
\newcommand{\KK}{{\mathbb K}}
\newcommand{\CC}{{\mathbb C}}
\newcommand{\RR}{{\mathbb R}}
\newcommand{\QQ}{{\mathbb Q}}
\newcommand{\TT}{{\mathbb T}}
\newcommand{\E}{{\mathcal E}}
\newcommand{\I}{{\mathfrak I}}
\newcommand{\td}{{\text{''}}}
\newcommand{\tg}{{\text{``}}}
\newtheorem{them}{Theorem}
\newtheorem{coro}{Corollary}
\newtheorem{thm}{Theorem}[section]
\newtheorem{defi}[thm]{Definition}
\newtheorem{prop}[thm]{Proposition}
\newtheorem{lemma}[thm]{Lemma}
\newtheorem{cor}[thm]{Corollary}
\theoremstyle{definition}
\theoremstyle{definition}
\newtheorem{exa}[thm]{Example}}
\begin{document}

\title{INFLECTION POINTS OF  REAL AND TROPICAL PLANE CURVES}

\date{\today}

\author[E.  Brugallé]{Erwan A. Brugallé}

\address{Université Pierre et Marie Curie,  Paris 6, 
4 place Jussieu, 75005 Paris, France}

\email{brugalle@math.jussieu.fr}

\author[L.  L\'opez de Medrano]{Lucia M. López de Medrano}

\address{Unidad Cuernavaca del Instituto de Matemáticas,Universidad Nacional
Autonoma de México. Cuernavaca, México}

\email{lucia@matcuer.unam.mx}

\subjclass[2000]{}

\keywords{Tropical geometry, Patchworking, Inflection points, Tropical
  modifications, Real algebraic curves}

\begin{abstract}
We prove that Viro's patchworking produces real algebraic curves with
the maximal number of real inflection points. In particular this
implies that maximally inflected
 real algebraic $M$-curves realize many isotopy types.
The strategy we adopt in this paper is tropical:
we study tropical limits of
inflection points of
classical plane algebraic curves. The main tropical tool
we use to understand these tropical inflection points 
 are tropical modifications.
\end{abstract}

\maketitle

\section{Introduction}
Let $k$ be any field of characteristic 0, and
consider a plane algebraic curve $X$ in  $kP^2$
given by the homogeneous  equation $P(z,w,u)=0$. The Hessian of the polynomial
$P(z,w,u)$, denoted by $Hess_P(z,w,u)$, is  the homogeneous
polynomial defined as

$$Hess_P(z,w,u)=\det
\left(\begin{matrix}
\frac{\partial^2 P}{\partial ^2z}

           & $ $ & \frac{\partial^2 P}{\partial z\partial w}

              & $ $ & \frac{\partial^2 P}{\partial z\partial u}\\
              
               $ $ & $ $ & $ $\\

        \frac{\partial^2 P}{\partial z\partial w}

           & $ $ &\frac{\partial^2 P}{\partial ^2w}

               & $ $ & \frac{\partial^2 P}{\partial w\partial u}\\
               
               $ $ & $ $ & $ $\\

       \frac{\partial^2 P}{\partial z\partial u}

           & $ $ & \frac{\partial^2 P}{\partial w\partial u}

              & $ $  &\frac{\partial^2 P}{\partial ^2u}

   \end{matrix}\right). $$

If $Hess_P(z,w,u)$ is not the null polynomial, it
defines a curve $Hess_X$  called the \textbf{Hessian} of $X$. 
Note that $Hess_X$
only depends on $X$, and is invariant under projective transformations
of $kP^2$.
An \textbf{inflection point} of the curve $X$ is by definition a point $p$
in $X\cap Hess_X$, of multiplicity $m$ if $(X\circ Hess_X)_p=m$.

A plane algebraic curve
 has two kinds of inflection points: its singular points, and
non-singular points having a contact of order 
$l\ge 3$
with their
tangent line. In this latter case, the multiplicity of such an
inflection point is exactly 
$l-2$.

\vspace{1ex}
If $k$ is algebraically closed, Bézout Theorem implies that
 an algebraic curve $X$ in $kP^2$ 
of degree $d\ge 2$ which is reduced and does not contain any line
has exactly $3d(d-2)$ inflection points (counted
with multiplicity). Moreover, a non-singular generic curve $X$ has
exactly  $3d(d-2)$ inflection points, all of them of multiplicity 1.
 
\vspace{1ex}
When $k$ is not algebraically closed, the situation
becomes more subtle. First, the number of inflection points of an
algebraic curve in $kP^2$ depends not only on its degree, but also on
the coefficients of its equation. 
In the case $k=\RR$, it has been known for a long time 
that a non-singular
real cubic has only 3 real  points among its
 9 inflection points.  
More generally, Klein proved that at most one
third of the complex inflection points of a non-singular real algebraic
curve can actually be real.

\begin{thm}[Klein \cite{Kle2}, see also \cite{Ron1},
\cite{Schuh}, and \cite{V10}]\label{klein}
A non-singular real algebraic curve in $\RR P^2$ of degree $d\ge 3$ cannot
have more than $d(d-2)$ real inflection points.
\end{thm}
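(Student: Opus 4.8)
The plan is to bound the real inflection points, counted with multiplicity, by studying the real Gauss (tangent-direction) map instead of intersecting $X$ with its Hessian directly. By the excerpt the total number of complex inflection points, counted with multiplicity, is $3d(d-2)$, and the non-real ones occur in complex conjugate pairs; so it suffices to show that at most $d(d-2)$ of them, with multiplicity, are real --- equivalently that at least $d(d-2)$ conjugate pairs are non-real. I will treat all non-singular real curves of degree $d$ at once, letting the multiplicity $l-2$ of a real inflection of contact order $l$ (as recorded in the excerpt) be absorbed as the multiplicity of the corresponding critical point of the Gauss map, so that no separate reduction to generic curves is needed.

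Next I would describe the real inflection points topologically. The real locus $\RR X$ is a disjoint union of smooth circles, each an oval or (for odd $d$) a pseudo-line, and at a simple real inflection the curve meets its tangent as in the local model $y=x^3$. Thus, if $\gamma\colon \RR X \to \RR P^1$ sends a point to the direction of its tangent line, the real inflection points are exactly the critical points of $\gamma$, a simple inflection being a nondegenerate Morse critical point and a contact of order $l$ a degenerate critical point of multiplicity $l-2$. On each component the rotation number of the tangent line is fixed by its isotopy type (degree $2$ on an oval, read off from the total turning $2\pi$ of the tangent in an affine chart), so the critical points of $\gamma$ organize into maximum--minimum pairs and their number is even on every component. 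Counting fibers, the real points of $\RR X$ whose tangent has a fixed generic direction $\alpha$ are the real points of $X$ lying on the polar curve of the point at infinity in direction $\alpha$, a curve of degree $d-1$; B\'ezout therefore bounds every fiber of $\gamma$ by $d(d-1)$, and this is the source of the numerical constant.

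The hard part will be turning these local and fiberwise counts into the sharp bound $d(d-2)$, since the Morse and rotation-number data by themselves control only parities. Here I would bring in the complex structure: applying Riemann--Hurwitz to the degree-$d$ projection of $X(\CC)$ from a generic real point recovers the class $d(d-1)$ as the total ramification, and relating the flexes to the higher ramification of this family of projections (equivalently, to the cusps of the dual curve) should convert the excess $d(d-1)-d=d(d-2)$ into the desired inequality $\#\{\text{real flexes}\}\le \mathrm{class}-\deg$. To pin down this constant rather than a weaker multiple, I expect to need the complex-orientation refinement of \cite{V10}: when $X$ is of dividing type, $\RR X$ separates $X(\CC)$ into two halves interchanged by conjugation, a Riemann--Hurwitz computation on a single half, together with the fact that all real flexes lie on the common boundary, forces at least $d(d-2)$ inflection points into each of the two halves, hence at least $2d(d-2)$ non-real ones in total. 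The non-dividing case would then be reduced to the dividing one by a small real deformation. Carrying out this orientation bookkeeping cleanly, and verifying that the count of real critical points of $\gamma$ with multiplicity genuinely equals the real inflection count in every degenerate configuration, is where the real work lies.
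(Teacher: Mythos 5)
The paper offers no proof of Theorem \ref{klein} to compare against: it is quoted as a classical result, with \cite{Kle2}, \cite{Ron1}, \cite{Schuh} and \cite{V10} cited for proofs. Judged on its own, your proposal has a genuine gap, and it sits exactly where you say ``the real work lies''. The elementary part of your plan (the tangent-direction map $\gamma\colon\RR X\to\RR P^1$, rotation numbers, and the B\'ezout bound $d(d-1)$ on fibers via polar curves) can never produce an \emph{upper} bound on the number of real inflection points: for a smooth circle-valued map, fiber cardinalities and the degree only bound the number of critical points from \emph{below} (e.g.\ the number of folds is at least the difference between the largest and smallest regular fiber), and a degree-$2$ map can have arbitrarily many critical points while every fiber has at most $4$ points, by stacking folds over pairwise disjoint intervals of critical values. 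So all the content must come from the complex-algebraic step; but there the inequality you aim at, namely $\#\{\text{real flexes}\}\le \mathrm{class}-\mathrm{degree}$, \emph{is} Klein's theorem (for a non-singular curve Klein's equation reads $d(d-1)=d+i'+2\tau'$, with $\tau'$ the number of real bitangents whose contact points are complex conjugate), and saying that Riemann--Hurwitz applied to projections ``should convert'' the excess $d(d-2)$ into this inequality restates the goal rather than proving it.

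Moreover, the two concrete devices you propose for this step do not work as described. First, the projection of one half $X_+$ of a dividing curve from a generic real point is not a branched cover to which Riemann--Hurwitz applies: a non-real point of $X_+$ projects to a real line whenever it lies on a real line through the center, so $\pi_q(X_+)$ meets $\RR P^1$ along a one-dimensional set and $X_+$ does not map to a half of $\CC P^1$; no Euler-characteristic count is available along the lines you indicate without substantial additional argument. Second, and more decisively, the reduction of the non-dividing case to the dividing one ``by a small real deformation'' is impossible: the dividing type is a topological invariant of the pair $(X(\CC),\mathrm{conj})$, hence locally constant on the space of non-singular real curves of degree $d$; it can only change by crossing the discriminant, where you lose control of the real flexes. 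What is missing from your proposal is precisely the content supplied by the classical arguments --- Klein's continuity argument as vindicated in \cite{Ron1}, Schuh's equation of reality \cite{Schuh}, or Viro's integration with respect to Euler characteristic \cite{V10}.
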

 Klein also proved that this upper bound is sharp. Following
 \cite{Kha4}, we say that a non-singular real algebraic curve of
 degree $d$ in $\RR P^2$ is
 \textbf{maximally inflected} if it has $d(d-2)$ distinct real
 inflexion points.
If a real algebraic
 curve has a node $p$ with two real branches such that each branch is
 locally strictly  convex around $p$, then any smoothing of $p$
 produces 
 two real inflection points. 
 Applying Hilbert's method of construction, the previous observation implies 
 immediately the existence of maximally inflected curves in any
 degree at least 2.
However, real inflection points of maximally inflected  curves remains
quite mysterious. For example, which rigid-isotopy classes of real algebraic
curves contain a maximally inflected curve? How real inflection
points can be distributed among the connected component of a maximally
inflected curve?

The first step to answer  questions of this sort  is of course to find a
systematic way to construct maximally real inflected curves.
Invented by Viro at the end of the seventies (see \cite{V1}), the
patchworking technique 
turned out to be one of the most powerful 
 method to construct real algebraic curves with controlled
topology.
One of the main contribution of this paper is to prove that patchworking also
  provides
  a systematic method to construct maximally inflected real
  curves.

For the sake of shortness we do not recall this technique
here, we refer
 instead to the tropical presentation made
in \cite{V9}, \cite{Mik12}, or \cite{Br11}.
In non-tropical terms, Theorem \ref{main intro} states that any
real primitive $T$-curve, under a mild assumption on the corresponding
convex function, is maximally inflected. Note that this result is of
the same flavor 
as the fact that $T$-curves have asymptotically maximal
total curvature (see \cite{Lop} and \cite{Ris}).
We denote by $T_d$ the triangle in $\RR^2$ with vertices $(0,0)$,
$(d,0)$ and $(0,d)$.
All precise definitions needed in Theorem \ref{main intro} are given in
section \ref{standard trop}.
\begin{them}\label{main intro}
Let $C$ be a non-singular tropical curve in $\RR^2$ defined by the
tropical polynomial $\tg \sum_{i,j}a_{i,j} x^iy^j \td$ with Newton
polygon the triangle $T_d$ with $d\ge 2$. Suppose that if $v$ is a
vertex of $C$ dual to $T_1$, 
then its 3 adjacent edges have 3 different length.
Then the real algebraic curve   defined by the polynomial
 $ P(z,w)=\sum_{i,j} \alpha_{i,j}t^{-a_{i,j}} z^iw^j$ with
$\alpha_{i,j}\in\RR$ has  exactly
$d(d-2)$ inflection points in $\RR P²$ for $t>0$ small enough. 
\end{them}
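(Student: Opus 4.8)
The plan is to work over the field $\KK$ of Puiseux series and to regard the family $P(z,w)$ as a single curve $X$ over $\KK$ tropicalizing to $C$ (the case $d=2$ being immediate, since smooth conics have no inflection points). The inflection points of $X$ are the intersection $X\cap Hess_X$, which over the algebraically closed $\KK$ consists of exactly $3d(d-2)$ points. By Theorem \ref{klein}, for each fixed $t$ at most $d(d-2)$ of the corresponding points of $X_t\subset\RR P^2$ can be real, so it suffices to produce at least $d(d-2)$ real inflection points for $t$ small. I would do this by following the inflection points through the tropical limit $t\to 0$: each of them specializes to a point of $C$, and the reality of the limiting configuration is governed by the patchworking data. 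The feature to be explained is that the final count turns out to be independent of the signs $\alpha_{i,j}$, so reality must be forced by the combinatorics of $C$ alone.

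To locate these limits one needs the tropical curve $\mathrm{Trop}(Hess_X)$ and its intersection with $C$, and here lies the main obstacle. The Hessian determinant is built from the derivatives of $P$, so its coefficients are sums of products of the $\alpha_{i,j}t^{-a_{i,j}}$ in which the dominant terms cancel systematically; consequently the naive termwise tropicalization of $Hess_P$ produces the wrong Newton subdivision and the wrong tropical curve. To compute the genuine $\mathrm{Trop}(Hess_X)$ I would use tropical modifications: lift $\RR^2$ along the graphs of the tropical functions recording the first and second order behaviour of $C$, so that on the modified ambient space the Hessian tropicalizes without cancellation and $\mathrm{Trop}(Hess_X)\cap C$ can be read off combinatorially. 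By the tropical B\'ezout theorem this stable intersection has total multiplicity $d\cdot 3(d-2)=3d(d-2)$, matching the complex count; this confirms that no inflection point escapes to the boundary and that the localization is complete.

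With $\mathrm{Trop}(Hess_X)\cap C$ located, I would reduce the global problem to a sum of local contributions. The delicate contributions are concentrated near the vertices of $C$ dual to $T_1$; at the other vertices and along the edges the local models are automatically non-degenerate and the reality of the inflection points is unambiguous. Near a vertex dual to $T_1$ the rescaled limit of $X$ is a local model whose shape is governed by the three adjacent edge lengths, and the hypothesis that these lengths are pairwise distinct is precisely what makes this local model non-degenerate, preventing the inflection points localized there from colliding or escaping the chart. The heart of the local analysis is to determine, model by model, how many of the complex inflection points sitting over each tropical point are actually real; the outcome is dictated by the tropical combinatorics rather than by the signs, which is exactly what makes the final count independent of the $\alpha_{i,j}$ and accounts for the ratio $\tfrac13$ appearing in Klein's bound.

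Adding up the local real counts over all the tropical points of $\mathrm{Trop}(Hess_X)\cap C$ then produces exactly $d(d-2)$ real inflection points for $t>0$ small enough; by Theorem \ref{klein} this is already the maximal possible number, so the count is exact and Theorem \ref{main intro} follows. The step I expect to be genuinely hard is the computation of $\mathrm{Trop}(Hess_X)$ by tropical modifications, since it is there that the cancellation in the Hessian determinant has to be resolved and the correct combinatorial model of the tropical Hessian extracted; once this is done, the distinctness assumption on the three edge lengths at each vertex dual to $T_1$ plays the role of a genericity condition that keeps every local model in its simplest form and makes the summation unambiguous.
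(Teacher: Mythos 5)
Your proposal reproduces the paper's outer skeleton --- work over $\KK$, localize the inflection points tropically, produce at least $d(d-2)$ real ones for small $t$, and conclude with Klein's bound (Theorem \ref{klein}) --- but the two ideas that actually carry the paper's proof are missing, and what you put in their place would not work. First, you have no mechanism for reality. You assert that a model-by-model local analysis determines ``how many of the complex inflection points sitting over each tropical point are actually real'', independently of the signs $\alpha_{i,j}$; you give no reason this is possible, and the paper never establishes such a local determination. Its mechanism is a parity argument: each inflection component $\E$ is invariant under complex conjugation, and since $X$ is defined over $\RR\KK$, conjugation permutes the $\mu_\E$ inflection points with valuation in $\E$; hence the number of real ones among them is congruent to $\mu_\E$ modulo $2$, so every odd-multiplicity component carries \emph{at least} one real inflection point. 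This only gives a lower bound; the exact per-component count (Theorem \ref{main real}) comes a posteriori from Klein's global bound, not from local models. Second, you give no argument that these guaranteed real points number $d(d-2)$. That requires two combinatorial facts: $\sum_{\E}\mu_\E=3d(d-2)$ (Proposition \ref{number infl combina}) and, crucially, that under the edge-length hypothesis the components of multiplicity $1$ and of multiplicity $2$ are equinumerous --- proved in the paper by checking honeycomb curves and showing invariance under flips of the dual triangulation. Without this, ``adding up the local real counts'' is not a proof of the total $d(d-2)$; it is precisely the statement to be proved.

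There is also a misdiagnosis of where the difficulty with the Hessian lies. The obstacle is not that cancellations make $Trop(Hess_X)$ hard to compute: Lemma \ref{subd} shows the Hessian tropicalizes compatibly with initial forms, i.e. $(H_P)_{\CC,F'}=H_{P_{\CC,F}}$, so $Trop(Hess_X)$ is completely known --- it has the \emph{same underlying set} as $C$, with all edges of weight $3$. That is exactly the problem: $C\cap Trop(Hess_X)$ is all of $C$, a maximally non-transverse intersection carrying no localization information. The paper resolves this not by recomputing the Hessian on a modified ambient space, but by (i) locating inflection points through the tropicalizations of their \emph{tangent lines} (Propositions \ref{tangents} and \ref{inclusion}), which pins down the finitely many inflection components, and (ii) counting multiplicities via the vertical ends of the tropical modification of $Trop(Hess_X)$ along $z^2w^2P(z,w)$, where Proposition \ref{inclusion} is again an essential input (Lemmas \ref{edge 1} and \ref{edge 2}). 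The tangent-line step has no counterpart in your proposal, and without it neither the location of the inflection components nor the multiplicity computation goes through.
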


As an example of application of Theorem \ref{main intro}, combined
  with
classical results in topology of real algebraic curves (see \cite{V1}
and \cite{V3}
for example), we get the following 
 corollary.

\begin{coro}
Any rigid isotopy class of non-singular real algebraic curves of degree
at most 6 with non-empty real part contains a maximally inflected curve.

Any real scheme with non-empty real part realized by a non-singular
real algebraic curve of degree 7 is realized by a maximally inflected
curve of degree 7.
\end{coro}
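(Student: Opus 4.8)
The plan is to deduce this from Theorem \ref{main intro} together with the known realizations of low degree curves by Viro's combinatorial patchworking. Recall first that the combinatorial $T$-curve attached to a unimodular triangulation $\tau$ of $T_d$ and a distribution of signs at the lattice points depends, up to rigid isotopy (and in particular up to its real scheme), only on $\tau$ and on the signs, and \emph{not} on the particular convex function $a=(a_{i,j})$ certifying $\tau$, nor on the precise real coefficients $\alpha_{i,j}$. Thus the topological data is a purely combinatorial invariant of $(\tau,\text{signs})$, while the edge lengths of the associated tropical curve $C$ are free parameters encoded by $a$. This decoupling is exactly the flexibility I would exploit.

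Next I would quote the relevant realization input. For $d\le 6$ the rigid isotopy classification of non-singular real curves in $\RR P^2$ is complete, and every class with non-empty real part is obtained from a primitive $T$-curve; for $d=7$ every real scheme with non-empty real part that is realized by some non-singular real septic is likewise realized by a primitive $T$-curve (see \cite{V1} and \cite{V3}). In either case we are reduced to the following: given a unimodular triangulation $\tau$ of $T_d$ and signs producing the prescribed class, choose a convex function $a$ inducing $\tau$ for which the hypothesis of Theorem \ref{main intro} holds, namely that at every vertex of $C$ dual to a triangle of type $T_1$ the three adjacent edges have pairwise distinct lengths.

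This last step is a genericity argument. The set of convex functions $a$ inducing a fixed triangulation $\tau$ is a non-empty open polyhedral cone (the secondary cone of $\tau$); each vertex of $C$ solves a linear system in $a$ determined by its dual triangle, so every bounded edge of $C$ has fixed direction and lattice length an affine-linear function of $a$ on this cone. At a vertex $v$ dual to a $T_1$-triangle, the three differences of adjacent edge lengths are therefore affine-linear functionals of $a$; the locus where one of them vanishes is a hyperplane section of the cone. Choosing $a$ generically in the cone avoids the finitely many such hyperplanes, yields the same $\tau$ and hence the same $T$-curve up to rigid isotopy, and now satisfies the hypothesis of Theorem \ref{main intro}. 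Applying that theorem produces a maximally inflected curve in the prescribed rigid isotopy class (resp.\ realizing the prescribed real scheme), which proves both assertions.

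The main obstacle is not this soft genericity step but the realization input: the statement that \emph{all} rigid isotopy classes in degree at most $6$, and \emph{all} realizable real schemes in degree $7$, arise from primitive $T$-curves relies on the deep classification and patchworking results cited in \cite{V1} and \cite{V3}, and is the genuine content being imported here. A secondary point deserving care is to confirm that at each $T_1$-vertex no two of the three adjacent edge lengths are forced to be equal throughout the secondary cone of $\tau$; this holds because those lengths are governed by heights at lattice vertices that are not all shared, so the length-difference functionals are not identically zero, but one should check it for the triangulations actually used.
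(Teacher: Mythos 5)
Your overall route --- import the classical realization results of \cite{V1} and \cite{V3}, then apply Theorem \ref{main intro} after choosing the convex function generically in the secondary cone of the given triangulation --- is exactly the derivation the paper intends (the paper gives no more detail than ``Theorem \ref{main intro} combined with classical results''). The gap lies precisely in the step you flagged as ``deserving care'', and the reason you give for dismissing it is false: it is \emph{not} true in general that the length-difference functionals at a $T_1$-vertex are non-vanishing on the secondary cone, because the neighbouring triangles of a $T_1$-triangle need not have distinct apexes, and when two of them share their apex the corresponding two edge lengths can coincide \emph{identically}.

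Concretely, take the lattice triangle with vertices $A=(0,0)$, $B=(1,0)$, $D=(-1,3)$; it has integer volume $3$ and its centroid $C=(0,1)$ is a lattice point. Coning from $C$ subdivides it into the three primitive triangles $ABC=T_1$, $BCD$ and $ACD$, so the two neighbours of $T_1$ across $BC$ and $AC$ share the apex $D$. Writing $\alpha,\beta,\gamma,\delta$ for the heights at $A,B,C,D$, the vertex of the tropical curve dual to $T_1$ is $v=(\alpha-\beta,\alpha-\gamma)$, the adjacent vertices are $v_1=(3\gamma-2\beta-\delta,\,2\gamma-\beta-\delta)$ and $v_2=(2\alpha-3\gamma+\delta,\,\alpha-\gamma)$, and the two adjacent edges (directions $(1,1)$ and $(1,0)$) have lattice lengths $|3\gamma-\alpha-\beta-\delta|$ and $|\alpha+\beta+\delta-3\gamma|$, which are equal for \emph{every} choice of heights. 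This local picture is regular (pull at the centroid), and it can be completed to a regular primitive triangulation of $T_d$ with the configuration in the interior of $T_d$ (already for $d=6$); on the entire secondary cone of such a triangulation the hypothesis of Theorem \ref{main intro} fails, so no generic choice of $a$ rescues it. Your perturbation argument is correct exactly when the three neighbouring triangles of each relevant $T_1$-triangle have three pairwise distinct apexes (then each length can be varied independently by moving the height of one apex). So to complete the proof you must either verify that the triangulations underlying the classical realizations in degrees $\le 7$ contain no such ``centroid configuration'', or show that each rigid isotopy class (resp.\ real scheme) in question can be realized by a primitive convex triangulation avoiding it; this verification is a genuine missing step, not a formality.
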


Theorem \ref{main intro} is a weak version of Theorem \ref{main real}:
the  polynomials $P(z,w)$ are in fact polynomials over the
field of generalized Puiseux series, and
 we give in addition the distribution of real inflection points
among the connected components of a real algebraic curve obtained by patchworking.
See  Figures \ref{honey real} and \ref{honey real 2} from Example
\ref{exa1},
 as well as section
\ref{constructions} for some examples of such patchworkings.

\vspace{2ex}

A
plane \textbf{tropical curve} $C$
can be thought as a combinatorial encoding 
of a 1-parametric degeneration of 
 plane 
complex algebraic curves 
$X(t)$ (see section \ref{standard trop} for
definitions).
The main part in the proof of Theorem 
 \ref{main intro} is then to understand which points of $C$
 represent a limit of inflection points of the
 algebraic curves $X(t)$. Since plane tropical curves are piecewise linear
 objects, 
the location of these tropical intersection points
 is not obvious at first sight, and we need to refine the tropical
 limit process. 
\textbf{Tropical modifications}, introduced by Mikhalkin in \cite{Mik3},
allow such a refinement.

It follows from Kapranov's Theorem that
the tropicalization $C$ of a family of
plane complex algebraic curves $X(t)$
only depends on the first order term in $t$  of the coefficients 
of the  equation of $X(t)$.
As rough as it may seem, the curve $C$ keeps track of a
non-negligible amount of information about the family $(X(t))$. For
example, if $C$ is non-singular, the genus of $X(t)$ is
equal to the first Betti number of $C$.
 However, some information
depending on more than just first order terms
might be lost when passing from $(X(t))$ to $C$. Tropical
modifications  refine  the tropicalization
process, and allows one to recover some information about $(X(t))$ sensitive
to higher order terms.

By
means of these tropical modifications, we identify a finite number of
\textbf{inflection components} on any non-singular tropical curve $C$ 
(Proposition \ref{inclusion}). These inflection components 
 are the tropical analogues of inflection
points. Using further tropical modifications, we prove that 
the \textbf{multiplicity} $\mu(\E)$ of such a component $\E$ (i.e. 
the
number of inflection points of $X(t)$ which tropicalize in $\E$)
 only depends on the combinatoric of $C$ (Theorem \ref{main}). 
Now suppose that $X(t)$ is a family of \textit{real} algebraic curves.
As an immediate consequence,
 we get that the number of real inflection points of $X(t)$ which tropicalize
 in $\E$ has the same parity 
 
 as $\mu(\E)$. In Theorem
 \ref{main real}, we establish that a generic tropical curve has
 exactly $d(d-2)$ inflection components with odd multiplicity.
Hence Theorem
 \ref{main real} together with Klein Theorem  imply 
that $X(t)$ has exactly $d(d-2)$ real inflection points when $t$ is
small enough.

At several places in the text, we will see that tropical
modifications can also be used to \textbf{localize} a problem. For
example, relation between classical and tropical intersections 
(Proposition \ref{set intersection}), or
intersections between a curve and its Hessian (Theorem \ref{main
  real}), 
are reduced to easy
local considerations after a suitable tropical modification.

\tableofcontents

\vspace{2ex}
\textbf{Acknowledgment: } We are grateful to Viatcheslav Kharlamov for
pointing us the fact that any smoothing of a real node with two local
real convex branches produces two real inflection points, as well as
for his simplification of the proof of Proposition \ref{classification d=4}.
We are also indebted to Grigory Mikhalkin for many
useful discussions about  tropical modifications.

Finally, we  thank
Jean-Jacques Risler and Frank Sottile for their 
encouragements, as well as 
Cristhian Garay 
and the unkonwn referee
for many useful comments
on preliminary versions of this paper.

A major part of this work has been done during the visit of E.B. at the Universidad Nacional Autónoma de México (Instituo de Matemáticas. Unidad Cuernavaca), and during the
 Tropical Geometry semester at  MSRI in Fall 2009. We thank these institutions for their support and excellent working conditions
 they offered us. Both authors were partially founded by UNAM-PAPIIT IN-117110 and CONACyT 55084.
E.B. is also
partially supported by the ANR-09-BLAN-0039-01 and ANR-09-JCJC-0097-01.
L.L. is also partially supported by UNAM-DGAPA 
and Laboratorio Internacional Solomon Lefschetz (LAISLA), asociated to the CNRS (France) and CONACYT (Mexico).

\section{Convention}\label{convention}
Here we pose once for all some notations and conventions we will
use throughout the paper. Almost all of them are commonly used in the
literature. 

An integer convex polytope in $\RR^n$ is a convex
polytope with vertices in $\ZZ^n$. The \textbf{integer volume} is the
Euclidean volume normalized so 
that the standard simplex
 with vertices $0$, $(1,0,\ldots,0)$, $(0,1, 0,\ldots,0)$, $\ldots$, 
$(0,\ldots,0,1)$ has volume 1. That is to say, the integer volume in $\RR^n$ is
 $n!$ times the Euclidean volume in $\RR^n$.
 In this paper,  we only consider
  integer volumes.
A simplex $\Delta$ in $\RR^n$ will be called \textbf{primitive} if it
has volume 1. Equivalently, $\Delta$ is primitive if and only if it is 
the image  
of the standard simplex
under an element of $GL_n(\ZZ)$ composed with a translation.

Given $d\ge 1$, we denote by $T_d$ the integer triangle in $\RR^2$ with vertices
$(0,0)$, $(d,0)$, and $(0,d)$.

A \textbf{facet} of a polyhedral complex is a face of maximal dimension.

The letter $k$ denotes an arbitrary
field of characteristic 0. 
Given $P(z)$ a polynomial in $n$ variables over  $k$, 
we denote by
$V(P)$ the hypersurface of $(k^*)^n$ defined by $P(z)$.
We write $P(z)=\sum a_{i}z^i$ with $i=(i_1,\ldots,i_n)$,
$z=(z_n,\ldots, z_n)$, and $a_iz^i=a_iz_{1}^{i_1}\ldots z_{n}^{i_n}$.
The Newton polytope of $P(z)$ 
is denoted by $\Delta(P)\subset\RR^n$, and given
a subset $\Delta'$ of $\Delta(P)$, we define the \textbf{restriction} of
  $P(z)$ along $\Delta'$, by
$$P^{\Delta'}(z):=\sum_{i\in \Delta'\cap\ZZ^n} a_{i}z^i .$$
If $X$ and $X'$ are two algebraic curves in the projective plane
$kP^2$,  the intersection multiplicity of $X$
and $X'$ at a point $p\in kP^2$ is denoted by $(X\circ X')_p$.

\section{Standard tropical geometry}\label{standard trop}

In this section we review briefly some  standard facts about tropical
geometry, and we fix the notations used in this paper. For a more
educational exposition, we refer, for example, to \cite{Mik3}, \cite{Mik9},
 \cite{St2}, and \cite{BIT}. 
There exist several non-equivalent definitions of tropical
varieties in the literature.
 In this paper, we have chosen for practical reasons
to present them via
non-archimedean amoebas. 

\subsection{Non-archimedean amoebas}\label{non-arch}

A \textbf{locally convergent generalized Puiseux
series} is a formal series of the form
$$a(t) = \sum_{r\in R}\alpha_r t^r $$
where $R\subset\RR$ is a well-ordered set,  $\alpha_r\in\CC$, and the
series is convergent for $t>0$ small enough.
We denote by $\KK$  the set of all locally convergent generalized Puiseux
series. It is naturally a field of characteristic $0$, which turns out
to be algebraically closed. 
An element $a(t) = \sum_{r\in R}\alpha_r t^r $ of $\KK$ is said to be 
 \textbf{real} if  $\alpha_r\in\RR$ for all $r$, and we denote by
 $\RR\KK$ the subfield of $\KK$ composed of real series.

Since elements of $\KK$ are convergent for $t>0$ small
enough, an algebraic variety over $\KK$ (resp. $\RR\KK$)
can be  seen as a one parametric family
of algebraic varieties over $\CC$ (resp. $\RR$).

\vspace{1ex}
The field $\KK$ has a natural non-archimedean valuation 
defined as follows:
 $$\begin{array}{cccc}
val:&\KK&\longrightarrow& \RR\cup\{-\infty\}
\\ & 0 &\longmapsto & -\infty
\\ & \sum_{r\in R}\alpha_r t^r \ne 0&\longmapsto &-min\{r\mid \alpha_r\neq
0\}.
 \end{array}$$
Note that 
we call $val$ a valuation, although it is rather the opposite of a
valuation
for  classical litterature.
This valuation extends naturally to a map 
$Val:\KK^n\rightarrow (\RR\cup\{-\infty\})^n$ by evaluating $val$ coordinate-wise, i.e.
$Val(z_1,\dots,z_n)=(val(z_1),\dots,val(z_n))$.  
If $X\subset(\KK^*)^n$ is an algebraic variety,
$Val(X)\subset\RR^n$  is called the \textbf{non-archimedean amoeba} of $X$.

\begin{exa}
An integer matrix $M\in\mathcal M_{n,m}(\ZZ)$
 defines a multiplicative map
$\Phi_{M}:(\KK^*)^m\to (\KK^*)^n$. The non-archimedean
amoeba of $\Phi_{M}((\KK^*)^m)$ is the vector subspace
of $\RR^n$ spanned by the columns of $M$.
\end{exa}

Let $X$ be an irreducible algebraic variety of dimension $m$. In this
case, Bieri
and Groves proved in \cite{BiGr} that $Val(X)$ 
 is a finite rational polyhedral
complex of pure dimension $m$ (rational means that each of its
faces has a direction defined over $\QQ$). Given a facet $F$ 
 of $Val(X)$, we associate a positive integer number $w(F)$, called
\textbf{the weight} of $F$, as
follows: pick a point $(p_1,\ldots,p_n)$ 
in the relative interior of $F$, and choose
a basis $(e_1,\ldots,e_n)$ of $\ZZ^n\subset\RR^n$ such that
$(e_1,\ldots,e_m)$ is a basis of the direction of $F$; denote by
$Y_F\subset(\KK^*)^n$ 
the 
multiplicative translation of $\Phi_{(e_{m+1},\ldots,e_{n})}((\KK^*)^{n-m})$ along
$(t^{p_1},\ldots,t^{p_n})$, and define
$w(F)$ as the number (counted with multiplicity) of intersection
points of $X$ and $Y_F$ with valuation $(p_1,\ldots,p_n)$.
Note that $w(F)$ does not depend on the choice of the point $(p_1,\ldots,p_n)$.

\begin{exa}

A matrix $M\in\mathcal M_{n,m}(\ZZ)$ with $Ker\ M=\{0\}$ maps the
lattice $\ZZ^m$ to a sub-lattice $\Lambda'$ of $\Lambda=\ZZ^n\cap Im \ M$.
The weight of the non-archimedean
amoeba 
of $\Phi_{M}((\KK^*)^m)$ is the index of $\Lambda'$ in $\Lambda$.
\end{exa}

\begin{defi}
The non-archimedean amoeba of $X$ equipped with the weight function on
its facets is called the tropicalization of $X$,
and is denoted by $Trop(X)$.
\end{defi}
The notion of tropicalization extends naturally to any algebraic
subvariety of $(\KK^*)^n$, not necessarily of pure dimension. 
In this paper, a \textbf{tropical variety} is 
a finite rational polyhedral
complex in $\RR^n$ equipped with a weight function, and
 which is the tropicalization of some algebraic
subvariety of $(\KK^*)^n$.

\begin{exa}\label{trop plane}
A plane tropical curve, a tropical plane in $\RR^3$, and a tropical curve
contained in this tropical plane are depicted 
in Figures \ref{example}a, \ref{example}b and \ref{example}c.
\end{exa}

\begin{figure} [htbp]

\begin{center}
\begin{tabular}{ccc}
\includegraphics[height=5cm, angle=0]{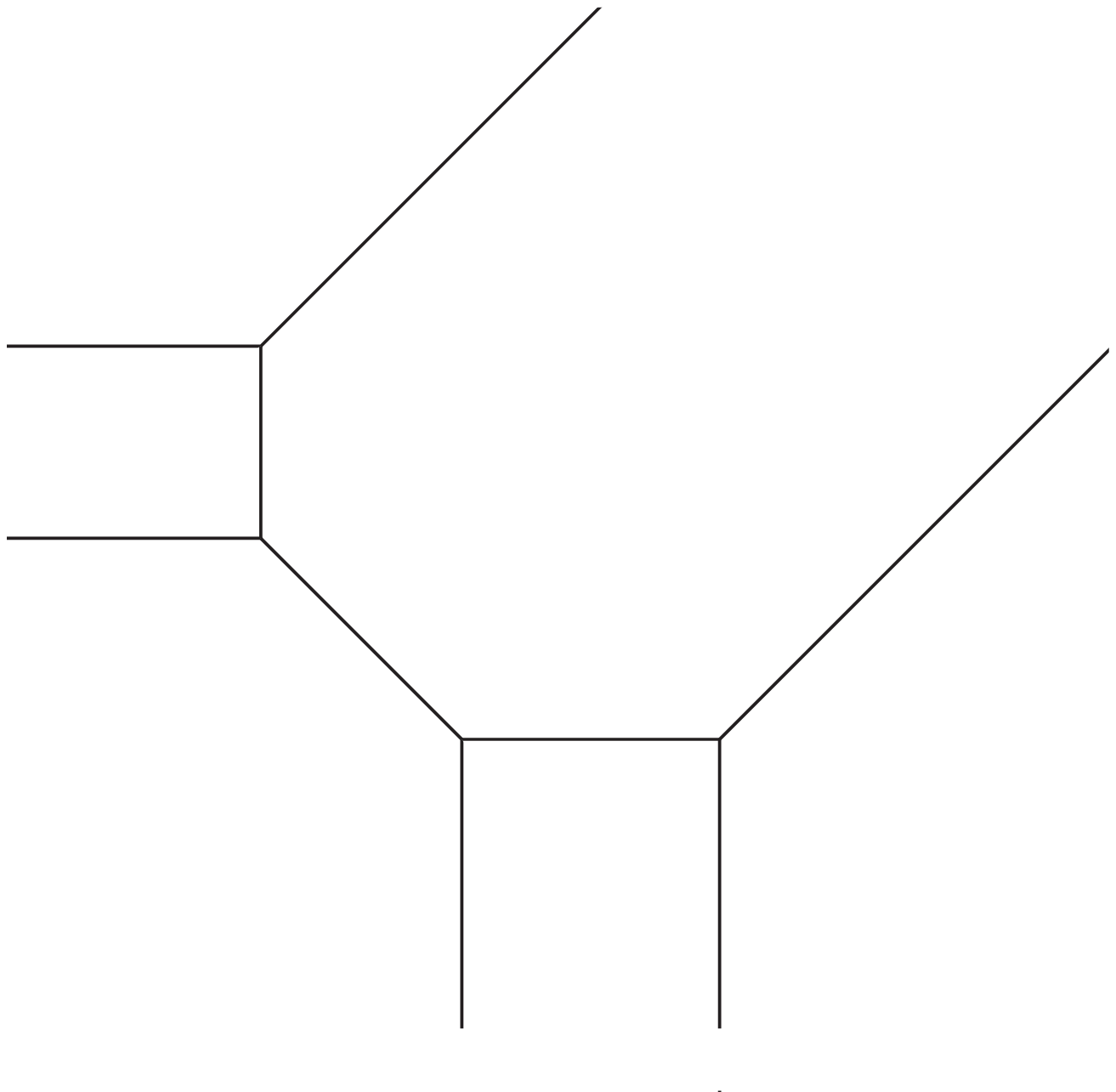}&
\includegraphics[height=5cm, angle=0]{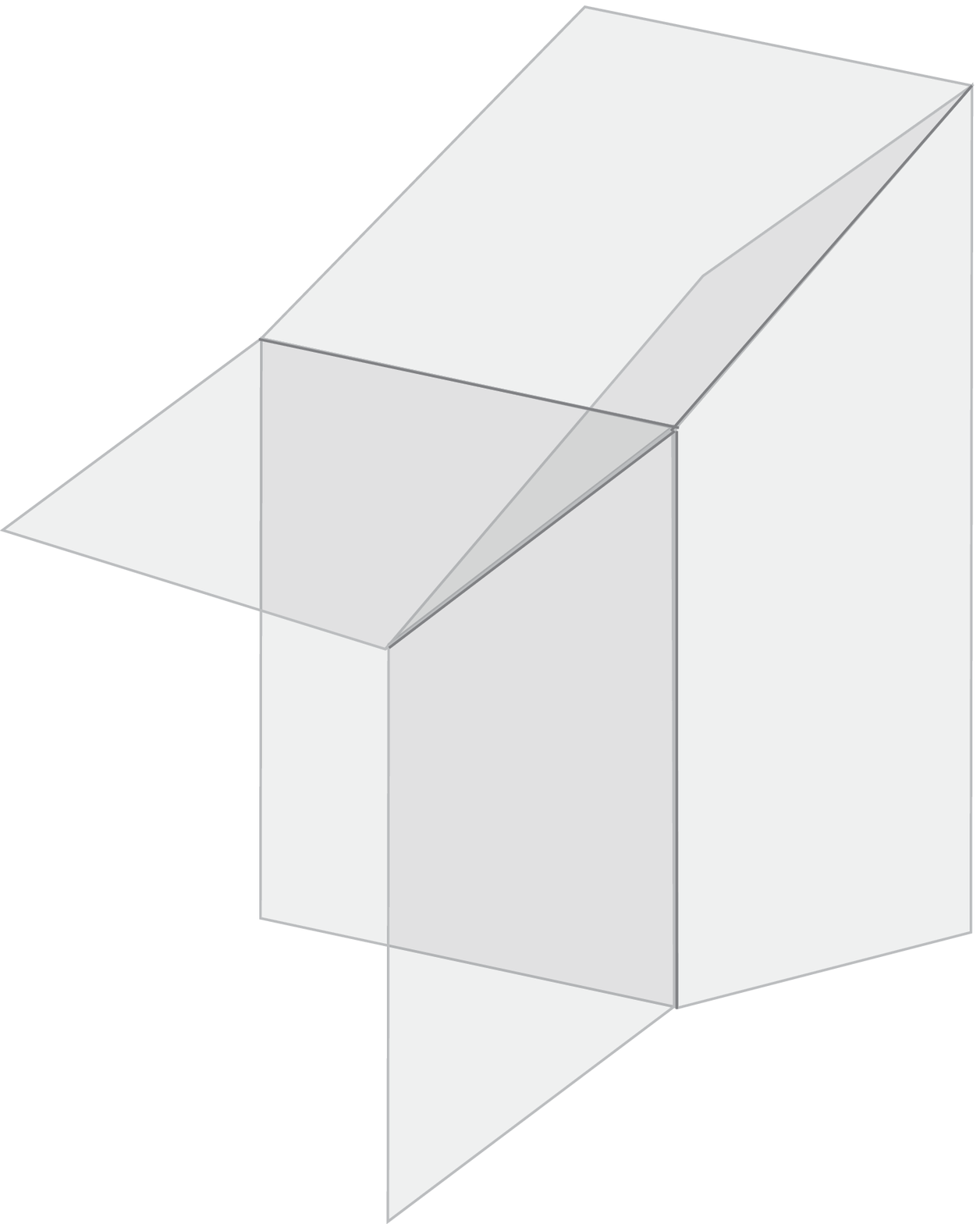}&
\includegraphics[height=5cm, angle=0]{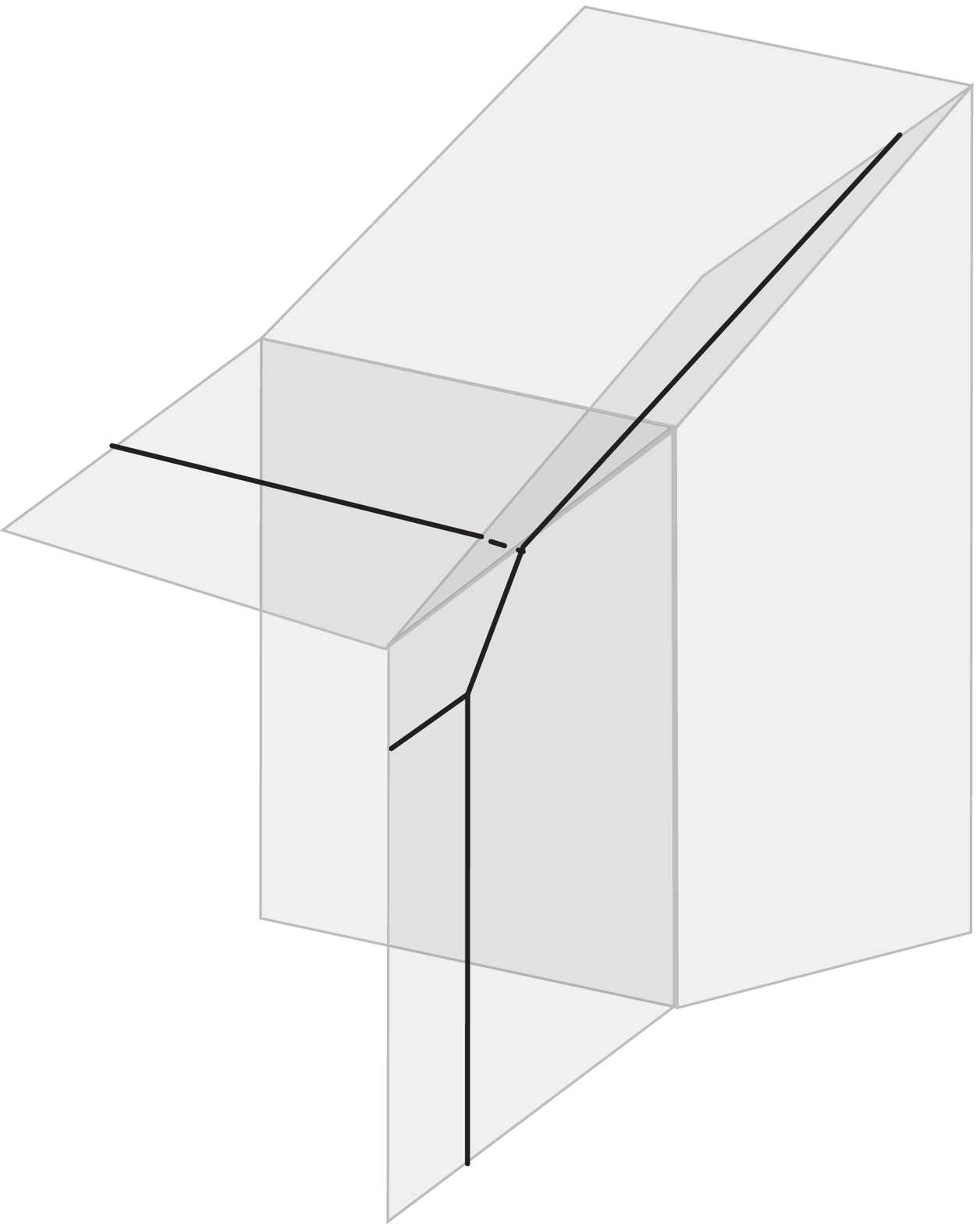}
\\ a) & b) &c)
\end{tabular}
\caption {Examples of tropical varieties. In these cases all the weights are equal to 1.}
\label{example}
\end{center}
\end{figure}

\begin{defi}
Let $V$ be a tropical variety in $\RR^n$, and $X$ be an algebraic
subvariety of $(\KK^*)^n$.  We say that $X$
 realizes $V$ if $V=Trop(X)$. If $X=V(P)$ for some polynomial $P(z)$,
 we say that $P(z)$ realizes $V$.
\end{defi}

Tropical varieties satisfy the so-called \textbf{balancing
  condition}. We give here this property only for tropical
curves, since this is anyway the only case we need in this paper and 
 makes the exposition easier. We refer to \cite{Mik3} for the general
 case.

Let $C\subset\RR^n$ be a tropical curve, and let $v$ be a vertex of
$C$.
Let $e_1,\ldots, e_l$ be the edges of $C$ adjacent to
$v$. Since $C$ is a rational graph, each edge $e_i$ has a primitive
integer direction. If in addition we ask that the orientation of $e_i$
defined by this vector points away from $v$, then this primitive
integer vector is unique. Let us denote by $u_{v,e_i}$ this vector.

\begin{prop}[Balancing condition]
For any vertex $v$, one has
 $$\sum_{i=1}^l w(e_i)u_{v,e_i}=0.$$ 
\end{prop}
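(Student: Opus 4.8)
The plan is to reduce the vector identity to its individual coordinates, and then to recognise each coordinate identity as the statement that a rational function on a complete curve has principal divisor of degree zero.

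First I would normalise the situation. A multiplicative translation $z\mapsto(t^{a_1}z_1,\ldots,t^{a_n}z_n)$ translates the amoeba $Val(X)$ by $(a_1,\ldots,a_n)$ while preserving both the weights of facets and the primitive edge directions, so I may assume $v=0$ and work in an arbitrarily small ball around $v$, where $C$ coincides with the star of $v$: a one-dimensional rational fan with rays $\RR_{\ge 0}u_{v,e_i}$. Writing $(u_{v,e_i})_j$ for the $j$-th coordinate, it suffices to prove, for each $j\in\{1,\ldots,n\}$,
$$\sum_{i=1}^l w(e_i)\,(u_{v,e_i})_j=0,$$
since these $n$ scalar relations are exactly the components of the desired vector identity. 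This $j$-th relation is the relation one gets by pairing with the functional $\xi=(0,\ldots,1,\ldots,0)$, for which the corresponding monomial $z^\xi$ is simply the coordinate $z_j$.

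Next I would pass to the reduction of $X$ at $v$. Since $\KK$ is algebraically closed of equicharacteristic $0$, I may choose a semistable model of the smooth completion $\overline X$ over the valuation ring of $\KK$, adapted to $v$, so that the vertex $v=0$ corresponds to an irreducible component $X_v$ of the special fibre, a complete curve over the residue field $\CC$, and each coordinate $z_j$ reduces to a rational function $\overline{z_j}$ on $X_v$. In this dictionary the edges $e_i$ adjacent to $v$ correspond to the points $q$ of $X_v$ at which some coordinate acquires a zero or pole, and the lattice normalisation built into the weight is precisely designed so that
$$w(e_i)\,u_{v,e_i}=\big(\mathrm{ord}_q\overline{z_1},\ldots,\mathrm{ord}_q\overline{z_n}\big)$$
(for a sign convention on $Val$ fixed once and for all; the sign is global, hence immaterial below). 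I would check this identity directly against the definition of $w(e_i)$ given above: the perpendicular torus $Y_{e_i}$ meets $X$ in $w(e_i)$ points of valuation $v$, and this intersection number is exactly the multiplicity with which the branch over $e_i$ wraps, i.e. the lattice length $w(e_i)$ of the primitive vector.

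Reading off the $j$-th coordinate and summing over all points $q$ of $X_v$, the scalar identity becomes
$$\sum_{i=1}^l w(e_i)\,(u_{v,e_i})_j=\sum_{q\in X_v}\mathrm{ord}_q\overline{z_j}=\deg\big(\mathrm{div}\,\overline{z_j}\big)=0,$$
the last equality because $\overline{z_j}$ is a nonzero rational function on the complete curve $X_v$ and principal divisors on a complete curve have degree zero. (That $\overline{z_j}$ is neither identically $0$ nor $\infty$ is guaranteed by $v$ lying in $Val(X)$ with finite coordinates.) Running this over all $j$ yields the balancing condition. I expect the main obstacle to be the third paragraph: constructing the model of $\overline X$ and proving the precise dictionary $w(e_i)\,u_{v,e_i}=(\mathrm{ord}_q\overline{z_j})_j$ between the edges at $v$ and the orders of the reduced coordinate functions at the corresponding points of $X_v$. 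Everything else is formal once this local comparison is in place, and it is exactly the point where the definition of $w(F)$ via intersection with $Y_F$ does all the work.
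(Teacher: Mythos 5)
First, a point of comparison: the paper does not prove this proposition at all --- it states the balancing condition for curves and refers to Mikhalkin \cite{Mik3} for the general case. So there is no in-paper argument to measure you against; your proposal has to stand on its own. In outline, your strategy is the standard non-archimedean one (it is essentially the slope formula, or Poincar\'e--Lelong, argument): reduce to one coordinate $z_j$ at a time, pass to a model over the valuation ring in which the vertex $v$ corresponds to a component $X_v$ of the special fibre, translate weighted edge directions into orders of vanishing of the reduced coordinates, and conclude from the fact that a principal divisor on the complete curve $X_v$ has degree zero. That route does work and is how this statement is proved in parts of the literature.

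The problem is that, as written, your argument has a genuine gap exactly where you yourself locate it, and one local claim that is false as stated. The dictionary $w(e_i)\,u_{v,e_i}=\bigl(\mathrm{ord}_q\overline{z_1},\ldots,\mathrm{ord}_q\overline{z_n}\bigr)$ is not a detail to be ``checked directly against the definition'': it is the entire content of the proposition, since everything after it (summing orders, degree of a principal divisor is zero) is formal. Deferring it means you have reduced the balancing condition to an unproved statement of equivalent depth, so the proposal is a plan rather than a proof. Worse, the claim is wrong in the form you give it, because the correspondence between edges at $v$ and points of $X_v$ need not be a bijection: an edge $e_i$ of weight $2$ can arise either from one branch wrapping with multiplicity $2$ or from two distinct points of $X_v$ each contributing a primitive order vector, and in the latter case no single point $q$ satisfies your identity. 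The correct local statement is that the \emph{sum} of $\bigl(\mathrm{ord}_q\overline{z_1},\ldots,\mathrm{ord}_q\overline{z_n}\bigr)$ over all points $q$ of $X_v$ lying over $e_i$ equals $w(e_i)\,u_{v,e_i}$; your final computation survives this correction since it sums over all of $X_v$ anyway, but the step as stated would fail. Finally, invoking a semistable model ``over the valuation ring of $\KK$'' is not free of charge here: $\KK$ has value group $\RR$, its valuation ring is non-Noetherian, and $\KK$ is not complete, so the usual semistable reduction theorems do not apply verbatim. One must either pass to the completion, or use that $X$ is defined over a subfield whose value group is finitely generated, or work with the Berkovich analytification; some such reduction needs to be stated for the third paragraph to make sense.
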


If $C$ is a tropical curve in $\RR^n$, any of its bounded edge $e$ has a
\textbf{length} $l(e)$ defined as follows:
$$l(e)=\frac{||v_1v_2||}{w(e)||u_{v_1,e}||} $$
where $v_1$ and $v_2$ are its adjacent vertices, and $||v_1v_2||$
(resp. $||u_{v_1,e}||$) denotes the Euclidean length 
of the vector $v_1v_2$ (resp. $u_{v_1,e}$).

\subsection{Tropical hypersurfaces}

Let us now study closer tropical hypersurfaces, i.e. 
tropical varieties in $\RR^n$ of pure dimension $n-1$. These particular tropical varieties
can easily be described as algebraic varieties over the
\textbf{tropical semi-field}  $(\TT,\tg+\td,\tg\times\td)$.
Recall that $\TT=\RR\cup\{-\infty\}$ 
and that for any two elements $a$ and $b$ in
$\TT$, one has 
\begin{center}
$\tg a+b \td=\max(a,b)$ and $\tg a\times b \td =a+b.$
\end{center}
As usual, we abbreviate $a\times b$ in $ab$, and 
$(\TT,\tg+\td,\tg\times\td)$ in $\TT$, and we use the convention that
$\max(-\infty,a)=a$ and $-\infty + a=-\infty$.
Note that $\TT^*=\RR$.

Since  $\TT$ is a semi-field,
we have a natural notion of tropical polynomials, i.e. polynomials over
$\TT$. Such a polynomial $P(x)=\tg\sum a_ix^i \td$ induces a function
 $$\begin{array}{cccc}
P:&\TT^n&\longrightarrow& \TT
\\ & x&\longmapsto & \max (\left\langle x,i\right\rangle + a_i)
 \end{array}$$
where $x=(x_1,\ldots,x_n)\in\TT^n$, $i=(i_1,\ldots,i_n)\in\NN^n$,
$x^i=\tg x_1^{i_1}\ldots, x_n^{i_n}\td$, and
$\left\langle\ ,\ \right\rangle$ denotes
 the standard
Euclidean product on $\RR^n$.

We denote by $\overset{\circ}V(P)$ the set of points $x$ in  $\RR^n$ for 
which the value
of $P(x)$ is given by at least 2 monomials. 
This is a finite rational polyhedral complex, which induces a
subdivision $\Theta$ of $\RR^n$.
Given $F$  a face of $\Theta$ and $x$  a point in the relative
interior of $F$, the set $\{i\in\Delta(P)\ | \ P(x)=\tg a_ix^i\td\}$
does not depend on $x$. We denote its convex hull by $\Delta_F$. 
All together,  the polyhedrons  $\Delta_F$ form a subdivision of $\Delta(P)$,
called the \textbf{dual subdivision} of $P(x)$. The polyhedron
$\Delta_F$ is called the \textbf{dual cell} of $F$, and $\dim
\Delta_F=n-\dim F$. 
In particular,
if $F$ is a facet of 
$\overset{\circ}V(P)$ then $\Delta_F$ is a segment, and we define
the \textbf{weight} of $F$ by $w(F)=Card(\Delta_F\cap\ZZ^n)-1$. We
denote by $V(P)$ the polyhedral complex $\overset{\circ}V(P)$ equipped
with the map $w$ on its facets. $V(P)$ is called
the \textbf{tropical hypersurface} defined by $P(x)$. 

The  
  Newton polygon of $P(x)$ and its dual subdivision 
 are entirely determined, up to translation, by
$V(P)$.
A tropical hypersurface is said to be \textbf{non-singular} if all the maximal
 cells of its dual
 subdivision are primitive simplices. 
In particular, any facet of a non-singular tropical hypersurface has
weight 1.

Note that we have used the same notations  
as in section
\ref{non-arch}. This is justified by the following fundamental Theorem, due to Kapranov.
\begin{thm}[Kapranov \cite{Kap1}]
Let $P(z)=\sum a_iz^i $ 
be a polynomial over $\KK$. If we define $P_{trop}(x)=\tg\sum
val(a_i)x^i \td $, 
then we have
$$Trop(V(P))=V(P_{trop}). $$ 
\end{thm}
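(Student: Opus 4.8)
The plan is to establish the two inclusions between the underlying sets $Val(V(P))$ and $V(P_{trop})$ separately, and then to match the weight functions on facets. Throughout I would use the basic properties of $val$: it is additive on products, $val(ab)=val(a)+val(b)$, and satisfies the ultrametric inequality $val(a+b)\le \max(val(a),val(b))$, with equality whenever $val(a)\ne val(b)$; and I would use that $\KK$ is algebraically closed with value group all of $\RR$ and residue field $\CC$. The easy inclusion is $Val(V(P))\subseteq V(P_{trop})$. Let $z\in(\KK^*)^n$ satisfy $P(z)=0$, and set $x=Val(z)$. For each index $i$ one has $val(a_iz^i)=val(a_i)+\langle i,x\rangle$, so $\max_i val(a_iz^i)$ is exactly the value $P_{trop}(x)$. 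If this maximum were attained by a single index $i_0$, the ultrametric inequality would give $val(P(z))=P_{trop}(x)\in\RR$, contradicting $P(z)=0$, whose valuation is $-\infty$. Hence the value of $P_{trop}$ at $x$ is given by at least two monomials, which by definition means $x\in V(P_{trop})$.

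The reverse inclusion $V(P_{trop})\subseteq Val(V(P))$ is the substance of the theorem. Given $x\in V(P_{trop})$, I must produce $z\in V(P)$ with $Val(z)=x$. First I would normalize by the monomial substitution $z_j\mapsto t^{-x_j}z_j$, which is legitimate because $\KK$ contains $t^r$ for every real $r$; this translates both $Val(V(P))$ and $V(P_{trop})$ by $-x$, so one may assume $x=0\in V(P_{trop})$. Let $\Delta_F\subset\Delta(P)$ be the set of indices $i$ realizing $P_{trop}(0)=\max_i val(a_i)$, and form the initial polynomial $\overline P(\zeta)=\sum_{i\in\Delta_F}\alpha_i\zeta^i$ over $\CC$, where $\alpha_i$ is the leading coefficient of $a_i$, i.e. the coefficient of $t^{-val(a_i)}$. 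Since $0\in V(P_{trop})$, the set $\Delta_F$ has at least two elements, so $\overline P$ is not a monomial; fixing all but one coordinate to generic nonzero complex values reduces $\overline P$ to a nonconstant univariate polynomial, which has a root in $\CC^*$ because $\CC$ is algebraically closed. This yields a point $\zeta=(\zeta_1,\dots,\zeta_n)\in(\CC^*)^n$ with $\overline P(\zeta)=0$.

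It remains to lift $\zeta$ to a genuine solution, and this is the main obstacle. I would specialize $z_2,\dots,z_n$ to elements of $\KK^*$ of valuation $0$ with leading terms $\zeta_2,\dots,\zeta_n$, obtaining a univariate polynomial $Q(z_1)=P(z_1,z_2,\dots,z_n)\in\KK[z_1]$. The key point is that a generic such specialization preserves the two lowest-order contributions coming from $\Delta_F$, so the Newton polygon of $Q$, read off from the valuations of its coefficients, retains a segment guaranteeing a root $z_1$ with $val(z_1)=0$ and leading coefficient $\zeta_1$; since $\KK$ is algebraically closed, $Q$ splits completely and such a root genuinely exists. Then $z=(z_1,\dots,z_n)\in V(P)$ with $Val(z)=0$, as required. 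The delicate part is exactly this step: ensuring that the complex zero of the initial polynomial is not spurious but prolongs to a Puiseux-series solution of the prescribed valuation. This is where algebraic closedness of $\KK$, surjectivity of $val$ onto $\RR$, and the Newton-polygon bookkeeping all enter, and where one must verify that specializing the spectator variables does not destroy the needed slope.

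Finally, to upgrade the set equality to an equality of weighted complexes, I would compare the two weight definitions on a facet $F$, both of which are local and invariant under the monomial normalization used above. On the tropical side $w(F)=Card(\Delta_F\cap\ZZ^n)-1$ is the lattice length of the dual segment $\Delta_F$; on the algebraic side $w(F)$ counts, with multiplicity, the intersection points of $V(P)$ with the transversal family $Y_F$ through the chosen point of $F$. After normalization this count reduces to the number of torus roots of the two-term initial polynomial $\overline P$ associated with $\Delta_F$, which equals the lattice length of $\Delta_F$ by a one-variable degree computation. Hence the two weights coincide, and $Trop(V(P))=V(P_{trop})$ as weighted polyhedral complexes.
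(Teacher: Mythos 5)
The paper itself offers no proof of this statement: it is quoted as Kapranov's theorem with a citation to [Kap1] and is used as a black box throughout. So the only meaningful comparison is with the standard published argument, and your proposal is exactly that argument: the ultrametric inequality gives the inclusion $Val(V(P))\subseteq V(P_{trop})$; the reverse inclusion is obtained by translating $x$ to $0$ (using that $t^{r}\in\KK$ for every real $r$, so the value group is all of $\RR$), passing to the initial complex polynomial $\overline P$ supported on $\Delta_F$, finding a root $\zeta\in(\CC^*)^n$, and lifting it by specializing all variables but one; and the weights are matched by restricting to the transversal one-parameter subgroup in the paper's definition and doing a one-variable count. The outline is correct, and also correctly identifies where the real work is.

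Two points would have to be tightened before this is a complete proof. First, the engine of both the lifting step and the weight comparison is the univariate lemma: for $Q\in\KK[z_1]$, the valuations of the roots of $Q$, counted with multiplicity, are exactly the tropical roots of $Q_{trop}$ (the slopes of the Newton polygon). You invoke it but do not prove it, and the sentence ``since $\KK$ is algebraically closed, $Q$ splits completely and such a root genuinely exists'' is not a justification --- splitting says nothing about valuations; one needs the short Newton-polygon argument (via elementary symmetric functions, or Newton--Puiseux iteration) relating coefficient valuations to root valuations. Since Kapranov's theorem in one variable \emph{is} this lemma, omitting it means the heart of the theorem is assumed. (Your stronger claim that the lifted root has leading coefficient exactly $\zeta_1$ needs the refined, initial-form version of this lemma; fortunately only $val(z_1)=0$ is needed.) Second, solving for $z_1$ only works if $\Delta_F$ contains two exponents with distinct first coordinates: if all maximizing exponents share the same first coordinate $j_0$, then for generic $\zeta_2,\dots,\zeta_n$ the specialized initial form is a nonzero constant times $\zeta_1^{j_0}$, it has no root in $\CC^*$, and $0$ is not a tropical root of $Q_{trop}$, so the specialization in that variable genuinely fails. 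One must choose the distinguished coordinate (or apply a monomial change of variables) so that $\Delta_F$ has positive width in that direction, and ``generic'' must be made precise as avoiding the zero loci of the finitely many nonzero coefficient polynomials of $\overline P$ viewed as a polynomial in the distinguished variable --- this is what guarantees that at least two coefficients of $Q$ attain the maximal valuation, so that no cancellation destroys the needed slope. Both repairs are routine, and with them your proof is the standard one.
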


\begin{exa}
The tropical planar curve and the tropical plane in Figure
\ref{example}a and \ref{example}b, are given respectively by the
tropical polynomials 
 
$P(x,y)=\tg x^2+y^2+2x+2y+3xy+3\td$ 
and $Q(x,y,z)=\tg x+y+z+1\td$.
\end{exa}

Let $P(z)$ be a polynomial over $\KK$ realizing a tropical hypersurface $V$ in
$\RR^n$. 
To
each  face $F$ of $V$ dual to the polyhedron $\Delta_F$, 
we associate below a complex polynomial
$P_{\CC,F}(z)$. 
Let $i_1$, $\ldots$,
$i_l$ be the vertices of $\Delta_F$.

Let us first suppose that $\Delta_F$ has dimension $n$.
In this case, 
the points $(i_1,-val(a_{i_1}))$, $\ldots,$ $(i_l,-val(a_{i_l}))$ lie on the same hyperplane in
$\RR^n\times \RR$. Hence we have $-val(a_{i_j})=\lambda_{\Delta_F}(i_j)$ where
$\lambda_{\Delta_F}:\RR^n \to\RR$ is a linear-affine map. The maps 
$\lambda_{\Delta_F}$ glue along faces of codimension 1 to produce a
convex piecewise-affine map $\lambda:\Delta(P)\to\RR$. Note that the
cells of dimension $n$ of the dual subdivision of $V$ correspond
exactly to the domains of linearity of $\lambda$, and that 
$-val(a_i)\ge \lambda(i)$ for
any $i\in\Delta(P)\cap \ZZ^n$.

Let us go back to the case when
 $\Delta_F$ may have any dimension between 0 and $n$. According to the
preceding paragraph, there exists a linear-affine
function $\lambda_{\Delta_F}:\RR^n\to\RR $ such that 
$-val(a_{i_j})= \lambda_{\Delta_F}(i_j)$ for all $j=1\ldots l$, and  
$-val(a_{i})> \lambda_{\Delta_F}(i)$  for
any  $i$
not in $\Delta_F$. If
$\Delta_F$ has dimension $n$, then $\lambda_{\Delta_F}$ is unique and is
precisely the map we defined above.
Write $\lambda_{\Delta_F}(i)=\sum \gamma_ji_j + \alpha$, and
define
$\widetilde P(z)=t^{\alpha}P(t^{\gamma_1}z_1,\ldots,
t^{\gamma_n}z_n)$. If we write $\widetilde P(z)=\sum\widetilde a_iz^i$, then
$-val(\widetilde a_i)\ge 0$ for $i\in\Delta_F$, and $-val(\widetilde
a_i)> 0$ for $i\notin\Delta_F$. Hence, if we plug $t=0$ in $\widetilde
P(z)$, we obtain a well defined 
complex polynomial $P_{\CC,F}(z)$ with Newton
polygon $\Delta_F$.
Note that if $P(z)$ is defined over $\RR\KK$, then all the polynomials 
$P_{\CC,F}(z)$ are real.

\subsection{Tropical intersection}\label{sub trop int}

Let $P_1(x,y)$ and $P_2(x,y)$ be two tropical polynomials defining
respectively the tropical curves $C_1$ and $C_2$ in $\RR^2$.  Then, the
polynomial $P_3(x,y)=\tg P_1(x,y)P_2(x,y) \td$ defines a tropical
curve $C_3$, whose underlying set is the union of $C_2\cup C_3$. A
vertex of $C_3$ which is in the set-theoretic intersection $C_1\cap C_2$ is
called a \textbf{tropical intersection point}\footnote{Such points are also called \textbf{stable intersection
  points} in the literature, we refer to \cite{St2} for a justification
of this terminology.} of $C_1$ and $C_2$.
The set of tropical intersection points of
$C_1$ and $C_2$ is denoted by $C_1\cap_\TT C_2$.

Two tropical curves might have an infinite set-theoretic intersection,
however they always have a finite number of tropical intersection
points. 
Now we assign a multiplicity $(C_1\circ_\TT C_2)_v$ 
to each tropical intersection point $v$ of
$C_1$ and $C_2$ as follows
$$(C_1\circ_\TT C_2)_v=\frac{1}{2}\left(Area(\Delta_{v})-\delta_v \right)$$
where
\begin{itemize}
\item $\delta_v=0$ if $v$ is
an isolated intersection point of two edges of $C_1$ and $C_2$; 
\item $\delta_v=Area(\Delta_{v'})$ if $v$ is 
a vertex $v'$ of $C_1$ (resp. $C_2$)  but not of $C_2$
  (resp. $C_1$); 
\item $\delta_v=Area(\Delta_{v'})+ Area(\Delta_{v''})$ if $v$ is 
a vertex $v'$ of $C_1$, but  also a vertex $v''$ of $C_2$; 
\end{itemize}

Note that $(C_1\circ_\TT C_2)_v$ only depends on $C_1$ and $C_2$, and
neither on $P_1(x,y)$ nor $P_2(x,y)$.

A \textbf{component} of $C_1\cap C_2$ is a
connected component of this set. Such a component $E$ has a
multiplicity defined as
$$(C_1\circ_\TT C_2)_E=\sum_{v\in C_1\cap_{\TT,E} C_2} (C_1\circ_\TT
C_2)_v$$
where $C_1\cap_{\TT,E} C_2$ is the set of tropical intersection points
of $C_1$ and $C_2$ contained in $E$.
\begin{exa}
A transverse and a non-transverse intersection of planar tropical
lines
 are depicted respectively in Figures \ref{trans}a and \ref{trans}b.
\end{exa}

\begin{figure} [htbp]

\psfrag{C2}{$C_2$}\psfrag{C1}{$C_1$}\psfrag{p}{$p$}\psfrag{E}{$E$}
\begin{center}
\begin{tabular}{cc}
\includegraphics[height=3cm, angle=0]{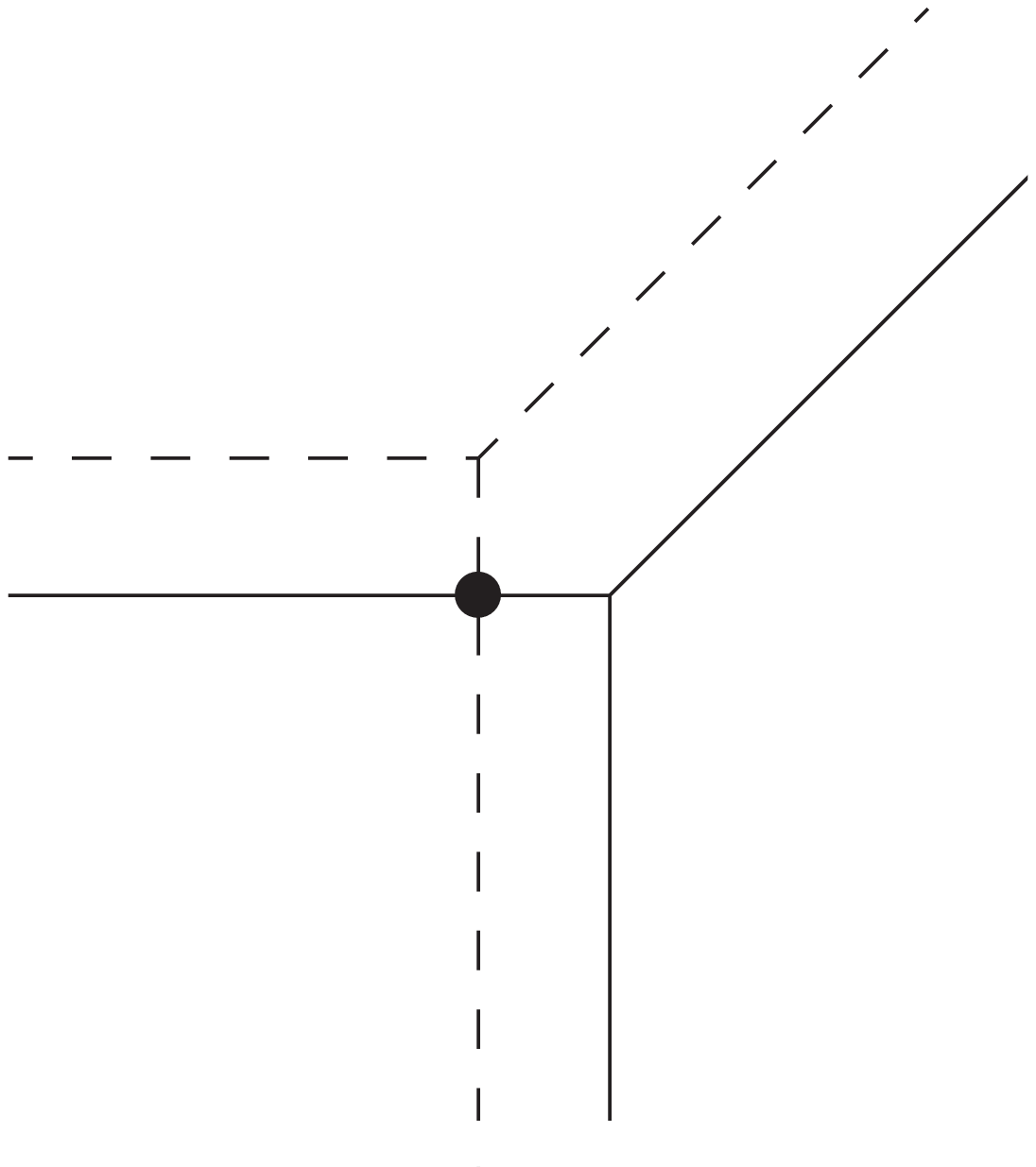}&
\includegraphics[height=3cm, angle=0]{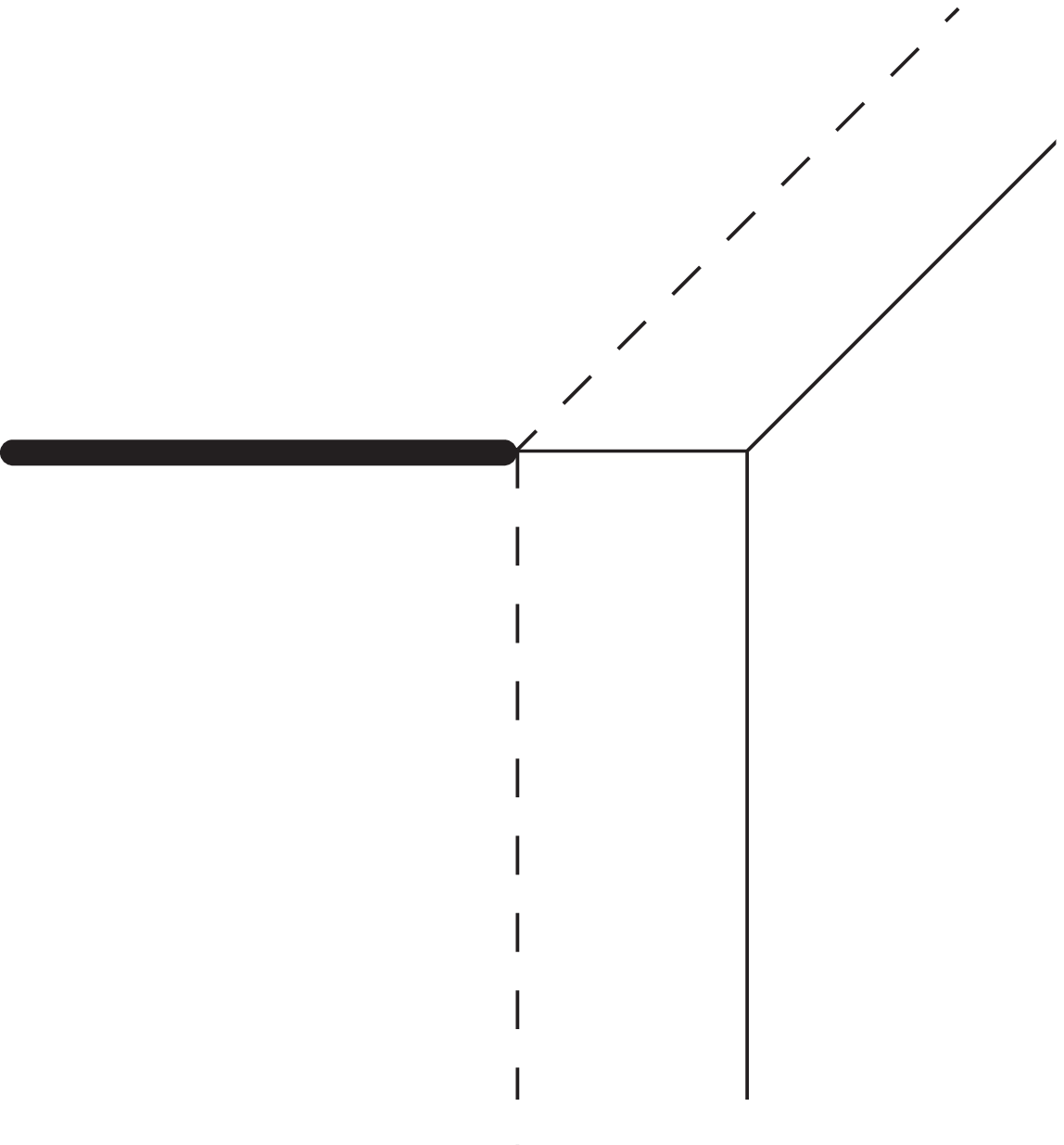}
\\ a) & b)
\end{tabular}
\caption {Transverse and non-transverse intersections. In both cases,
  $\mu(E)=1$.} 
\label{trans}
\end{center}
\end{figure}

\begin{exa}
Let $C_1$ and $C_2$ be two non-singular tropical curves in $\RR^2$
with a component 
$E$  of $C_1\cap C_2$ not reduced to a
point. Suppose that $E$ contains a boundary point $p$ which is not a
vertex of both $C_1$ and $C_2$ (see Figure \ref{2}). Then
$(C_1\circ_\TT C_2)_p=1$.  
\begin{figure} [htbp]

\psfrag{E}{$E$}\psfrag{e1}{$e_1$}\psfrag{e2}{$e_2$}\psfrag{C2}{$C_2$}\psfrag{C1}{$C_1$}\psfrag{p}{$p$}

\includegraphics[scale=0.30]{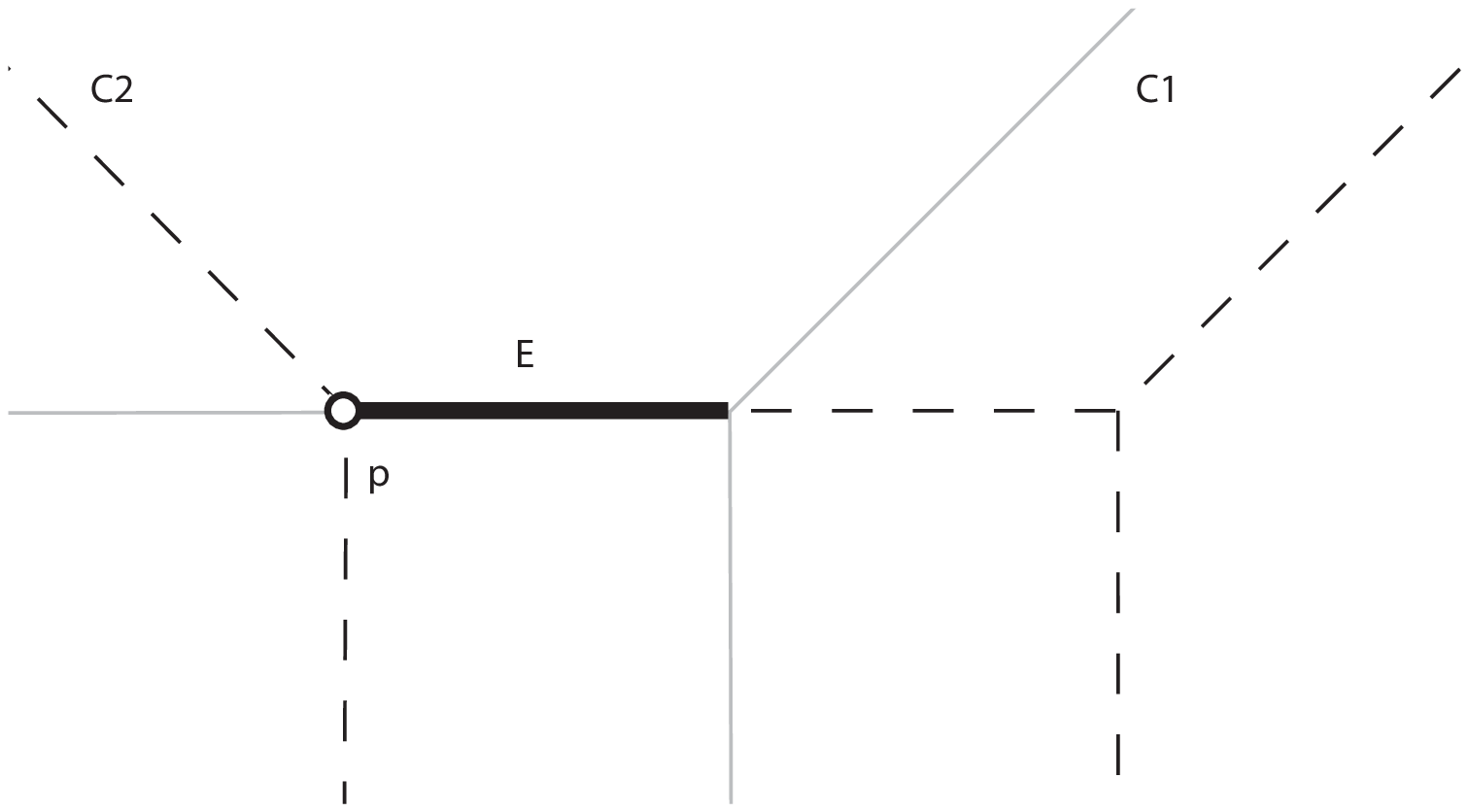}

\begin{center}

\caption {$(C_1\circ_\TT C_2)_p=1$  and $(C_1\circ_\TT C_2)_E=2$.}
\label{2}
\end{center}

\end{figure}

\end{exa}

Intersection in $(\KK^*)^2$ and tropical intersection are related by
the following Proposition.
When  the set-theoretic intersection is infinite, 
we use tropical modifications to reduce
the problem to local computations. Hence
we postpone the proof of Proposition \ref{trop int} to section \ref{section:
  modifications} (see Proposition \ref{set intersection}). Note that
Rabinoff also gave in \cite{Rab1} a proof of  Proposition \ref{trop int} 
 using Berkovich spaces.
\begin{prop}\label{trop int}
Let $X_1$ and $X_2$ be two algebraic curves in $(\KK^*)^2$ intersecting
in a finite number of points, and let
$E$ be a component of the intersection of $C_1=Trop(X_1)$ and
$C_2=Trop(X_2)$. Then, the number of intersection point (counted with
multiplicity) of $X_1$ and $X_2$ with valuation in $E$ is at most 
$(C_1\circ_\TT C_2)_E$, with equality if $E$ is compact.
\end{prop}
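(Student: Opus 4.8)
The plan is to reduce the statement to a local computation at transverse crossings of edges, and to carry out this reduction by means of tropical modifications, which is precisely why the argument is deferred to Section \ref{section: modifications}. First I would settle the model case in which $E=\{v\}$ is a single point where an edge $e_1$ of $C_1$ crosses an edge $e_2$ of $C_2$ transversally, so that $\delta_v=0$. Writing $v=(p_1,p_2)$ and translating multiplicatively by $(t^{p_1},t^{p_2})$, an intersection point of $X_1$ and $X_2$ with valuation $v$ specializes, upon setting $t=0$, to a common zero in $(\CC^*)^2$ of the two complex polynomials $P_{\CC,e_1}$ and $P_{\CC,e_2}$ attached to $e_1$ and $e_2$. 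Each $\Delta_{e_i}$ is a segment of lattice length $w(e_i)$ and primitive direction $u_i$, so the zero locus of $P_{\CC,e_i}$ in $(\CC^*)^2$ is a union of $w(e_i)$ cosets of the one--parameter subgroup associated with $u_i$. Two such systems with $\det(u_1,u_2)\neq 0$ meet transversally in exactly $w(e_1)w(e_2)|\det(u_1,u_2)|$ simple points, which equals $\frac{1}{2}Area(\Delta_v)=(C_1\circ_\TT C_2)_v$ since $\Delta_v$ is the parallelogram $\Delta_{e_1}+\Delta_{e_2}$. As these common zeros are simple, each lifts uniquely to an intersection point of $X_1$ and $X_2$ with valuation $v$, giving equality in this transverse model case.

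The core of the argument is then to remove the two hypotheses implicit above, namely that $E$ is a single point and that the crossing is transverse; neither holds when the set-theoretic intersection $C_1\cap C_2$ is infinite along $E$. Here I would invoke tropical modifications. Starting from a tropical rational function whose graph resolves the coincidence along $E$ (built, for instance, from a polynomial defining $C_1$, or from the higher--order data distinguishing the overlapping edges), one lifts $X_1$ and $X_2$ to curves $\widetilde X_1,\widetilde X_2$ in the modified ambient space, with tropicalizations $\widetilde C_1,\widetilde C_2$. Two facts must then be established: that the modification induces a birational isomorphism of the ambient torus carrying $X_1\cap X_2$ bijectively onto $\widetilde X_1\cap\widetilde X_2$ and sending points with valuation in $E$ to points with valuation in the preimage $\widetilde E$; and that the tropical intersection multiplicity is preserved, $(C_1\circ_\TT C_2)_E=(\widetilde C_1\circ_\TT\widetilde C_2)_{\widetilde E}$. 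After finitely many such modifications the intersection over $\widetilde E$ becomes a disjoint union of transverse crossings, to each of which the model case applies; summing the local equalities then recovers $(C_1\circ_\TT C_2)_E$.

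The remaining point, and the one I expect to be the main obstacle, is the dichotomy between the compact and the non-compact case. When $E$ is compact the resolved configuration stays bounded, no intersection point can drift away, and the bijection above accounts for every crossing, yielding equality. When $E$ is unbounded, the modifications separating the curves along the unbounded direction may push some of the resolved crossings towards the boundary of the amoeba, so that a point with valuation in $E$ is replaced by one whose valuation leaves $\widetilde E$; such a crossing still contributes to $(C_1\circ_\TT C_2)_E$ but no longer to the count of finite intersection points, which is exactly why only the inequality survives. Making precise how much multiplicity can escape at the unbounded ends, and verifying that nothing escapes when $E$ is compact, is the delicate analytic part; the invariance of the tropical intersection multiplicity under modification, a combinatorial verification on the dual subdivisions, is the other technically heavy ingredient.
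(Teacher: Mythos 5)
Your proposal does not follow the paper's route, and the step you delegate to tropical modifications---the core of the argument---has a genuine gap. You claim that one can modify the ambient torus so that $X_1\cap X_2$ is carried bijectively onto $\widetilde X_1\cap\widetilde X_2$, that tropical multiplicities are preserved, and that after finitely many such modifications the intersection over $\widetilde E$ becomes a disjoint union of transverse crossings to which your planar model case applies. None of this is established, and the most natural choices fail outright: if you modify along $P_1$ (or $P_2$), the embedding $\Phi:(z,w)\mapsto(z,w,P_1(z,w))$ is only defined off $V(P_1)$, so the points of $X_1\cap X_2$ are exactly the points that get \emph{deleted}---they reappear as vertical ends at infinity, not as intersection points of two lifted curves, and $X_1$ itself cannot be lifted at all since $X_1\setminus V(P_1)=\emptyset$. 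If instead you modify along an auxiliary polynomial avoiding $X_1\cap X_2$, you owe a proof that finitely many such modifications achieve tropical transversality; moreover your model case is proved in $(\KK^*)^2$ with initial forms, whereas after one modification the ambient tropical space is a two-dimensional polyhedral surface in $\RR^3$, and crossings landing on its edges require an intersection theory on tropical surfaces---precisely the machinery the paper explicitly avoids (it remarks that such a generalization needs "more involved tropical intersection theory"). Finally, the compact/non-compact dichotomy, which is where the inequality versus equality is decided, is left as a metaphor about multiplicity "escaping to the boundary," with no mechanism to quantify or bound the escape.

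For comparison, the paper's proof (Proposition \ref{set intersection}) turns the intersection points into combinatorial data rather than trying to make the tropical intersection transverse. One modifies only $C_1$, along $P_2$: by Lemma \ref{intersection} the intersection points of $X_1$ and $X_2$ with valuation in $E$ become vertical ends of $C_1'=Trop\bigl(\Phi(X_1\setminus V(P_2))\bigr)$, with weights equal to the intersection multiplicities. The balancing condition then expresses their total weight $m$ through the edges adjacent to $\pi_{C_1'}^{-1}(E)$; when $E$ is compact this expression depends only on $C_1$ and $C_2$. When $E$ is not compact, the slope of each non-vertical unbounded edge of $C_1'$ over $E$ is bounded by the slope of the corresponding edge of the modified plane $W$, which gives $m\le m'$ for an explicitly defined combinatorial quantity $m'$. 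The identification $m'=(C_1\circ_\TT C_2)_E$ is then obtained by constructing a balanced $1$-cycle $\Gamma$ in $W$ projecting to $C_1$ with prescribed vertical ends, reducing to the transverse case via stable intersection. Your transverse model computation (initial forms, $\frac12 Area(\Delta_v)=w(e_1)w(e_2)|\det(u_1,u_2)|$, lifting of simple common zeros) is correct in spirit and plays a role loosely analogous to the paper's final step, but without the vertical-end bookkeeping and the slope comparison, the global inequality and the compact-case equality remain unproved.
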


Next we prove some easy lemmas we will use later in this paper. 
Lemma \ref{few} is probably already known, however we couldn't find it
explicitely in the litterature.

\begin{lemma}\label{few}
A  polynomial in one variable with $l$ monomials cannot have a root of
order $l$ other than 0.
\end{lemma}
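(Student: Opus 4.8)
The plan is to argue by contradiction, using the vanishing of $P$ together with its first $l-1$ derivatives, and to reduce the resulting homogeneous linear system in the coefficients of $P$ to a Vandermonde system. Write $P(x)=\sum_{k=1}^l c_k x^{a_k}$ with all $c_k\neq 0$ and with distinct exponents $a_1<a_2<\cdots<a_l$ in $\NN$. Suppose some $x_0\neq 0$ is a root of $P$ of order $l$, so that in particular $P(x_0)=P'(x_0)=\cdots=P^{(l-1)}(x_0)=0$. Differentiating $j$ times gives $P^{(j)}(x)=\sum_{k=1}^l c_k\,\frac{a_k!}{(a_k-j)!}\,x^{a_k-j}$, with the convention that terms with $a_k<j$ vanish. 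Since $x_0\neq0$, I would multiply the relation $P^{(j)}(x_0)=0$ by $x_0^{\,j}$ and set $b_k:=c_k x_0^{a_k}$, turning the $l$ vanishing conditions into
\[
\sum_{k=1}^l (a_k)_j\, b_k = 0, \qquad j=0,1,\dots,l-1,
\]
where $(a_k)_j=a_k(a_k-1)\cdots(a_k-j+1)$ denotes the falling factorial.

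Next I would show that the coefficient matrix $M=\big((a_k)_j\big)_{0\le j\le l-1,\,1\le k\le l}$ is invertible. Each $(a_k)_j$ is a monic polynomial of degree $j$ in its argument, so expanding in powers of $a_k$ exhibits the rows of $M$ as a unitriangular combination of the rows of the Vandermonde matrix $V=\big(a_k^{\,j}\big)_{j,k}$; concretely $M=SV$ with $S$ lower triangular and with $1$'s on the diagonal. Hence $\det M=\det V=\prod_{1\le k<k'\le l}(a_{k'}-a_k)$, which is nonzero because the exponents are pairwise distinct. Consequently the only solution of the system is $b_k=0$ for all $k$, and since $x_0\neq 0$ this forces $c_k=0$ for all $k$, contradicting the assumption that $P$ genuinely has $l$ monomials.

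The only step that requires any care, and the one I expect to be the crux, is the nonvanishing of $\det M$; the rest is a routine rewriting of the derivative conditions. If one prefers to avoid the explicit triangular change of basis, the same fact can be phrased as follows: the polynomials $(a)_0,(a)_1,\dots,(a)_{l-1}$ form a basis of the space of polynomials of degree $\le l-1$, being monic of distinct degrees, so the evaluation map sending such a polynomial to its values at the pairwise distinct points $a_1,\dots,a_l$ is an isomorphism, and $M$ is exactly its matrix in this basis. I would present whichever of the two formulations turns out shorter.
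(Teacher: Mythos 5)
Your proof is correct, but it follows a genuinely different route from the paper's. The paper argues by induction on $l$: assuming (as one may, since only nonzero roots are at stake) that the constant term of $P$ is nonzero, the derivative $P'$ has exactly $l-1$ monomials, and a nonzero root of $P$ of order $l$ would be a nonzero root of $P'$ of order $l-1$, contradicting the inductive hypothesis; the base case $l=1$ is a monomial, which has no nonzero roots. Your argument instead converts the $l$ vanishing conditions $P(x_0)=\dots=P^{(l-1)}(x_0)=0$ into a homogeneous linear system in the rescaled coefficients $b_k=c_kx_0^{a_k}$ and shows the coefficient matrix of falling factorials is a unitriangular transform of the Vandermonde matrix in the distinct exponents $a_1,\dots,a_l$, hence invertible --- this is essentially the statement that the Wronskian of $x^{a_1},\dots,x^{a_l}$ does not vanish at any nonzero point. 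The paper's induction is shorter and determinant-free; your version is non-inductive, makes the linear independence statement explicit and quantitative, and isolates exactly where the hypothesis of distinct exponents enters. Note that both proofs use characteristic $0$ (the paper's standing convention) in an essential way: in your argument the Vandermonde determinant $\prod_{k<k'}(a_{k'}-a_k)$ is a nonzero integer, which could vanish in positive characteristic, and indeed the lemma fails there (e.g.\ $z^p-1=(z-1)^p$ has two monomials and a root of order $p$). Either formulation of your key step (the explicit factorization $M=SV$ or the basis-of-polynomials argument) is fine; the second is marginally shorter.
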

\begin{proof}
We prove the Lemma by induction on $l$.
The Lemma is obviously true if $l=1$. Suppose now that the Lemma is
true for some $l\ge 1$, and let $P(z)$ be a polynomial in one variable with $l+1$
monomials.
Since we are looking at roots $z\ne 0$, we may suppose that 
the constant term  of $P(z)$ is non null. In particular, the
derivative $P'(z)$,
 of $P(z)$  has
$l$ monomials. So $P(z)$ cannot have a root $z\ne 0$ of order bigger than $l+1$
since otherwise it would be a root of order bigger than $l$ of $P'(z)$.
\end{proof}

\begin{lemma}\label{no tang}
Let $X_1$ and $X_2$ be two algebraic curves in $(\KK^*)^2$, and
suppose that there exists a tropical intersection point $p$ of 
$C_1=Trop(X_1)$ and
$C_2=Trop(X_2)$ which is 
the  isolated intersection of an edge $e_1$ of $C_1$ and an
edge $e_2$ of $C_2$ (see Figure \ref{4}a). Suppose in addition 
that $p$ is neither a vertex of $C_1$ nor 
of $C_2$, and that $w(e_1)=w(e_2)=1$.
Then, any intersection point  of $X_1$ and $X_2$ with valuation $p$ is transverse.
\end{lemma}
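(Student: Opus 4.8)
The plan is to normalize the local picture by a monomial change of coordinates, to use the weight hypothesis on $e_1$ to present $X_1$ as a graph, and then to reduce transversality to the simplicity of the roots of a one--variable restriction of $P_2$. First I would use that the statement is invariant under the monomial isomorphisms $\Phi_M$ of $(\KK^*)^2$ with $M\in GL_2(\ZZ)$ and under multiplicative translations: these act on $C_1$ and $C_2$ by the induced affine map, and preserve weights as well as intersection multiplicities in $(\KK^*)^2$. Hence I may assume $p=(0,0)$ and that the primitive direction of $e_1$ is $u_{v,e_1}=(1,0)$. Writing $(a,b)$ for the primitive direction of $e_2$, the fact that the intersection of $e_1$ and $e_2$ at $p$ is isolated forces $b\neq 0$, and $\gcd(a,b)=1$; set $\mu=|b|=|\det((1,0),(a,b))|$, which is the local tropical multiplicity $(C_1\circ_\TT C_2)_p$. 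Because $p$ is interior to $e_1$ and to $e_2$ and $w(e_1)=w(e_2)=1$, the dual cells of $p$ are the primitive segments $\Delta_{e_1}$ and $\Delta_{e_2}$, so that $(P_1)_{\CC,e_1}(z,w)=z^{i_0}w^{j_0}(c_0+c_1w)$ and $(P_2)_{\CC,e_2}(z,w)=z^{i_0'}w^{j_0'}(d_0+d_1z^{-b}w^{a})$ with $c_0,c_1,d_0,d_1\in\CC^*$.

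Next I would exploit $w(e_1)=1$ to describe $X_1$ locally. For every $z_0\in\KK^*$ with $val(z_0)=0$, the Newton polygon in $w$ of $P_1(z_0,w)$ has, in the slope corresponding to $val(w)=0$, the segment $\Delta_{e_1}$ of $w$--extent $1$; hence $P_1(z_0,w)$ has exactly one root $w_0$ with $val(w_0)=0$, and it is simple. Thus $\partial P_1/\partial w$ does not vanish at $q=(z_0,w_0)$, so $X_1$ is smooth there and is, over $\{val(z)=0\}$, the graph $w=g(z)$ of a single branch with $val(g(z))=0$; moreover its leading coefficient (the coefficient of $t^{0}$) equals $\omega_0:=-c_0/c_1$ for every such $z_0$, since the binomial $c_0+c_1w$ does not involve $z$. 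Consequently every intersection point $q$ of $X_1$ and $X_2$ with $Val(q)=p$ lies on this graph, and, $X_1$ being smooth at $q$ and parametrised by $z$,
$$(X_1\circ X_2)_q=\mathrm{ord}_{z=z_0}\,Q(z),\qquad Q(z):=P_2(z,g(z)).$$

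It then remains to show that each such $z_0$ is a simple root of $Q$. Since $val(z)=val(g(z))=0$, the monomials of $P_2$ contributing to the lowest power of $t$ in $Q(z)$ are exactly those indexed by $\Delta_{e_2}$, and reading off their leading coefficients shows that the initial polynomial of $Q$, in the leading coefficient $\zeta$ of $z$, is $(P_2)_{\CC,e_2}(\zeta,\omega_0)=\zeta^{i_0'}\omega_0^{j_0'}(d_0+d_1\zeta^{-b}\omega_0^{a})$. Its nonzero roots solve $\zeta^{-b}=-d_0/(d_1\omega_0^{a})$, so there are exactly $\mu$ of them and all are simple (geometrically, these are the $\mu$ transverse intersections in $(\CC^*)^2$ of the complex line $w=\omega_0$ with the binomial curve $V((P_2)_{\CC,e_2})$). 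Now any intersection point $q$ with $Val(q)=p$ yields a root $z_0$ of $Q$ with $val(z_0)=0$ whose leading coefficient must annihilate this initial polynomial, hence is one of those $\mu$ simple roots; by the Newton--polygon (Hensel) correspondence over $\KK$, a simple root of the initial polynomial lifts to a unique, and simple, root of $Q$ with that valuation and leading coefficient. Therefore $\mathrm{ord}_{z_0}Q=1$ and the intersection at $q$ is transverse; this also recovers exactly $\mu=(C_1\circ_\TT C_2)_p$ such points, in agreement with Proposition \ref{trop int}.

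The two load--bearing inputs, which I would state carefully and which form the main obstacle, are the structural fact that the weight--one edge $e_1$ forces $X_1$ to be a single smooth branch over $\{val(z)=0\}$ (so that the intersection number becomes the order of vanishing of the one--variable $Q$), and the Hensel--type lifting matching the multiplicity of a root of $Q$ with that of its leading coefficient in the initial polynomial. Both are standard over the valued field $\KK$, but they are precisely where the weight hypotheses $w(e_1)=w(e_2)=1$ and the isolatedness of $p$ enter, so they deserve a precise statement rather than being taken for granted.
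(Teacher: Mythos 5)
Your argument is correct in substance, but it takes a genuinely different route from the paper's. The paper argues by contradiction and by degeneration: after normalizing $P_1^{\Delta_{e_1}}(z,w)=z-1$ and arranging the two $\Delta_{e_2}$-coefficients of $P_2$ to have valuation $0$, it regards $X_1$ and $X_2$ as one-parameter families of complex curves $X_1(t),X_2(t)$, observes that an intersection point of multiplicity at least $2$ with valuation $p=(0,0)$ would converge in $(\CC^*)^2$ as $t\to 0$, and concludes by semicontinuity of intersection multiplicity that the limit curves $V(z-1)$ and $V(P_{2,\CC,e_2})$ would meet non-transversally, contradicting Lemma \ref{few} applied to the binomial $P_{2,\CC,e_2}(1,w)$. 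You never pass to the limit $t\to 0$: you work over $\KK$ itself, using $w(e_1)=1$ to present $X_1$ over $\{val(z)=0\}$ as a graph $w=g(z)$ with constant leading coefficient $\omega_0=-c_0/c_1$, reducing the intersection multiplicity to the order of vanishing of the one-variable restriction $Q(z)=P_2(z,g(z))$, and invoking a Newton-polygon/Hensel lifting. The complex-geometric core is identical in both proofs (a line meets a binomial curve transversally in $(\CC^*)^2$, which is the content of Lemma \ref{few}); what differs is the bridge between $\KK$-multiplicities and that complex picture: degeneration plus semicontinuity in the paper, implicit function theorem plus lifting in yours. Your route yields a bit more, namely exactly $\mu=(C_1\circ_\TT C_2)_p$ intersection points above $p$, all transverse (the compact case of Proposition \ref{trop int} in this local situation), whereas the paper only excludes multiplicities $\ge 2$; the paper's route is shorter and needs no analytic input over $\KK$.

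The one step you should make precise --- and which you rightly flag --- is the lifting, because you apply it to $Q(z)=P_2(z,g(z))$, which is not a polynomial in $z$ (the function $g$ is only an implicit algebraic function on the annulus $\{val(z)=0\}$), so the off-the-shelf Newton-polygon statement for polynomials over valued fields does not apply verbatim. A clean patch inside your framework is to drop the lifting and verify transversality at a given intersection point $q$ with $Val(q)=p$ by a valuation count on the Jacobian, using exactly the initial forms you identified: normalizing so that the coefficients of valuation $0$ are exactly those on $\Delta_{e_1}$ (resp.\ $\Delta_{e_2}$) with leading coefficients $c_0,c_1$ (resp.\ $d_0,d_1$), one gets $val(\partial_w P_1(q))=0$ with leading coefficient $-c_0\zeta^{i_0}\omega_0^{j_0-1}$; $val(\partial_z P_1(q))<0$ because its order-zero part is a multiple of $c_0+c_1\omega_0=0$; $val(\partial_z P_2(q))=0$ with leading coefficient $b\,d_0\,\zeta^{i_0'-1}\omega_0^{j_0'}$, nonzero precisely because $b\neq 0$ --- this is where isolatedness of the tropical intersection, equivalently simplicity of the roots of the initial binomial, enters; and $val(\partial_w P_2(q))\le 0$. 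Hence $\partial_wP_1(q)\,\partial_zP_2(q)$ has valuation $0$ while $\partial_zP_1(q)\,\partial_wP_2(q)$ has strictly negative valuation, so the Jacobian determinant is nonzero and $q$ is transverse. (Alternatively, run your Newton-polygon argument on the honest polynomial $\mathrm{Res}_w(P_1,P_2)$, or fall back on the paper's degeneration argument.)
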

\begin{figure} [htbp]

\psfrag{C2}{$C_2$}\psfrag{C1}{$C_1$}\psfrag{p}{$p$}
\begin{center}
\begin{tabular}{cc}
\includegraphics[height=3cm, angle=0]{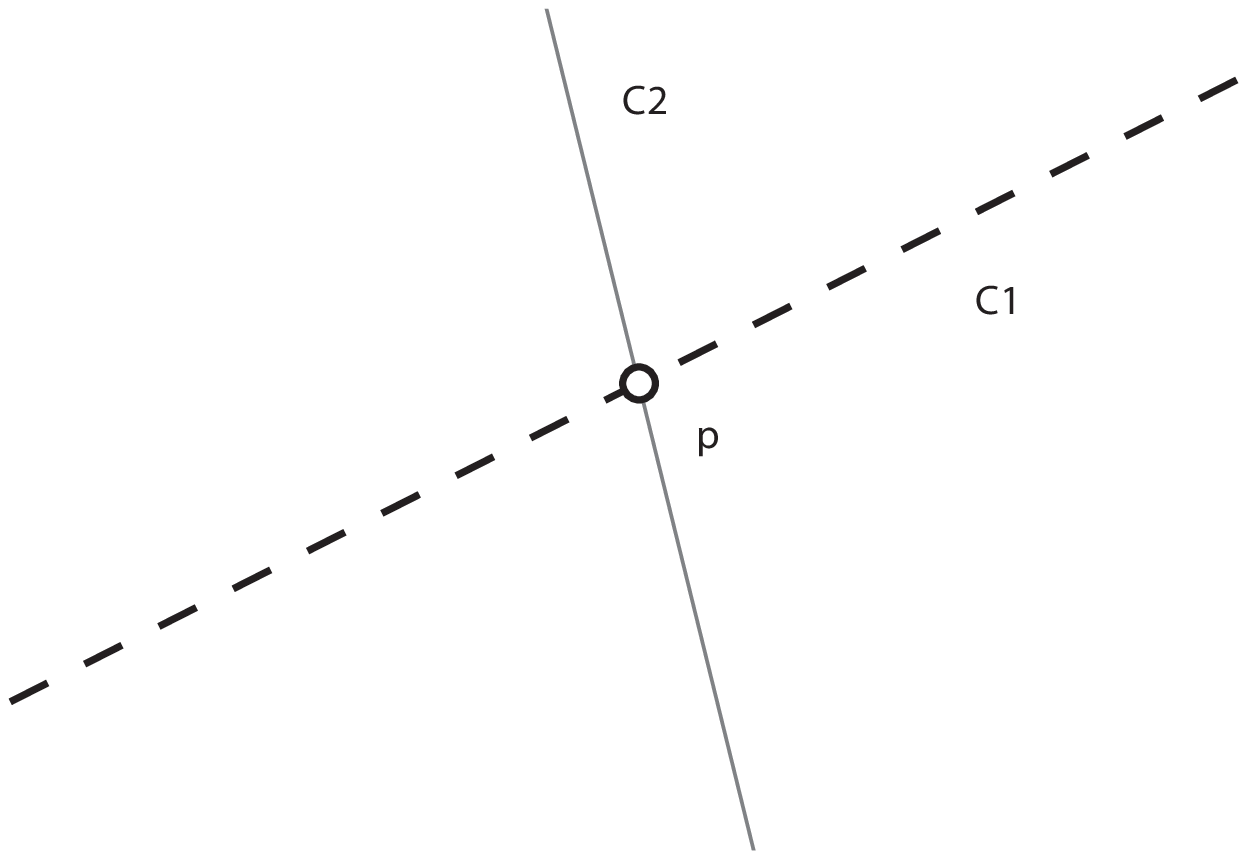}&
\includegraphics[height=3cm, angle=0]{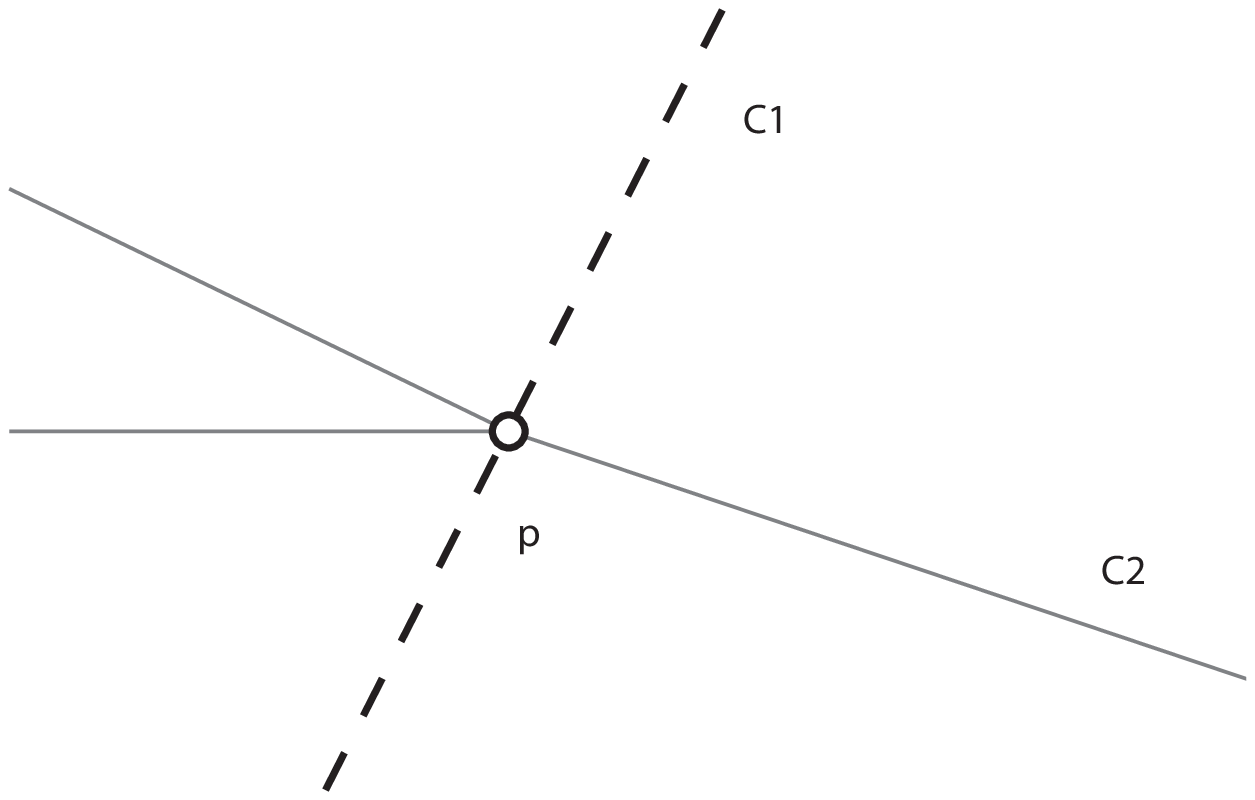}
\\ a) & b)
\end{tabular}
\caption {In this two cases, if $Trop(X_i)=C_i$ and $C_i$ is non-singular, no intersection point of $X_1$ and $X_2$ with valuation $p$ has multiplicity bigger that 2.}
\label{4}
\end{center}
\end{figure}

\begin{proof}
Suppose that $X_i$ is defined by a polynomial $P_i(z,w)$, and suppose
that there exists an intersection point  of $X_1$ and $X_2$ with
valuation $p$ with multiplicity at least $2$.
Without loss of generality, we may suppose that
$P_1^{\Delta_{e_1}}(z,w)=z-1$, and 
that the two coefficients of
$P_2(z,w)$ corresponding to  $\Delta_{e_2}$ have valuation 0. In
particular, $p=(0,0)$.
 Then, the algebraic varieties $X_1(t)$ and $X_2(t)$ have an
intersection point of multiplicity at least 2 which converges in
 $(\CC^*)^2$ when $t\to 0$. This implies that the two curves $V(z-1)$ and
$V(P_{2,\CC,e_2})$
 have an
intersection point of multiplicity at least 2 in
 $(\CC^*)^2$. Since these intersection points are solution  in $\CC^*$
of the
equation $P_{2,\CC,e_2}(1,w)=0$
which is a binomial equation, this is
impossible by Lemma \ref{few}.
\end{proof}

\begin{lemma}\label{no infl}
Let $X_1$ and $X_2$ be two algebraic curves in $(\KK^*)^2$, and
suppose that there exists a tropical intersection point $p$ of 
$C_1=Trop(X_1)$ and
$C_2=Trop(X_2)$ such that $p$ is 
a vertex $v$ of $C_2$ but not of $C_1$
 (see Figure \ref{4}b). 
 Suppose in addition that $\Delta_v$ is primitive, and that $w(e)=1$
 where $e$ is the edge of $C_1$ containing $p$.
Then,  any intersection point  of $X_1$ and $X_2$ with valuation $p$
is of multiplicity at most 2.
\end{lemma}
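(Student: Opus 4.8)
The plan is to follow the same strategy as in the proof of Lemma \ref{no tang}, arguing by contradiction and reducing the question to a one-variable root-counting problem over $\CC$ via Lemma \ref{few}. Suppose that $X_i$ is defined by $P_i(z,w)$ and that $X_1$ and $X_2$ have an intersection point of multiplicity at least $3$ with valuation $p$. Since $w(e)=1$, the dual cell $\Delta_e$ is a primitive segment and $P_{1,\CC,e}(z,w)$ is a binomial; after a monomial change of coordinates in $GL_2(\ZZ)$ and a multiplicative translation I may assume $P_{1,\CC,e}(z,w)=z-1$ and $p=(0,0)$. As in Lemma \ref{no tang}, the existence of a $\KK$-intersection point of multiplicity at least $3$ with valuation $p$ forces the complex curves $V(z-1)$ and $V(P_{2,\CC,v})$ to have an intersection point of multiplicity at least $3$ in $(\CC^*)^2$.

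Next I would translate this into a statement about a single polynomial in one variable. The intersection points of $V(z-1)$ and $V(P_{2,\CC,v})$ in $(\CC^*)^2$ are exactly the nonzero roots of $P_{2,\CC,v}(1,w)$, and the intersection multiplicity of the line $\{z=1\}$ with $V(P_{2,\CC,v})$ at such a point equals the order of the corresponding root of $P_{2,\CC,v}(1,w)$. Hence a complex intersection point of multiplicity at least $3$ would produce a nonzero root of order at least $3$ of $P_{2,\CC,v}(1,w)$.

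It then remains to bound the number of monomials of $P_{2,\CC,v}(1,w)$. Since $v$ is a vertex of $C_2$ and $\Delta_v$ is primitive, $\Delta_v$ is the image of the standard simplex under an element of $GL_2(\ZZ)$ composed with a translation, and therefore contains exactly its three vertices as lattice points. Thus $P_{2,\CC,v}(z,w)$ has exactly three monomials, and the substitution $z=1$ leaves a polynomial in $w$ with at most three monomials. By Lemma \ref{few}, such a polynomial cannot have a nonzero root of order at least $3$, contradicting the previous paragraph; hence every intersection point of $X_1$ and $X_2$ with valuation $p$ has multiplicity at most $2$. The step I expect to require the most care is the reduction from a multiplicity-$\ge 3$ intersection point over $\KK$ to a multiplicity-$\ge 3$ complex intersection point of the initial degenerations $V(z-1)$ and $V(P_{2,\CC,v})$, that is, the compatibility of the two normalizations and the semicontinuity of intersection multiplicity under the degeneration $t\to 0$; the combinatorial monomial count and the appeal to Lemma \ref{few} are then routine.
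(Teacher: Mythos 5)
Your proposal is correct and follows essentially the same route as the paper: reduce via a monomial change of coordinates so that the initial form of $P_1$ along $e$ is $z-1$, pass to the complex limit curves $V(z-1)$ and $V(P_{2,\CC,v})$, observe that primitivity of $\Delta_v$ makes $P_{2,\CC,v}(1,w)$ a polynomial with at most three monomials, and invoke Lemma \ref{few}. The paper compresses all of this into two lines by referring back to the proof of Lemma \ref{no tang}; you have merely made explicit the semicontinuity step (a multiplicity-$\ge 3$ intersection over $\KK$ with valuation $p$ degenerates to a multiplicity-$\ge 3$ complex intersection of the initial forms) that the paper leaves implicit.
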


\begin{proof}
As in the proof Lemma \ref{no tang}, we may suppose that $X_1$ is
the line with equation $z=1$ and that $P_2(z,w)$ is a trinomial. Once
again, the result follows from Lemma \ref{few}.
\end{proof}

For a deeper study of simple tropical tangencies, we refer the
interested reader to the
forthcoming papers \cite{Br9} and \cite{Br12}.

\begin{lemma}\label{propE1} 
Let $l$ be a positive integer, let $X_1$ be an algebraic curve
in $(\KK^*)^2$ with Newton polygon the triangle with vertices $(0,0)$,
$(0,1)$ and $(1,l)$, and let $X_2$ be a line in  $(\KK^*)^2$. Suppose
that $v$ is a vertex of both $Trop(X_1)$ and $Trop(X_2)$ (see Figure \ref{6}).
Then,
counting with multiplicity, at least $l-1$ intersection
points of $X_1$ and $X_2$ have valuation $v$ (note that $(Trop(X_1)\circ_\TT Trop(X_2))_v=l+1$). 
\end{lemma}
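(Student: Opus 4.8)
The plan is to reduce the statement to a one-variable computation, exactly in the spirit of the proofs of Lemmas \ref{no tang} and \ref{no infl}. First I would normalize the situation: applying a multiplicative translation $(z,w)\mapsto(t^{\alpha}z,t^{\beta}w)$ I may assume $v=(0,0)$, and after multiplying each defining equation by a suitable power of $t$ I may assume that $X_1=V(P_1)$ with $P_1(z,w)=a_{0,0}+a_{0,1}w+a_{1,l}zw^l$ and $X_2=V(P_2)$ with $P_2(z,w)=b_{0,0}+b_{1,0}z+b_{0,1}w$, where all six coefficients have valuation $0$. This is precisely the condition that $v$ be the (unique) vertex of both $Trop(X_1)$ and $Trop(X_2)$, the triangle with vertices $(0,0)$, $(0,1)$, $(1,l)$ being primitive. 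Eliminating $z$ through the line equation $z=-(b_{0,0}+b_{0,1}w)/b_{1,0}$ produces a one-variable polynomial $Q(w)=b_{0,1}w^{l+1}+b_{0,0}w^l+c_1w+c_0$ of degree $l+1$, whose four potentially non-zero coefficients all have valuation $0$; its roots in $\KK^*$ correspond, with multiplicity, to the intersection points of $X_1$ and $X_2$.

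Since the Newton polygon of $Q$ has horizontal lower boundary, all $l+1$ roots $w$ satisfy $val(w)=0$, and their leading (i.e. $t^0$-) coefficients are precisely the roots, counted with multiplicity, of the complex polynomial $Q_{\CC}(w)=\beta_{0,1}w^{l+1}+\beta_{0,0}w^l+\gamma_1w+\gamma_0$ obtained by keeping the leading coefficient of each $a_{i,j}$ and $b_{i,j}$. The key observation is that such an intersection point has valuation exactly $v=(0,0)$ if and only if its $z$-coordinate also has valuation $0$; since $val(z)=val(b_{0,0}+b_{0,1}w)$, this happens precisely when the leading coefficient $\omega$ of the root $w$ satisfies $\beta_{0,0}+\beta_{0,1}\omega\neq0$, that is $\omega\neq\sigma:=-\beta_{0,0}/\beta_{0,1}$. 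The remaining intersection points escape from $v$ along the ray common to $Trop(X_1)$ and $Trop(X_2)$. Therefore the number of intersection points with valuation $v$ equals $l+1$ minus the multiplicity of $\sigma$ as a root of $Q_{\CC}$, and it remains only to bound this multiplicity by $2$.

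The main, and essentially only delicate, point is thus to show that the non-zero number $\sigma$ is a root of $Q_{\CC}$ of order at most $2$. A direct application of Lemma \ref{few} to $Q_{\CC}$ (four monomials) only rules out order $4$, hence yields order at most $3$, so I would instead argue with the second derivative. Writing $Q_{\CC}(w)=w^l(\beta_{0,1}w+\beta_{0,0})+(\gamma_1w+\gamma_0)$, the number $\sigma$ is by construction a simple root of the factor $\beta_{0,1}w+\beta_{0,0}$, whereas $Q_{\CC}''(w)=l\,w^{l-2}\big((l+1)\beta_{0,1}w+(l-1)\beta_{0,0}\big)$ is a monomial times a binomial whose unique non-zero root is $\tfrac{l-1}{l+1}\sigma\neq\sigma$. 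Since $\sigma\neq0$ and, crucially, $\beta_{0,0}\neq0$ (because $val(b_{0,0})=0$ forces the $t^0$-coefficient of $b_{0,0}$ to be non-zero), we conclude $Q_{\CC}''(\sigma)\neq0$, so $\sigma$ cannot be a triple root of $Q_{\CC}$. Hence at most $2$ of the $l+1$ intersection points escape from $v$, and at least $l-1$ of them have valuation $v$, as claimed. For $l=1$ the conclusion is empty, so one may freely assume $l\geq2$, which also makes the exponent $l-2$ in $Q_{\CC}''$ meaningful.
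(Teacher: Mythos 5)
Your proof is correct, and while it opens with the same move as the paper (reduce to one variable by parametrizing the line), the decisive step is genuinely different, and the difference traces back to which variable you eliminate. The paper substitutes $w=a+bz$ (eliminating $w$, not $z$) into the normalized equation $1+w+zw^l$ and gets a degree-$(l+1)$ polynomial $\widetilde P(z)=\sum c_jz^j$ whose coefficients satisfy $val(c_j)=0$ for $j\ge 2$, only $c_0=1+a$ and $c_1=b+a^l$ being subject to cancellation; since the escaping intersection points escape along the horizontal ray, i.e.\ with $val(z)<0$, the Newton polygon of $\widetilde P$ alone shows that $0$ is a tropical root of order at least $l-1$, and the lemma follows with no further computation. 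In your normalization the escape happens purely in the $z$-coordinate, so after eliminating $z$ your polynomial $Q(w)$ has all four coefficients of valuation exactly $0$ and its Newton polygon sees nothing (all $l+1$ roots have $val(w)=0$); you are therefore forced to locate the escaping roots inside the initial form $Q_{\CC}$ and to bound the multiplicity of the distinguished root $\sigma=-\beta_{0,0}/\beta_{0,1}$ by $2$. Your second-derivative computation, $Q_{\CC}''(\sigma)=l\sigma^{l-2}\bigl((l+1)\beta_{0,1}\sigma+(l-1)\beta_{0,0}\bigr)=-2l\,\beta_{0,0}\sigma^{l-2}\neq 0$, does this correctly, and it is genuinely needed: as you observe, Lemma \ref{few} applied to the four-term polynomial $Q_{\CC}$ only excludes multiplicity $4$. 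The trade-off is that the paper's choice of elimination makes the count immediate, whereas yours costs an extra argument but returns slightly finer information: exactly $l+1-\mathrm{mult}_\sigma(Q_\CC)\ge l-1$ points have valuation $v$, the at most two escaping points tropicalize onto the common ray of $Trop(X_1)$ and $Trop(X_2)$, and (a point the paper leaves implicit) an intersection point with one coordinate of valuation $0$ automatically has valuation exactly $v$.
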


\begin{figure} [htbp]

\psfrag{e}{$e$}\psfrag{C2}{$Trop(X_2)$}\psfrag{C1}{$Trop(X_1)$}\psfrag{v}{$v$}

\includegraphics[scale=0.30]{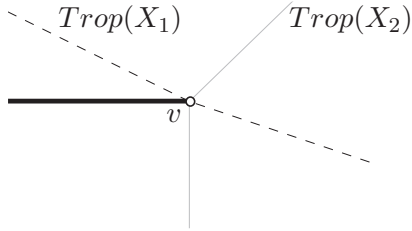}

\begin{center}

\caption {At least $l-1$ intersection points of $X_1$ and $X_2$ have valuation $v$.}
\label{6}
\end{center}
\end{figure}
\begin{proof} 
Without lost of generality, we may suppose that $X_1$ is defined by
the polynomial $P(z,w)=1+w+zw^l$,
and $X_2$ by  $Q(z,w)=a+bz-w$ with $val(a)=val(b)=0$. In
particular, $v=(0,0)$.
The intersection points of $X_1$ and $X_2$
 are the points $(z,a +bz)$ where
 $z$ is a root of the polynomial $\widetilde{P}(z)=P(z,a+bz)$. We have 
$$\begin{array}{lll}
\widetilde{P}(z)&=&(1+a)+(b+a^l)z+\sum_{j=1}^l
\begin{pmatrix}
l\\
j\\
\end{pmatrix}a^{l-j}b^{j}z^{j+1} 
\\&=&\sum_{j=0}^{l+1}c_jz^j.
\end{array}$$

Since
 $val(a)=val(b)=0$, we have $val(c_0)\leq 0$, $val(c_1)\leq 0$ and 
$val(c_j)=0$ for $j\ge 2$. Hence, 0 is a tropical root of order at
least $l-1$ of $\widetilde{P}_{trop}$.
\end{proof}

\section{Tropical modifications}\label{section: modifications}

The tropicalization of an
algebraic variety $X$ in $(\KK^*)^n$ defined by an ideal $I$ 
only depends on the first order term
of  elements of $I$. For hypersurfaces this follows immediately from
Kapranov's Theorem; in the general situation one can refer to \cite{St6}
or \cite{Ale1}.
As rough as it may seem, the tropicalization process keeps track of a
non-negligible amount of information about  original algebraic
varieties, e.g. intersection multiplicities. However, some information
depending on more than just first order terms
might be lost when passing from $X$ to $Trop(X)$. Tropical
modifications, introduced by Mikhalkin in \cite{Mik3},
 can be seen as a refinement of the tropicalization
process, and allows one to recover some information about $X$ sensitive
to higher order terms.

\subsection{Example}
Let us start with a simple example illustrating our approach. Consider
the two 
lines $X_1$ and $X_2$ in $(\KK^*)^2$ with equation
$$X_1:\ P_1(z,w)=(1+t^2)+z+w=0\quad  \text{and}\quad  X_2:\ (1+t) + z + t^{-1}w=0.$$
It is not hard to compute that these two lines intersect at the point 
$p=( -1,-t^2)$ which has
valuation $(0,-2)$.
Suppose now that we want to compute the valuation of $p$
just using tropical geometry, i.e. looking at $Trop(X_1)$ and
$Trop(X_2)$. As depicted in Figure \ref{7}a,
 the set $Trop(X_1)\cap Trop(X_2)$ is infinite, and it is not
 clear at all which point on  $Trop(X_1)\cap Trop(X_2)$ corresponds to
 $Val(p)$. Proposition \ref{trop int} and
the stable intersection point $(0,-1)$ of
 $Trop(X_1)$ and $Trop(X_2)$ tell us that $X_1$ and $X_2$ intersect in
at most 1 point, but turn out to be useless in the exact determination of
$Val(p)$.

\begin{figure} [htbp]

\psfrag{C2}{$Trop(X_2)$}\psfrag{C1}{$Trop(X_1)$}\psfrag{v}{$v$}\psfrag{T}{$C'$}\psfrag{t}{$C''$}\psfrag{A}{$A$}\psfrag{B}{$B$}
\psfrag{C}{$C$}
\begin{center}
\begin{tabular}{ccc}
\includegraphics[height=5cm, angle=0]{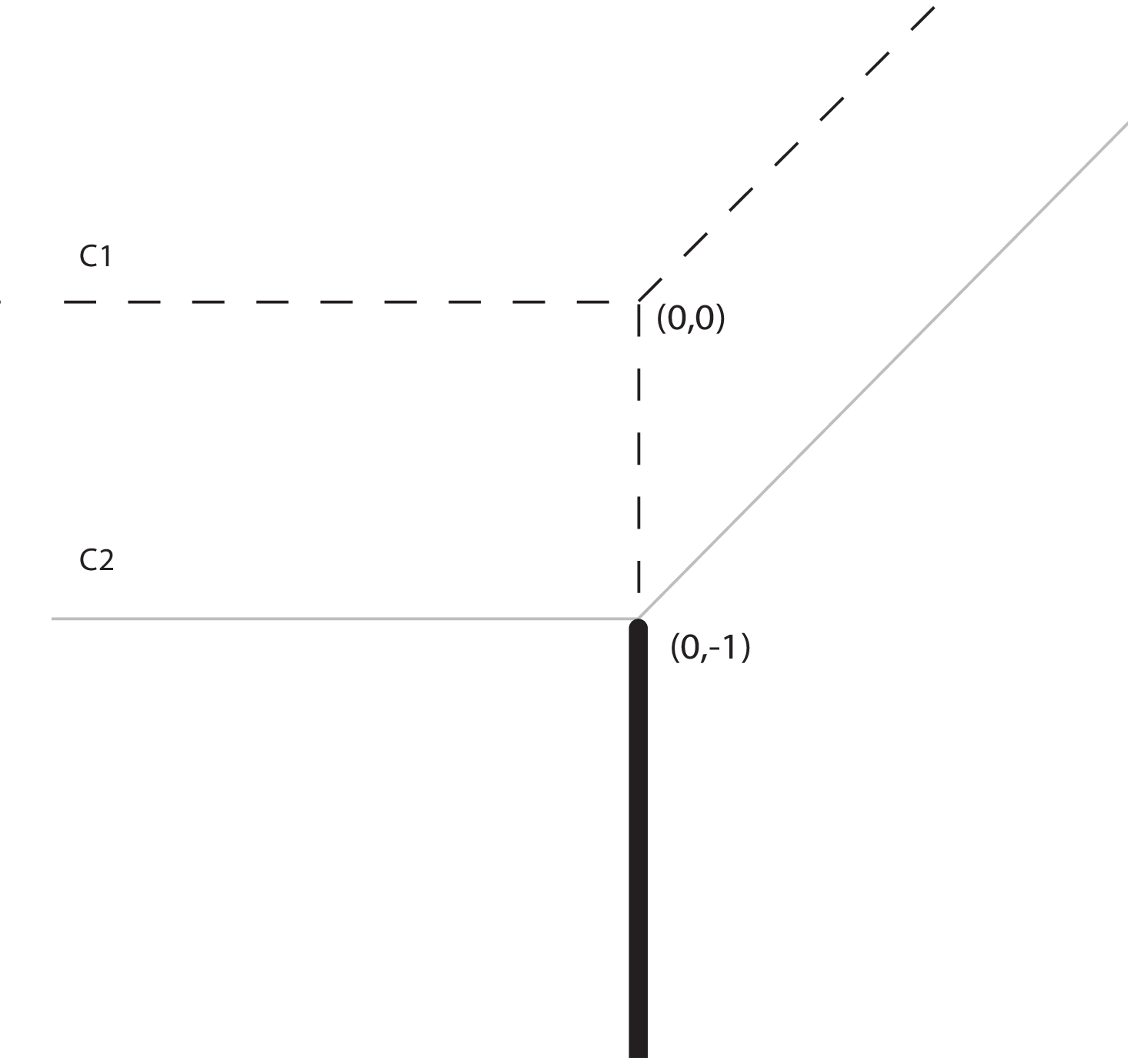}&
\includegraphics[height=5cm, angle=0]{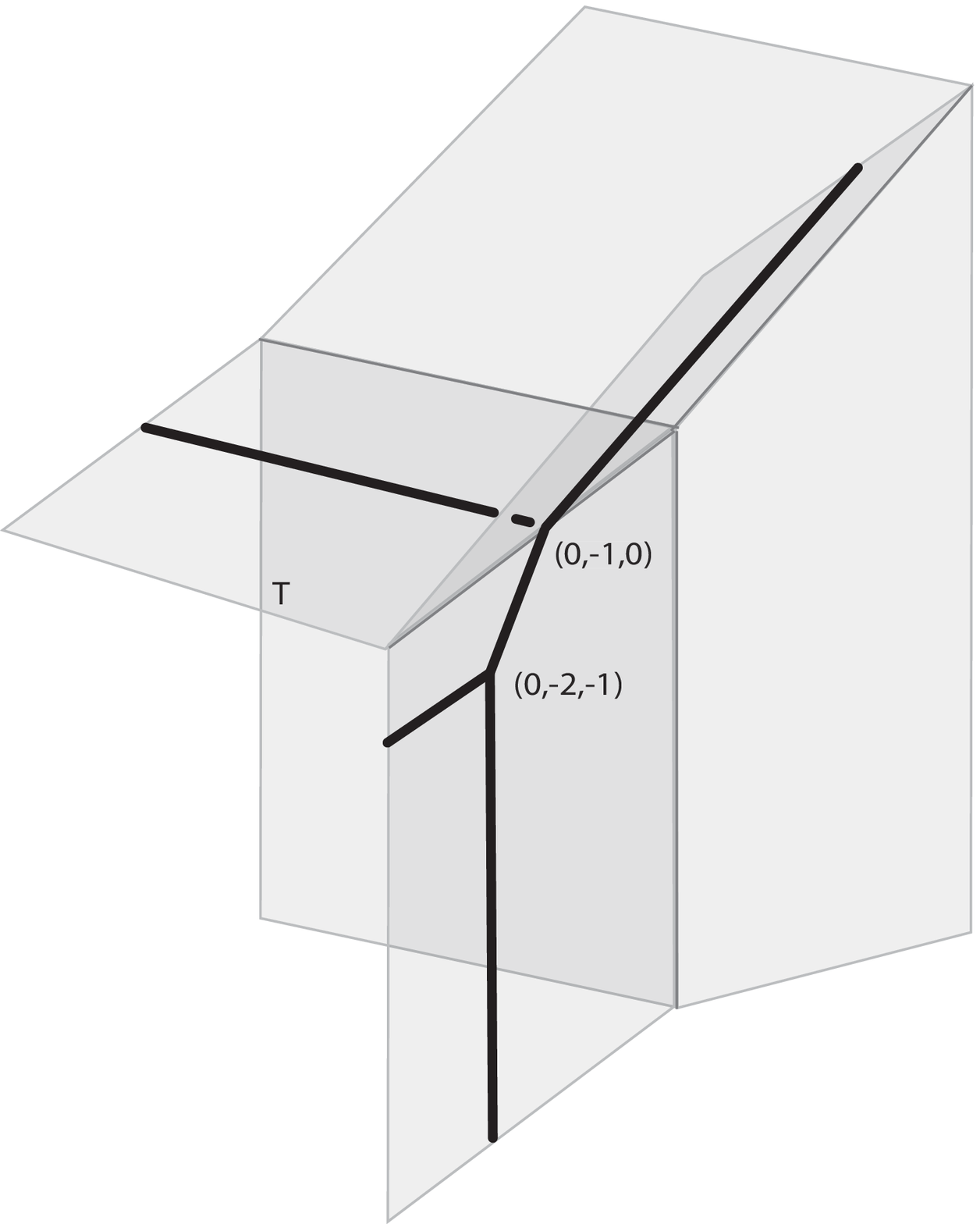}&
\includegraphics[height=5cm, angle=0]{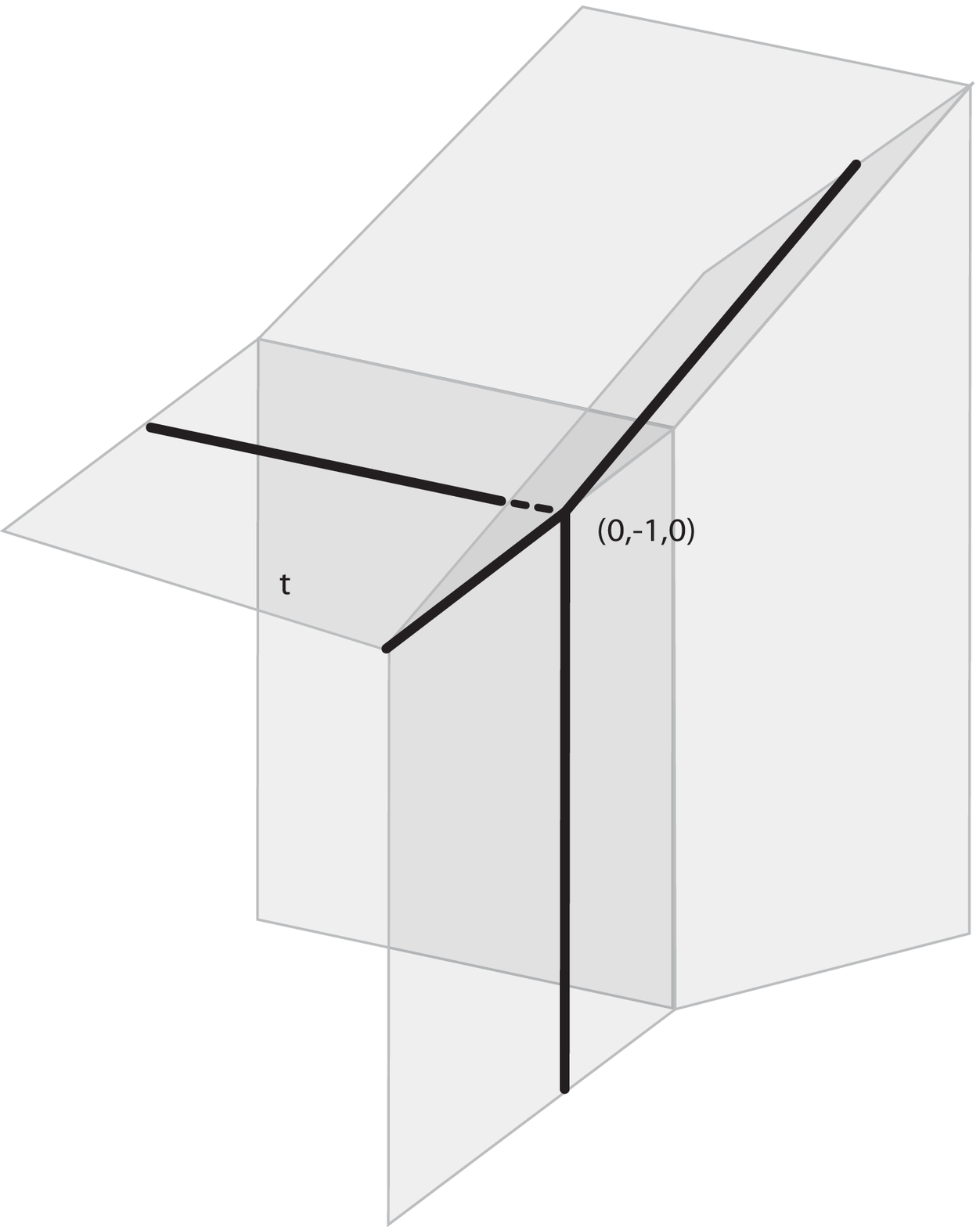}
\\ a) & b) & c)
\end{tabular}

\caption {Two tropical modifications of $Trop(X_2)$}
\label{7}
\end{center}
\end{figure}

To resolve the infinite set-theoretic intersection
$Trop(X_1)\cap Trop(X_2)$, we
 use one of the two lines, say $X_1$, to embed our
plane in $(\KK^*)^3$.
 
Let us denote
$Y=(\KK^*)^2\setminus X_1$ and consider the following map
$$\begin{array}{cccc}
\Phi:& Y&\longrightarrow &  (\KK^*)^3
\\ & (z,w)&\longmapsto & (z,w,\ P_1(z,w)).
\end{array}$$
The map $\Phi$ restricts to $X_2\setminus\{p\}$, and
$Trop(\Phi(X_2\setminus\{p\}))$ is a tropical curve in $\RR^3$ with an
edge starting at $(0,-2,-1)$ and unbounded in the direction
$(0,0,-1)$ (see Figure \ref{7}b). 
Clearly, this edge corresponds to $p$, and tells us that
$Val(p)=(0,-2)$.

\vspace{1ex}
Next section is devoted to
generalizing the method  used in 
this example.

\subsection{General method}\label{general}

Let $P(z)$ be a
polynomial in $n$ variables over $\KK$
and
denote $Y=(\KK^*)^n\setminus V(P)$. 
As in the
preceding section, this polynomial defines the following embedding of $Y$ to 
$(\KK^*)^{n+1}$ 
$$\begin{array}{cccc}
\Phi:& Y&\longrightarrow &  (\KK^*)^{n+1}
\\ & z&\longmapsto & (z,P(z)).
\end{array}$$

The tropical variety $W=Trop(\Phi(Y))$ is called the \textbf{tropical
  modification} of $\RR^n$ defined by $P(z)$. 
Since $\Phi(Y)$ has equation $z_{n+1}-P(z_1,\ldots,z_n)=0$, it follows
from Kapranov's Theorem that $W$ is given by the tropical polynomial
$\tg x_{n+1}+P_{trop}(x_1,\ldots,x_n)\td$. 
If $\pi^\KK_{n+1}:(\KK^*)^{n+1}\to (\KK^*)^{n}$ (resp. 
$\pi_{n+1}:\RR^{n+1}\to \RR^{n}$) denotes the projection forgetting the
last coordinate,  we obviously 
have $Trop\circ \pi^\KK_{n+1}=\pi_{n+1}\circ Trop$.

Since $W$ is a tropical hypersurface, many combinatorial properties of
 $W$ are straightforward:
the map $\pi_{n+1}$ restrict to a surjective map 
$\pi_{W}:W\to \RR^n$, one-to-one above $\RR^n\setminus V(P_{trop})$;
if $p\in V(P_{trop})$, then $\pi_{W}^{-1}(p)$ is a half ray, unbounded
in the direction $(0,\ldots,0,-1)$; the weight of a facet $F'$ of $W$
is equal to $w(F)$ if $\pi_{W}(F')=F$ is a facet of  $V(P_{trop})$, and is
equal to 1 otherwise.

\begin{exa}\label{exa trop modif}
The tropical plane in Figure \ref{example}b is the tropical
modification of $\RR^2$ defined by a polynomial of degree 1.
\end{exa}

More generally, if $X$ is
 an algebraic variety in $(\KK^*)^n$  with no component contained in $V(P)$, 
the polynomial $P(z)$
 defines a divisor $D(P)$ on $X$,
and
the map $\Phi$ defines an
 embedding of $X'=X\setminus V(P)$ in $(\KK^*)^{n+1}$. The
 tropical variety 
$V'=Trop(X')$ is called the tropical modification of $V=Trop(X)$
 defined by $P(z)$.

Unlike
in the case of a tropical modification of $\RR^n$, the tropical
variety
$V'$ does not depend only on
first order terms of $X$ and $P(z)$. 
The map $\pi_{n+1}$ still restricts to a surjective map $\pi_{V'}:V'\to V$,
but this map is 
 one-to-one only above $V\setminus V(P_{trop})$, i.e. $\pi_{V'}$ could be
 not injective not only on $\pi_{V'}^{-1}(Trop(V(P)\cap X))$ but also on
the (potentially strictly) bigger set $\pi_{V'}^{-1}(V(P_{trop})\cap V)$.
Hence
very few combinatorial
properties of $ \pi_{V'}^{-1}(V(P_{trop}))$ 
can be deduced in general only from those of $V$ and
$V(P_{trop})$:
 the set 
$\pi_{V'}^{-1}(p)$ is a bounded set if $p\in V\setminus Trop(V(P)\cap X)$,
and unbounded in the direction $(0,\ldots,0,-1)$ otherwise;
the weight of a facet $F'$ of $V'$, unbounded in the direction
$(0,\ldots,0,-1)$ and 
 such that  $\pi_{V'}(F')=F$ is a
facet of $Trop(D(P))$, is equal to 
$w(F)$ (recall that by definition, each
  facet of $Trop(D(P))$ has  a weight);
the weight of a facet $F'$ of $V'$ not contained in 
$\pi_{V'}^{-1}(V(P_{trop})\cap V))$ is equal to the weight of the
facet of $V$ containing  $\pi_{V'}(F')$.

\begin{exa}
Let us illustrate the dependency of
tropical modifications  on higher
order terms by going on with the example of preceding section. Recall
that $X_1$ and $X_2$ are the two lines in $\RR^2$ given by
$$X_1:\ (1+t^2)+z+w=0\quad  \text{and}\quad  X_2:\ (1+t) + z + t^{-1}w=0.$$
We
have already seen that the tropical modification of $Trop(X_2)$ along
$X_1$ is a tropical line in $\RR^3$ with two 3-valent vertices. Consider
now the line $X_3$ defined by the equation $2+z+w=0$. Note that
$Trop(X_1)=Trop(X_3)$. 
Since $X_2$ and
$X_3$ 
intersect at $(-2-t,t)$ which has valuation $(0,-1)$, the  tropical
modification of $Trop(X_2)$ along 
$X_3$ is a tropical line in $\RR^3$ with one 4-valent vertex (see
Figure \ref{7}c).
\end{exa}

\begin{exa}
Consider the curve $X$ in $(\KK^*)^2$ given by the equation
$$Q(z,w)=(t^{-9}+t^{-5}+1) + (2t^{-4}+1)z + tz^2 + (1+ t^{-5})w + 2zw
+
t^5z^2w +t^3w^2.$$
The tropical curve $C=Trop(X)$ 
and its tropical modification $C'$
given by the polynomial 
$P(z,w)=z+t^{-5}$ 
are depicted in Figure
\ref{modif non inj}. In
particular $C$ is a singular tropical curve, and the map
 $\pi_{C'}$ is
 not injective  on a strictly bigger set than
$\pi_{C'}^{-1}(Trop(V(P)\cap X))$.
To see that $C'$ is as depicted in Figure \ref{modif non inj},
 just notice that the
projection $\RR^3\to \RR^2$ forgetting the first coordinate sends
$C'$ to $C''=Trop(V(Q''))$ where $Q''(z,w)=Q(z-t^{-5},w)$, and compute easily
$$Q''(z,w)= 1 + z+ w + t z^2 + t^3 w^2 + t^5 wz^2.$$
\end{exa}
\begin{figure}[h]
\psfrag{2}{2}\psfrag{C}{C}\psfrag{C'}{C'}\psfrag{C''}{C''}
\centering
\begin{tabular}{c}
\includegraphics[height=6cm, angle=0]{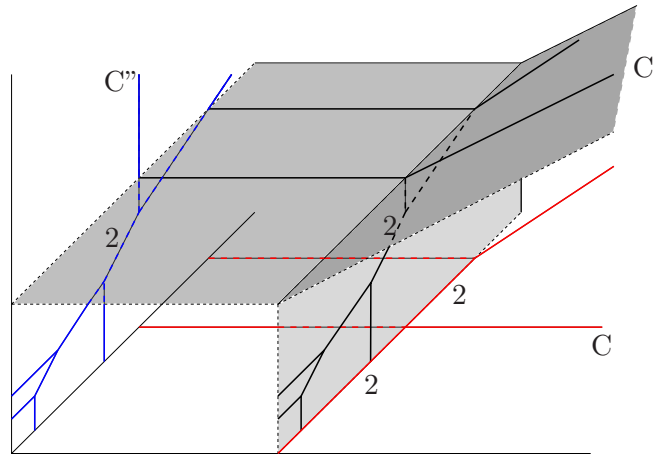}
\end{tabular}
\caption{The tropical curves $C$ and $V(P_{trop})$ do not determine $C'$.}
\label{modif non inj}
\end{figure}

\subsection{The case of plane curves}
In the case of plane curves, the situation is much simpler than the
general case discussed above. In particular, we  will see 
that the tropical modification of a non-singular plane tropical curve
depends on very few combinatorial data. An edge unbounded in the
direction $(0,0,-1)$ of a tropical curve  $C$ will be called a
\textbf{vertical end}  of $C$.

Let $P_1(z,w)$ and $P_2(z,w)$ be two polynomials 
  defining respectively the curves $X_1$ and $X_2$ in  $(\KK^*)^2$,
  such that $X_1$ and $X_2$ have no irreducible component in common.
We denote $C_i=Trop(X_i)$, and by $C'_1$ the tropical modification of
  $C_1$ given by $P_2(z,w)$.

Next Lemma is a restatement  in the particular case of plane curves 
of the 
material we discussed in  section
\ref{general}.
\begin{lemma}\label{intersection}
If $e$ is a vertical end of $C'_1$, then
$\pi_{C_1'}(e)\in Trop(X_1\cap X_2)$. 

Conversely, if $p \in Trop(X_1\cap X_2)$, then $\pi_{C_1'}^{-1}(p)$ contains 
a vertical end $e$ of $C'_1$, and
$$w(e)=\sum_{q\in X_1\cap X_2\cap Val^{-1}(p)}(X_1\circ X_2)_q. $$
\end{lemma}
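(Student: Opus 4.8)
The plan is to specialize the general description of tropical modifications from Section \ref{general} to the case $n=2$, $X=X_1$, $P=P_2$, the only new geometric input being the meaning of the divisor $D(P_2)$. Since $X_2=V(P_2)$ and $X_1,X_2$ share no irreducible component, the restriction of $P_2$ to $X_1$ is a nonzero rational function whose divisor of zeros is exactly the intersection scheme $X_1\cap X_2$; that is, $D(P_2)=\sum_{q\in X_1\cap X_2}(X_1\circ X_2)_q\,q$ as a divisor on $X_1$, the multiplicity at $q$ being the order of vanishing of $P_2$ along $X_1$, which is $(X_1\circ X_2)_q$. Consequently $Trop(D(P_2))$ is the finite set $Trop(X_1\cap X_2)=Val(X_1\cap X_2)$, and the weight it assigns to a point $p$ is $\sum_{q\in X_1\cap X_2\cap Val^{-1}(p)}(X_1\circ X_2)_q$. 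This is precisely the number appearing in the statement.

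Next I would exploit that in $(\KK^*)^3$ the third coordinate of $\Phi(X_1')$ is the value of $P_2$. Hence a point of $\Phi(X_1')$ has very negative third-coordinate valuation exactly when $P_2$ is close to $0$ there, i.e. when the corresponding point of $X_1$ is close to the zero locus $X_1\cap X_2$ of $P_2$ on $X_1$. More precisely, an end of $C_1'=Trop(\Phi(X_1'))$ has direction $(0,0,-1)$ if and only if it comes from a branch of $X_1$ along which $(val(z),val(w))$ stabilizes to a point $p$ while $val(P_2)\to-\infty$; since $P_2$ tends to $0$ only as one approaches its zeros on $X_1$ (using that the smooth model of $X_1$ is proper and $\KK$ algebraically closed), such a branch converges to a point $q\in X_1\cap X_2$ with $Val(q)=p$. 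This yields the first assertion: $\pi_{C_1'}(e)=p\in Trop(X_1\cap X_2)$.

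For the converse, fix $p\in Trop(X_1\cap X_2)=Trop(D(P_2))$. By the description of tropical modifications of a curve recalled in Section \ref{general}, the fibre $\pi_{C_1'}^{-1}(p)$ is unbounded in the direction $(0,0,-1)$ precisely because $p$ lies in $Trop(D(P_2))$; as $C_1'$ is one-dimensional, its unbounded-downward part over $p$ is a single vertical ray (two ends in direction $(0,0,-1)$ over the same $p$ would share the ray $\{p\}\times(-\infty,s_0]$ for $s_0\ll 0$, hence coincide). Call $e$ this vertical end. Its weight counts, with multiplicity, the branches of $\Phi(X_1')$ escaping to infinity along $e$, which is the total order of vanishing of $P_2$ on $X_1$ at the points over $p$; by the identification of $D(P_2)$ above this equals the weight assigned to $p$ in $Trop(D(P_2))$, namely $\sum_{q\in X_1\cap X_2\cap Val^{-1}(p)}(X_1\circ X_2)_q$. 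Equivalently, this is the instance for $F'=e$ of the general rule $w(F')=w(F)$ for vertical facets lying over a facet $F$ of $Trop(D(P))$.

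The step I expect to be the main obstacle is the weight computation, i.e. showing that the single vertical end over $p$ carries weight equal to the full sum $\sum_q(X_1\circ X_2)_q$ rather than being split among several ends. This needs both the merging argument (all downward branches over a fixed $p$ lie on one ray) and the identification of the multiplicity of that end with the order of vanishing of $P_2$ along $X_1$, for which the clean formulation is exactly the divisor identity $D(P_2)=X_1\cap X_2$ combined with the weight rule from Section \ref{general}. The only delicate point in the forward direction, that an end in direction $(0,0,-1)$ genuinely comes from an honest intersection point and not merely from a branch escaping to the toric boundary, is handled by the properness of the smooth model of $X_1$.
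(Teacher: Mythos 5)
Your proposal is correct and follows essentially the same route as the paper: the paper gives no separate argument for Lemma \ref{intersection}, presenting it as a direct restatement, for plane curves, of the general description of tropical modifications in section \ref{general} (fibres over $V(P_{trop})$ bounded away from $Trop(V(P)\cap X)$, vertical ends over $Trop(D(P))$ with weight equal to that of the corresponding facet of $Trop(D(P))$), which is exactly the specialization you carry out, with the identification $D(P_2)=\sum_{q\in X_1\cap X_2}(X_1\circ X_2)_q\, q$ as the key input. Your additional justifications (the branch/puncture analysis and the merging of vertical rays) only elaborate on facts the paper takes for granted from \cite{Mik3}.
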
 

Tropical modifications allow us to relate easily intersection in
$(\KK^*)^2$ and tropical intersection.
\begin{prop}\label{set intersection}
Let $E$ be a component of $C_1\cap C_2$, and let $m$ be the sum of the
weight of all vertical ends in $\pi_{C'_1}^{-1}(E)$. Then
$$m\le  (C_1\circ_\TT C_2)_E$$
and equality holds if $E$ is compact. 
\end{prop}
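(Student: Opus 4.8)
The plan is to compute the weight $m$ of the vertical ends of $C_1'$ lying over $E$ as a boundary flux, by summing the balancing condition over all vertices of $C_1'$ situated above a neighbourhood of $E$. First I choose a region $U\subset\RR^2$ with $E\subset U$, whose boundary $\partial U$ is transverse to $C_1$ and meets $C_1\cap C_2$ nowhere, and such that $E$ is the only component of $C_1\cap C_2$ contained in $U$; when $E$ is compact I take $U$ bounded. By the combinatorial description of tropical modifications in Section \ref{general}, the vertical ends of $C_1'$ all lie over $Trop(X_1\cap X_2)\cap E$, a finite set, and by Lemma \ref{intersection} their weights sum exactly to $m$. I then sum the balancing condition over every vertex of $C_1'$ lying in $\pi_{C_1'}^{-1}(U)$: each bounded interior edge is counted twice with opposite orientations and cancels, leaving only the contributions of the vertical ends (each outgoing in direction $(0,0,-1)$), of the edges of $C_1'$ crossing the cylinder $\pi_{C_1'}^{-1}(\partial U)$, and, if $E$ is unbounded, of the unbounded horizontal ends of $C_1'$.

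Reading off the third coordinate of this telescoped balancing relation gives
\[
-m+\sum_{q\in C_1\cap\partial U} w(q)\,\sigma_q+\sum_{e\ \mathrm{unbounded}} w(e)\,\tau_e=0,
\]
where $\sigma_q$ is the last coordinate of the outgoing primitive integer direction of the edge of $C_1'$ over the crossing point $q$, and $\tau_e$ is the analogous quantity for an unbounded horizontal end. Since $\partial U$ avoids $C_1\cap C_2$, the map $\pi_{C_1'}$ is one-to-one over $\partial U$, so the crossings of $C_1'$ with $\pi_{C_1'}^{-1}(\partial U)$ correspond to the crossings of $C_1$ with $\partial U$ with the same weights, and $\sigma_q$ equals the slope of $P_{2,trop}$ restricted to the corresponding edge of $C_1$. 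The key point is then that the quantity $\sum_{q} w(q)\,\sigma_q$ is precisely the tropical intersection number $(C_1\circ_\TT C_2)_E$: as one travels along $\partial U$ the monomial of $P_2$ realizing $P_{2,trop}$ along $C_1$ is locally constant and jumps exactly when $\partial U$ crosses $C_2$, so this flux records the degree of the divisor cut by $P_{2,trop}$ on $C_1$ inside $U$. I expect to verify this identity by a purely local computation, matching the flux with the area formula $\tfrac12(Area(\Delta_v)-\delta_v)$ defining $(C_1\circ_\TT C_2)_v$; by perturbing $C_2$ inside $U$ to meet $C_1$ transversally this reduces to the case of a single transverse stable point, where it is immediate.

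For compact $E$ there are no unbounded ends, so the balancing relation reads $m=\sum_{q}w(q)\,\sigma_q=(C_1\circ_\TT C_2)_E$, giving equality. For non-compact $E$ the overlap of $C_1$ and $C_2$ along the unbounded directions of $E$ produces the extra unbounded horizontal ends of $C_1'$, and one obtains instead $m=(C_1\circ_\TT C_2)_E+\sum_{e}w(e)\,\tau_e$. The main obstacle is exactly the control of this correction term: I must show that the total third-coordinate outflow $\sum_e w(e)\,\tau_e$ of the unbounded ends is nonpositive, reflecting the fact that along the overlap some intersection may ``escape to the toric boundary'' and thereby lower $m$ without affecting the flux. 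Establishing the sign of this leakage — via the local geometry of $C_1'$ along the common ray of $C_1$ and $C_2$, where $P_{2,trop}$ is critical — is the delicate step, and it yields the inequality $m\le(C_1\circ_\TT C_2)_E$ in the non-compact case.
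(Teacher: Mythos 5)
Your telescoped balancing argument, and the reduction of the flux identity to the transverse case by perturbation, are sound when $E$ is compact; that part coincides with the paper's own proof (the paper packages the perturbation step as the existence of a balanced graph $\Gamma\subset W$ with $\pi(\Gamma)=C_1$ and vertical ends of weight $\frac{1}{2}(Area(\Delta_p)-\delta_p)$ at the stable intersection points). The non-compact case, however, rests on two claims that are both false. Counterexample: $X_1:\ 1+z+w=0$ and $X_2:\ 1+tz+tw=0$. Here $C_1$ is the tropical line with vertex $(0,0)$, $C_2$ the line with vertex $(1,1)$, the component $E=\{(x,x)\,:\,x\ge 1\}$ is unbounded, and $(C_1\circ_\TT C_2)_E=1$, the stable point being $(1,1)$. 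Since $P_2$ restricted to $X_1$ is the nonzero constant $1-t$, the modification $C'_1$ is just $C_1$ lifted at height $0$: there are no vertical ends, so $m=0$; the unbounded end over the overlap ray is horizontal, so $\tau_e=0$; and the boundary crossing has $\sigma_q=0$ because $P_{2,trop}\equiv 0$ on $C_1$ below $(1,1)$. Thus $\sum_q w(q)\sigma_q=0\ne 1=(C_1\circ_\TT C_2)_E$, and your relation $m=(C_1\circ_\TT C_2)_E+\sum_e w(e)\tau_e$ reads $0=1$. Replacing $X_2$ by $1+tz+2tw=0$ (same tropicalization) gives $m=1$ and an unbounded non-vertical end of $C'_1$ of direction $(1,1,1)$ over the overlap ray, i.e. $\tau_e=+1>0$; so the sign $\sum_e w(e)\tau_e\le 0$ that you identify as the remaining step is also false, and cannot be established.

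The underlying reason is that a boundary flux computes the degree of the divisor cut by $P_{2,trop}$ on $C_1$ inside $U$ only when $U$ is bounded; when $E$ is unbounded the identity necessarily carries the slopes at infinity, which are in general nonzero. The correct comparison — and this is precisely the paper's key step — is not $\tau_e\le 0$ but $\tau_e\le\lambda\widehat z_i$, where $\widehat z_i$ is the last coordinate of the primitive direction of the corresponding unbounded edge $\widehat e_i$ of $W$, i.e. of the top edge of the vertical wall of $W$ over the overlap ray: this bound holds because $C'_1\subset W$, and an unbounded edge contained in that wall cannot rise faster than the wall's top edge. Running your flux argument on the extremal model (the graph of $P_{2,trop}|_{C_1}$ completed by vertical ends at the stable points — the paper's $\Gamma$, for which every $\tau_e$ equals $\lambda\widehat z_i$) yields the correct identity $(C_1\circ_\TT C_2)_E=\sum_q w(q)\sigma_q+\sum_i w(\widetilde e_i)\,\lambda\widehat z_i$, and subtracting the two flux relations gives $m\le (C_1\circ_\TT C_2)_E$, with equality when there are no unbounded ends. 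With that replacement your outline becomes the paper's proof; as written, its non-compact half would fail on the examples above.
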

\begin{proof}
Let $e_1,\ldots,e_r$ (resp. $\widetilde e_1,\ldots,\widetilde e_s$) 
be the edges of $C'_1$ which are not contained in
$\pi_{C'_1}^{-1}(E)$ but adjacent to a vertex $v_i$ in
$\pi_{C'_1}^{-1}(E)$ (resp. which are unbounded but not vertical,
and contained in $\pi_{C'_1}^{-1}(E)$). See Figure \ref{Edges}.

\begin{figure} [htbp]
\begin{center}
\psfrag{v}{$v$}\psfrag{1}{$e_1$}\psfrag{2}{$e_2$}\psfrag{3}{$e_3$}\psfrag{E}{$\pi^{-1}_W(E)$}
\psfrag{4}{$e_4$}\psfrag{e}{$\widetilde e_1$}\psfrag{5}{$v_1$}\psfrag{6}{$v_2$}
\begin{tabular}{cc}
\includegraphics[height=5cm, angle=0]{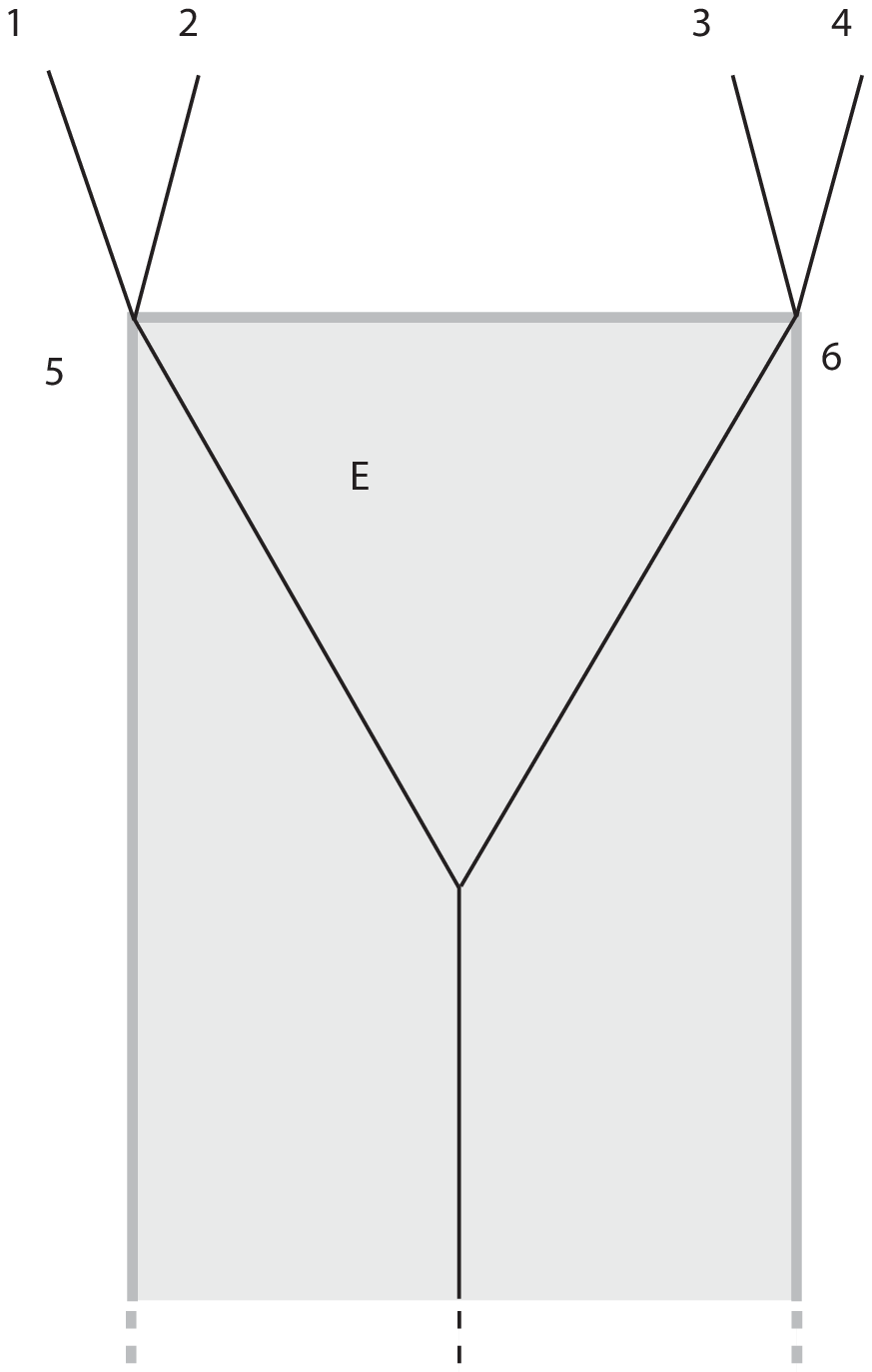}
&
\includegraphics[height=5cm, angle=0]{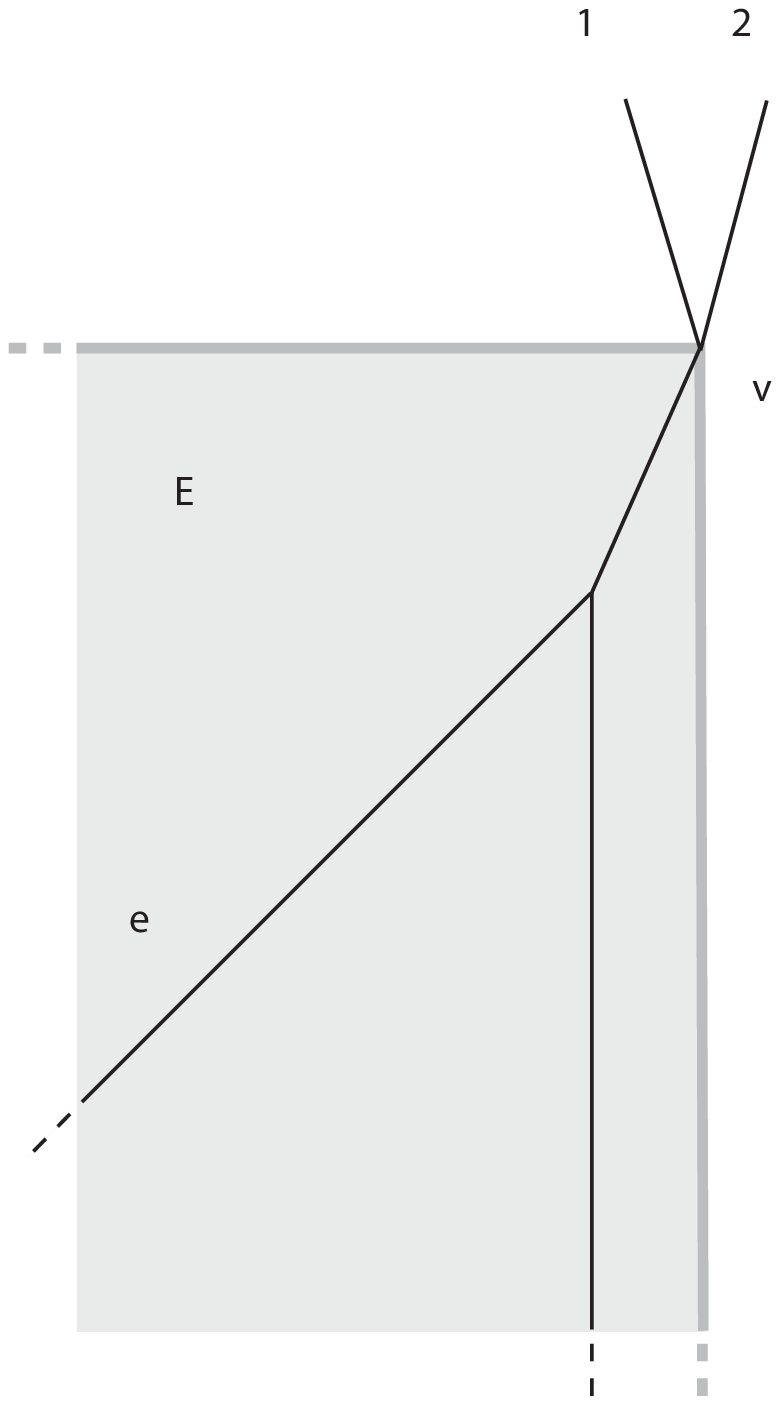}
\\ a) The compact case ($s=0$). & b) The non compact case ($s>0$).
\end{tabular}

\caption {}
\label{Edges}
\end{center}
\end{figure}

Let $(x_i,y_i,z_i)$ (resp. $(\widetilde x_i,\widetilde
y_i,\widetilde z_i)$)
be the primitive
integer direction of $e_i$ (resp. $\widetilde e_i$) 
pointing away from $v_i$ (resp. pointing to
infinity).
 Then, it follows
from
 the
balancing condition that
$$m=\sum_{i=1}^r w(e_i)z_i + \sum_{i=1}^s w(\widetilde e_i)\widetilde z_i.$$
Note that the balancing condition implies that
if $E$ is compact (i.e. when $s=0$), then 
 the integer $m$ depends only on $C_1$ and $C_2$. 
In the case 
where $E$ is not compact, we define an integer $m'$ as follows. 

We denote by $W$ the tropical modification of $\RR^2$ given by
$P_2(z,w)$. 

For any $1\le i \le s$, we denote by $\widehat e_i$ the non vertical
edge of $W$ such that $\pi_W(\widehat e_i)\cap \pi_W(\widetilde
e_i)\neq \emptyset$, and by
$(\widehat x_i, \widehat y_i, \widehat
z_i)$ the primitive vector of $\widehat e_i$ pointing to
infinity. Then, $(\widetilde x_i,\widetilde y_i)=\lambda (\widehat
x_i,\widehat y_i)$ with $\lambda$ a positive rational number. 
Since  $\widetilde e_i$ is contained in $\pi_W^{-1}(E)$, the slope of $\widetilde e_i$ is bounded by the slope of $\widehat e_i$. Hence, we necessarily have $\widetilde z_i\le \lambda \widehat z_i$ with equality if and only if $\widetilde e_i$ and $\widehat e_i$ are parallel (see Figure \ref{Slopes}).

Let us define 
$$m'=m+ \sum_{i=1}^s  w(\widetilde e_i)\left( {\lambda}\widehat z_i -
\widetilde z_i\right).$$ 
Hence we have $m\le m'$. In particular, $m=m'$ if and only if all the unbounded edges of $\pi_{C'_1}^{-1}(E)$ are vertical ends or parallels to the corresponding edge of $W$.

Now the balancing condition implies that
the integer $m'$ only depends on $C_1$
and $C_2$, and not any more on $X_1$ and $X_2$.

\begin{figure} [htbp]
\begin{center}
\psfrag{b}{$(\widetilde x_i, \widetilde y_i,\widetilde{z_i})$}\psfrag{a}{$(\widehat x_i,\widehat y_i, \widehat z_i)$}\psfrag{e}{$\widetilde e_i$}\psfrag{f}{$\widehat e_i$}
\begin{tabular}{cc}
\includegraphics[height=5cm, angle=0]{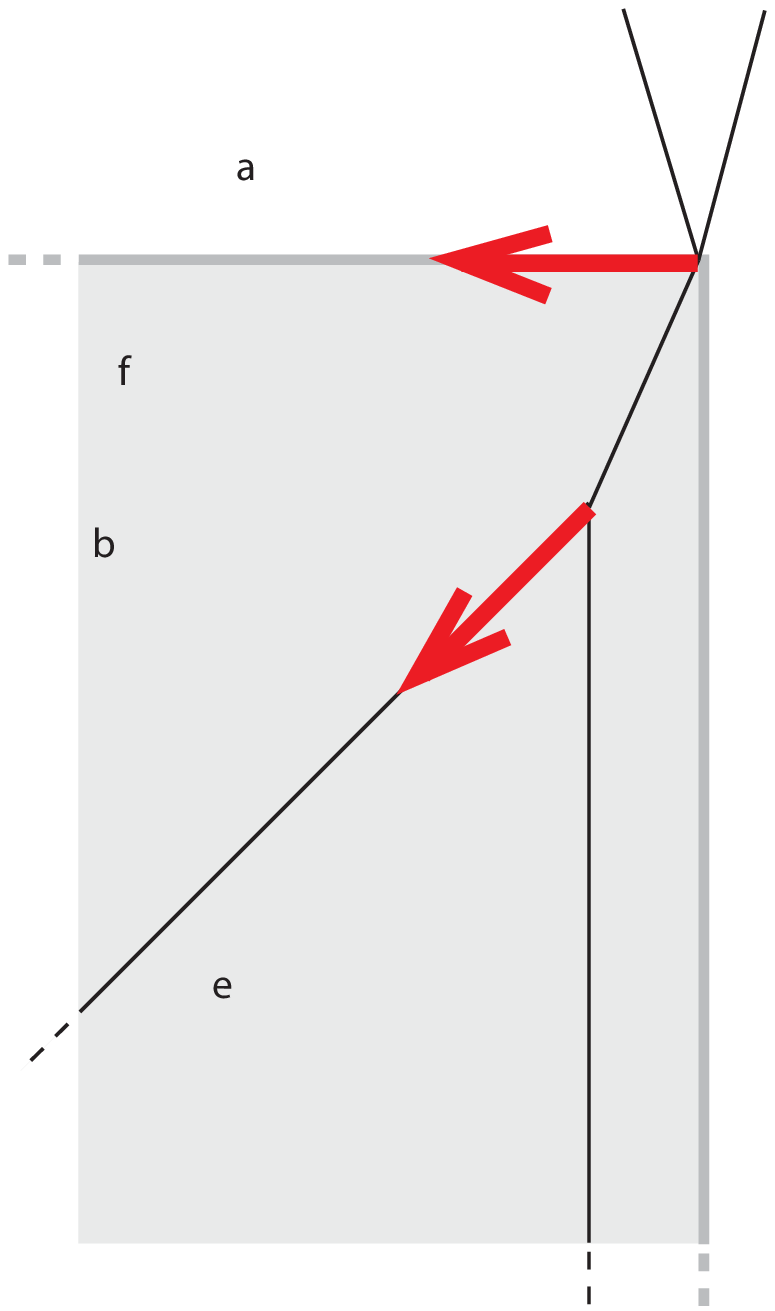}
&
\includegraphics[height=5cm, angle=0]{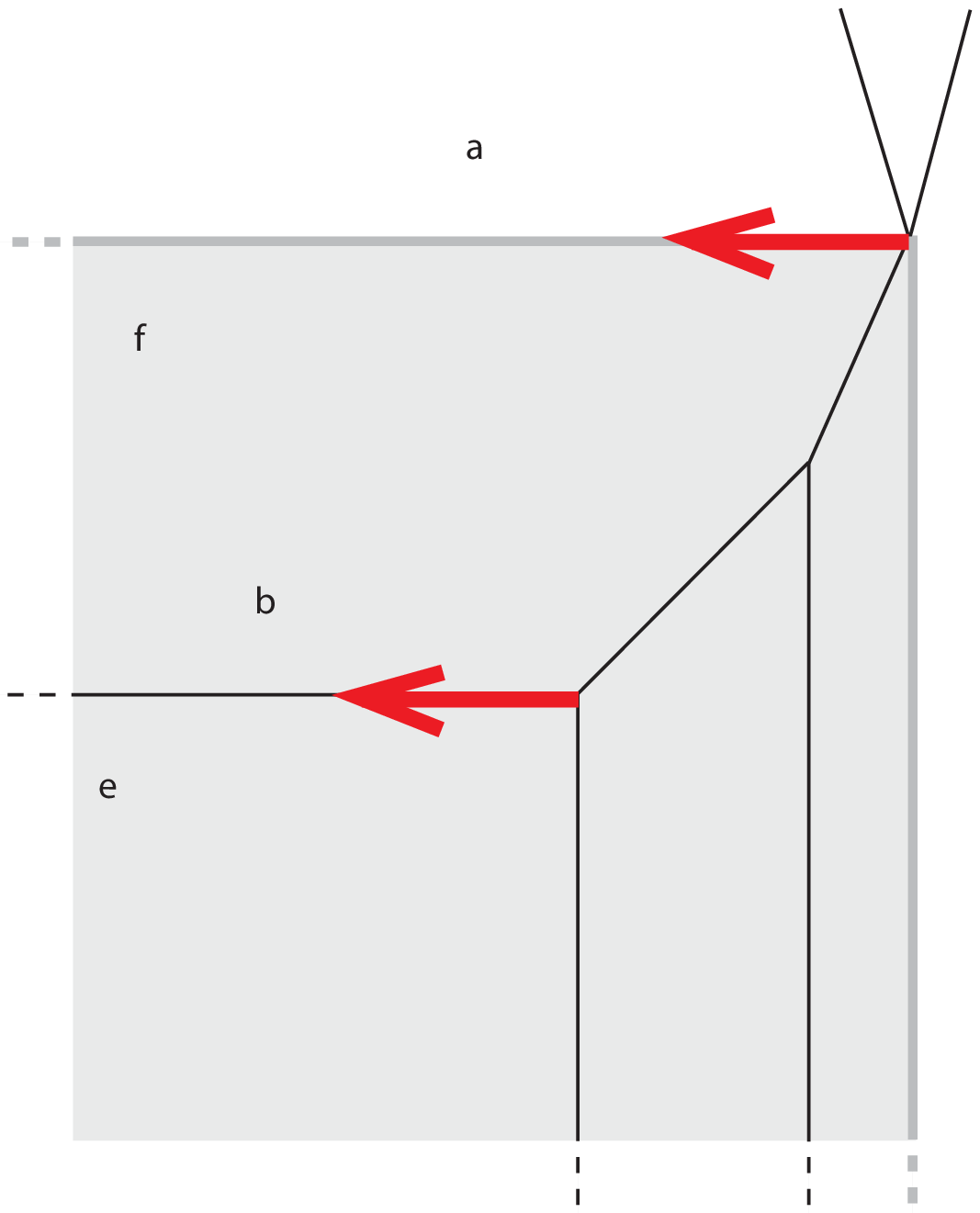}
\\ a) $\widetilde z_i<\lambda \widehat z_i$. & b) $\widetilde z_i= \lambda \widehat z_i$.
\end{tabular}

\caption {The slope of $\widetilde e_i$ is bounded by the slope of $\widehat e_i$.}
\label{Slopes}
\end{center}
\end{figure}

Hence to conclude, it remains to prove that $m'=(C_1\circ_\TT C_2)_E$. To do it, it is sufficient to prove that there exists a
balanced graph $\Gamma$ with rational slopes in $W$ (i.e. a tropical
1-cycle in the terminology of \cite{Mik3}) such that
$\pi(\Gamma)=C_1$, and such that $\pi_{\Gamma}^{-1}(p)$ is a vertical end of
weight $\frac{1}{2}(Area(\Delta_p)-\delta_p)$ if $p\in
C_1\circ_{\TT,E} C_2$,
and
$\pi_{\Gamma}^{-1}(p)$ is a point otherwise.
The existence of $\Gamma$ is clear if $E$ is the isolated 
intersection of two
edges of $C_1$ and $C_2$. The general case reduces to the latter case
via stable intersections (see \cite{St2}).
\end{proof}

Note that Proposition \ref{set intersection} and its proof do not
depend of the algebraically closed ground field $\KK$, and generalize
to intersections of tropical varieties of higher dimensions. The
existence of $\Gamma$ can also be established using the fact that
tropical intersections are tropical varieties (see \cite{Pay1}). 

Using
a more involved tropical intersection theory (see for example
\cite{Mik3}, \cite{Kris1}, or \cite{AllSmooth}), the proof of
Proposition \ref{set intersection} 
should generalize easily when replacing 
the ambient space $\RR^n$ by any smooth realizable tropical variety.

\begin{lemma}\label{weight modif}
Let $p$ be a point on an edge $e$ of $C_1$ 
such that $\pi_{C_1'}^{-1}(p)$ does not contain any vertex of $C'_1$,
 and denote by $e_1,\ldots, e_l$ the edges of $C'_1$ containing
a point of $\pi_{C_1'}^{-1}(p)$. Then
$$\sum_{i=1}^l w(e_i)\le w(e).$$
\end{lemma}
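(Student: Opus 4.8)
The plan is to show that over the relative interior of the edge $e$ the projection $\pi_{C_1'}\colon C_1'\to C_1$ behaves like a degree-one covering whose sheets carry their weights, so that these weights sum to $w(e)$ once corrected by the horizontal stretching factors; since these factors are positive integers, dropping them can only decrease the sum.

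First I would record the local structure of the fibre over $p$. Because $\pi_{C_1'}^{-1}(p)$ contains no vertex of $C_1'$, it contains no vertical edge either (a vertical edge or a vertical end has a vertex projecting to $p$), so each $e_i$ is a non-vertical edge whose image $\pi_{C_1'}(e_i)$ is a segment of $C_1$ through $p$, hence contained in $e$ near $p$. In particular $e_1,\dots,e_l$ are exactly the edges of $C_1'$ lying over a generic point of $e$ close to $p$. Writing $v$ for the primitive integer direction of $e$ and $u_i$ for the primitive integer direction of $e_i$, the projection $\pi_{n+1}$ sends $u_i$ to $\pi_{n+1}(u_i)=\lambda_i v$ for some positive rational $\lambda_i$; since $u_i\in\ZZ^3$ and $v$ is primitive in $\ZZ^2$, each $\lambda_i$ is in fact a positive integer, so $\lambda_i\ge 1$.

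Next I would invoke the fact that the embedding $\Phi$ of section \ref{general} realizes $\pi_{C_1'}$ as a degree-one map at the level of algebraic curves: since $\pi^\KK_{n+1}\circ\Phi=\mathrm{id}$ on $X_1'=X_1\setminus V(P_2)$, the projection $\pi^\KK_{n+1}$ restricts to an isomorphism from $\Phi(X_1')$ onto $X_1'$, whose tropicalizations are $C_1'$ and $C_1$ respectively. Pushing the tropical cycle $C_1'$ forward along the linear projection $\pi_{n+1}$ therefore returns $C_1$ together with its weights, which is exactly the multiplicity formula $w(e)=\sum_{i=1}^l w(e_i)\,\lambda_i$ (the standard push-forward of weights under a monomial map; see \cite{St2} and \cite{Mik3}). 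Here the previous paragraph guarantees that the sum ranges over precisely $e_1,\dots,e_l$.

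Combining the two steps yields $\sum_{i=1}^l w(e_i)\le \sum_{i=1}^l w(e_i)\,\lambda_i=w(e)$, the desired inequality. The only non-formal point is the weight equality $w(e)=\sum_i w(e_i)\,\lambda_i$, and I expect this to be the main obstacle: the cleanest justification is the degree-one push-forward above, but if one prefers a self-contained argument it can be obtained directly from the definition of the weight in section \ref{non-arch}, by intersecting $\Phi(X_1')$ with an affine plane $Y_{e_i}$ transverse to $e_i$ and comparing with the intersection of $X_1$ with the transverse line through $p$. That second route is more computational, so I would avoid it.
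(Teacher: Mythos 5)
Your proof is correct --- in fact it proves more than the lemma asks, namely the exact identity $w(e)=\sum_{i=1}^l\lambda_i w(e_i)$ --- but it follows a genuinely different route from the paper's. The paper never invokes a push-forward formula. Instead it argues by intersection counting against an auxiliary line: after a monomial change of coordinates one takes the tropical line $L$ defined by $\tg y+a \td$ meeting $e$ transversally at $p$, realizes $L$ by the classical line $\{w+t^{-a}=0\}$, and observes that the number $m_p$ of points of $X_1\cap\{w+t^{-a}=0\}$ with valuation $p$ equals $(C_3\circ_\TT L)_p\,w(e)$, where $C_3$ is the binomial (classical) line containing $e$ --- essentially the definition of the weight from section \ref{non-arch}. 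Lifting these intersection points to $(\KK^*)^3$ via $\Phi$, at least $(C_3\circ_\TT L)_p\,w(e_i)$ of them have valuation on $e_i$, and summing over $i$ gives the inequality. This is precisely the ``more computational'' fallback you sketched in your last paragraph and chose to avoid. Your main route instead rests on the compatibility of tropicalization with push-forward along the degree-one projection $\pi_{n+1}$ restricted to $\Phi(X_1')$; that statement is true and standard, but your citation is off: neither \cite{St2} nor \cite{Mik3} contains the multiplicity formula you need --- it is the Sturmfels--Tevelev projection formula, and within this paper's bibliography the closest legitimate reference would be \cite{Pay1}. Granting that input, your structural preliminaries (no vertical edges over $p$, the $e_i$ being exactly the edges over nearby generic points of $e$, and the stretching factors $\lambda_i$ being positive integers) are all sound. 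The trade-off is clear: your argument is shorter and yields the sharper equality, quantifying the defect in the lemma's inequality as $\sum_i(\lambda_i-1)w(e_i)$, at the cost of importing a theorem the paper does not develop; the paper's proof stays entirely inside its own toolkit, using only the definition of weights and Proposition \ref{set intersection}.
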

\begin{proof}
By assumption, the set $\pi_{C_1'}^{-1}(p)$ is
finite.
Without loss of generality, we may assume that there exists a
tropical line $L$ given by $\tg y+a \td$, with $a\in\TT$, having an
isolated intersection point with $e$ at $p$. 
Let $C_3$ be the non-singular 
tropical curve in $\RR^2$ containing $e$ and defined
by a binomial polynomial (in particular $C_3$ is a classical line).
If we denote by
$m_p$ the
number of intersection points of $X_1$ with the line of equation
$w+t^{-a}=0$, then we have
$$m_p=(C_3\circ_\TT L)_pw(e).$$
Now the result follows from the fact that for each $i$, at
least $(C_3\circ_\TT L)_pw(e_i)$ 
points in $X'_1\cap\{w+t^{-a}=0\}\subset (\KK^*)^3$ have valuation
contained in $e_i$.
\end{proof}

\begin{cor}\label{nonsingular}
If $C_1$ is a non-singular tropical curve 
with no component
contained in $C_2$, then $C'_1$
is entirely determined by $C_1$, $C_2$, and $Trop(X_1\cap X_2)$. More
precisely, we have

\begin{itemize}
\item $\pi_{C'_1}$ is one-to-one above $C_1\setminus Trop(X_1\cap X_2)$;

\item for any $p\in  Trop(X_1\cap X_2)$, the set $\pi_{C_1'}^{-1}(p)$
  is a vertical end of weight $w(p)$ (recall that by definition, each
  point in $Trop(X_1\cap X_2)$ comes with a multiplicity);

\item any edge $e$ of $C'_1$ which is not a vertical end is of weight 1.
\end{itemize}
\end{cor}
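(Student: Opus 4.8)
The plan is to establish the three listed properties in turn, forcing the weight inequalities of Lemmas \ref{intersection} and \ref{weight modif} to become equalities by using that every edge of the non-singular curve $C_1$ has weight $1$, and then upgrading a generic statement to a global one by a continuity argument.

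First I would fix an edge $e$ of $C_1$ and a point $p$ in its relative interior with $p\notin Trop(X_1\cap X_2)$. By Lemma \ref{intersection} no vertical end of $C'_1$ lies over such a $p$, so the fibre $\pi_{C'_1}^{-1}(p)$ is bounded. Choosing $p$ generic, so that its fibre avoids the finitely many vertices of $C'_1$, I can apply Lemma \ref{weight modif} to get $\sum_{i=1}^{l}w(e_i)\le w(e)=1$, the last equality because $C_1$ is non-singular and hence each of its edges is a weight $1$ facet. Since weights are positive integers this forces $l=1$ and $w(e_1)=1$; moreover $e_1$ cannot be vertical, since a vertical edge over $p$ would drag its endpoints (vertices of $C'_1$) into the fibre, contradicting genericity, while a vertical end is excluded by the choice of $p$. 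Thus $\pi_{C'_1}$ is injective on $e_1$ and the fibre is a single point. This already proves the third bullet, namely that every edge of $C'_1$ which is not a vertical end has weight $1$, and it shows $\pi_{C'_1}$ is one-to-one over a dense open subset of $C_1$.

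Next I would promote this generic injectivity to injectivity over all of $C_1\setminus Trop(X_1\cap X_2)$; the remaining points are the vertices of $C_1$ and the finitely many interior edge-points whose fibre meets a vertex of $C'_1$. For these I let $p'\to p$ along $C_1$ through generic points, so that each $\pi_{C'_1}^{-1}(p')$ is a single point, and I would argue that the limiting fibre cannot break into several points of $C'_1$. This is the step I expect to be the main obstacle: ruling out that the fibre over a vertex of $C_1$ (which is trivalent and dual to a primitive triangle) splits into two or more vertices of $C'_1$. I expect to close it using the balancing condition together with non-singularity, since a spurious isolated vertex appearing in the fibre would be forced to have a valence, or a dual cell, incompatible with balancing at the vertices of $C'_1$. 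This yields the first bullet.

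Finally, for $p\in Trop(X_1\cap X_2)$ the converse part of Lemma \ref{intersection} produces a vertical end $e$ over $p$ with $w(e)=\sum_{q\in X_1\cap X_2\cap Val^{-1}(p)}(X_1\circ X_2)_q=w(p)$. Combining the injectivity just established with the same limiting argument shows that the non-vertical part of $\pi_{C'_1}^{-1}(p)$ is the single vertex to which $e$ is attached, and a second vertical end issuing from that vertex in the direction $(0,0,-1)$ would coincide with $e$; hence $\pi_{C'_1}^{-1}(p)$ is exactly this vertical end of weight $w(p)$. This is the second bullet, and together the three bullets show that $C_1$, $C_2$ and $Trop(X_1\cap X_2)$ determine $C'_1$ completely.
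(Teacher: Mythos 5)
The first and third bullets of your argument for generic points are fine, and they use the same two lemmas the paper relies on (Lemmas \ref{intersection} and \ref{weight modif}). But the step you yourself flag as ``the main obstacle'' is a genuine gap, and the tools you propose for closing it are not sufficient as stated. Continuity alone cannot promote generic injectivity to injectivity over a vertex $v$ of $C_1$ or over a point of $(C_1\cap C_2)\setminus Trop(X_1\cap X_2)$: the limits of the single-point fibres taken along the different edges adjacent to $v$ need not coincide, and the fibre over $v$ may a priori also contain bounded \emph{vertical} edges of $C'_1$, which are invisible both to Lemma \ref{weight modif} (its hypothesis excludes fibres containing vertices of $C'_1$) and to your limiting process. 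Ruling these out requires an actual balancing analysis at each putative extra fibre point (e.g.\ observing that at a trivalent vertex dual to a primitive triangle no proper nonempty subset of the three primitive directions sums to zero, and an infinite-descent argument killing hanging vertical bounded edges); you only express the hope that such an analysis works. Note also that the third bullet, as you derive it, covers only non-vertical edges; bounded vertical edges are excluded only once the first bullet is fully proved, so your ordering is slightly circular.

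The idea you are missing, and which the paper uses to bypass all of this, is the containment $C'_1\subset W$, where $W$ is the tropical modification of $\RR^2$ defined by $P_2$ (this holds because $X_1\setminus V(P_2)\subset (\KK^*)^2\setminus V(P_2)$, so tropicalizing preserves the inclusion). The combinatorics of $W$ are completely known: $\pi_W$ is one-to-one over $\RR^2\setminus C_2$, and its fibres over $C_2$ are vertical rays. This gives the first bullet over $C_1\setminus C_2$ immediately, with no limiting argument, and confines every remaining fibre to a single vertical ray; the structure over $C_1\cap C_2$ is then pinned down by Proposition \ref{set intersection} and Lemma \ref{weight modif} together with balancing. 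Finally, observe that you never use the hypothesis that no component of $C_1$ is contained in $C_2$, which is a warning sign: even granting your three bullets, they do not by themselves determine $C'_1$ as a subset of $\RR^3$, because they say nothing about the heights of the lifted edges. The paper uses precisely that hypothesis to produce a point $p\in C_1\setminus C_2$ whose lift $\pi_{C'_1}^{-1}(p)=\pi_W^{-1}(p)$ is a known point of $W$; this anchor, combined with $C'_1\subset W$, is what turns the combinatorial description into the statement that $C_1$, $C_2$ and $Trop(X_1\cap X_2)$ determine $C'_1$ entirely.
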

\begin{proof}
Let us denote by $W$ the tropical modification of $\RR^2$ given by $P_2(z,w)$.
According to Lemmas \ref{set intersection} and
 \ref{weight modif}, $C'_1$ is entirely determined by
the knowledge of $Trop(X_1\cap X_2)$, the direction of
one edge of $C_1'$ and one point of $C'_1$.
By hypothesis,  there exists
a point $p$ in $C_1\setminus C_2$ on an $e$ edge of $C_1$. Since
$\pi_{W}$ is one to one over $\RR^2\setminus C_2$, the point
$\pi_{C_1'}^{-1}(p)=\pi_{W}^{-1}(p)\in W$ is fixed, as well as the
direction of the edges of $C'_1$ passing through $\pi_{C_1'}^{-1}(p)$.
\end{proof}

\section{Tropicalization of  inflection points}
Now we come to the core of this paper. Namely, given a non-singular
tropical curve $C$, we study the possible
tropicalizations for the inflection points of a realization of $C$. 
Our main result is that for almost all tropical curves, there exists a finite
number of such points $p$ on $C$ and that the number of inflection points
which tropicalize to $p$ only depends on $C$, and not on
 the chosen realization.

Before going into the details, let us give an outline of our strategy.
Let $X$ be a realization of $C$, and $T$ be a tangent line to $X$ at
an inflection point $p$. 
First of all we prove in Proposition \ref{tangents}
 that  the vertex of $L=Trop(T)$ has
to be a vertex of $C$, which leaves only finitely many possibilities
for $L$.
In a second step, we refine in Proposition \ref{inclusion}
the possible locations of $Val(p)$ by
studying the tropical modifications of $C$ and $\RR^2$ defined by $T$.
In particular we identify finitely many subsets of $C$,
independant of
$X$ and called
\textbf{inflection components} of $C$, which may possibly contain $Val(p)$. Note
that these inflection components are 
often reduced to a point.
Finally, we prove in Theorem \ref{main} 
that the number of inflection points of $X$ with
valuation in a given inflection component $\E$  of $C$ only depends
on $\E$. We call this number the \textbf{multiplicity} of $\E$. 
The proof of Theorem \ref{main}  is postponed to section \ref{proof
  main}, and goes by the
study of 
the tropical  modification of $C$ and $\RR^2$ defined  by $Hess_X$.

\subsection{Inflection points of curves in  $(k^*)^2$}\label{sec:
  infl}
Let $k$ be any field of characteristic 0.
Given a (non-necessarily homogeneous) polynomial in two variables
$P(z,w)$, we denote by $P^{hom}(z,w,u)$ its homegeneization.
 Inflection points   of the curve $V(P)$   (recall that by
  definition $V(P)\subset (k^*)^2$)
 are defined as  the inflection points in 
$(k^*)^2= \{[z:w:u]\in kP^2\ | \ zwu\ne 0\}$ of the projective
curve defined by  $P^{hom}(z,w,u)$. 
Note that  inflection points of $V(P)$
 are invariant under the transformations $(i,j)\in\NN^2\mapsto
z^iw^jP(z,w)$ but are \textbf{not} invariant in general under 
invertible monomial transformations of $(k^*)^2$ (i.e.  automorphisms of
$(k^*)^2$).

Since the torus $(k^*)^2$ is not compact, the number of inflection
points  of $V(P)$
may depend 
on the coefficients of $P(z,w)$. Indeed, some inflection points could
escape from $(k^*)^2$ for some specific values of these coefficients.
However, we will see in Proposition \ref{max infl points} 
that this can happen only if $\Delta(P)$ contains
edges parallel to an edge of the simplex $T_1$.

\subsection{Location}\label{locate}
In the whole section, $X$ is an algebraic curve
in $(\KK^*)^2$ which is not a line, and
whose tropicalization is a non-singular tropical curve
$C$.
As we will see with Lemma \ref{subd} ,  it is hopeless to locate
the tropicalization of inflection points of $X$ looking at
$Trop(X)\cap Trop(Hess_X)$, this intersection being highly
non-transverse.
However, an inflection point $q$ of 
 $X$ comes together with its
tangent $T$, which has intersection at least $3$ with $X$ at $q$. 
It turns out that 
the determination of all possible $Trop(T)$ is a
 much 
 easier task.

Hence let  $q$ be an inflection point of $X$,
with tangent line $T$.
We denote by $L$ the tropical line
$Trop(T)$, 
by $p$ the point $Val(q)$, by $v$ the vertex of $L$, and by $E$ the
component  of $C\cap L$ containing $p$.

\begin{prop}\label{tangents}
The vertex $v$ is a common vertex of $C$ and $L$,
and 
 is contained in  $E$. Moreover, $E$
 is one of the following (see Figure \ref{Possibles}):
\begin{itemize}
\item reduced to $v$;
\item an  edge of $C$;
\item three adjacent edges of $C$, at least 2 of them being bounded.
\end{itemize}

\begin{figure} [htbp]
\begin{center}
\begin{tabular}{cccc}
\includegraphics[height=2cm, angle=0]{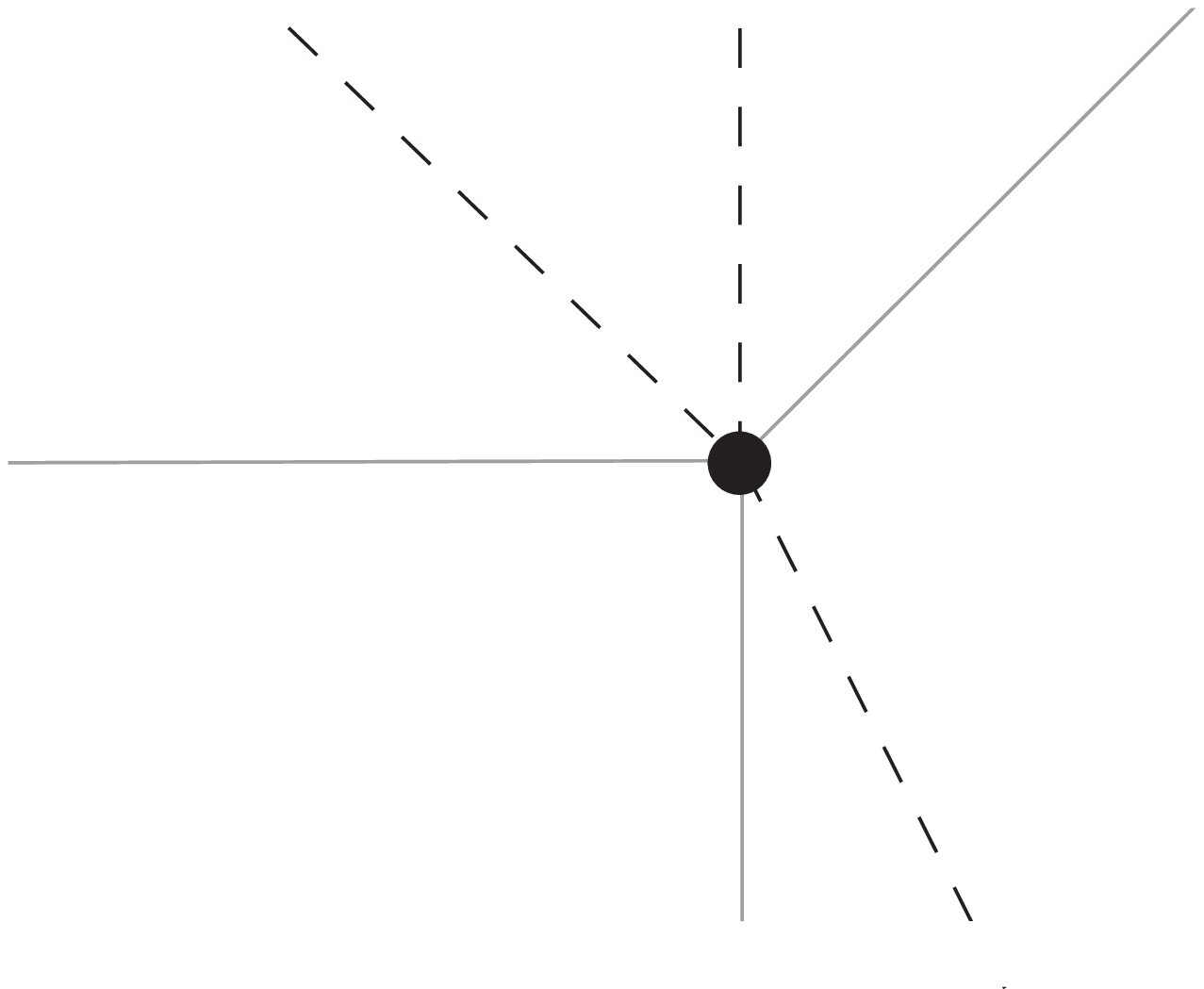}&
\includegraphics[height=2cm, angle=0]{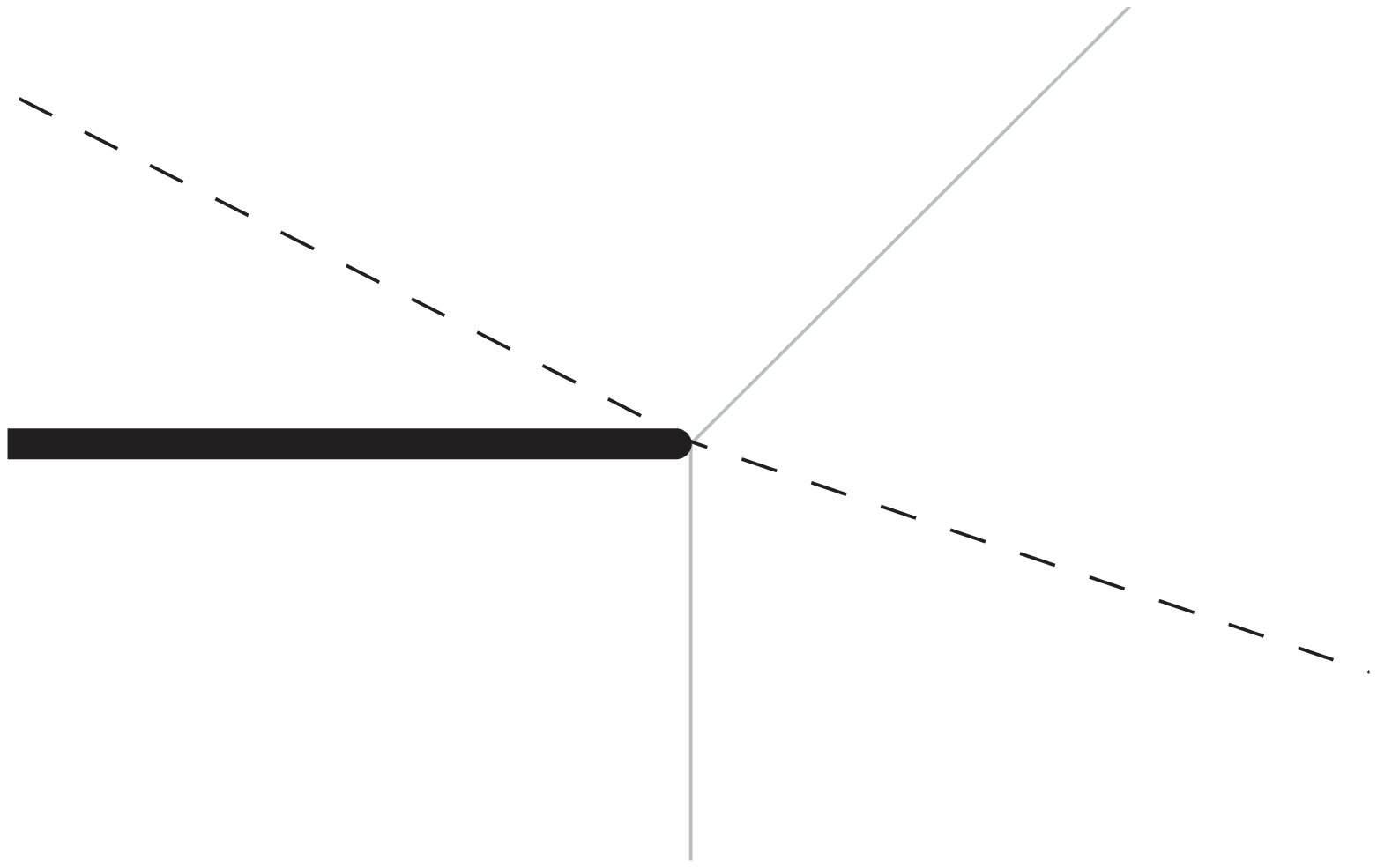}&
\includegraphics[height=2cm, angle=0]{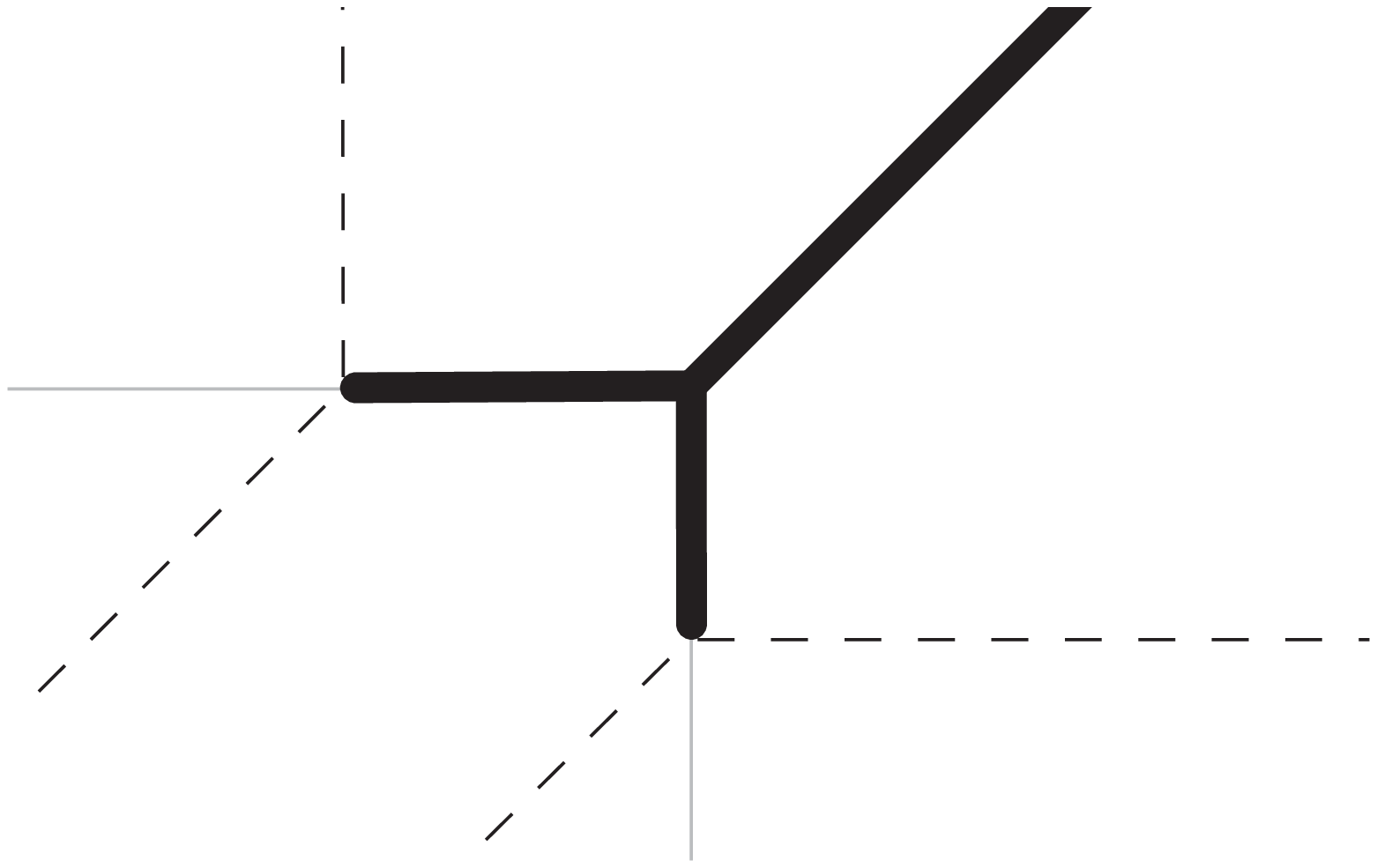}&
\includegraphics[height=2cm, angle=0]{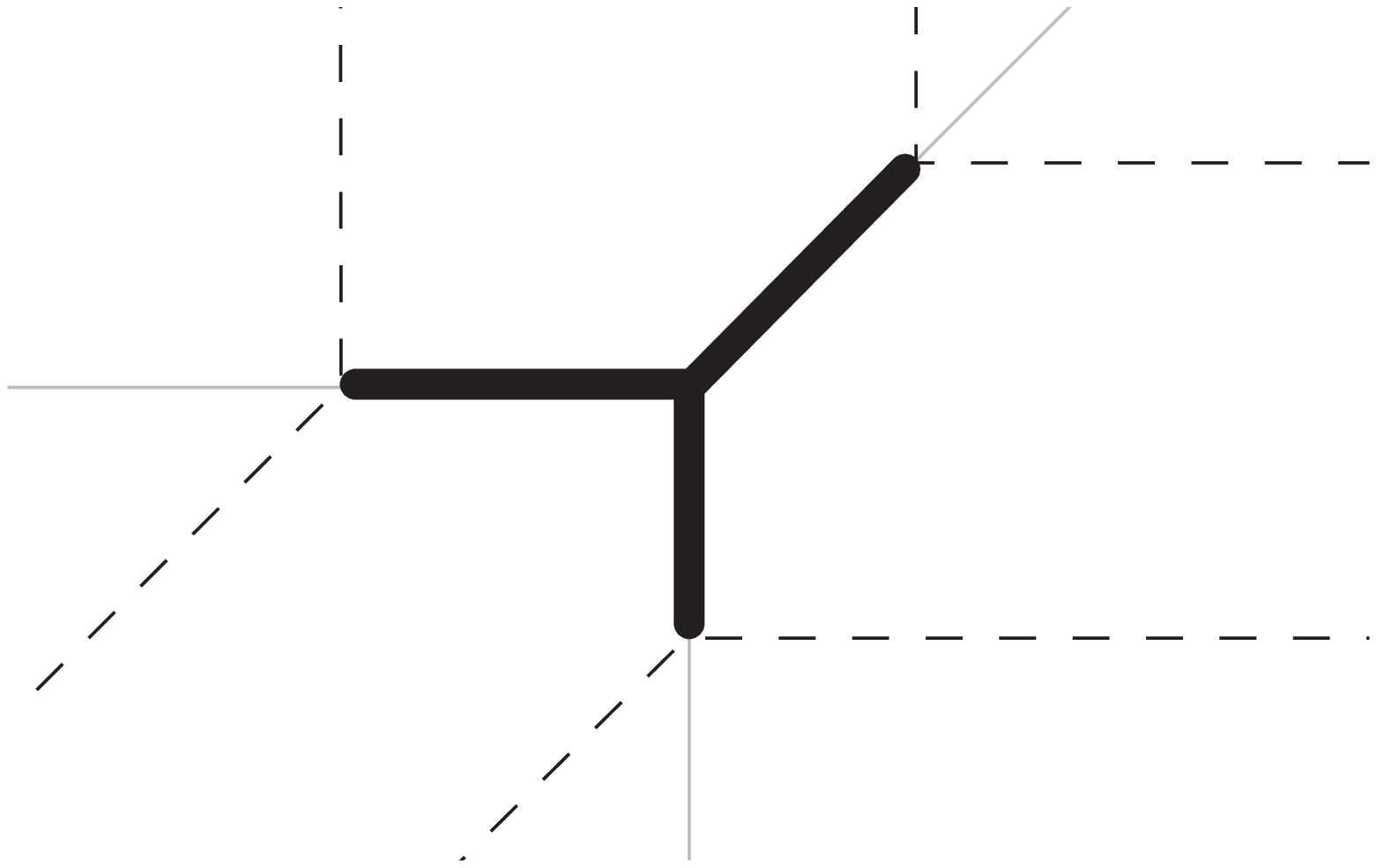}

\end{tabular}
\caption {Possible tropicalizations of third-order tangent
  lines. The intersection of $L$ and $C$ is in bold.}
\label{Possibles}
\end{center}
\end{figure}

\end{prop}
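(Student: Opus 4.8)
I want to show that the tangent line $T$ at an inflection point $q$ of $X$ tropicalizes to a tropical line $L$ whose vertex $v$ lies in the component $E$ of $C\cap L$ containing $p=Val(q)$, and that $E$ has one of the three listed shapes. The key invariant to exploit is that $T$ has contact of order at least $3$ with $X$ at $q$, i.e. $(X\circ T)_q\ge 3$. My strategy is to feed this high local intersection multiplicity into the tropical intersection bound of Proposition \ref{set intersection}, which asserts that for a \emph{compact} component $E$ of $C\cap L$ the sum of weights of vertical ends of the modification $C'$ over $E$ equals $(C\circ_\TT L)_E$, and that in general this sum of weights is bounded above by $(C\circ_\TT L)_E$. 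Since the $\ge 3$ intersection at $q$ contributes (via Lemma \ref{intersection}) at least $3$ to the weight of a vertical end over $p$, we must have $(C\circ_\TT L)_E\ge 3$.

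First I would establish that $v\in E$. The idea is that if the vertex $v$ of $L$ were \emph{not} in the closure of $E$, then near $E$ the tropical line $L$ looks like a single straight edge (a classical line locally), and $C$ is non-singular, so along $E$ the intersection is governed by the local pictures of Lemmas \ref{no tang} and \ref{no infl}. Concretely, if $E$ is the isolated intersection of an edge of $L$ with an edge of $C$ (Lemma \ref{no tang}) then every intersection point of $X$ and $T$ over $p$ is transverse, contradicting $(X\circ T)_q\ge 3$; and if $E$ meets a vertex of $C$ but not of $L$, then Lemma \ref{no infl} bounds the local multiplicity by $2$, again a contradiction. So no edge of $L$ disjoint from $v$ can carry the order-$3$ contact; the vertex $v$ must participate, i.e. $v\in E$, and in particular $v$ is simultaneously a vertex of $L$ and of $C$. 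This last point uses non-singularity of $C$ together with Lemma \ref{no infl}: an interior edge point of $C$ crossing the vertex of $L$ only yields multiplicity $\le 2$.

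Next, granting $v\in E$ is a common vertex, I would enumerate the possible shapes of $E$ by a dual-subdivision / Newton-polygon argument. Since $C$ is non-singular, the dual cell $\Delta_v$ is a primitive triangle, so the three edges of $C$ at $v$ have primitive integer directions. A tropical line has exactly three rays in the directions $(-1,0),(0,-1),(1,1)$ (up to the chosen convention), meeting at $v$. The component $E$ is the connected locus where $L$ and $C$ coincide set-theoretically through $v$; its ``arms'' are exactly those edges of $C$ at $v$ whose direction matches a ray direction of $L$. Counting how many of the three edges of $C$ at $v$ can be parallel to the three rays of $L$ gives precisely the trichotomy: if none coincide, $E=\{v\}$; if exactly one coincides, $E$ is a single edge of $C$; and the remaining case is $E$ consisting of three adjacent edges of $C$ (one along each ray of $L$). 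The constraint that at least two of these three edges are bounded is what I expect to be the \textbf{main obstacle}: I would argue it by the intersection-multiplicity budget. If two of the three arms were unbounded, then $E$ would be non-compact in two directions, and using Proposition \ref{set intersection} together with the global Bézout count $(C\circ_\TT L)=d$ for the total intersection of $C$ (Newton polygon $T_d$) with a line, I would show there is not enough intersection multiplicity available to localize an order-$3$ contact at such an $E$ while keeping $Val(q)=p$ finite — the inequality $m\le (C\circ_\TT L)_E$ becomes strict in the unbounded directions (as in the $\widetilde z_i<\lambda\widehat z_i$ analysis of the proof of Proposition \ref{set intersection}), forcing at least one inflection-bearing arm to be bounded, and a symmetric argument forces a second. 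Combining the $v\in E$ step, the enumeration, and this boundedness constraint yields exactly the three listed possibilities.
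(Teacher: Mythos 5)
Your skeleton is the paper's own: the contact of order $\ge 3$ at $q$ forces, via Lemma \ref{intersection} and Proposition \ref{set intersection} (equivalently Proposition \ref{trop int}), the bound $(C\circ_\TT L)_E\ge 3$, and one then excludes every shape of $E$ incompatible with the statement. But two of your exclusion steps have genuine gaps. The step ``$v\in E$ and $v$ is a common vertex'' cannot be carried by Lemmas \ref{no tang} and \ref{no infl} alone: those lemmas bound the multiplicity of intersection points of $X$ and $T$ whose valuation is a tropical intersection point of two specific local types (isolated edge--edge crossing; vertex of one curve interior to an edge of the other). If $E$ is a \emph{one-dimensional overlap} component in which $v$ fails to be a common vertex --- an unbounded edge of $C$ sitting inside a ray of $L$ with $v$ outside it, or a partial overlap segment running from $v$ (interior to an edge of $C$) to a vertex of $C$ --- then $q$ may a priori tropicalize to the interior of $E$, which is not a tropical intersection point at all, and neither lemma says anything there; that interior points of overlap components really do absorb intersection points is exactly the phenomenon of Figure \ref{2}, and of your own good cases in Proposition \ref{inclusion}. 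Such components must instead be excluded by the multiplicity count: each carries at most two tropical intersection points, each of multiplicity $1$, so $(C\circ_\TT L)_E\le 2<3$, contradicting the bound you established at the outset. This is how the paper disposes of the second and third shapes of Figure \ref{3}. Note that the division of labor cannot be reversed: for \emph{point} components the lemmas are genuinely needed, since an isolated crossing of an edge of $C$ of direction, say, $(1,3)$ with a ray of $L$ has tropical multiplicity $3$, and only Lemma \ref{no tang} rules it out.

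Your mechanism for ``at least two of the three arms are bounded'' is also not the right one. Non-compactness of $E$ does not make the inequality $m\le (C\circ_\TT L)_E$ strict, nor does it starve $E$ of multiplicity: the admissible configuration with two bounded arms and one unbounded arm is just as non-compact, has $(C\circ_\TT L)_E=3$, and does carry inflection points --- indeed, when the two bounded arms have equal length, the whole unbounded arm is an inflection component (Figure \ref{11}e). The correct and much shorter argument is again a direct count: if only one arm is bounded, the tropical intersection points in $E$ are $v$ and the far endpoint of the bounded arm, each of multiplicity $1$, whence $(C\circ_\TT L)_E=2<3$. No global B\'ezout count and no strictness analysis in unbounded directions is needed, and the latter is not a valid principle in any case.
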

\begin{proof}
According to  
Proposition \ref{trop int}, if $E$ contains a tropical inflection point, then $(C\cap_\TT L)_E\geq 3$. Since
$C$ and $L$ are both non-singular, if $E$
does not fulfill the conclusion of the Proposition, then $E$ is one of
the following (see Figure \ref{3}):
\begin{itemize}
\item reduced to a point which is not a common vertex of both $C$ and
  $L$;
\item an unbounded edge of $C$ or $L$ but not of both;
\item a bounded segment which is not an edge of $C$ neither of
  $L$; 
\item three adjacent edges of $C$ with only one of them being
  bounded;
\end{itemize}
The conclusion in the first case follows from Lemmas \ref{no tang} and
\ref{no infl}. In the last three cases we have
$$(C\cap_\TT L)_E< 3.$$
Hence we excluded all cases not listed in the proposition.
\end{proof}

\begin{figure} [htbp]
\begin{center}

\begin{tabular}{cccc}
\includegraphics[height=2cm, angle=0]{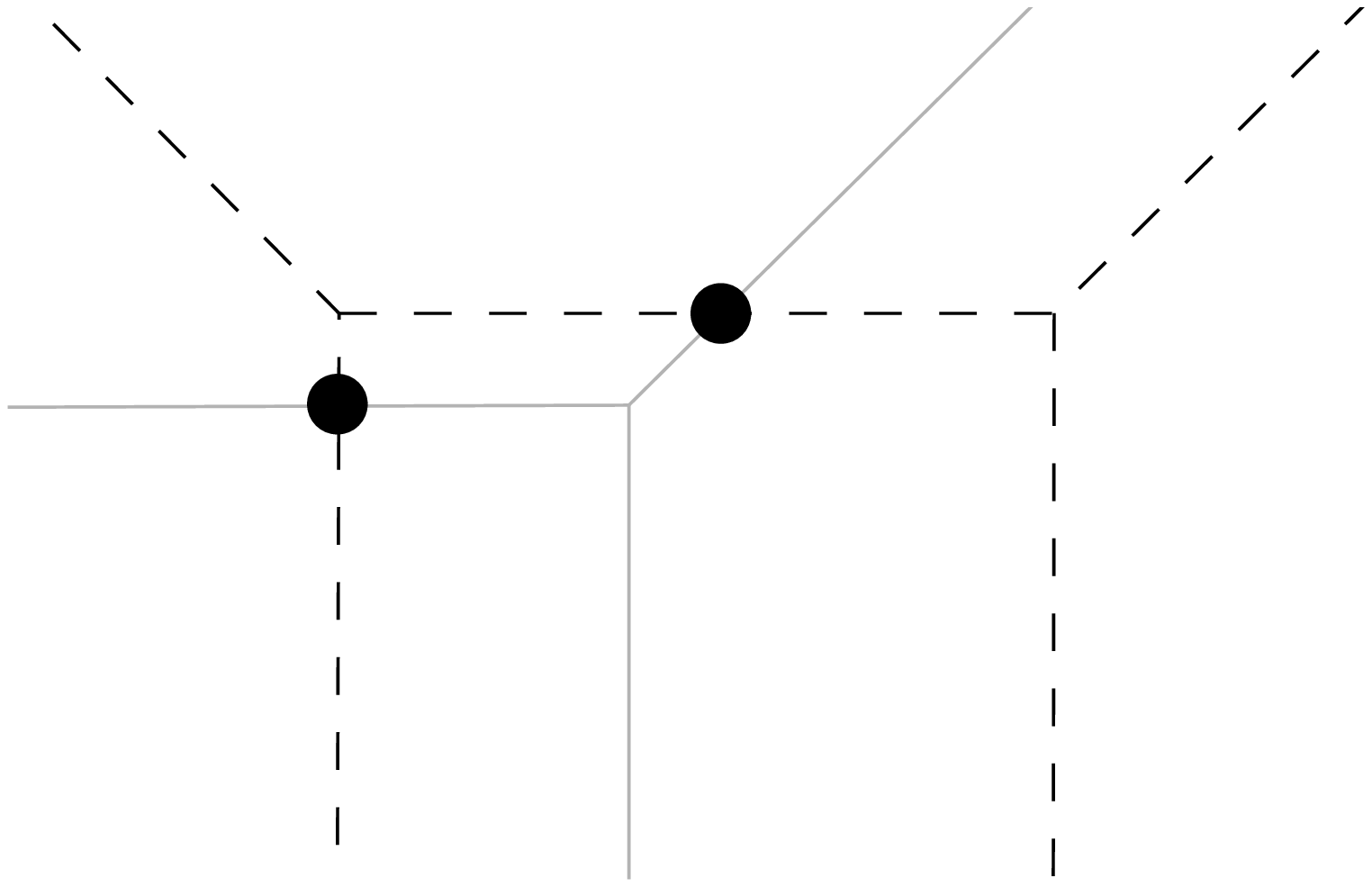}&
\includegraphics[height=2cm, angle=0]{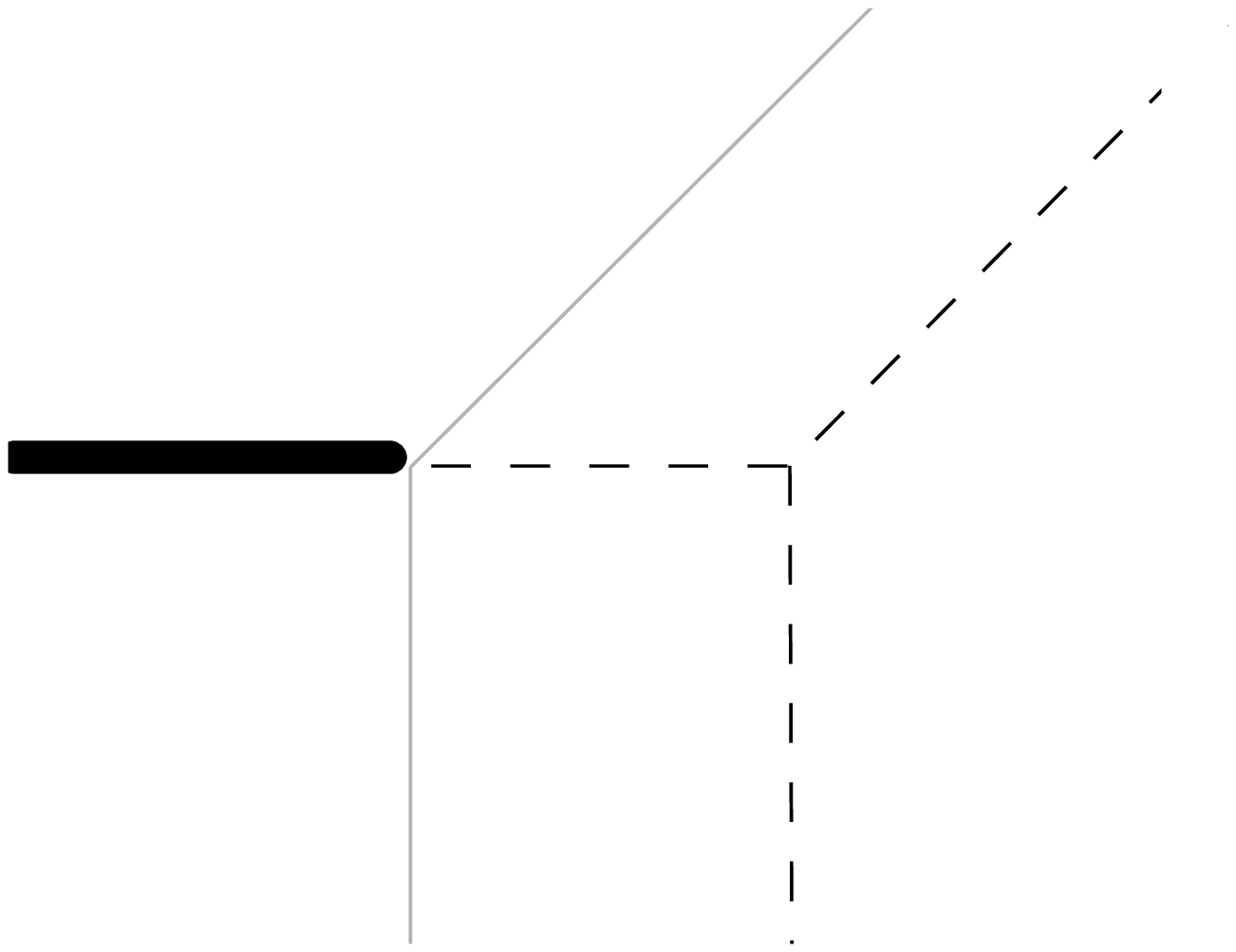}&
\includegraphics[height=2cm, angle=0]{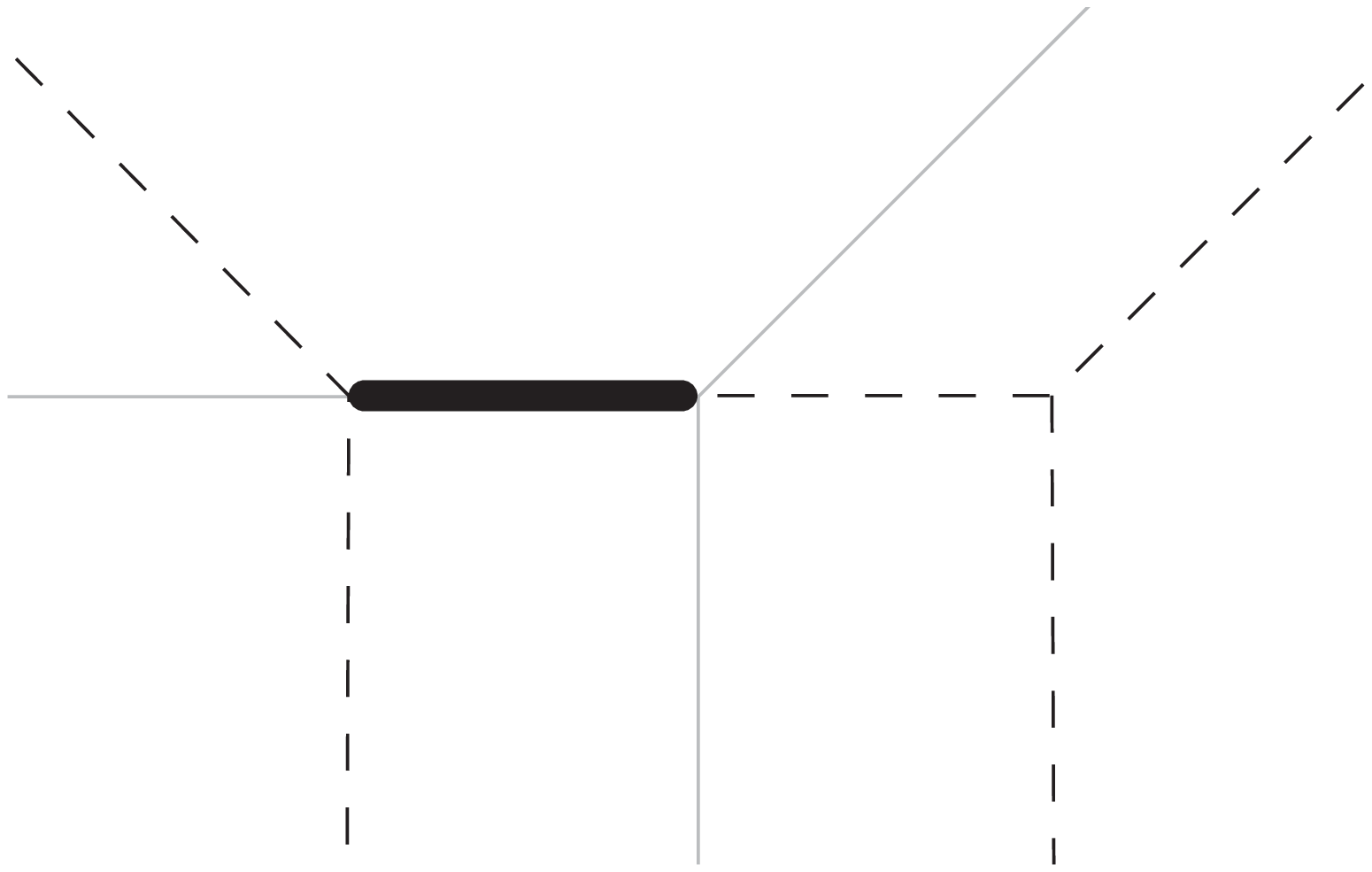}&
\includegraphics[height=2cm, angle=0]{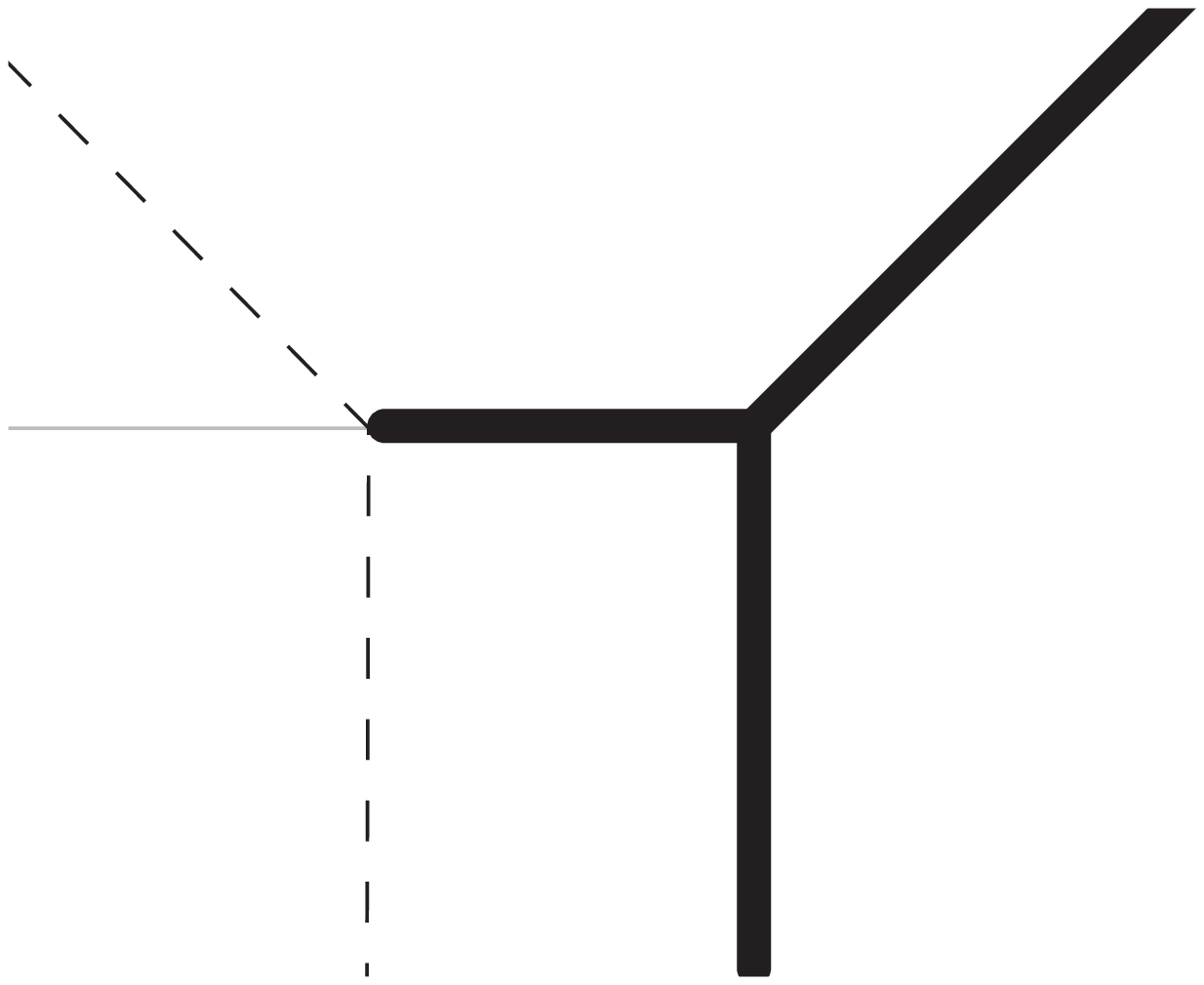}

\end{tabular}

\caption {Impossible tropicalizations of third-order tangent lines.}
\label{3}
\end{center}
\end{figure}

An immediate and important consequence of Proposition \ref{tangents} is that
 since the vertex of $L$ must be a vertex of $C$, there 
are only finitely many possibilities for $L$. 
For each of
these possibilities, 
we use  tropical modifications to get a
refinement on the
possible locations of $p$. If $E$ contains a bounded edge $e$, then 
we denote  by $v_e$ the other vertex of $C$ 
adjacent to $e$.  Recall that $l(e)$ is the length of $e$ as an edge of $C$.
We define the subset $\I_L$ of
$E$ as follows :
\begin{itemize}
\item if $E=\{v\}$ or $E$ is an unbounded edge of $L$, then
  $\I_L=\{v\}$ (see Figure \ref{11}a and \ref{11}b);

\item if $E$ is a bounded edge $e$ of $C$,  then
  $\I_L=\{v,p_e\}$ where  $p_e$ is 
the point on $e$ at distance $\frac{l(e)}{3}$ from $v$ (see Figure \ref{11}c);

\item if $E$ is the union of 2 bounded edges $e_1$, $e_2$, and one
  unbounded edge $e_3$, then
 \begin{itemize}
\item if   $l(e_1)> l(e_2)$, then $\I_L=\{p_{e_1}\}$ where  $p_{e_1}$ is 
the point on $e_1$ at distance $\frac{l(e_1)-l(e_2)}{3}$ from $v$ (see Figure \ref{11}d);
 \item if  $l(e_1)=l(e_2)$, then $\I_L$ is the whole edge $e_3$ (see Figure \ref{11}e);
\end{itemize}

\item if $E$ is the union of 3 bounded edges $e_1$, $e_2$, and $e_3$, then
\begin{itemize}
\item if   $l(e_1)\ge l(e_2)> l(e_3)$, then
  $\I_L=\{p_{e_1},p_{e_2}\}$ where  $p_{e_i}$ is  
the point on $e_i$ at distance $\frac{l(e_i)-l(e_3)}{3}$ from $v$ (see Figure \ref{11}f);
 \item if  $l(e_1)>l(e_2)=l(e_3)$, then $\I_L$ is the whole segment
   $[v;p_{e_1}]$ 
where  $p_{e_1}$ is 
the point on $e_1$ at distance $\frac{l(e_1)-l(e_2)}{3}$ from $v$ (see Figure \ref{11}g);
\item if   $l(e_1)=l(e_2)=l(e_3)$, then $\I_L=\{v\}$ (see Figure \ref{11}h).
\end{itemize}

\begin{figure} [htbp]

\begin{center}

\begin{tabular}{cccc}
\includegraphics[height=2.5cm, angle=0]{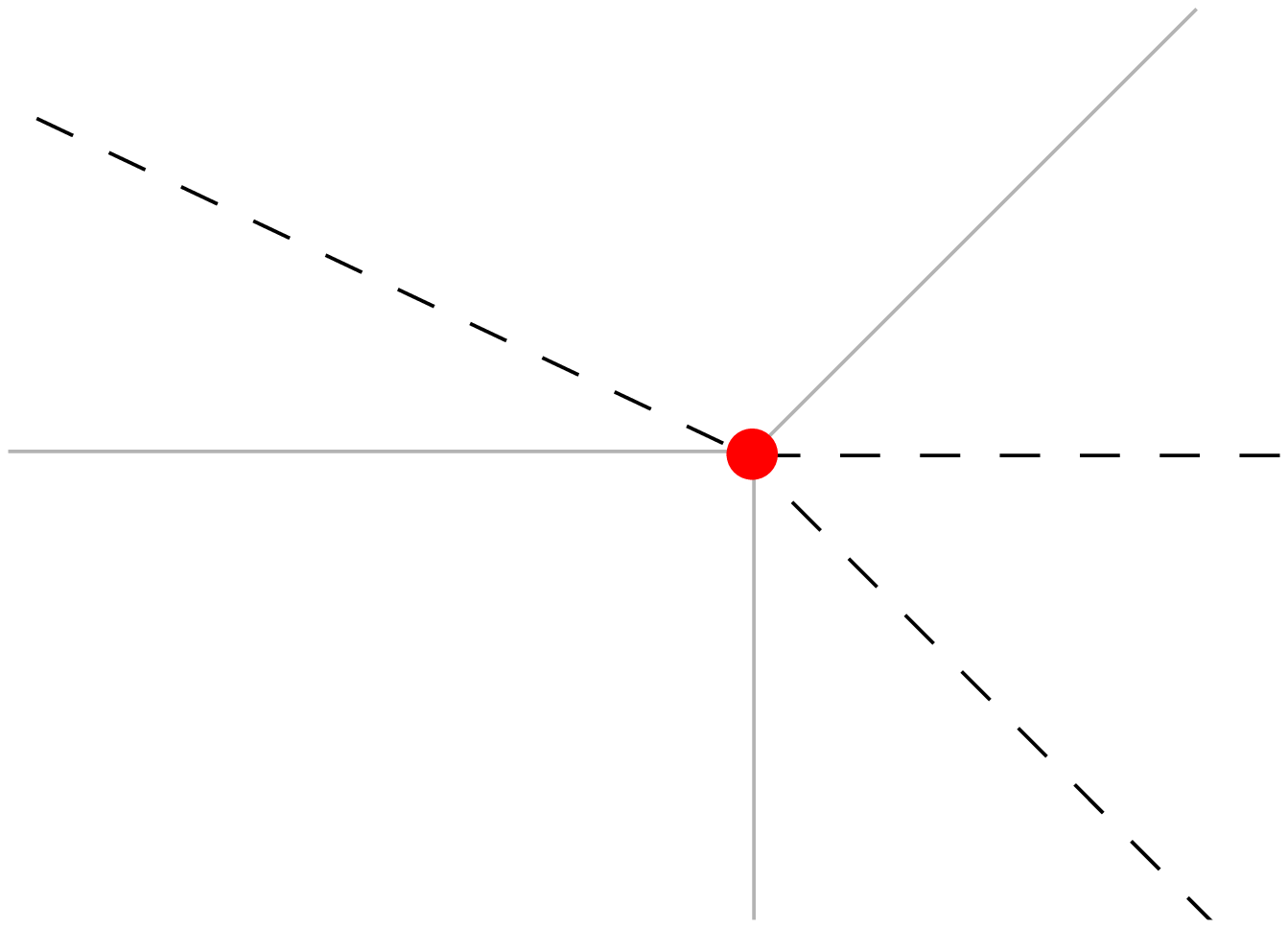}&
\includegraphics[height=2.5cm, angle=0]{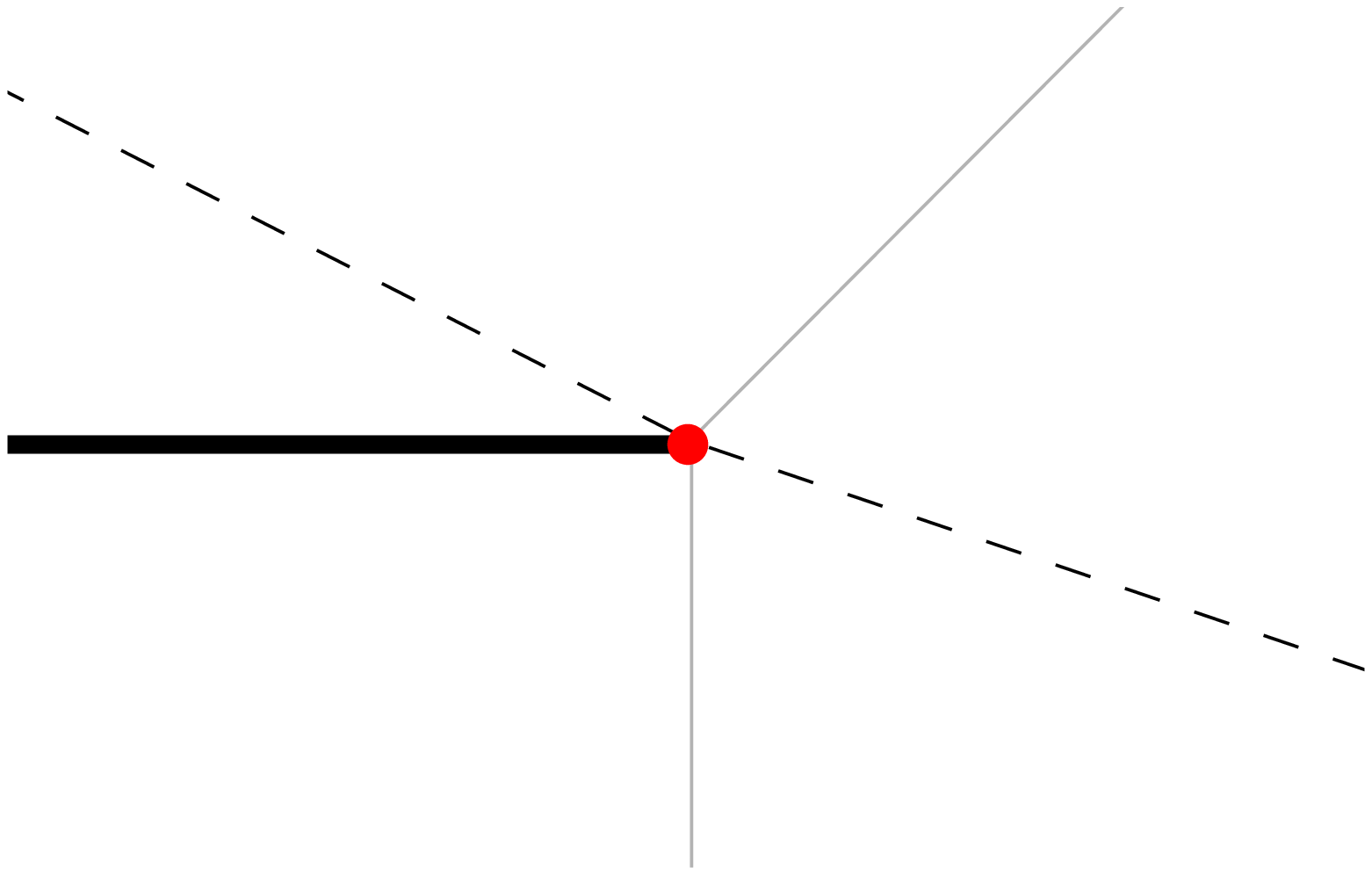}&
\includegraphics[height=2.5cm, angle=0]{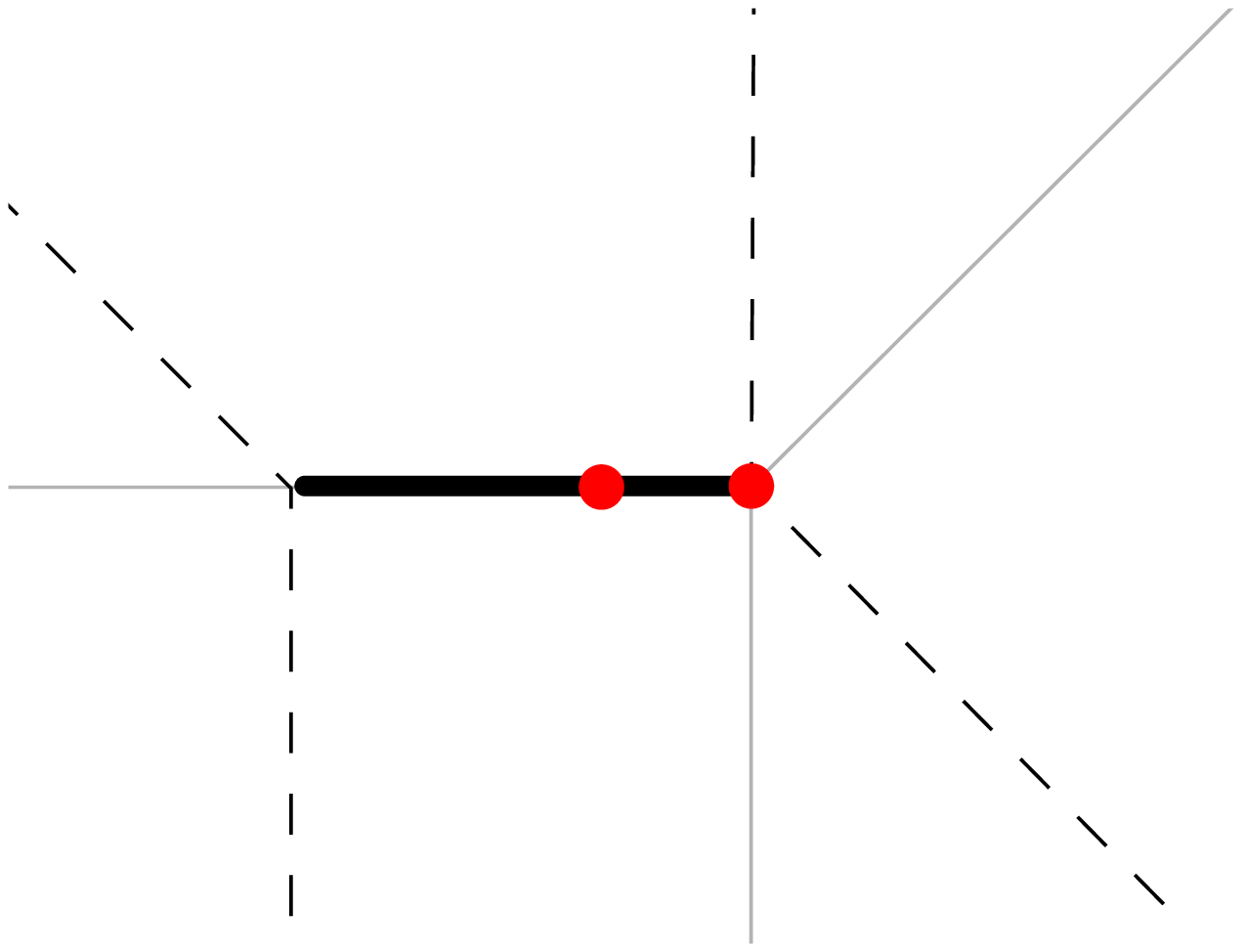}&
\includegraphics[height=2.5cm, angle=0]{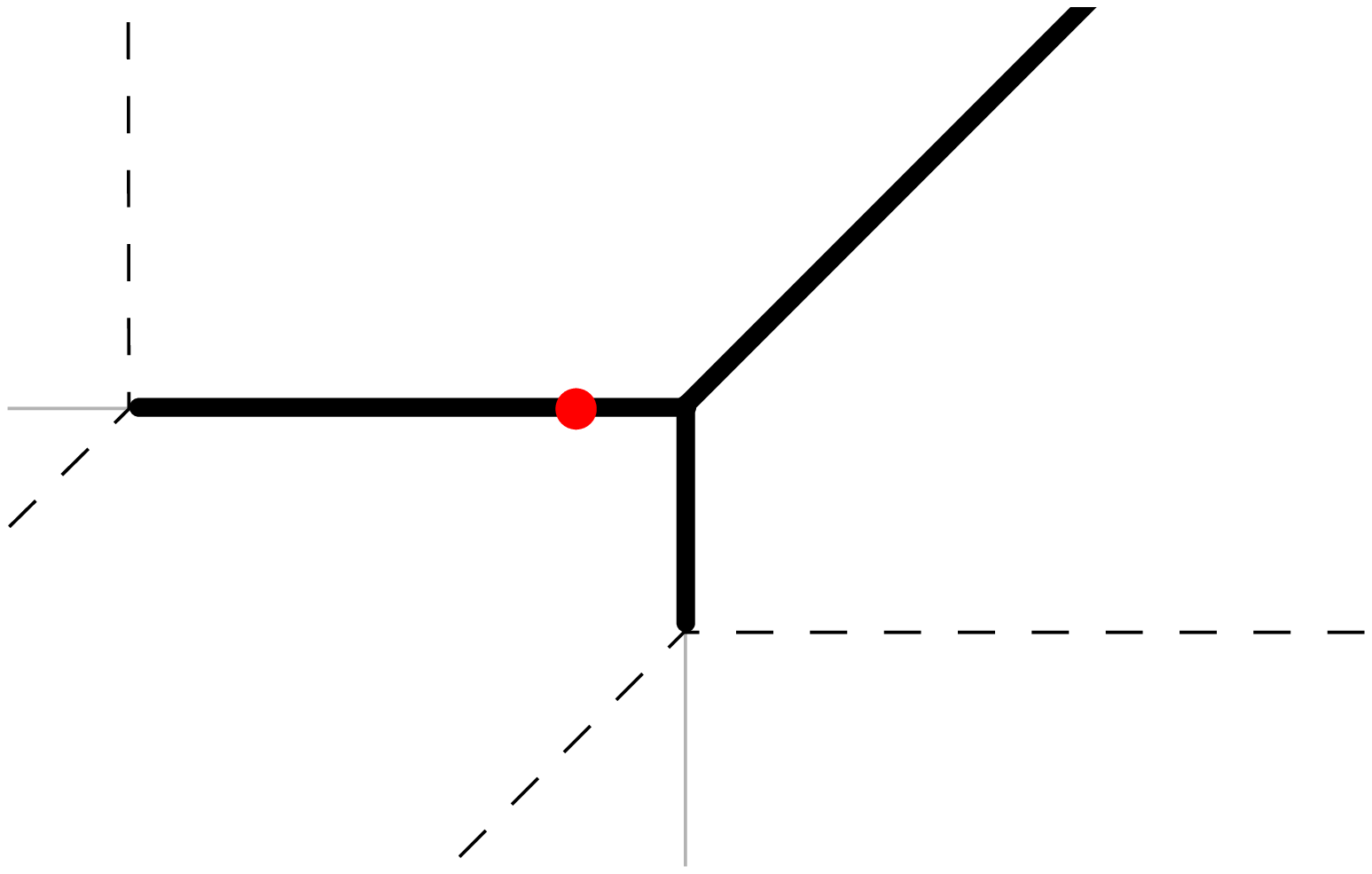}
\\ a) & b) & c) & d)
\\ \includegraphics[height=2.5cm, angle=0]{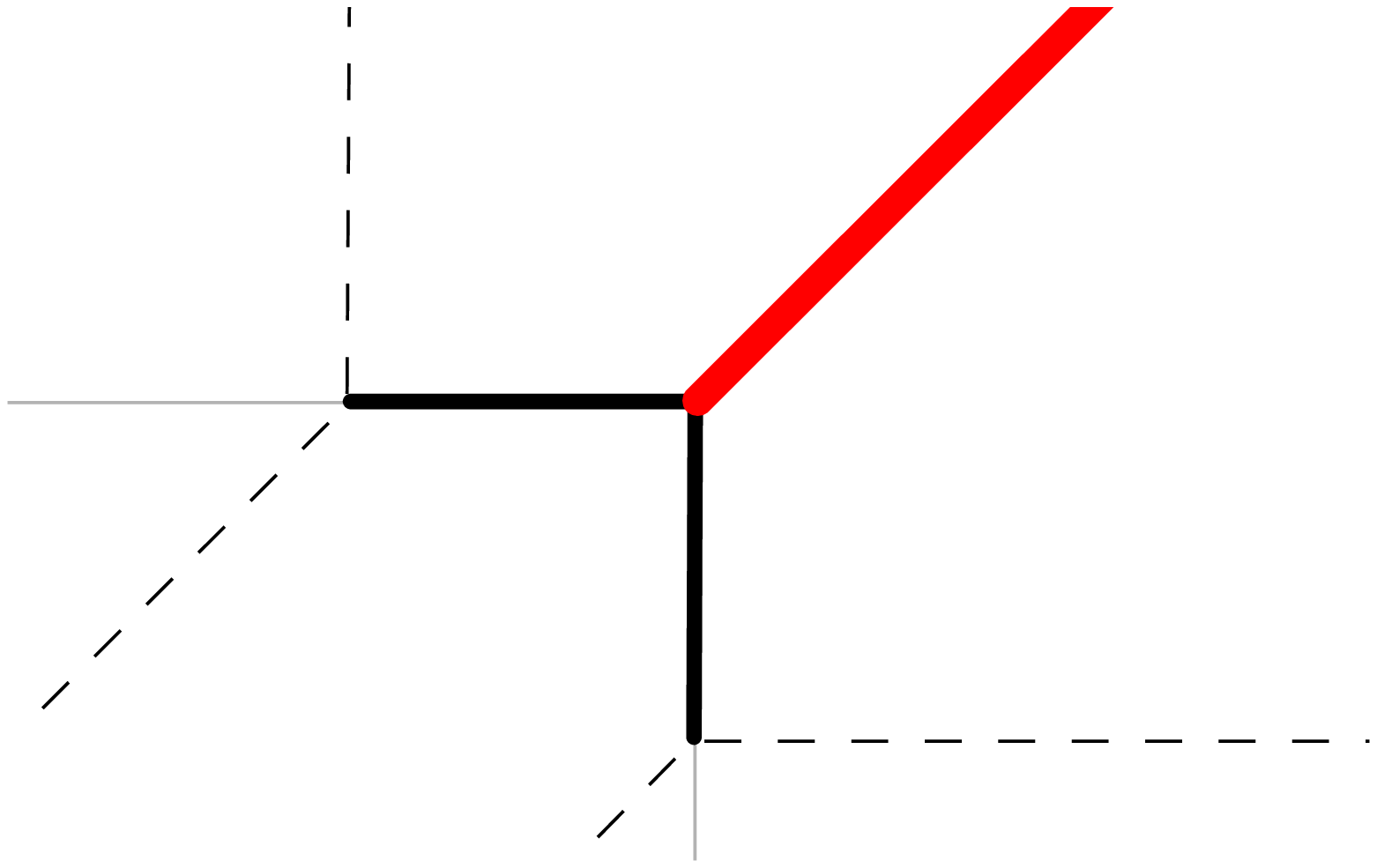}&
\includegraphics[height=2.5cm, angle=0]{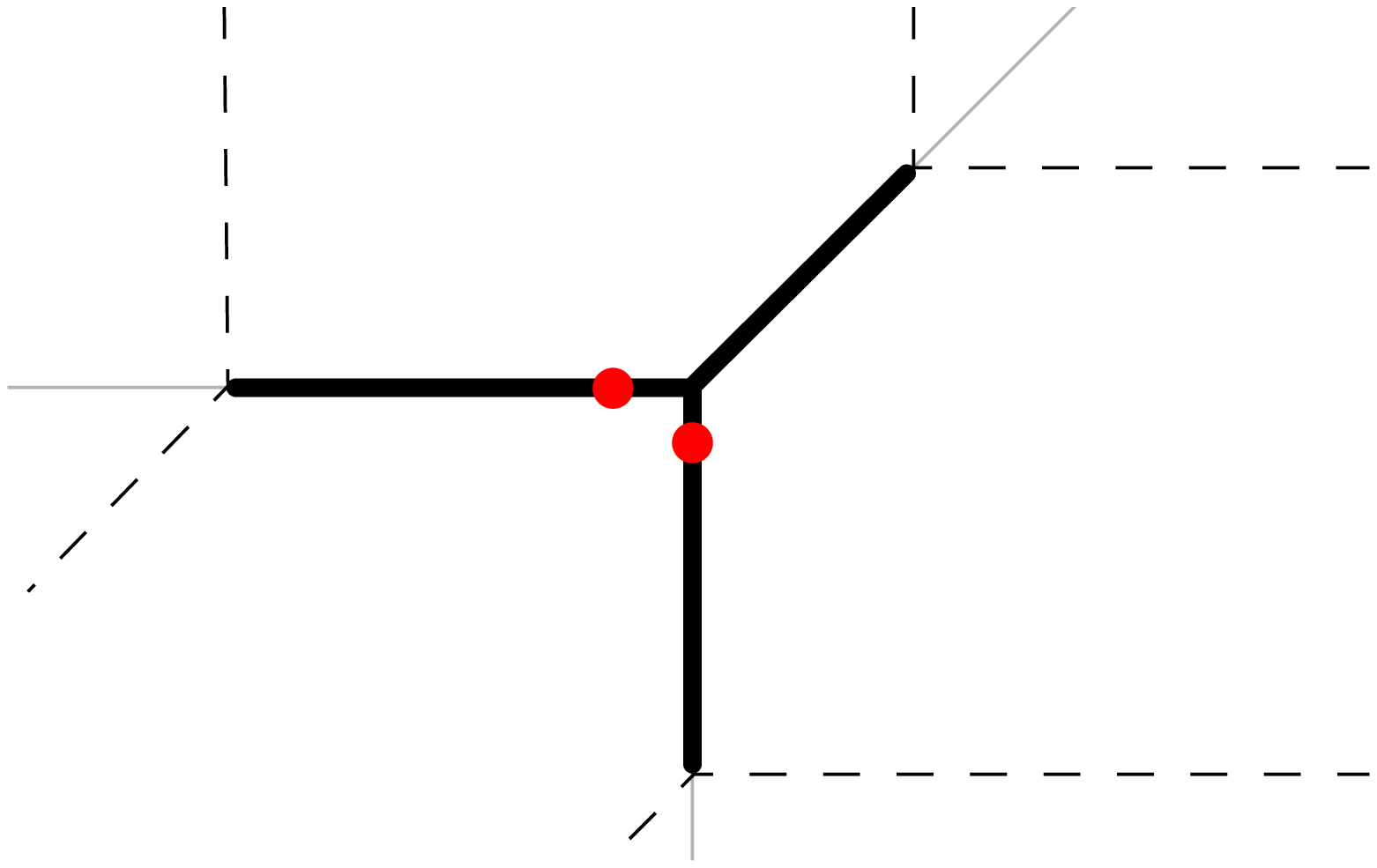}&
\includegraphics[height=2.5cm, angle=0]{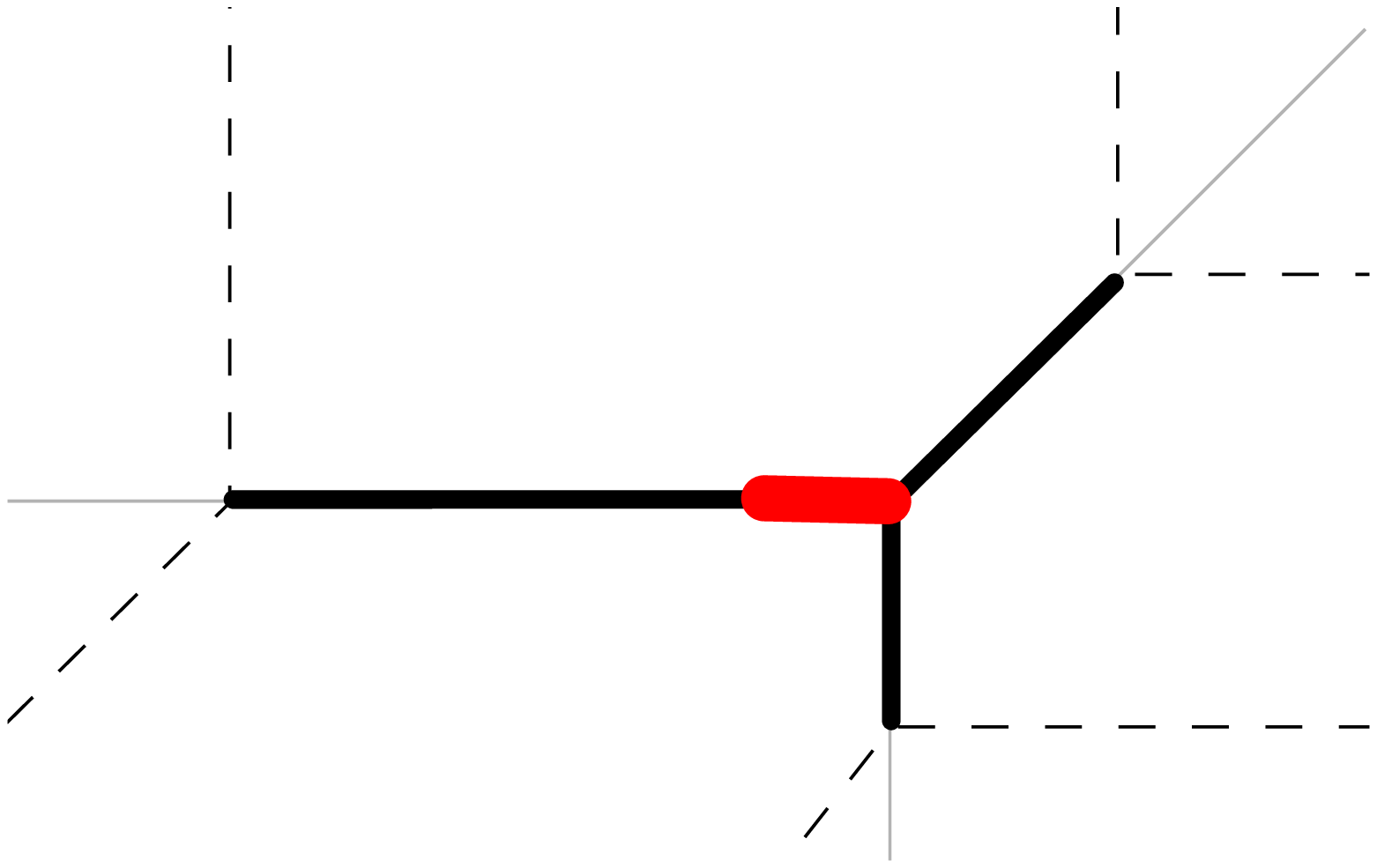}&
\includegraphics[height=2.5cm, angle=0]{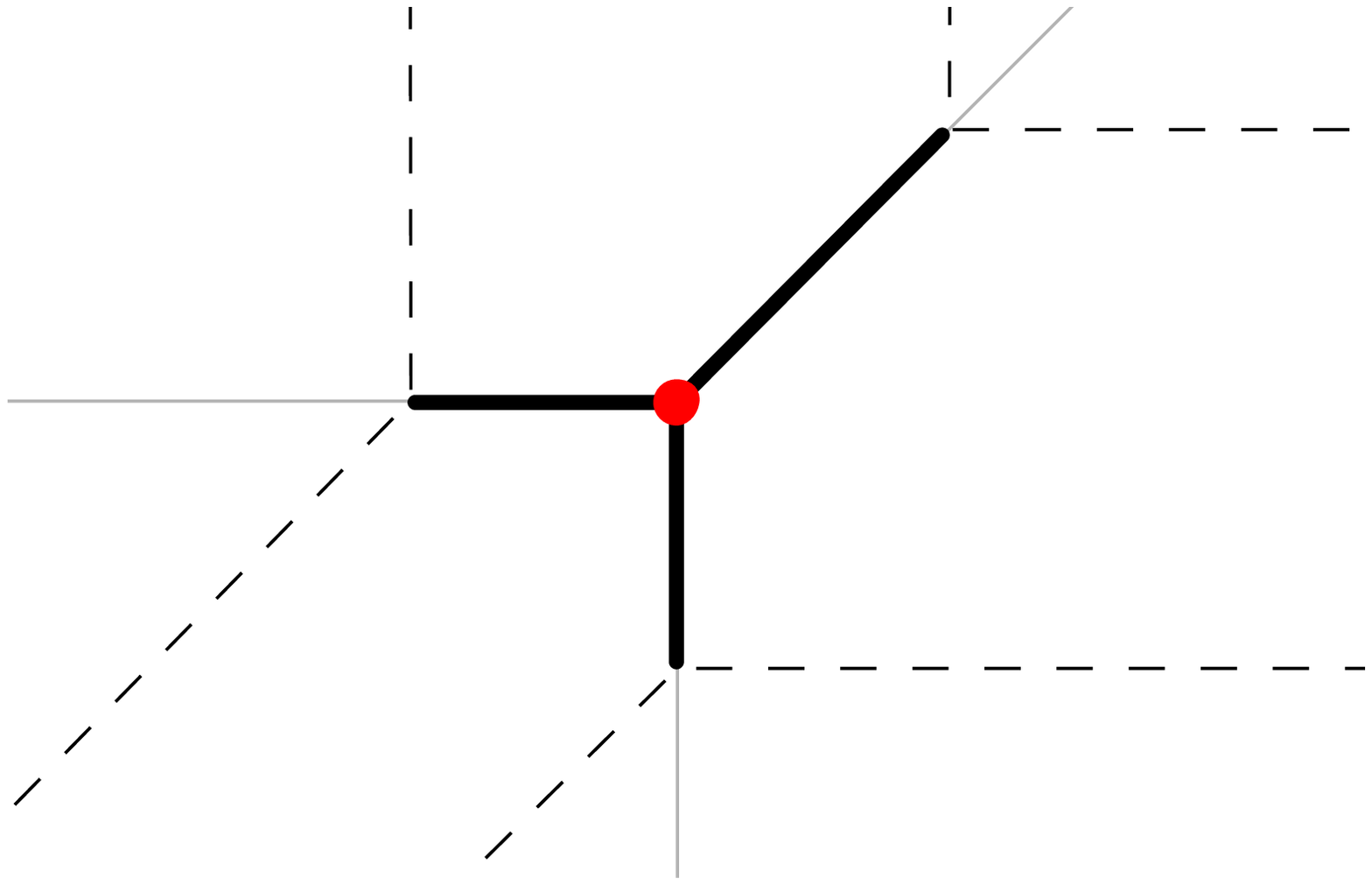}
\\ e) & f) & g) & h)
\end{tabular}

\caption {Description of $\I_L$ in all the possibles cases.}
\label{11}
\end{center}
\end{figure}

\end{itemize}
Note that except in two cases, the
  set $\I_L$ is  finite.

\begin{prop}\label{inclusion}
The  point $p$ is in $\I_L$.
\end{prop}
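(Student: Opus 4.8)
The plan is to pass to the tropical modification $C'$ of $C$ defined by a polynomial cutting out the tangent line $T$, and to read off the position of $p=Val(q)$ from the combinatorics of $C'$ above the component $E$. Recall that the associated modification of $\RR^2$ is the tropical hypersurface $\tg x_3+L(x_1,x_2)\td$, i.e. the graph of $L$ over $\RR^2$ together with the walls hanging in the direction $(0,0,-1)$ above $L\subset\RR^2$, and that $C'$ lies in it and projects onto $C$. By Corollary \ref{nonsingular} (applicable since $C$ is non-singular), $\pi_{C'}$ is one-to-one over $C\setminus Trop(X\cap T)$, every fibre over a point of $Trop(X\cap T)$ is a vertical end, and every non vertical edge of $C'$ has weight $1$. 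Hence there is a well-defined piecewise-linear \emph{height function} $h$ on $C\setminus Trop(X\cap T)$, namely the third coordinate of the lift; it equals $L$ over $C\setminus L$ (the graph part) and satisfies $h\le L$ over $E\subset L$ (the wall part). By Lemma \ref{intersection} the inflection point $q$, for which $(X\circ T)_q\ge 3$, produces a vertical end of $C'$ of weight $\ge 3$ over $p$. Thus locating $p$ amounts to locating a downward break of $h$ of weight $\ge 3$.

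First I would establish two local \emph{slope-gap} identities from the balancing condition in $\RR^3$ and the unimodularity of non-singular vertices. Write $\ell_e$ for the slope of $L$ along an edge $e\subset E$. At the vertex $v$ of $L$, the lift leaves $v$ along $e$ with a slope $\sigma_0$ determined by balancing against the two edges of $C$ that leave $L$ at $v$ (over which $h=L$); a direct computation gives $\ell_e-\sigma_0=2$. At a vertex $v_e$ of $C$ lying in the relative interior of a leg of $L$, the lift arrives with slope $\sigma_2$ satisfying $\sigma_2-\ell_e=1$. Since crossing a vertical end of weight $w$ raises the slope of $h$ by exactly $w$ (again by balancing), these identities force the \emph{total} weight of vertical ends over the interior of a bounded edge $e=[v,v_e]$ to equal $(\ell_e+1)-(\ell_e-2)=3$. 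Because $q$ already contributes at least $3$, it is the unique intersection point there, of weight exactly $3$; imposing $h(v)=L(v)$, $h(v_e)=L(v_e)$ and the slopes $\ell_e-2$, $\ell_e+1$ on the two subedges then yields that the break lies at lattice distance $l(e)/3$ from $v$, that is, at the point $p_e$ in the definition of $\I_L$.

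I would then run the same bookkeeping through the remaining configurations of Proposition \ref{tangents}. If $E$ is an unbounded edge of $C$, its only finite endpoint is $v$ and the slope must return to $\ell_e$ at infinity, so the available budget over the interior is only $(\ell_e)-(\ell_e-2)=2<3$; the weight-$3$ end cannot fit in the interior and is forced onto $v$, giving $\I_L=\{v\}$ (and likewise when $E=\{v\}$). When $E$ is a tripod of bounded edges at $v$, the three legs are simultaneously edges of $C$ and of $L$, so the boundary conditions $h=L$ now sit only at the outer vertices $v_{e_i}$, and balancing at the common lift of $v$ couples the three leaving slopes. Carrying out the height-matching across the legs shows that the weight-$3$ break must sit on a longest leg at distance $(l(e_i)-l(e_3))/3$ from $v$, reproducing the points $p_{e_1},p_{e_2}$; when two of the relevant lengths coincide the matching no longer pins the break down and a one-parameter family of admissible profiles appears, which is exactly why $\I_L$ becomes one-dimensional (the edge $e_3$, or the segment $[v;p_{e_1}]$) in those cases.

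The main obstacle is to make the budget argument fully rigorous in the presence of several intersection points of $X$ and $T$ whose valuations lie in $E$: these create extra vertical ends and extra breaks of $h$, and one must verify that no admissible profile can place a weight-$\ge 3$ break outside $\I_L$. Here the decisive constraints are the inequality $h\le L$ throughout $E$ (the lift stays in the wall, with equality forced at the endpoints of $E$) together with the weight-$1$ bound on non vertical edges from Corollary \ref{nonsingular}, which caps the slope changes and thereby the redistribution of vertical weight. I expect the careful combinatorial casework enforcing these constraints, rather than any single estimate, to be the technically demanding part of the proof.
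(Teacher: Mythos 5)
Your overall strategy is the same as the paper's: modify along the tangent line, use Lemma \ref{intersection} to see the inflection point as a vertical end of weight at least $3$, then pin down its position by slope bookkeeping and height matching (your computations in the tripod cases reproduce the paper's equations). However, there is a genuine gap at the core of your argument: the two ``slope-gap identities'' do \emph{not} follow from the balancing condition and unimodularity, as you claim. At the lift $v'$ of the common vertex $v$, balancing involves one more unknown than you account for, namely the weight $w_0\ge 0$ of a vertical end hanging at $v'$ itself (corresponding to intersection points of $X$ and $T$ whose valuation is exactly $v$). In Case 3, the general unimodular vertex has edges $e=(-1,0)$ along the leg, $(1-l,1)$ and $(l,-1)$ with $l\ge 1$; their lifts are $(1-l,1,1)$ and $(l,-1,l)$, and balancing at $v'$ gives $\sigma_0=w_0-l-1$, not $\sigma_0=\ell_e-2$. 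The identity you want holds precisely when $w_0=l-1$, and nothing combinatorial forces this. Concretely, take $l=2$, so $(C\circ_\TT L)_v=3$ and $(C\circ_\TT L)_E=4$: the profile with \emph{no} vertical end at $v'$ or $v_e'$ and a single vertical end of weight $4$ at distance $l(e)/4$ from $v$ (slopes $-3$, then $+1$) is balanced at every vertex, lies in the wall with $h\le L$ and $h=L$ at both endpoints, has all non-vertical edges of weight $1$, and has total vertical weight $4=(C\circ_\TT L)_E$ consistent with Proposition \ref{set intersection}. It therefore satisfies every constraint you invoke --- including the ones you single out as ``decisive'' in your last paragraph --- yet it places a weight-$\ge 3$ break at $l(e)/4\notin\I_L$.

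What eliminates such profiles is algebraic, not combinatorial: the paper's Lemma \ref{propE1}, proved by an explicit computation with the defining polynomials over $\KK$, shows that at least $l-1$ intersection points of $X$ and $T$ (with multiplicity) must have valuation exactly $v$, i.e. $w_0\ge l-1$. Combined with the budget $(C\circ_\TT L)_E$ of Proposition \ref{set intersection} and the weight-$\ge 3$ end inside $e$, this forces $w_0=l-1$ and no vertical end at $v_e'$; only then do your identities $\sigma_0=\ell_e-2$, $\sigma_2=\ell_e+1$, the interior budget of $3$, and the position $l(e)/3$ follow. The same input is needed for your unbounded-edge case: your bound ``$2<3$'' presupposes $\sigma_0=\ell_e-2$, and without Lemma \ref{propE1} one can build an analogous balanced profile with a weight-$3$ end in the interior of the unbounded edge when $l\ge 2$. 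So the missing ingredient is exactly Lemma \ref{propE1}, and your proposed route to rigor (the inequality $h\le L$ plus the weight-$1$ cap on non-vertical edges) cannot close the gap. Only in the tripod cases, where $\Delta_v=T_1$ and the total budget is already $3$ or $4$, is your purely combinatorial height matching sufficient, and there it coincides with the paper's argument.
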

\begin{proof}
Without loss of generality, we may assume that $v=(0,0)$ and
that $T$ is given by the equation $1+z+w=0$. Let $W$ (resp. 
$C'$) be the tropical modification of $\RR^2$ (resp. $C$) given by
the polynomial $1+z+w$. Recall that the tropical hypersurface $W$ has been
described in Examples \ref{exa trop modif}.
We denote $p=(x_p,y_p)$.
Given a
vertex  $v_1$ of $C$, we denote by $v_1'$ the vertex of $C'$ such that
$\pi_{C'}(v_1')=v_1$.  
According to
Lemma \ref{intersection}, the tropical curve $C'$ must have a vertical
end $e_p$
 with $w(e_p)\ge 3$  such that
$\pi_{C'}(e_p)=p$. 
Let us prove the Proposition case by case.

\vspace{1ex}
\textbf{Case 1: $E=\{v\}$.}  This case is trivial. 

\vspace{1ex}
\textbf{Case 2: $E$ is an unbounded edge $e$ of $C$. } 
We may assume that  $E$ is a horizontal edge. Then the proposition follows
from  Lemma \ref{propE1}.

\vspace{1ex}
\textbf{Case 3: $E$ is a bounded edge $e$ of $C$. } We may assume that
$E$ is a horizontal edge.
Since $C'\subset W$, we have
$v'=(0,0,0)$ and 
$v_e'=(-l(e),0,0)$ (see Figure \ref{Case3}). 
If $p=v$
there is nothing to prove, so
suppose now that $p\ne v$. We have
$(C\cap_\TT L)_E=(C\cap_\TT L)_v +1$. Hence according to Corollary 
\ref{set intersection} and Lemma
\ref{propE1}, 
the edge $e_p$ has weight exactly 3 and is the only vertical end of $C'$
above $e\setminus \{v\}$. According to Corollary \ref{nonsingular} and
the balancing condition, $C'$ has exactly 3 edges above $e\setminus
\{v\}$: $e_p$, an edge with primitive integer
direction $(1,0,-1)$ adjacent to $v_1'$, and an edge with primitive integer
direction $(1,0,2)$ adjacent to $v'$. Hence, we have $(l(e)+x_p) +
2x_p=0$ which reduces to $x_p=-\frac{l(e)}{3}$.

\begin{figure} [htbp]

\psfrag{V}{$v'$}\psfrag{v1}{$v'_1$}\psfrag{p}{$p$}\psfrag{v2}{$v'_2$}\psfrag{3}{$3$}\psfrag{ep}{$e_p$}
\begin{center}

\includegraphics[height=5cm, angle=0]{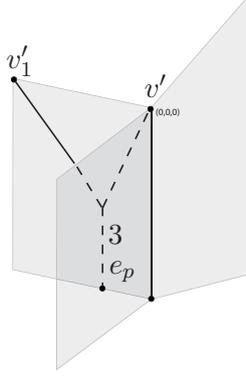}
\caption {Case 3: $E$ is a bounded edge.}
\label{Case3}
\end{center}
\end{figure}
\vspace{1ex}

\textbf{Case 4: $E$  is the union of 2 bounded edges $e_1$, $e_2$, and one
  unbounded edge $e_3$. }
We may assume that $e_1$ is horizontal, $e_2$ is vertical, and that
$l(e_1)\ge l(e_2)$. 
Since $C'\subset W$, we have

$$v_{e_1}'=(-l(e_1),0,0),\quad v_{e_2}'=(0,-l(e_2),0),\quad \text{and}\quad
v'=(0,0,-a) \ \text{with}\ a\ge 0.$$
In this case $(C\cap_\TT L)_E=3$. Then,
the edge $e_p$ has weight exactly 3 and is the only vertical end of $C'$
above $E$. Moreover, the curve $C'$ is completely determined once $p$
is known.

If $p\in e_1$ (i.e. $y_p=0$), 
then the fact that $v'$ is a vertex
of $C'$ gives us the equations $l(e_2)=a$ and $(l(e_1)+x_p) +
2x_p=a$ which reduces to $x_p=-\frac{l(e_1)-l(e_2)}{3}$(see Figure \ref{Case4}a). 

If $p\in e_2$, then $y_p=-\frac{l(e_1)-l(e_2)}{3}$. Since in this case
$y_p$ is
non-positive, this is possible only if $l(e_1)=l(e_2)$ and $y_p=0$ (see Figure \ref{Case4}b). 

If $p\in e_3$, then the vertex $v'$ imposes the condition
$l(e_1)=l(e_2)$. Hence, as soon as $l(e_1)=l(e_2)$, the point $p$ may
be anywhere on $e_3$ (see Figure \ref{Case4}c).

\begin{figure} [htbp]

\psfrag{V}{$v'$}\psfrag{v1}{$v'_{e_1}$}\psfrag{p}{$p$}\psfrag{v2}{$v'_{e_2}$}\psfrag{3}{$3$}\psfrag{ep}{$e_p$}
\begin{center}
\begin{tabular}{ccc}
\includegraphics[height=5.5cm, angle=0]{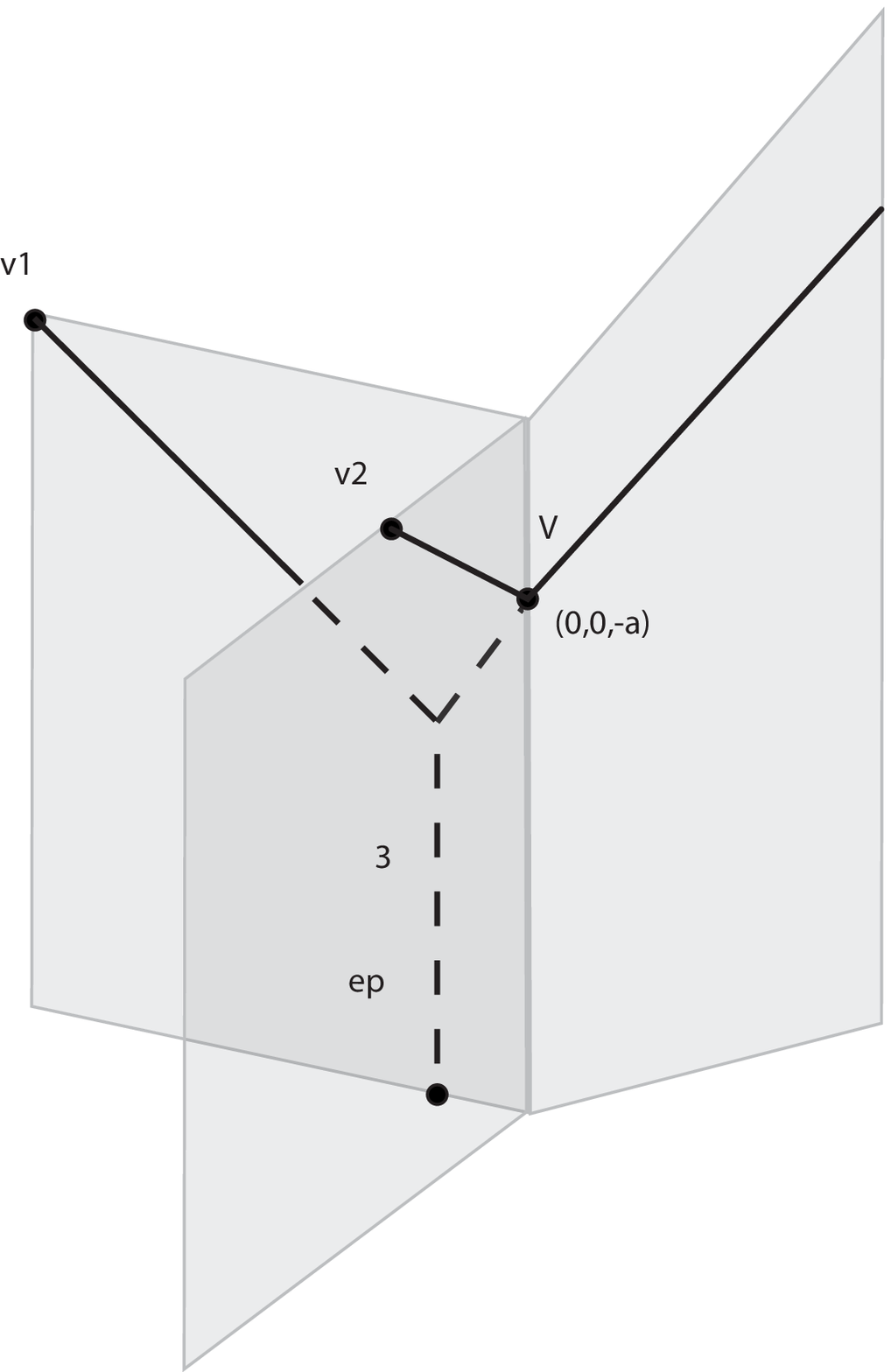}&
\includegraphics[height=5.5cm, angle=0]{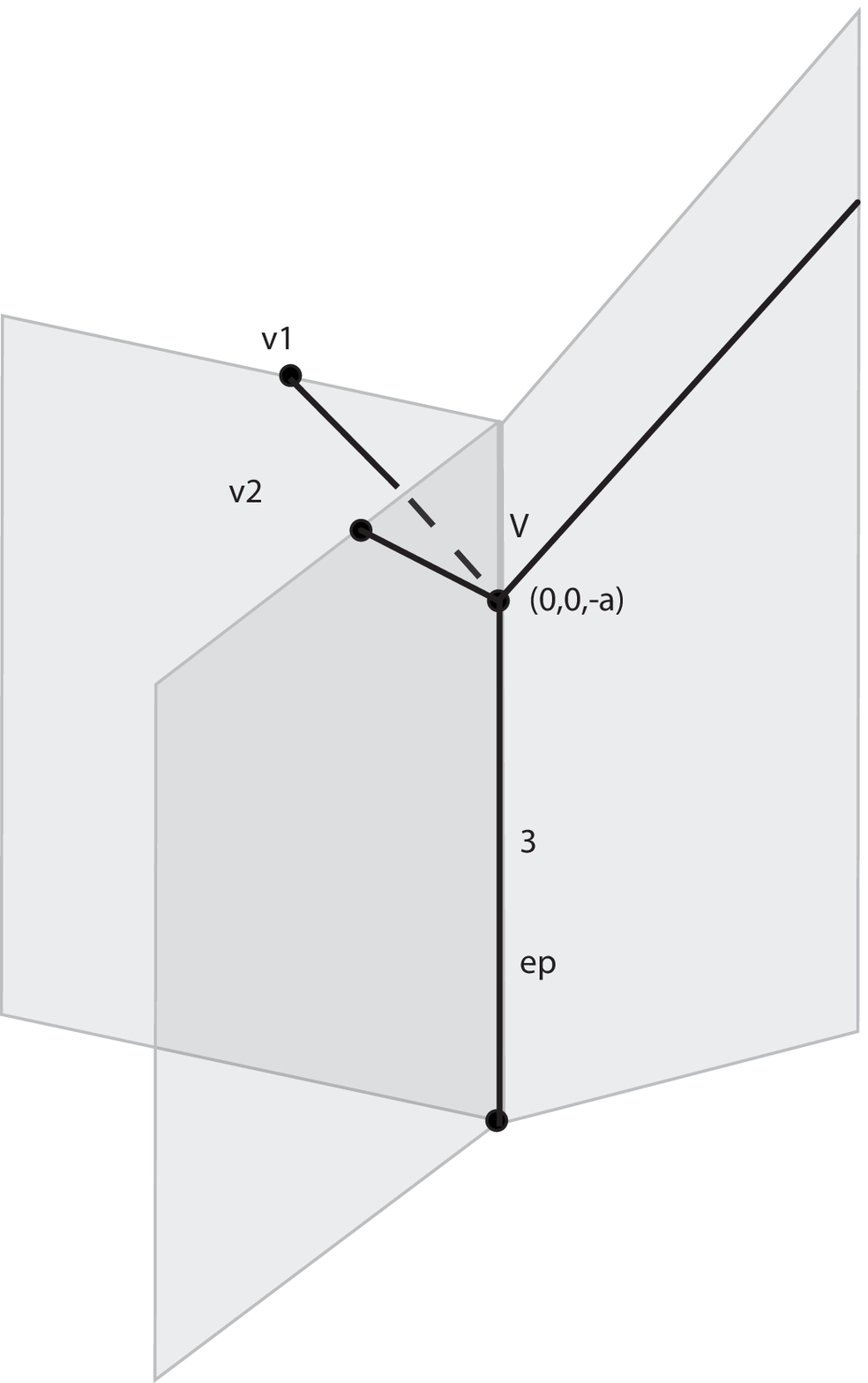}&
\includegraphics[height=5.5cm, angle=0]{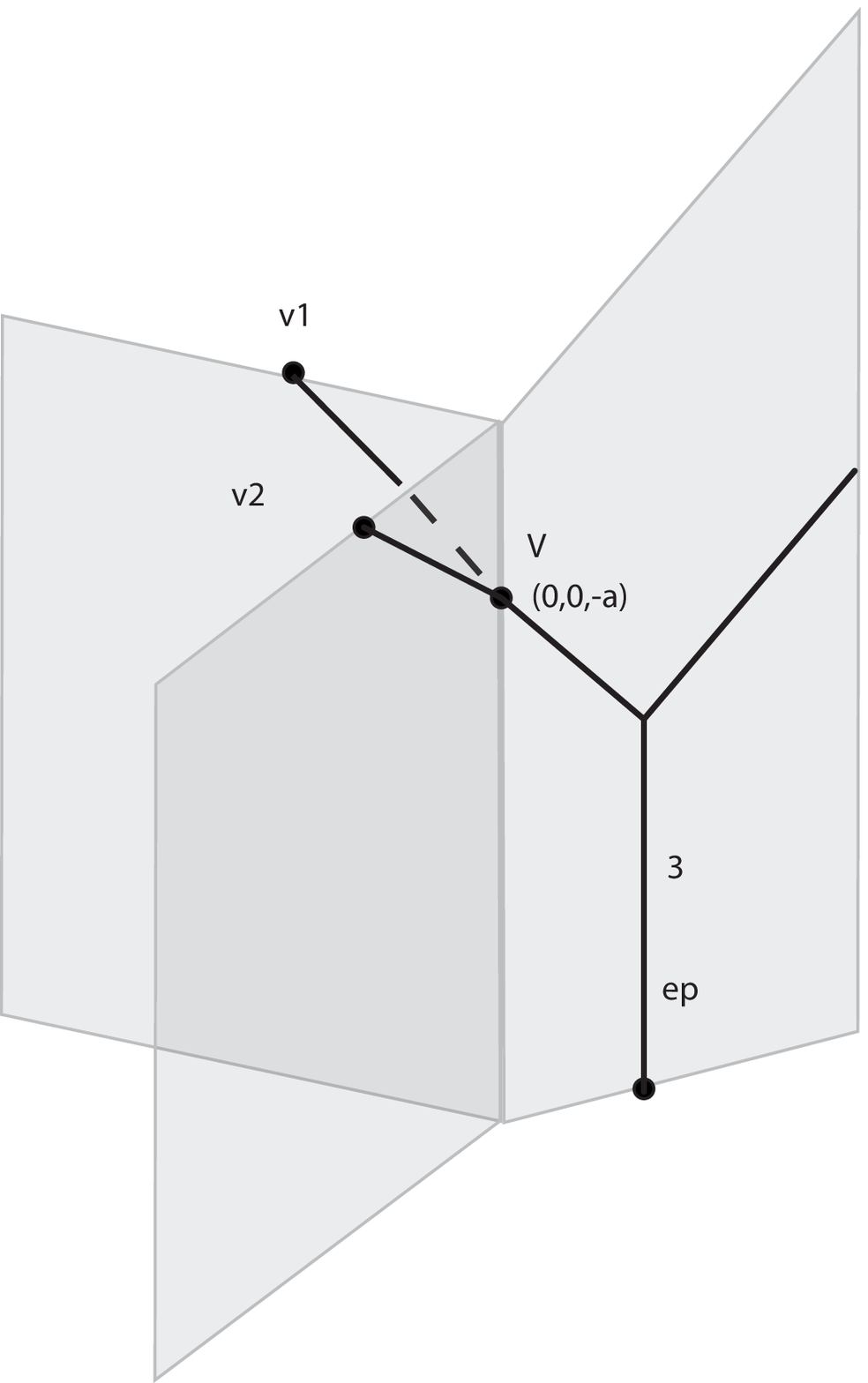}
\\ a) $l(e_1)>l(e_2)$. & b) $l(e_1)=l(e_2)$. & c) $l(e_1)=l(e_2)$.
\end{tabular}
\caption {Case 4: $E$  is the union of 3 edges, 2 of them bounded.}
\label{Case4}
\end{center}
\end{figure}

\vspace{1ex}
\textbf{Case 5: $E$  is the union of 3 bounded edges $e_1$, $e_2$, and  $e_3$. }
We may assume that $e_1$ is horizontal, $e_2$ vertical, and that
$l(e_1)\ge l(e_2)\ge l(e_3)$. 
Since $C'\subset W$, we have (see Figure \ref{Case5})
$$v_{e_1}'=(-l(e_1),0,0),\ v_{e_2}'=(0,-l(e_2),0),
\ v_{e_3}'=(l(e_3),l(e_3),l(e_3)),\
\ \text{and}\
v'=(0,0,-a) \ \text{with}\ a\ge 0.$$
We deduce from
$(C\cap_\TT L)_E=4$ that 
the edge $e_p$ may have weight 3 or 4. If $w(e_p)=4$, then $e_p$  
is the only vertical end of $C'$
above $E$.
If $w(e_p)=3$, there exist
exactly two vertical ends of $C'$
above $E$, $e_p$ and $e'$. Note that $w(e')=1$, and that we may 
assume that $w(e_p)=3$ since the case $w(e_p)=4$ corresponds to the
case
 $e_p=e'$. 
Moreover, the curve $C'$ is completely determined once $p$ and $p'=\pi_{C'}(e')=(x',y')$
are known. 

\begin{figure} [htbp]
\begin{center}

\psfrag{v1}{$v_{e_1}'$}\psfrag{V2}{$v_{e_2}'$}\psfrag{V3}{$v_{e_3}'$}\psfrag{V}{$v'$}\psfrag{V5}{$e'$}\psfrag{V6}{$e_p$}\psfrag{3}{$3$}\psfrag{4}{$4$}\psfrag{=}{$e'=e_p$}
\begin{tabular}{cc}
\includegraphics[height=5.5cm, angle=0]{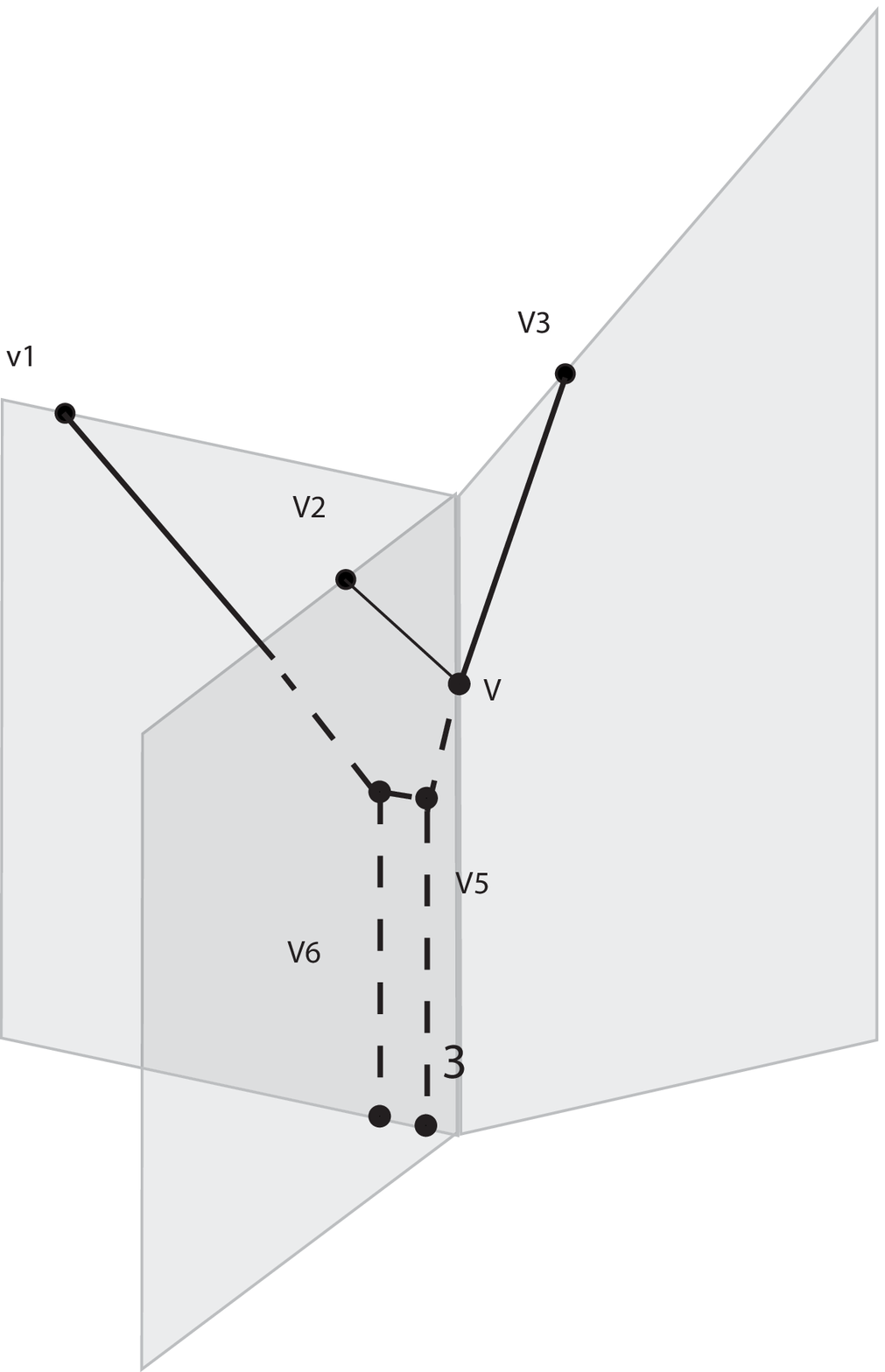}&
\includegraphics[height=5.5cm, angle=0]{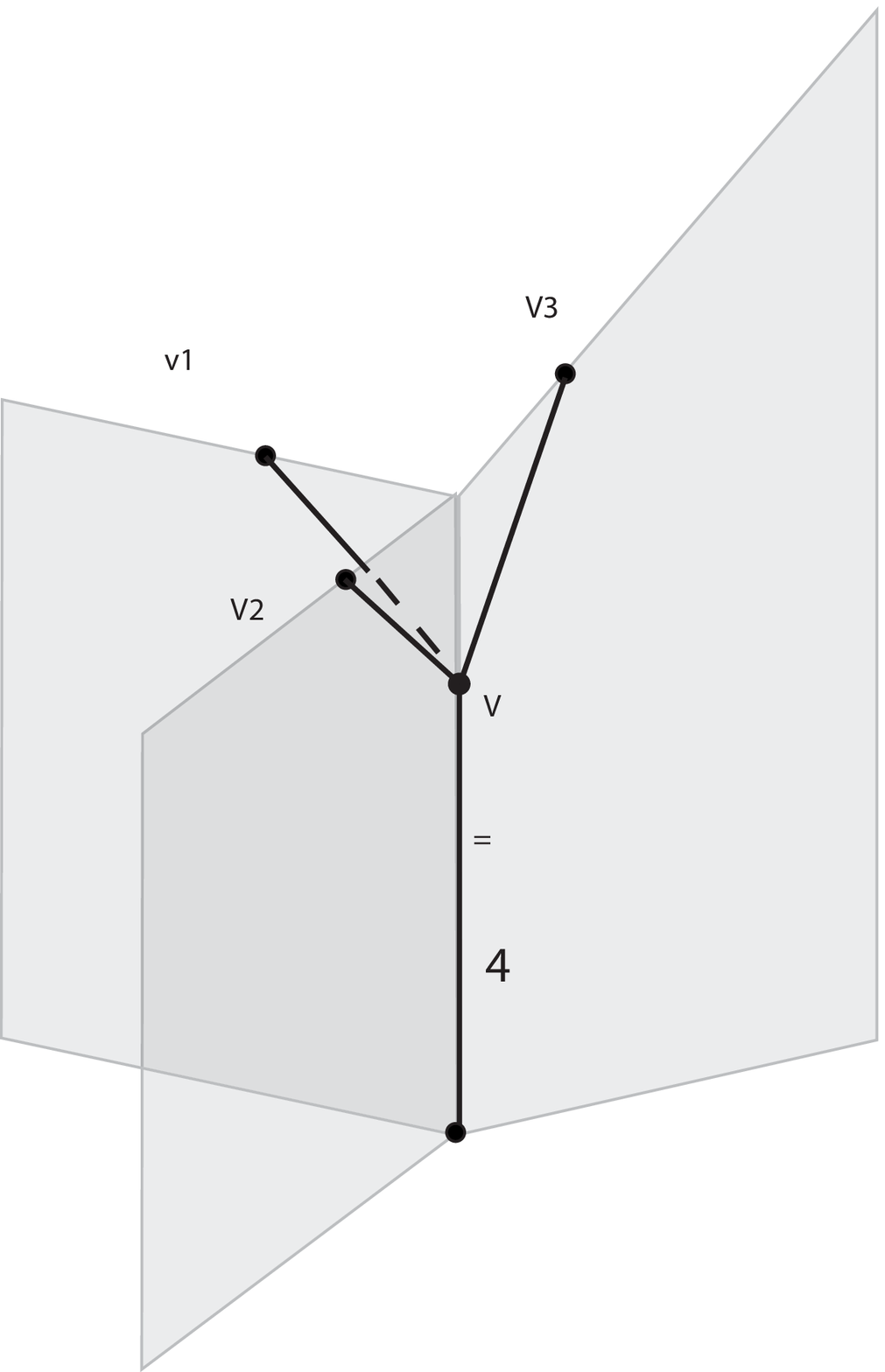}
\\ a) If $p,p'\in e_1$. & b) If $p,p'\in e_3$.
\\ \includegraphics[height=5.5cm, angle=0]{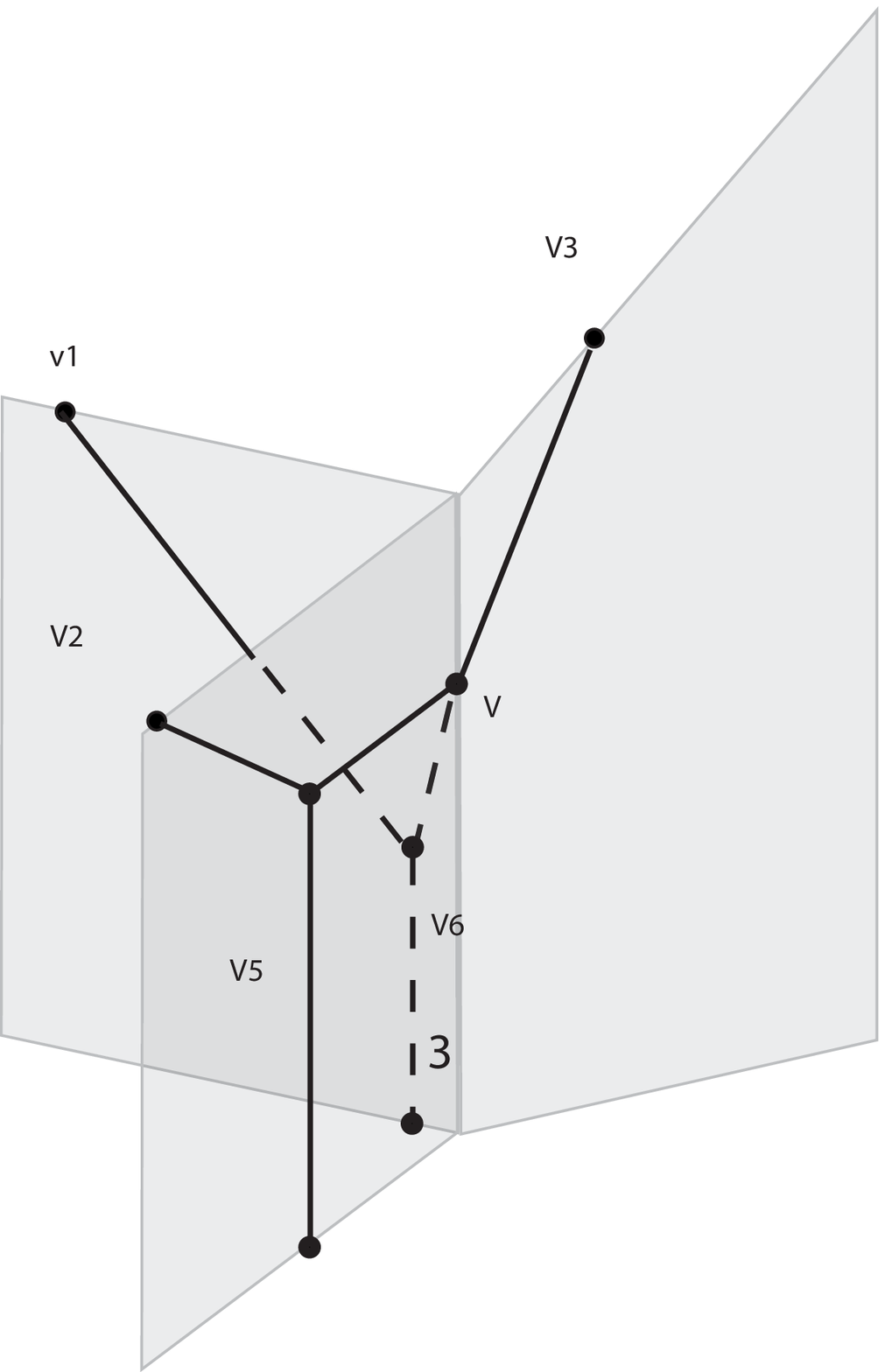}&
\includegraphics[height=5.5cm, angle=0]{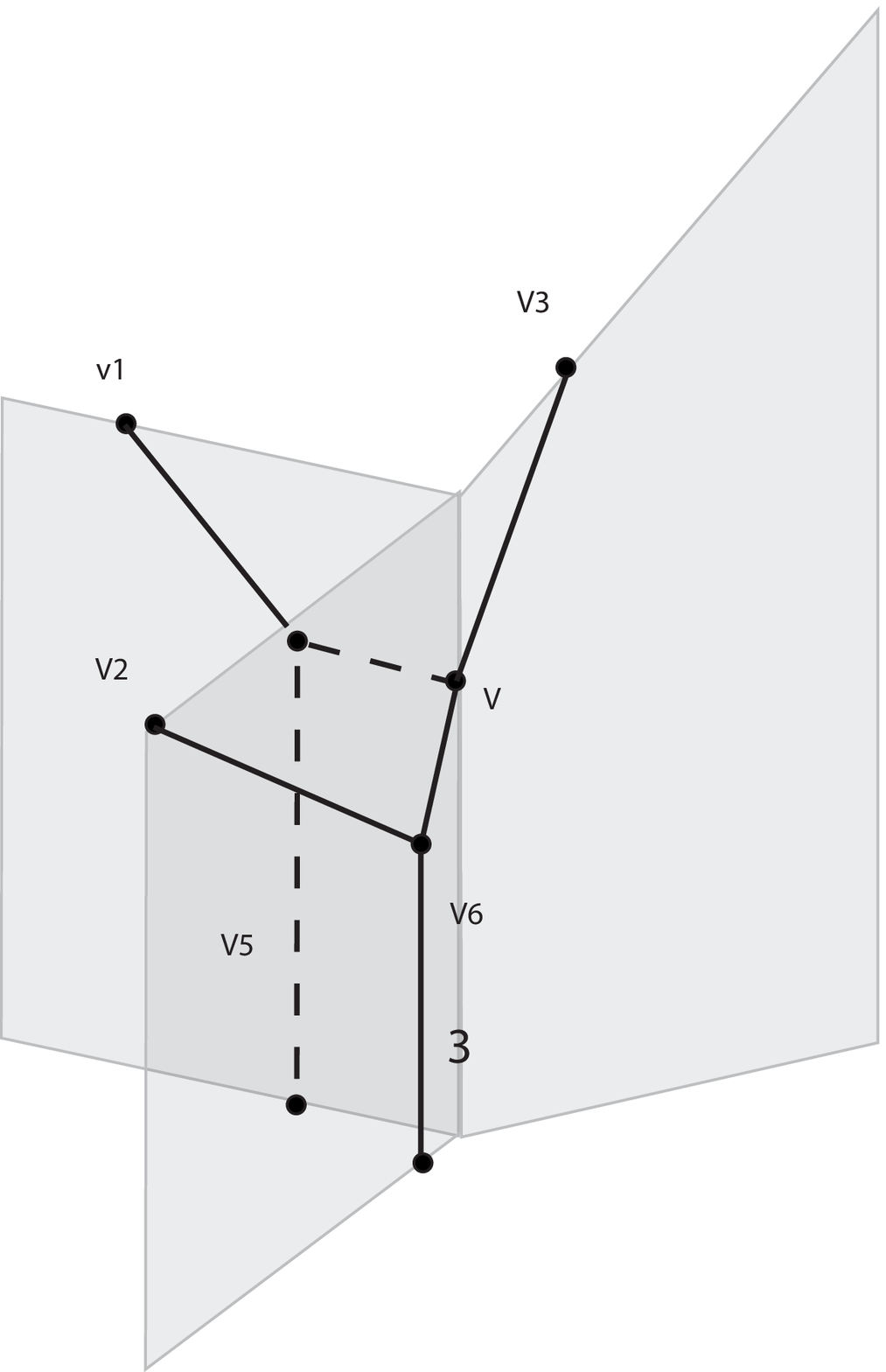}

\\c) If $p\in e_1$ and $p'\in e_2$. & d) If $p\in e_2$ and $p'\in e_1$.
\end{tabular}
\end{center}

\begin{center}

\caption {Case 5: $E$ is the union of 3 bounded edges.}
\label{Case5}
\end{center}

\end{figure}

If $p,p'\in e_1$, then the equations given by the vertex $v'$ of $C'$
reduce to
 $l(e_2)=l(e_3)=a$ and $x_p=-\frac{l(e_1)-l(e_2) - 
 x'}{3}$. 
So this is possible only if $l(e_2)=l(e_3)$,
and in this case the point $p$ may be anywhere on $e_1$ as long
as it is at distance at most $\frac{l(e_1)-l(e_2)}{3}$ from $v$ (See Figure \ref{Case5}a).

In the same way, the points $p$ and $p'$ can be both either on $e_2$
or on $e_3$
if and only if $l(e_1)=l(e_3)$, and in this case 
$p=p'=v$. (See Figure \ref{Case5}b).

If $p\in e_1$ and $p'\in e_2$, then we get
  $x_p=-\frac{l(e_1)-l(e_3)}{3}$ (See Figure \ref{Case5}c).

If $p\in e_2$ and $p'\in e_1$, then we get
  $y_p=-\frac{l(e_2)-l(e_3)}{3}$ (See Figure \ref{Case5}d).

If $p$ is in $e_3$, then we get $a=l(e_1)$ or $a=l(e_2)$, and
$a\le l(e_3) + 2x_p $ which is possible only if $x_p=0$.

In the same way, if $p'\in e_3$, then $p'=v$.
\end{proof}

\subsection{Multiplicities}\label{mult}
In the preceding section, we have seen that if $C$ is a non-singular
tropical curve in $\RR^2$, then the inflection points of any 
realization $X$ of $C$ tropicalize in a simple subset
$I_C$ 
of $C$, which
depends only on $C$. Namely, given a tropical line $L$ whose vertex
$v$ is
also a vertex of $C$ and such that $v$ is contained in a component of
$C\cap L$ of multiplicity at least 3, we  define the set $\I_L$ as
in  section \ref{locate}. 
Then we define
$I_C=\bigcup \I_L$ where $L$ ranges over all such tropical lines. We also
 define $\I_C$ as the set of all connected components of $I_C$.
In this section we prove that given
any element $\E$ of $\I_C$, the number of inflection
points of $X$ which tropicalize in $\E$ only depends on $\E$.

Let $\Delta\subset \RR^2$ be an integer  convex polygon, 
and let $\delta$ be  an edge of
$\Delta$. If $\delta$ is not parallel to any edge of $T_1$, 
then we set $r_\delta=0$; if $\delta$ is  supported on the
line with equation $i=a$ (resp. $j=a$, $i+j=a$) and $\Delta$ is
contained in the half-plane defined by $i\le a $ (resp. $j\le a $,
$i+j\ge a$), 
then we set $r_\delta=Card(\delta\cap\ZZ^2)-1$; otherwise we set  
$r_\delta=2(Card(\delta\cap\ZZ^2)-1)$; finally, we define
$$i_{\Delta}=3Area(\Delta) -\sum_{\delta \text{ edge of }\Delta}r_\delta.$$
Note that  $i_{\Delta}<0$ if and only if $\Delta$ is equal to $T_1$ or
one of its edges.

\begin{defi}
An element of $\I_C$ is called an inflection component of $C$.
The multiplicity
of an inflection component $\E$, denoted by $\mu_\E$, is defined as follows
\begin{itemize}
\item if $\E$ is a vertex of $C$ dual to the primitive triangle
  $\Delta\ne T_1$, then
$$\mu_\E=i_\Delta; $$

\item if $\E$ is bounded and
contains a vertex of $C$ dual to the primitive triangle
  $T_1$, then
$$\mu_\E=6; $$

\item in all other cases,
$$\mu_\E=3. $$
\end{itemize}

\end{defi}

\begin{exa}\label{trop curves with inflection}
We depicted in Figure \ref{honey compl} some 
honeycomb tropical curves together with their inflection
components. Each one of these components is a point  of multiplicity 3.
\end{exa}
\begin{figure}[h]
\centering
\begin{tabular}{ccc}
\includegraphics[height=5cm, angle=0]{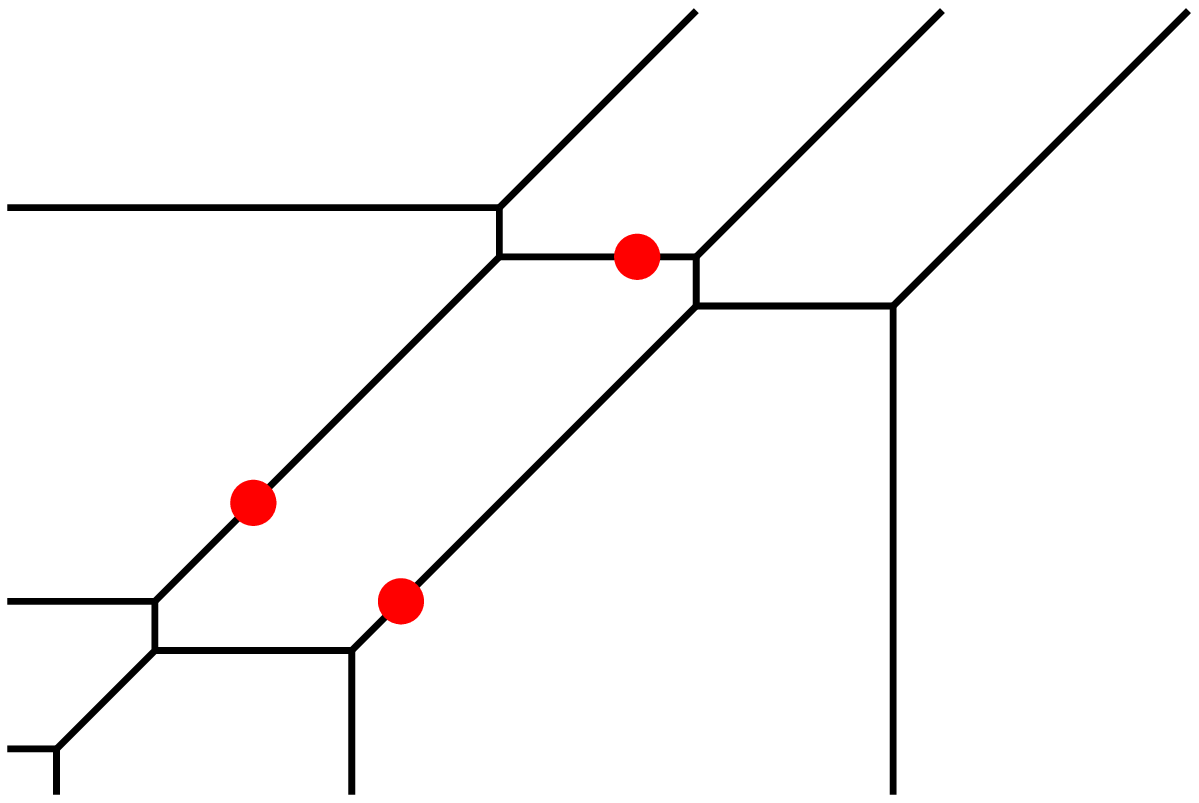}&
\includegraphics[height=5cm, angle=0]{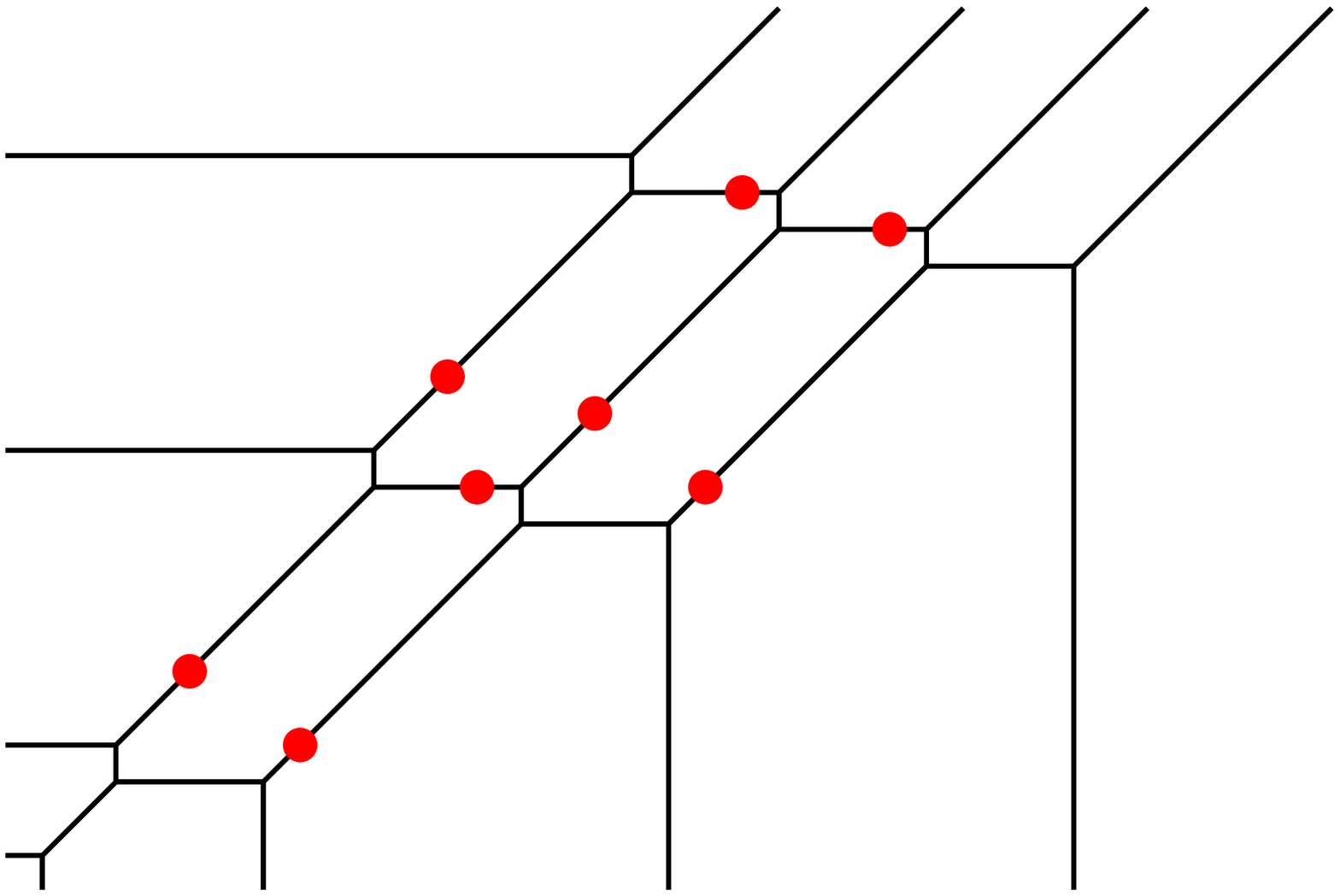}&
\includegraphics[height=5cm, angle=0]{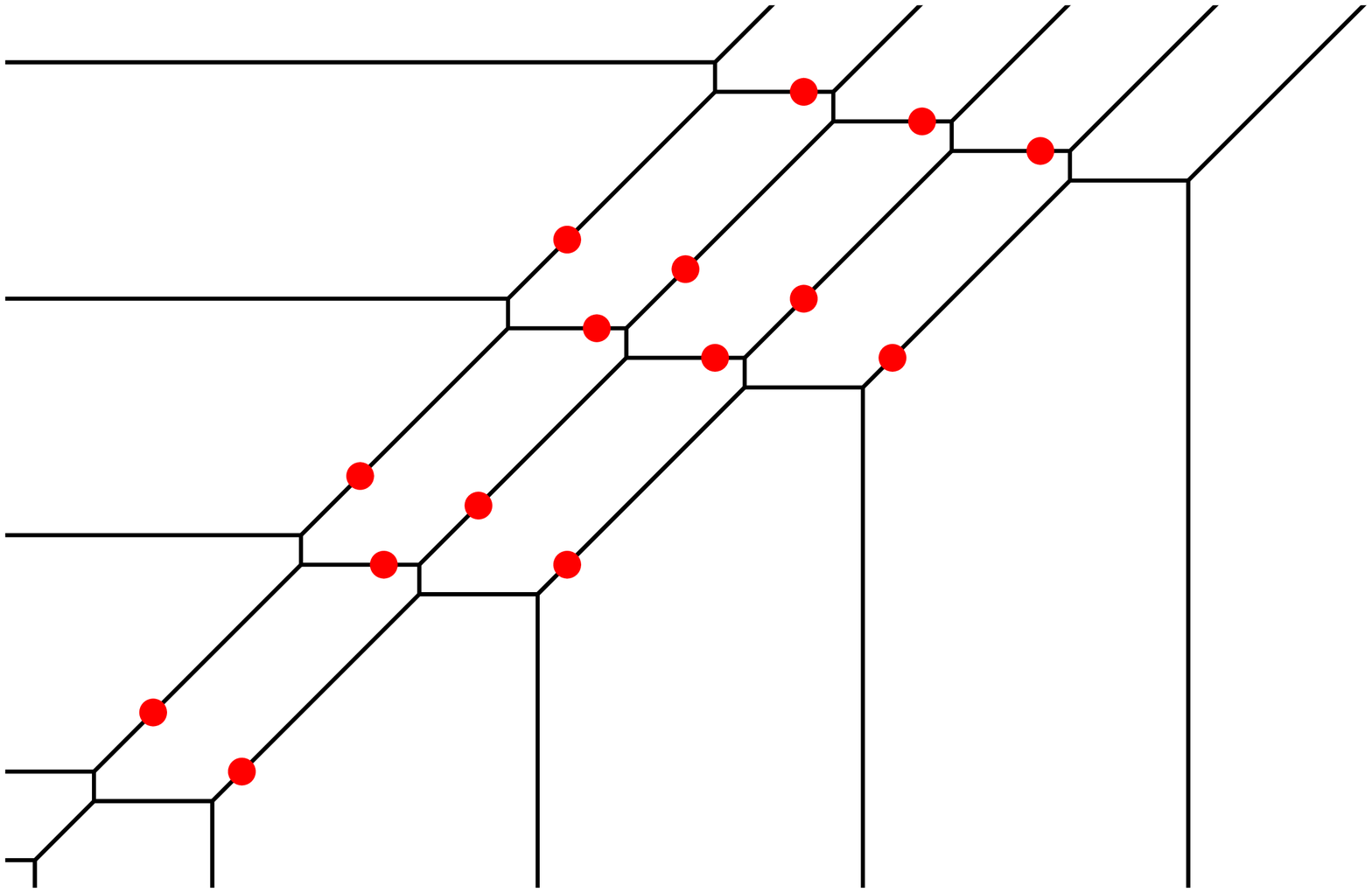} 
\end{tabular}

\caption{Some honeycomb tropical curves and their inflection points}
\label{honey compl}
\end{figure}

\begin{prop}\label{number infl combina}
For any non-singular tropical curve $C$ in $\RR^2$ with Newton polygon
$T_d$, we have
$$\sum_{\E\in\I_C} \mu_\E= 3d(d-2).$$
\end{prop}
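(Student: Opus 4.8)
The plan is to reduce the global sum to a computation local to each vertex of $C$ and to match it against an additivity property of the combinatorial quantity $i_\Delta$ under the unimodular subdivision $\Theta$ of $T_d$ dual to $C$. It is convenient to first pin down the target value: evaluating $i_{T_d}$ directly from its definition, each of the three sides $\delta$ of $T_d$ lies on the non-reduced side of its supporting line, so $r_\delta = 2(Card(\delta\cap\ZZ^2)-1) = 2d$, whence $i_{T_d} = 3d^2 - 6d = 3d(d-2)$. It therefore suffices to prove $\sum_{\E\in\I_C}\mu_\E = i_{T_d}$.

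The first genuine step is an additivity lemma. Writing $\Delta_v$ for the primitive triangle dual to a vertex $v$ of $C$, I would compute $\sum_v i_{\Delta_v} = 3\,Area(T_d) - \sum_v\sum_{\delta\subset\partial\Delta_v} r_\delta$ and analyze the double sum edge by edge in $\Theta$. A boundary edge of $T_d$ contributes the same total as in $i_{T_d}$, since each of its $d$ primitive pieces again has $r=2$. An internal edge $\delta$ of $\Theta$ not parallel to any side of $T_1$ contributes $0$ from each of its two adjacent triangles. Finally, an internal edge parallel to a side of $T_1$ contributes $2+1=3$: one adjacent triangle lies on the reduced side ($r_\delta=1$) and the other on the opposite side ($r_\delta=2$). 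Denoting by $I$ the number of internal edges of $\Theta$ parallel to a side of $T_1$, this gives the clean relation $\sum_v i_{\Delta_v} = i_{T_d} - 3I$.

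The heart of the proof is the local analysis. For each vertex $v$ there is a unique tropical line $L_v$ centered at $v$, and an edge of $C$ at $v$ runs along a ray of $L_v$ exactly when its dual edge of $\Delta_v$ is parallel to a side of $T_1$ and lies on the side giving $r_\delta=2$; call such an edge \emph{aligned}. Since the three primitive edge-directions at $v$ sum to zero, the number of aligned edges is $0$, $1$ or $3$, the last case characterizing $\Delta_v=T_1$. Each aligned \emph{bounded} edge is claimed by exactly one of its two endpoints (the one seeing $r_\delta=2$), so if $b_v$ denotes the number of aligned bounded edges claimed by $v$ then $\sum_v b_v = I$. For each $v$ I would then collect all inflection components lying in $\I_{L_v}$ and show, running through the cases of the definitions of $\I_L$ in section~\ref{locate} and of $\mu_\E$ in section~\ref{mult}, that their total multiplicity equals $i_{\Delta_v} + 3b_v$. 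Summing over all vertices and invoking the additivity lemma then yields $\sum_\E \mu_\E = \sum_v i_{\Delta_v} + 3I = i_{T_d} = 3d(d-2)$.

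The main obstacle is exactly this local identity at the $T_1$-vertices, where $i_{T_1}=-3$ is negative and the inflection data is most delicate. There the component(s) of $\I_{L_v}$ may be two points of multiplicity $3$, a single bounded segment of multiplicity $6$, an unbounded end of multiplicity $3$, or be absent altogether when fewer than two adjacent edges are bounded (a configuration forced by Proposition~\ref{tangents}). One must verify that in each of these subcases the local total is precisely $i_{T_1}+3b_v = 3b_v-3$, and, just as importantly, that the claiming convention assigns every inflection component to a single vertex so that the sets $\I_{L_v}$ partition $\I_C$ and no component is counted twice. Once this case-check is carried out, the additivity lemma immediately closes the argument.
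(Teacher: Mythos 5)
Your proposal is correct and takes essentially the same route as the paper: your local identity $\sum_{\E\in\I_{L_v}}\mu_\E=i_{\Delta_v}+3b_v$ is precisely the paper's $\sum_{\E\in\I_L}\mu_\E=i_{\Delta_v}+3\gamma_{\Delta_v}$ (your ``aligned bounded edges claimed by $v$'' are exactly its bounded $\Delta_v$-degree-$1$ edges, with the same convention that each internal edge parallel to a side of $T_1$ is claimed by exactly one of its two adjacent triangles). Your additivity lemma $\sum_v i_{\Delta_v}=i_{T_d}-3I$ together with $i_{T_d}=3d(d-2)$ is just a repackaging of the paper's direct computation $\sum_{\E\in\I_C}\mu_\E=3\,Area(T_d)-2\,Card(\partial T_d\cap\ZZ^2)-3\alpha+3\alpha$, so the two arguments coincide.
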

\begin{proof}
Let us first introduce some terminology. Let  $\Delta$ be a polygon of 
 the dual subdivision of $C$, and let $\delta$ be one of its edges.
The edge $\delta$  is said to be bounded if the
 edge of $C$ dual to $\delta$ is bounded. The edge $\delta$ is said to have
 $\Delta$-degree 1 if $\delta$ is 
supported either on the line $\{i=a\}$, or $\{j=a\}$, or
    $\{i+j=a\}$, and $\Delta$ is contained in the half plane
  defined     respectively by $\{i\ge a\}$, or $\{j\ge a\}$, or
    $\{i+j\le a\}$. The number of bounded $\Delta$-degree 1 edges of $\Delta$
        is denoted by $\gamma_{\Delta}$. Finally, 
        let $\alpha$ be
        the number of bounded edges of the dual subdivision of $C$
which are parallel to an edge of $T_1$.

From section \ref{locate} and the definition of $\mu_\E$, it follows
immediately 
that for any vertex $v$ of $C$, we have
$$\sum_{\E\in \I_L} \mu_\E= i_{\Delta_v} + 3\gamma_{\Delta_v}$$ 
where $L$ is the line with vertex $v$.
Hence we deduce that
$$\begin{array}{lll}
\sum_{\E\in\I_C}  \mu_\E &= & 3Area(T_d) - 2 Card(\partial T_d\cap \ZZ²) - 3\alpha
+3\alpha  
\\ \\&=& 3d(d-2)
\end{array} $$
as announced.
\end{proof}

To get a genuine correspondence between 
inflection points of an algebraic curve and the
inflection components of its tropicalization, we actually need to pass to
projective curves.
 It is well known that the compactification process we are going to
describe now can be adapted to construct general non-singular tropical toric
varieties. However, since we will just need to deal with plane
projective curves, we restrict ourselves to the construction of 
tropical projective spaces (see \cite{St6}).

As in classical geometry, the \textbf{tropical projective space} 
$\TT P^n$ of
dimension $n\ge 1$ is defined as the quotient of the space
$\TT^{n+1}\setminus\{(-\infty,\ldots,-\infty)\}$ by the equivalence
relation 
$$v\sim \tg\lambda v \td =v+\lambda(1,\ldots, 1).$$
 that is
$$\TT P^n=(\TT^{n+1}\setminus\{(-\infty,\ldots,-\infty)\})/(1,\ldots, 1) $$
Topologically, the space $\TT P^n$ is a simplex of dimension $n$, in particular
it is a triangle when $n=2$. 

The coordinate system $(x_1,\ldots,x_{n+1})$ on $\TT^{n+1}$ induces a
\textbf{tropical homogeneous coordinate system} $[x_1:\ldots,x_{n+1}]$ on
$\TT P^n$, and we have the natural
embedding
$$\begin{array}{ccc}
 \RR^n &\longrightarrow & \TT P^n
\\ (x_1,\ldots,x_n)&\longmapsto & [x_1:\ldots,x_{n}:0].
\end{array}$$
Hence any tropical variety $V$ in $\RR^n$ has a natural compactification
$\overline V$ in $\TT P^n$. Also, any non-compact inflection component of a
non-singular tropical curve $C$ in $\RR²$ compactifies in an inflection
component of $\overline C$.
The map $Val:\KK^{n+1}\to \TT^{n+1}$ induces a map 
$Val:\KK P^{n}\to \TT P^n$, and if $X$ is an algebraic variety in
$(\KK^*)^n$ with closure $\overline X$ in $\KK P^n$, we have
$$Trop(\overline X)= \overline{Trop(X)}. $$

\begin{thm}\label{main} 
Let $C$ be a non-singular tropical curve in $\RR^2$ with Newton
polygon the triangle $T_d$ with $d\ge 2$, and let $X$ be any
realization of $C$. Then for any inflection point $p$ of $\overline X$, the
point $Val(p)$ is contained in an inflection component of $\overline C$, and for
any inflection component $\E$ of $\overline C$, exactly $\mu_\E$ inflection
points of $\overline X$ have valuation in $\E$.
\end{thm}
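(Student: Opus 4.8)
The plan is to prove the statement in two stages: first the \emph{localization} assertion, that every inflection point of $\overline X$ has valuation in an inflection component of $\overline C$, and then the \emph{enumerative} assertion, that each component $\E$ carries exactly $\mu_\E$ of them. For the localization I would proceed as follows. If $q$ is an inflection point lying in the torus $(\KK^*)^2$, let $T$ be its tangent line and $L=Trop(T)$. Proposition \ref{tangents} forces the vertex of $L$ to be a vertex of $C$ and restricts the component $E$ of $C\cap L$ containing $Val(q)$ to the finite list of Figure \ref{Possibles}; Proposition \ref{inclusion} then places $Val(q)$ in the explicit subset $\I_L$, hence in $I_C=\bigcup \I_L$, i.e. in an inflection component. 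For an inflection point on the toric boundary of $\KK P^2$ I would invoke the tropical toric structure of $\TT P^2$: each boundary edge of $\TT P^2$ is dual to an edge $\delta$ of $T_d$, and the inflection points of $\overline X$ lying over it are governed by the restriction $P^{\delta}$ of a defining polynomial, so the same local analysis applies in the relevant boundary chart. This yields the first claim and shows that the $\mu_\E$ are the only candidate local contributions.

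For the enumerative part I would pass to the tropical modification $C'$ of $C$ defined by $Hess_X$. Since $X\cap Hess_X$ is exactly the set of torus inflection points of $\overline X$, Lemma \ref{intersection} identifies the vertical ends of $C'$ with the tropical inflection points and, crucially, equips each with a weight equal to the number of inflection points (counted with multiplicity) having that valuation. Thus the number $n_\E$ of inflection points tropicalizing to $\E$ is the total weight of the vertical ends of $C'$ lying over $\E$. I would compute this weight locally, running through the configurations of Figure \ref{11} exactly as in the proof of Proposition \ref{inclusion} but now tracking weights: the initial datum at a vertex $v$ dual to a primitive triangle $\Delta$ is the trinomial $P_{\CC,v}$, whose torus inflection points number $i_\Delta$, while the equal-length configurations in which $\E$ is an edge or a segment produce the values $6$ and $3$. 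Proposition \ref{set intersection} then supplies the matching inequality $n_\E\le (C\circ_\TT Trop(Hess_X))_E$ that bounds each local contribution by $\mu_\E$.

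Finally I would upgrade these local bounds to equalities by a global count. Because $\overline X$ is a reduced curve of degree $d$ containing no line (both a consequence of $C$ being non-singular with Newton polygon $T_d$ and $d\ge 2$), B\'ezout's theorem gives $\sum_\E n_\E = 3d(d-2)$, while Proposition \ref{number infl combina} gives $\sum_\E \mu_\E = 3d(d-2)$; combined with $n_\E\le \mu_\E$ for every $\E$ this forces $n_\E=\mu_\E$ throughout, which is the second claim.

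The main obstacle is precisely the local multiplicity computation, because—unlike the modification of $\RR^2$—the modification $C'$ and the tropicalization $Trop(Hess_X)$ depend on higher-order terms of $X$, so Corollary \ref{nonsingular} does not determine $C'$ from $C$ alone and Proposition \ref{set intersection} only gives inequalities in the non-compact cases. The way around this is to extract from the local model only the \emph{upper} bound $n_\E\le\mu_\E$, which is insensitive to this ambiguity, and to let the exact total $3d(d-2)$ do the rest; handling the equal-length (positive-dimensional) inflection components and the boundary contributions at infinity in $\TT P^2$ with the same care is the most delicate point.
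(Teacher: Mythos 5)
Your localization step (torus inflection points via Propositions \ref{tangents} and \ref{inclusion}) and your global squeeze logic (B\'ezout gives $\sum_\E n_\E=3d(d-2)$, Proposition \ref{number infl combina} gives $\sum_\E\mu_\E=3d(d-2)$, so per-component upper bounds would force equality) are both sound in principle --- indeed the paper uses exactly this kind of squeeze, but only for the \emph{real} statement (Theorem \ref{main real}, via Klein's bound). The genuine gap is in the step that is supposed to produce the per-component upper bound $n_\E\le\mu_\E$. You invoke Proposition \ref{set intersection} to bound $n_\E$ by $(C\circ_\TT Trop(Hess_X))_E$, but that proposition bounds contributions of \emph{connected components of the set-theoretic intersection} $C\cap Trop(Hess_X)$, and this intersection has a single component: the Hessian's tropicalization has the same underlying set as $C$ (this is what the paper's Lemma \ref{subd} establishes, and what the text means by ``it is hopeless to locate the tropicalization of inflection points of $X$ looking at $Trop(X)\cap Trop(Hess_X)$, this intersection being highly non-transverse''). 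So the inflection components $\E$ are not components of that intersection, the quantity $(C\circ_\TT Trop(Hess_X))_\E$ is not defined by Proposition \ref{set intersection}, and the only bound the proposition yields is the useless global one (the mixed volume $3d^2$). Even the most generous repair fails: the inflection components lying in the interior of edges of $C$ (the points at distance $l(e)/3$, and the segments in the equal-length cases) are not tropical (stable) intersection points of $C$ and $Trop(Hess_X)$ at all, so no tropical intersection number localizes there; and at a vertex $v$ the natural local tropical multiplicity is the mixed area $3$, which is strictly bigger than $\mu_\E=i_{\Delta_v}$ whenever $\Delta_v$ has an edge parallel to an edge of $T_1$ (for instance every vertex dual to a triangle with an edge on $\partial T_d$, where $\mu_\E=1$). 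With bounds of $3$ instead of $\mu_\E$ the squeeze no longer closes.

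To actually obtain per-component control one is forced into the analysis you were trying to avoid: one must prove that near each vertex $v$ with $\Delta_v\ne T_1$ the modification of the Hessian's tropicalization along $X$ is obtained by plugging $t=0$, i.e.\ coincides with the modification attached to the constant-coefficient model $P_{\CC,v}$ and its Hessian (the paper's Lemma \ref{vertex modif 1}, resting on Lemma \ref{subd}); compute that local model explicitly, including the weight $i_{\Delta_v}$ of its vertical end and the slopes of its non-vertical ends which encode the boundary contributions (Lemma \ref{constant coeff}, which requires the by-hand count of inflection points of trinomials, Lemma \ref{inv primitive}); and then propagate along the edges of $C$ using the balancing condition together with Proposition \ref{inclusion} (Lemmas \ref{edge 1} and \ref{edge 2}). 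But this machinery delivers the \emph{exact} count $n_\E=\mu_\E$ component by component, so once it is in place the global squeeze is redundant. A secondary weakness of the same kind: your treatment of inflection points on the toric boundary of $\KK P^2$ (needed for $\sum_\E n_\E=3d(d-2)$ to count everything) is asserted via ``boundary charts'' but never reduced to a statement you can prove; in the paper these contributions are what the slope deficiencies $3z_i-z_{H'_C,i}$ in Lemma \ref{constant coeff} and the term $3z_e-\sum_i w(\widetilde e_i)\widetilde z_i$ in Lemma \ref{edge 1} are bookkeeping, and they cannot be dismissed, since the components $E'$ there are closures in $\TT P^2$.
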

We postpone the proof of Theorem \ref{main} to section \ref{proof main}.
The fact that any inflection point of $\overline X$ tropicalizes in
some inflection component of $C$ has already been proved in
Proposition \ref{inclusion}.
The fact that exactly $\mu_\E$ inflection
points of $\overline X$ have valuation in $\E$ for any inflection
component $\E$ follows from Lemmas \ref{inv primitive}, 
\ref{vertex modif 1}, \ref{edge 1}, and \ref{edge 2}.

\subsection{Application to real algebraic geometry}
 Here we give a real version of Theorem \ref{main}, which 
 implies
immediately Theorem \ref{main intro}.
Given $\E$ an inflection component of a non-singular tropical curve
$C$, we define its real multiplicity $\mu_\E^\RR$ by 
$$\mu_\E^\RR=0 \ \text{if}\  \mu_\E \ \text{is even}, 
\quad \quad\mu_\E^\RR=1 \ \text{if}\ \mu_\E \ \text{is odd}.$$

\begin{thm}\label{main real}
Let $C$ be a non-singular tropical curve in $\RR^2$ with Newton
polygon the triangle $T_d$ with $d\ge 2$. Suppose that if $v$ is a
vertex of $C$ adjacent to 3 bounded edges and such that $\Delta_v=T_1$, 
then these edges have 3 different length.
Then given any realization $X$ of $C$ over $\RR\KK$ and given
any inflection component $\E$ of $\overline C$, exactly $\mu^\RR_\E$ inflection
points of $\overline X$ have valuation in $\E$. 

In particular, the curve $\overline X$ has exactly
$d(d-2)$ inflection points in $\RR\KK P²$, and 
the curve $\overline X(t)$ has also exactly
$d(d-2)$ inflection points in $\RR P²$ for $t>0$ small enough. 
\end{thm}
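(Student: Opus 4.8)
The plan is to deduce the real statement from the complex count of Theorem~\ref{main} by combining the action of complex conjugation with Klein's upper bound (Theorem~\ref{klein}). Fix a realization $X$ of $C$ over $\RR\KK$. By Theorem~\ref{main}, every inflection point of $\overline X$ has valuation in some inflection component of $\overline C$, and each component $\E$ carries exactly $\mu_\E$ inflection points counted with multiplicity. Writing $n_\E$ for the number of \emph{real} inflection points of $\overline X$ with valuation in $\E$, again with multiplicity, the whole theorem amounts to the equalities $n_\E=\mu_\E^\RR$, which I would obtain by squeezing a conjugation lower bound against Klein's bound via a combinatorial identity.

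First I would record the parity constraint. As $X$ is defined over $\RR\KK$, the curve $\overline X$ and its Hessian have real coefficients, so the finite set of inflection points of $\overline X$ is invariant under the complex conjugation of $\KK P^2$; since conjugation commutes with $Val$, it preserves each component $\E$ as well as the multiplicity of each inflection point. The real inflection points are its fixed points, and the remaining ones fall into conjugate pairs of equal multiplicity lying in a common $\E$. Hence $n_\E\equiv\mu_\E\pmod 2$, and in particular $n_\E\ge\mu_\E^\RR$ for every $\E$.

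Next I would establish the combinatorial identity $\sum_{\E}\mu_\E^\RR=d(d-2)$, that is, $\overline C$ has exactly $d(d-2)$ inflection components of odd multiplicity. I would prove this by the same vertex-by-vertex bookkeeping as in Proposition~\ref{number infl combina}, now retaining only parities: the multiplicity-$3$ components are odd, the multiplicity-$6$ components attached to a $T_1$-vertex are even, and for a vertex $v$ with $\Delta_v\ne T_1$ the parity of $\mu_\E=i_{\Delta_v}$ is read off from the parities of the numbers $r_\delta$ entering its definition. This is exactly where the hypothesis on $T_1$-vertices is used: when the three bounded edges at such a vertex have three distinct lengths one lands in the case $\I_L=\{p_{e_1},p_{e_2}\}$ of \emph{two} odd (multiplicity-$3$) components, whereas an equality of lengths would instead produce a single \emph{even} (multiplicity-$6$) component and destroy the count. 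I expect this finite parity tabulation, together with the verification that the genericity assumption rules out every even substitution, to be the main combinatorial obstacle; summing the local contributions reorganizes as in Proposition~\ref{number infl combina} to give $d(d-2)$.

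Finally I would run the squeeze and transfer the conclusion to $\RR P^2$. The realization $\overline X$ is non-singular, since the geometric genus of a realization of $C$ equals the first Betti number of $C$, which for Newton polygon $T_d$ is the maximal value $(d-1)(d-2)/2$ for a plane curve of degree $d$; hence Klein's Theorem~\ref{klein}, valid over the real closed field $\RR\KK$, bounds the total number of real inflection points by $d(d-2)$. Combining this with the previous two steps gives
\[
d(d-2)\ \ge\ \sum_{\E}n_\E\ \ge\ \sum_{\E}\mu_\E^\RR\ =\ d(d-2),
\]
so equality holds throughout and $n_\E=\mu_\E^\RR$ for every $\E$; in particular each odd component carries a single simple real inflection point and $\overline X$ has exactly $d(d-2)$ of them. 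To pass to $\RR P^2$, I would observe that the inflection points of $\overline X$ are the solutions over $\RR\KK$ of the system cutting out $\overline X\cap Hess_{\overline X}$; for $t>0$ small these specialize bijectively to the inflection points of $\overline X(t)$, real points specializing to real points, so the count $d(d-2)$ of real inflection points persists in $\RR P^2$ for all sufficiently small $t>0$.
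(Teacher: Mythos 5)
Your overall architecture is exactly the paper's: complex conjugation on $\KK P^2$ preserves each inflection component and gives $n_\E\equiv\mu_\E\pmod 2$, hence $n_\E\ge\mu^\RR_\E$; this is squeezed against Klein's bound (which indeed transfers to the real closed field $\RR\KK$) via the identity $\sum_\E\mu^\RR_\E=d(d-2)$; finally one specializes at small $t>0$. The genuine gap is in your second step, and it is precisely the step the paper has to work for. You assert that $\sum_\E\mu^\RR_\E=d(d-2)$ follows from ``the same vertex-by-vertex bookkeeping as in Proposition~\ref{number infl combina}, now retaining only parities.'' It does not. The sum in Proposition~\ref{number infl combina} telescopes because it is \emph{linear} in the quantities $r_\delta$: the two values $r_\delta=1$ and $r_\delta=2$ carried by the two triangles adjacent to an interior edge parallel to an edge of $T_1$ add up to $3$ and cancel against the $3\gamma_\Delta$ terms. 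The parity count, by contrast, involves the indicator of $i_{\Delta_v}$ being odd, which is not additive in the $r_\delta$. Concretely, once the genericity hypothesis has removed the multiplicity-$6$ components (that part of your argument is correct), the identity $\sum_\E\mu^\RR_\E=d(d-2)$ is equivalent, using $\sum_\E\mu_\E=3d(d-2)$, to the assertion that the dual triangulation has exactly as many triangles with $i_\Delta=1$ as triangles with $i_\Delta=2$. Double counting the $r=1$ and $r=2$ sides of the edges parallel to edges of $T_1$ (the only ``reorganization'' available to a local argument) produces a relation between these counts and the number of $T_1$-translates and of upside-down triangles, but that relation is strictly weaker than the equality needed; the equality is a genuinely global constraint on the triangulation and can fail for no local reason. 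The paper closes exactly this hole with a non-local argument: the equality ``number of multiplicity-$1$ components $=$ number of multiplicity-$2$ components'' is verified for honeycomb curves, shown to be invariant under a flip of the dual subdivision, and then propagated to every non-singular tropical curve using the fact that any two primitive regular triangulations of $T_d$ are connected by a finite sequence of flips. Nothing in your proposal plays the role of this flip-connectivity argument, and your own phrasing (``I expect this finite parity tabulation \dots to be the main combinatorial obstacle'') concedes the point without resolving it.

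The remaining steps are fine and essentially coincide with the paper's proof: the conjugation argument, the appeal to Theorem~\ref{main} for the complex count per component, the use of Theorem~\ref{klein} over $\RR\KK$ to cap the total at $d(d-2)$, and the persistence of the count for $\overline X(t)$ at small $t>0$ (distinct real points of $\KK P^2$ stay distinct and real, conjugate non-real pairs stay non-real, since a nonzero locally convergent generalized Puiseux series is nonzero for all small $t>0$). So the proposal would become a complete proof only after replacing the claimed parity tabulation by a genuine proof of the combinatorial identity, for instance the paper's honeycomb-plus-flips argument.
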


\begin{proof}
Since $\overline X$ is defined over
$\RR\KK$, its inflection points are either in  $\RR\KK P^2$
or they come in pairs
of conjugated points. Hence, for each inflection component $\E$ of
$C$, at least $\mu_\E^\RR$ inflection points of $\overline X$ are
real and have valuation in $\E$.

If $C$ satisfies the hypothesis of the theorem, any of its
inflection component $\E$ has multiplicity at most 3. Let us prove that 
the number of inflection points of $C$ of multiplicity $1$ is equal to
the number of inflection points of $C$ of multiplicity $2$:
this is obviously true when $C$ is a honeycomb tropical curve
(i.e. all its edges have direction $(1,0)$, $(0,1)$, or $(1,1)$); one
checks easily that if
this is true for $C$, then this is also true for any tropical curve
whose dual subdivision is obtained from the one of $C$ by a flip; in conclusion
this is true for any non-singular tropical curve since any two
primitive regular integer triangulations of $T_d$
 can be obtained one from the other by a
finite sequence of flips.

As a consequence, we deduce
$$\begin{array}{lll}
\sum_{\E\in\I_C}  \mu^\RR_\E &= & \frac{1}{3}\sum_{\E\in\I_C} \mu_\E
\\ \\&=& d(d-2).
\end{array} $$

As a consequence the  algebraic curve $\overline X$ has at least $d(d-2)$
inflection points in  $\RR\KK P^2$. Since $\overline X$  is defined over $\RR\KK$,
it cannot have more according to
Theorem \ref{klein}. Hence the  curve $\overline X$ has exactly $d(d-2)$
inflection points in  $\RR\KK P^2$, and exactly $\mu^\RR_\E$ inflection
points of $\overline X\cap \RR\KK P^2$ 
have valuation in $\E$ for any inflection
component $\E$ of $C$. 
\end{proof}

\begin{figure}[h]
\centering
\begin{tabular}{ccc}
\includegraphics[height=6cm, angle=0]{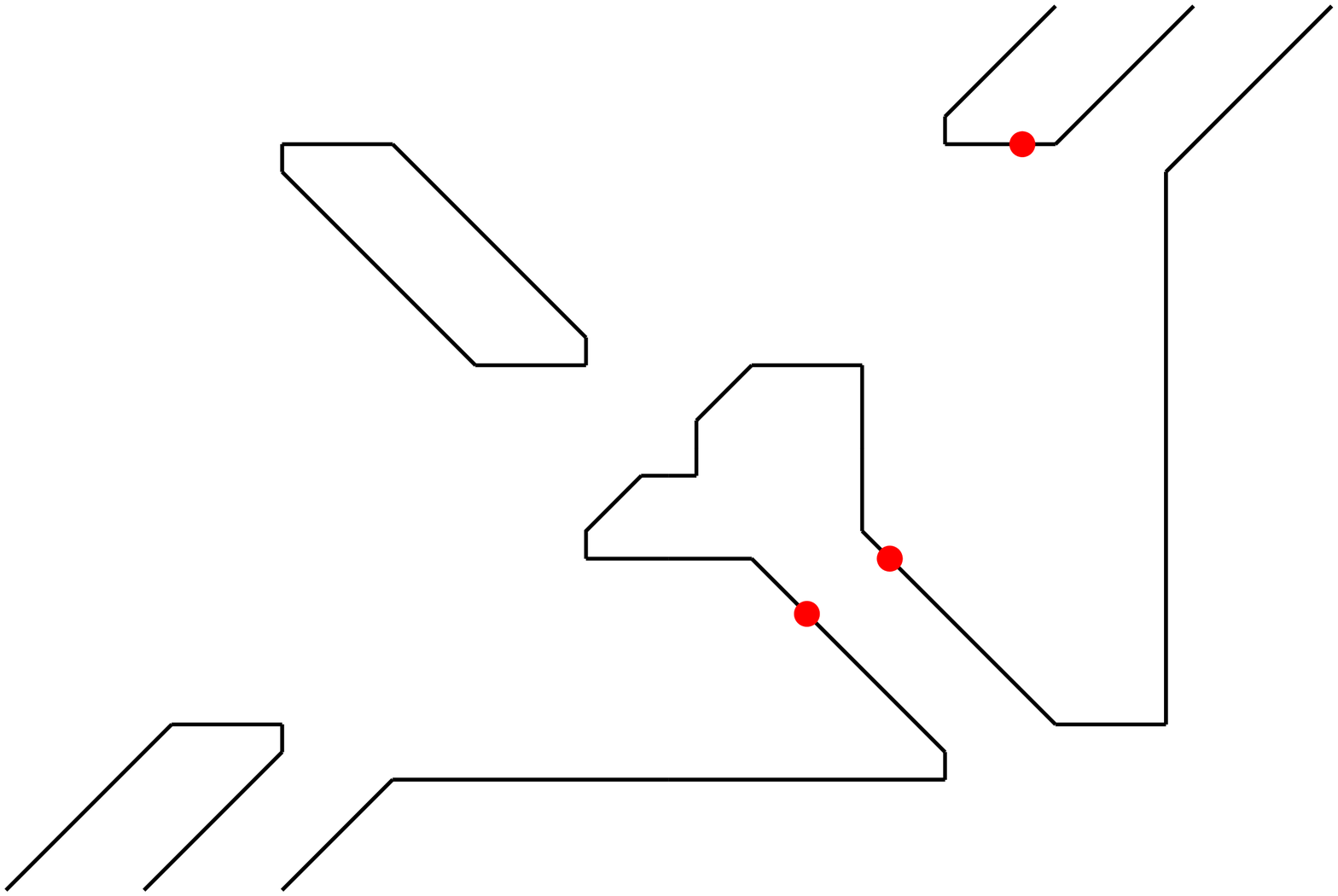}& \hspace{5ex} &
\includegraphics[height=6cm, angle=0]{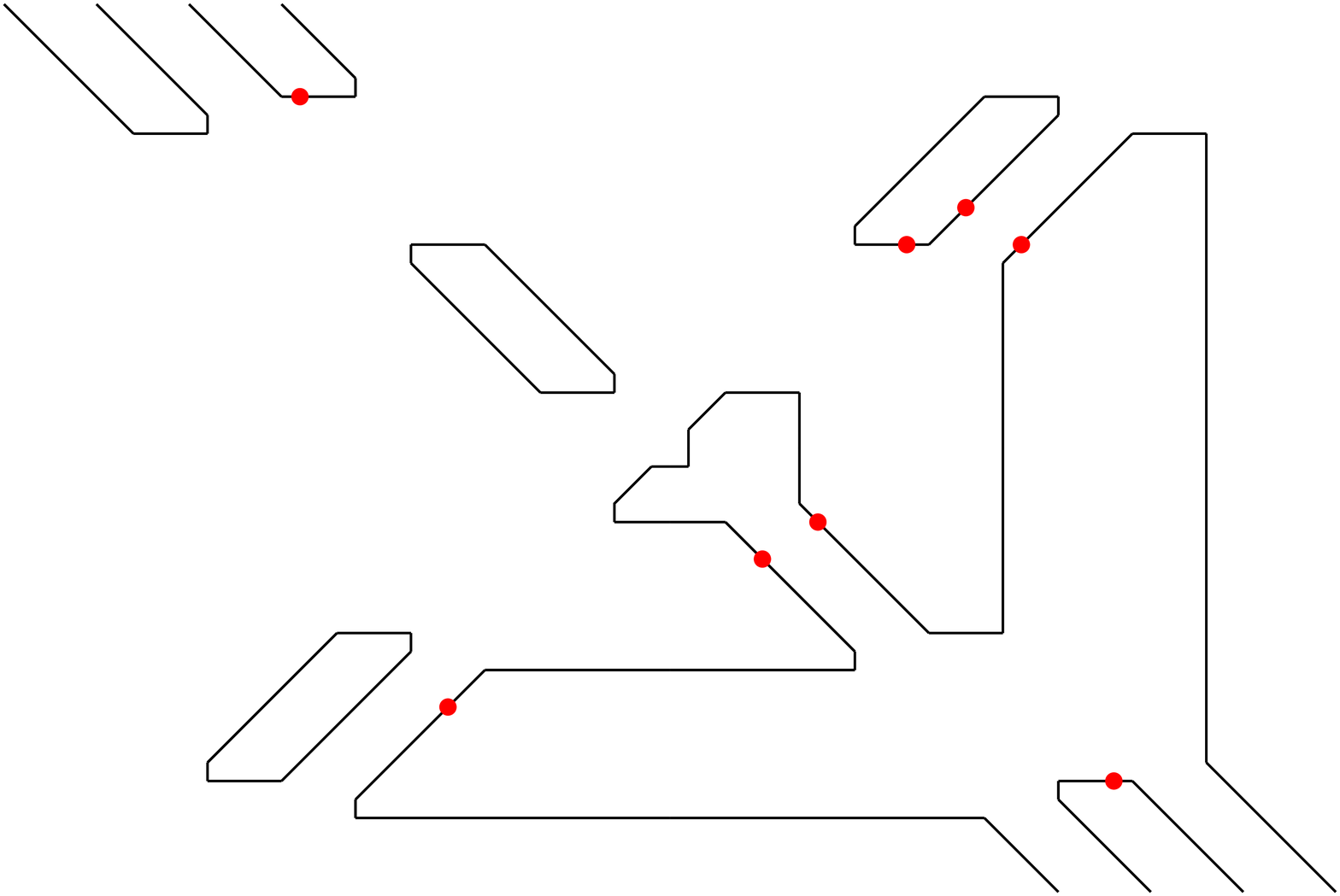}
\end{tabular}
\caption{A Harnack curve of degree 3 and 4}
\label{honey real}
\end{figure}
\begin{figure}[h]
\centering
\begin{tabular}{c}
 \includegraphics[height=6cm, angle=0]{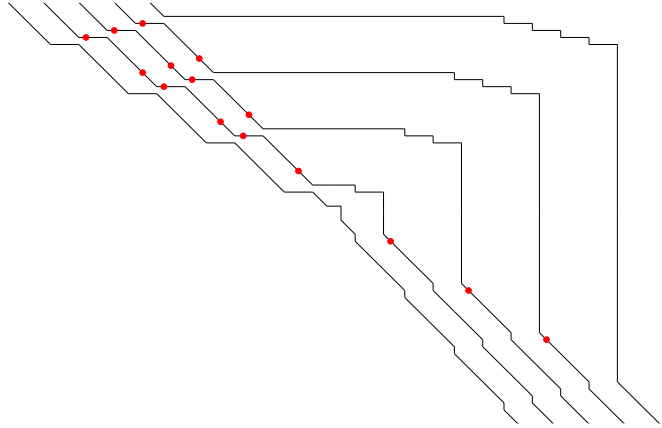} 
\end{tabular}

\caption{A hyperbolic quintic}
\label{honey real 2}
\end{figure}
\begin{exa}\label{exa1}
In Figures \ref{honey real} and  
\ref{honey real 2}, we depicted one possible patchworking of
real curve together with its real inflection points 
 for each  honeycomb tropical curve depicted in Figure
 \ref{honey compl}.
\end{exa}

\section{End of the proof of Theorem \ref{main}}\label{proof main}

Here we prove that the multiplicity of
an inflection component $\E$ of $\overline C$ corresponds to the number of
inflection points of $\overline X$ which tropicalizes in $\E$.
Thanks to tropical modifications, all computations are reduced to
elementary local considerations. 

Our first task is to study inflection points of algebraic curves in the
torus.

\subsection{Hessian of a primitive polynomial}
Given a polynomial $P(z,w)$ in $k[z,w]$, we define
$P^h(z,w,u)=z^2w^2u^2P^{hom}(z,w,u)$, and
 $H_P(z,w)=Hess_{P^h}(z,w,1)$. Clearly, 
the curves $V(P)$ and $V(P^h)$ have the same
 inflection points in $(k^*)^2$.

\begin{prop}\label{max infl points}
Let $P(z,w)$ be a polynomial in 2 variables over $k$.
If the curve $V(P)$  is reduced and 
does not contain any
line, 
then the  number of
inflection points
of $V(P)$ is at most  $i_{\Delta(P)}$ (recall that
$i_{\Delta(P)}$ has been defined in section \ref{mult}).
Moreover, this number is exactly equal to  $i_{\Delta(P)}$
 if $k$ is algebraically closed, and
 $r_\delta=0$ and $P^\delta$ has no multiple roots in $(\CC^*)^2$
for all edges $\delta$ of $\Delta$.
\end{prop}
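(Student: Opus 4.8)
The plan is to realise the inflection points of $V(P)$ as the points of $V(P)\cap V(H_P)$ lying in the torus $(k^*)^2$, and to bound their number through the Newton polygon of $H_P$ together with the Bernstein--Kushnirenko bound. Since the inflection points defined over $k$ form a subset of those defined over the algebraic closure $\bar k$, and both $i_{\Delta(P)}$ and the Bernstein bound are insensitive to the ground field, I would first reduce the upper bound to the case $k=\CC$. The hypotheses that $V(P)$ be reduced and contain no line guarantee that $H_P$ does not vanish identically on $V(P)$, so that $V(P)\cap V(H_P)$ is genuinely finite and a Bernstein-type count applies.

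The heart of the matter is to compute the Newton polygon $\Delta(H_P)$. The multiplication by $z^2w^2u^2$ in the definition of $P^h$ is precisely the device that makes this polygon canonical: expanding the determinant $Hess_{P^h}$ and dehomogenising, one writes $H_P$, up to an invertible monomial on $(k^*)^2$, as a fixed $\ZZ$-combination of products of three first and second (logarithmic-type) derivatives of $P$. Each such product has Newton polygon contained in a translate of $3\Delta$, whence $\Delta(H_P)\subseteq 3\Delta$ up to translation. Moreover, along every edge $\delta$ of $\Delta$ parallel to an edge of $T_1$ the leading terms cancel, forcing the support of $H_P$ to retract from that edge, and I expect the retraction depth to be $1$ or $2$ according to the position of $\delta$ relative to $T_1$ — exactly the recipe defining $r_\delta$. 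The truncated polygon should then satisfy $MV(\Delta(P),\Delta(H_P))=3Area(\Delta)-\sum_\delta r_\delta=i_{\Delta(P)}$, where $MV$ denotes the (translation-invariant) mixed volume governing the Bernstein count. Crucially this retraction is \emph{structural}, holding for every $P$ and not only generically, which is what turns it into an upper bound.

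With this computation in hand the upper bound is immediate: the number of points of $V(P)\cap V(H_P)$ in $(\bar k^*)^2$, counted with multiplicity, is at most $MV(\Delta(P),\Delta(H_P))=i_{\Delta(P)}$. For the equality statement I would invoke the sharpness criterion for the Bernstein bound. When $k=\bar k$ and $r_\delta=0$ for all edges, no edge of $\Delta$ is parallel to an edge of $T_1$, so $3\Delta$ is not truncated at all and $\Delta(H_P)=3\Delta$ with $i_{\Delta(P)}=3Area(\Delta)$. Equality then holds precisely when, for every edge direction, the facial subsystem obtained by restricting $P$ and $H_P$ to that edge has no solution in $(\CC^*)^2$. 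Since the facial restriction of $P$ is the edge polynomial $P^\delta$ — a monomial times a one-variable polynomial — while that of $H_P$ is built from its derivatives, such a solution would force $P^\delta$ to have a multiple root in $(\CC^*)^2$; the hypothesis that no $P^\delta$ has a multiple root rules this out, exactly in the spirit of Lemma \ref{few}, and yields equality.

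The main obstacle is the Newton-polygon computation of the second paragraph: establishing that $\Delta(H_P)$ is the predicted truncation of $3\Delta$, that the cancellation along each $T_1$-parallel edge is forced and of the correct depth $1$ or $2$, and that the surviving edge polynomials of $H_P$ are governed by the derivatives of $P^\delta$. This is a finite but delicate bookkeeping with the homogenised Hessian and its restrictions to the faces of $3\Delta$, and it is exactly where the special factor $z^2w^2u^2$, rather than a bare homogenisation, is essential.
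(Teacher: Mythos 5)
Your overall frame (reduce to $k$ algebraically closed, realise the inflection points as $V(P)\cap V(H_P)$ inside the torus, and bound them by a Bernstein-type count) is the paper's. But the claim you yourself single out as the heart of the matter --- that $\Delta(H_P)$ is a truncation of $3\Delta(P)$, retracted by depth $1$ or $2$ along each edge parallel to an edge of $T_1$, so that $i_{\Delta(P)}$ appears as a pure mixed volume $MV(\Delta(P),\Delta(H_P))$ --- is false, and the factor $z^2w^2u^2$ has exactly the opposite effect of the one you ascribe to it. Since every exponent of $P^h$ is at least $2$ in each variable, no monomial is killed by taking second partials; each of the six products in the expansion of the determinant has Newton polytope equal to $3\Delta(P^h)$ shifted by $(-2,-2,-2)$, and the facial polynomial of $H_P$ along the edge of $3\Delta(P)$ corresponding to $\delta$ is $H_{P^\delta}$, which is nonzero even when $P^\delta$ is a line binomial. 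Concretely, for $P^\delta=z+1$ one computes $H_{P^\delta}=24\,z^4w^4(z+1)(3z^2+4z+3)$: it is fully supported along the edge (the factor $(z+1)$ appears because every point of a line is a flex, but the polynomial does not vanish identically). Hence $\Delta(H_P)$ equals $3\Delta(P)$ on the nose, $MV(\Delta(P),\Delta(H_P))=3\,Area(\Delta(P))$, and your argument stops there: it can never reach $i_{\Delta(P)}$ when some $r_\delta>0$.

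The defect $\sum_\delta r_\delta$ is produced in the paper by two different mechanisms, neither of which is a retraction of $\Delta(H_P)$. First, for every $T_1$-parallel edge $\delta$ the facial system $(P^\delta,H_{P^\delta})$ genuinely has common roots in the torus --- one for each line-binomial factor of $P^\delta$ (in the example above, $z=-1$) --- and it is the deficiency form of Bernstein's theorem (count $\le$ mixed volume minus facial common roots) that lowers the bound by $Card(\delta\cap\ZZ^2)-1$; this is where the ``no line'' analysis of binomial curves enters, and it accounts for the orientation giving $r_\delta=Card(\delta\cap\ZZ^2)-1$. Second, for the other orientation (say $\delta\subset\{i=a\}$ with $\Delta(P)\subset\{i\ge a\}$) the paper switches to a different auxiliary polynomial $Q=w^2u^2P^{hom}$, precisely \emph{not} divisible by $z^2$; for this $Q$ the retraction you want does occur, $\Delta(Hess_Q)\subset 3\Delta(P)\cap\{i\ge 2\}$ up to translation, which is what yields the improved defect $2(Card(\delta\cap\ZZ^2)-1)$. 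So the cancellation you predict is real only after removing the corresponding square factor, it happens only on one side, and it must be combined with the facial-common-root deficiency on the other side; as written, your single-mechanism mixed-volume bookkeeping cannot be completed.
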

\begin{proof}
To prove the Proposition, we may suppose that $k$ is algebraically
closed. By assumption on $V(P)$, it has finitely many inflection points.
The Newton polygon of ${H_P}$ is, up to
translation, $3\Delta(P)$. Given $\delta$ an edge of
$\Delta(P)$, 
we denote by 
$\delta_{he}$
the corresponding edge of 
$\Delta(H_{P})$, by $n_\delta$ the number of common
roots of the polynomials $P^\delta$ and $H_{P}^{\delta_{he}}$, and we define
$$i_P=\sum_{\delta\text{ edge of }\Delta(P)}n_\delta.$$
 Hence, according to  Bernstein
Theorem, the number of inflection points of $V(P)$ is at
most
$$\frac{1}{2}\left(Area(4\Delta(P)) - Area(3\Delta(P)) - Area(\Delta(P))\right) 
 -i_P = 3Area(\Delta(P))
 -i_P$$
with equality if $i_P=0$.

\vspace{1ex}
Hence, we are left to the study of $n_\delta$ when $\delta$
 ranges over all edges of
$\Delta(P)$. Since $P^h(z,w,u)$ is divisible by
$z^2w^2u^2$,  we  have 
$H_{P^\delta}=(H_P)^{\delta_{he}}$ for any edge $\delta$
of $\Delta(P)$. Hence we may suppose that $\Delta(P)$ is
reduced to
the edge $\delta$. In this case $P^h(z,w,u)$ splits into the product of
$Card(\delta\cap \ZZ^2) - 1$
binomials, so we may further assume that $Card(\delta\cap \ZZ^2)=2$.
Then the curve $V(P)$ is non-singular in $(k^*)^2$, so the only possibility 
for $n_\delta$ to be equal to 1, is for $V(P)$ to be a line. That is,
$\delta$
 must
be parallel to an edge of $T_1$.

\vspace{1ex}
In the case when
$\Delta(P)$ has an edge $\delta$  supported on the
line with equation $i=a$ (resp. $j=a$, $i+j=a$) and $\Delta$ is
contained in the half-plane defined by $i\ge a $ (resp. $j\ge a $,
$i+j\le a$), we can refine the upper bound. Without loss of
generality,
 we may assume that $\delta$ is  supported on the
line with equation $i=0$, and we define $Q(z,w,u)=w^2u^2P^{hom}(z,w,u)$.
The polygon $\Delta(Hess_{Q})$ is contained, up
to translation, in
the polygon 
$3\Delta(P)\cap\{i\ge 2\}$,
so Bernstein Theorem implies
 that the number of inflection points of $V(P)$ is at
most
$$\frac{1}{2}\left(Area(4\Delta(P)\cap\{i\ge 2\}) - Area(3\Delta(P)\cap\{i\ge 2\}) -
Area(\Delta(P)) \right) 
 - \sum_{\delta'\ne\delta \text{ edge of }\Delta(Q)}n_{\delta'}.$$
Since
$$Area(m\Delta(P)\cap\{i\le 2\})= Area(\Delta(P)\cap\{i\le 2\}) +
4(m-1)(Card(\delta\cap\ZZ^2)-1),$$ 
we see that we can in fact substract $2(Card(\delta\cap\ZZ^2)-1)$ to
$3Area(\Delta(P))$ in the upper bound for the number of inflection
points of $V(P)$.
\end{proof}

\begin{exa}\label{prim1}
Any curve over an algebraically closed field, whose Newton
polygon is a primitive triangle without any edge 
parallel to an edge of $T_1$, has exactly 3 inflection points
in $(k^*)^2$. Indeed, such a curve is non-singular in $(k^*)^2$ and
does not contain any line.
\end{exa}

\begin{exa}
Proposition \ref{max infl points} might not be sharp, even if $k$ is
algebraically closed. Indeed, let $X$ be a cubic curve in $kP^2$.
It is  classical  (see for example \cite{Sha}) 
that a line passing through 
two inflection points
 of $X$ also passes through
a third inflection 
point of $X$. Hence, any algebraic curve with
Newton polygon the triangle $\Delta$ with vertices $(0,0)$, $(1,2)$, and
$(2,1)$, cannot have more than 6 inflection points in  $(k^*)^2$,
although in this case $i_{\Delta}=7$.
\end{exa}

However, 
 Proposition \ref{max infl points} is sharp for curves with primitive
 Newton polygon.

\begin{lemma}\label{inv primitive}
If $\Delta(P)$   is  primitive
and distinct from $T_1$, then the curve $V(P)$ has exactly 
$i_{\Delta(P)}$  inflection points in $(k^*)^2$.
\end{lemma}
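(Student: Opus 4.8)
The plan is to reduce to the algebraically closed case and then to show that the upper bound of Proposition \ref{max infl points} is attained. As in the proof of that proposition, it is enough to treat $k=\CC$. Since $\Delta(P)$ is a primitive triangle it has exactly three lattice points, its vertices $p_0,p_1,p_2$, so $P$ is a trinomial; dividing by $c_0z^{p_0}$ and setting $a=p_1-p_0$, $b=p_2-p_0$, primitivity means $|\det(a,b)|=1$, so $(a,b)$ is a basis of $\ZZ^2$ and the monomial map $z\mapsto(z^a,z^b)$ is an automorphism of $(\CC^*)^2$ carrying $V(P)$ to a line $\alpha u+\beta v+1=0$ with $\alpha,\beta\neq0$. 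Hence $V(P)$ is smooth in $(\CC^*)^2$, reduced and irreducible, and is not a line because $\Delta(P)\neq T_1$ is a genuine triangle. Thus the hypotheses of Proposition \ref{max infl points} hold and it gives at most $i_{\Delta(P)}$ inflection points. What remains is the matching lower bound, i.e. to produce exactly $i_{\Delta(P)}=3\,Area(\Delta(P))-\sum_\delta r_\delta=3-\sum_\delta r_\delta$ inflection points in the torus.

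For the lower bound I would compute the inflection points directly from the parametrization. Pushing forward the parametrization $u=s$, $v=\ell(s)$ of the line by the monomial automorphism above gives a parametrization $s\mapsto[z(s):w(s):1]$ of $V(P)$ in which $z(s)$ and $w(s)$ are monomials in $s$ and in the linear form $\ell(s)$. The inflection points of the image curve are the zeros of the Wronskian of its three homogeneous coordinate functions; the three points $s=0$, $s=-1/\alpha$ and $s=\infty$ are exactly the preimages of the toric boundary, one for each edge of $\Delta(P)$, and the inflection points lying in $(\CC^*)^2$ are precisely the Wronskian zeros away from these three special points. The total Wronskian degree is fixed by the degree of $V(P)$, and the orders of vanishing at the three special points measure the flexes sitting on the three coordinate lines of $\PP^2$; these orders depend only on the contact of $V(P)$ with the coordinate lines, which in turn is governed by whether the corresponding edge $\delta$ is parallel to an edge of $T_1$ and, if so, on which side of $\delta$ the polygon lies. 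When no edge is parallel this recovers Example \ref{prim1}, with the full count $3$ remaining in the torus.

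The main obstacle is this local computation at the three boundary points and the verification that the torus total equals $i_{\Delta(P)}$. Since $\Delta(P)$ is primitive, each edge polynomial $P^\delta$ is a binomial, whose roots in $\CC^*$ are automatically simple by Lemma \ref{few}; this rules out any extra degeneracy and makes the local orders explicit. To organize the cases I would use that both the number of inflection points and the quantity $i_{\Delta(P)}$ are invariant under the symmetries of $T_1$ together with translations, so that, modulo this group, only two primitive parallel configurations occur: the \emph{inward} one, contributing $r_\delta=1$, and the \emph{outward} one, contributing $r_\delta=2$. The delicate point is reproducing the factor $2$ of the outward case: there the doubling comes from the square factor $z^2w^2u^2$ in $P^h=z^2w^2u^2P^{hom}$, which forces one extra order of contact of $V(H_P)$ with the relevant coordinate line, exactly as in the refinement step of Proposition \ref{max infl points} that subtracts $2(Card(\delta\cap\ZZ^2)-1)$. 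Once the boundary orders are shown to total $3-i_{\Delta(P)}+\sum_\delta r_\delta$ worth of flexes and hence leave $i_{\Delta(P)}$ in the torus, equality with the upper bound follows. Finally, since the three special points $s=0,-1/\alpha,\infty$ stay distinct for all admissible coefficients, the count is insensitive to the coefficients of $P$, so the conclusion holds for every $P$ with primitive Newton polygon $\Delta(P)\neq T_1$ and over any field $k$ of characteristic $0$.
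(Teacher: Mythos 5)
Your reduction to $k=\CC$ and the upper bound via Proposition \ref{max infl points} are fine, and the parametrization-plus-Wronskian strategy for the matching lower bound can indeed be made to work; but as written your proof has a genuine gap exactly where the lemma has its content. Everything beyond Proposition \ref{max infl points} amounts to showing that each edge $\delta$ of $\Delta(P)$ absorbs \emph{exactly} $r_\delta$ Wronskian zeros, and you never establish these local orders. Moreover, the two justifications you sketch do not hold up. First, it is not true that the vanishing orders at the three special parameters depend only on whether the corresponding edge is parallel to an edge of $T_1$ and on the side on which $\Delta(P)$ lies: for $P=w+z^{l-1}+z^l$, parametrized by $(z,w)=(s,\,s^{l-1}(1+s))$, the Wronskian of $[s:s^{l-1}(1+s):1]$ is $(l-1)(l-2)s^{l-3}+l(l-1)s^{l-2}$, which vanishes to order $l-3$ at the special parameter $s=0$ as soon as $l\ge 4$, although the corresponding edge (joining $(0,1)$ and $(l-1,0)$) has $r_\delta=0$; that vanishing records the singularity of the closure at the corner $[0:0:1]$, and the orders become equal to $r_\delta$ only after dividing by a suitable monomial in $s$ and $\ell(s)$ coming from the factor $z(s)w(s)$ --- identifying that normalization is part of the computation you skipped. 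Second, and more seriously, your argument for the crucial case $r_\delta=2$ is circular: the factor $z^2w^2u^2$ and the inclusion of $\Delta(Hess_Q)$ in $3\Delta(P)\cap\{i\ge 2\}$ in the refinement step of Proposition \ref{max infl points} are upper-bound devices; via Bernstein they force \emph{at least} $2(Card(\delta\cap\ZZ^2)-1)$ of the curve--Hessian intersections onto the boundary, hence \emph{at most} $i_{\Delta(P)}$ flexes in the torus. They cannot show that the boundary absorbs no more than this, which is precisely what exactness requires. (A symptom of the missing bookkeeping: with overall count $3$, the boundary must absorb $\sum_\delta r_\delta=3-i_{\Delta(P)}$, not $3-i_{\Delta(P)}+\sum_\delta r_\delta$ as you write.)

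For comparison, the paper proves exactness by actually carrying out this computation: after the same two reductions you invoke (all coefficients normalized to $1$ by the torus action, possible by primitivity, and the $S_3$-symmetry of projective coordinates together with translations), the case with no edge parallel to an edge of $T_1$ is Example \ref{prim1}, and all remaining primitive triangles fall into the two normal forms $w+z^{l-1}+z^l$ and $w+zw+z^l$, whose torus flexes are exhibited as the roots in $k^*$ of an explicit second derivative --- i.e.\ your Wronskian computation done in a chart. Your uniform version could be completed in the same spirit: writing $z=s^{m_1}(1+s)^{n_1}$, $w=s^{m_2}(1+s)^{n_2}$ with unimodular exponent matrix, the torus flexes are the zeros away from $s=0,-1,\infty$ of the cubic $H=L_1L_2(L_2-L_1)-L_1Q_2+L_2Q_1$, where $L_i=m_i(1+s)+n_is$ and $Q_i=m_i(1+s)^2+n_is^2$, and one then checks case by case (for instance $H(0)=m_1m_2(m_2-m_1)$) that the order of $H$ at each special point equals $r_\delta$; until such a verification is done, the proof is incomplete. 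Finally, your closing deduction of the statement over an arbitrary field of characteristic $0$ from coefficient-insensitivity is unjustified for an exact count: for $w+zw+z^l$ with $l\ge 3$ the two torus flexes are non-real, hence not points of $(k^*)^2$ when $k=\QQ$ or $\RR$. This last defect is inherited from the statement itself, which the paper only ever uses over algebraically closed fields, but you should not claim to have proved it.
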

\begin{proof}
According to Example \ref{prim1}, 
 it remains 
 to check by hand the lemma
in the following 
two particular cases
(all coefficients may
be chosen equal to 1 since $\Delta(P)$ is primitive):
\begin{itemize}
\item $P(z,w)=w+ z^{l-1}+ z^l$ with $l\ge 2$: inflection points are
  given by the roots in $k^*$ of the second derivative of the 
   polynomial $z^{l-1}+ z^l$. We have 1 (resp. no) such  root when
  $l\ge 3$ (resp. $l=2$).

\item $P(z,w)=w+ zw+ z^l$ with $l\ge 0$: inflection points are
  given by the roots in $k^*$ of the second derivative of the 
  function $f(z)=-\frac{z^l}{1+z}$. We have 2 (resp. no) such  roots when
  $l\ge 3$ (resp. $l\le 2$).
\end{itemize}
\end{proof}

Let us fix  a polynomial $P(z,w)$  in $\CC[z,w]\subset\KK[z,w]$ such that
$\Delta(P)$ is 
primitive
and different from $T_1$, and  define $C=Trop(V(P))$. 
Recall that both tropical curves $C$ and $Trop(V(H_{P}))$ have the same
underlying set. 
Let $H'_{C}$ (resp. $W$) be the tropical
modification of $Trop(V(H_{P}))$ (resp. $\RR^2$) 
given by $P(z,w)$, and  $e_1,e_2$, and $e_3$ be the three edges of
$W$ such that $\pi(e_i)$ is an edge of $C$.
Let $(x_i,y_i,z_i)$ be the primitive integer direction of $e_i$ which points
to infinity, and by $\widetilde e_{i,1},\ldots\widetilde e_{i,s_i}$
the ends of $H'_C$  such that
$\pi(e_{i,j})=\pi(e_i)$.
Finally 
we denote by 
$(x_{i},y_{i},\widetilde z_{i,j})$ 
the primitive
integer direction of  $\widetilde e_{i,j}$ 
 pointing to
infinity, and we define
$$z_{H'_C,i}=\sum_{j=1}^{s_i} w(\widetilde e_{i,j})\widetilde z_{i,j}.$$

\begin{lemma}\label{constant coeff}
The tropical curve  $H'_C$ has a unique vertex, which is also the
vertex of $W$. Moreover
$H_C'$ has a vertical end with weight $i_{\Delta(P)}$, 
and for all $i=1,2,3$ we have
\begin{itemize}
\item $z_{H'_C,i}=3z_i$ if $(x_i,y_i)\ne \pm ( 1,0), \pm ( 0,1)$ and
  $\pm ( 1,1)$;
\item $z_{H'_C,i}=3z_i-1$ if $(x_i,y_i)= ( 1,0), ( 0,1)$ or
  $ ( -1,-1)$;
\item $z_{H'_C,i}=3z_i-2$ if $(x_i,y_i)= ( -1,0), ( 0,-1)$ or
  $ ( 1,1)$.
\end{itemize}
\end{lemma}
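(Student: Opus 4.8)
The plan is to separate the three assertions: I would first pin down the combinatorial shape of $H'_C$ (a single vertex, plus exactly one vertical end) by global arguments, and only then compute the slopes $z_{H'_C,i}$ by a local, edge-by-edge analysis. Since $\Delta(P)$ is primitive, $C$ is a single trivalent vertex over $v$ with three weight-one rays; and as $Trop(V(H_P))$ has the same underlying set, its dual subdivision is the trivial one on $3\Delta(P)$, so it too is a single vertex over $v$ with three weight-three rays. By Proposition \ref{inclusion} (Cases 1 and 2, which give $\I_L=\{v\}$ for a primitive Newton polygon), every inflection point of $V(P)$, i.e. every point of $V(P)\cap V(H_P)$ in the torus, has valuation $v$, and by Lemma \ref{inv primitive} there are exactly $i_{\Delta(P)}$ of them. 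Lemma \ref{intersection} then shows that the vertical ends of $H'_C$ lie exactly over $v$ and carry total weight $i_{\Delta(P)}$; being parallel rays issuing from a common point, they amount to a single vertical end of that weight. Finally, over the interior of each ray Lemma \ref{weight modif} bounds the fibre weight of $\pi_{C'_1}$ by $3$ and, no inflection point being there, forbids vertical ends; hence $H'_C$ has no vertex off $v$, giving the unique vertex, which is the vertex of $W$.

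For the slopes, the key observation is that $\widetilde z_{i,j}\le z_i$ for every end over the $i$-th ray, because $val(P)\le P_{trop}\circ Val$ along any branch of $V(H_P)$; hence $3z_i-z_{H'_C,i}=\sum_j w(\widetilde e_{i,j})(z_i-\widetilde z_{i,j})\ge 0$ is exactly the total valuation deficit of $P$ along the three branches of $V(H_P)$ at infinity in the direction $u_i=(x_i,y_i)$ dual to the edge $\delta_i$ of $\Delta(P)$. By the identity $(H_P)^{\delta_{i,he}}=H_{P^{\delta_i}}$ established in the proof of Proposition \ref{max infl points}, these branches and the leading behaviour of $P$ along them depend only on the truncation $P^{\delta_i}$. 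A branch contributes no deficit precisely when $P^{\delta_i}$ does not vanish on it, i.e. when the corresponding root of $H_{P^{\delta_i}}$ is not one of the $n_{\delta_i}$ common roots of $P^{\delta_i}$ and $H_{P^{\delta_i}}$ singled out in Proposition \ref{max infl points}. In particular, when $\delta_i$ is not parallel to an edge of $T_1$ one has $n_{\delta_i}=0$, all three branches are generic, and $z_{H'_C,i}=3z_i$, which is the case $r_{\delta_i}=0$.

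When $\delta_i$ is parallel to an edge of $T_1$ there is a single common root ($n_{\delta_i}=1$), hence one aligned branch, and the whole question reduces to the order to which $P$ vanishes along it. The three edge-directions of $T_1$ are permuted by the action of $S_3$ on the homogeneous coordinates $[z:w:u]$, under which $P^h=z^2w^2u^2P^{hom}$ and $H_P$ are equivariant; this reduces the parallel case to $\delta_i$ horizontal, where the two remaining possibilities are realized by the two primitive models $P(z,w)=w+z^{l-1}+z^l$ and $P(z,w)=w+zw+z^l$ already used in Lemma \ref{inv primitive}, corresponding to $\Delta(P)$ lying on the outer side ($r_{\delta_i}=2$) and on the inner side ($r_{\delta_i}=1$). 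For each I would write the aligned branch of $V(H_P)$ at infinity explicitly and read off $val(P)$ along it, exactly as the orders of inflection were read off in Lemma \ref{inv primitive}, obtaining a deficit of $2$ on the outer side and $1$ on the inner side, i.e. $z_{H'_C,i}=3z_i-r_{\delta_i}$ in all cases. The genuinely delicate point, which I expect to be the main obstacle, is precisely this outer/inner asymmetry: both sides give a single common root $n_{\delta_i}=1$, so distinguishing a deficit of $1$ from a deficit of $2$ requires controlling the exact order of vanishing of $P$ along the aligned branch as it runs out to the toric boundary $\{zwu=0\}$, which is the same refinement that appears at the end of the proof of Proposition \ref{max infl points}.

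As a consistency check, since $\sum_i z_i$ equals the integer area of $\Delta(P)$, which is $1$ by primitivity (this sum being $GL_2(\ZZ)$- and translation-invariant, one reduces it to $T_1$), the per-edge identity forces $\sum_i z_{H'_C,i}=3-\sum_i r_{\delta_i}=i_{\Delta(P)}$. Via the balancing condition at the vertex of $H'_C$, this matches the weight $i_{\Delta(P)}$ of the vertical end found in the first step, so the two halves of the statement corroborate each other.
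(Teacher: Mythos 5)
Your overall strategy is the paper's: establish the shape of $H'_C$, read the weight of the vertical end off Lemma \ref{intersection} and Lemma \ref{inv primitive}, and obtain the slope drops $3z_i-z_{H'_C,i}$ from common roots of the edge truncations, with the two models of Lemma \ref{inv primitive} doing the computation. Several steps are correct (the weight $i_{\Delta(P)}$ of the vertical end, the inequality $\widetilde z_{i,j}\le z_i$, the criterion "no deficit iff $P^{\delta_i}$ does not vanish at the branch point"). But your argument for the unique vertex is circular: Lemma \ref{weight modif} carries the hypothesis that $\pi^{-1}(p)$ contains \emph{no} vertex of the modification, so it cannot be invoked to exclude vertices; and "no vertical ends over the open rays" does not exclude them either, because inside the two-dimensional wall $\pi_W^{-1}(\rho_i)$ a balanced curve may split into edges of different slopes and rejoin (weights $3=1+2$, slopes $3a=b+2c$) with no vertical edge at all --- this bounded non-injectivity is exactly the phenomenon of Figure \ref{modif non inj}. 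What actually forces the statement is that $P$, hence $H_P$, hence the ideal $\langle H_P(z,w),\,u-P(z,w)\rangle$ defining $H'_C$, has coefficients in $\CC$, i.e.\ of valuation $0$: tropicalizations of constant-coefficient varieties are fans centered at the origin. This one-line remark (which also makes your detour through Proposition \ref{inclusion} unnecessary, since all points of $V(P)\cap V(H_P)$ lie in $(\CC^*)^2$ and so have valuation $v$) is the ingredient the paper treats as "straightforward", and it is missing from your proof.

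The second gap is the reduction of the deficit computation to the two models. You justify it by asserting that the branches of $V(H_P)$ at infinity and "the leading behaviour of $P$ along them depend only on the truncation $P^{\delta_i}$", citing $(H_P)^{\delta_{he}}=H_{P^{\delta_i}}$. That identity controls leading terms only, whereas the deficit is precisely about what happens when the leading terms cancel: its exact value (your own "delicate point", $1$ versus $2$) is governed by higher-order terms of $P$ and of the branch, which the truncations do not determine. Consequently, computing the deficit for $w+z^{l-1}+z^l$ and $w+zw+z^l$ does not by itself settle it for an arbitrary primitive triangle having an edge parallel to an edge of $T_1$; you would additionally need to show that every such triangle is carried to one of the two models by a transformation preserving inflection points (the $S_3$-action on $[z:w:u]$ together with multiplication by monomials) --- this is true, but it is nowhere in your text, and it is the same unstated reduction on which Lemma \ref{inv primitive} rests. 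Alternatively, and closer to what the paper points to, one can avoid the classification altogether: the proof of Proposition \ref{max infl points} gives the per-edge inequalities (deficit $\ge 1$ on an "inner" parallel edge from the common root, and deficit $\ge 2$ on an "outer" one via the refined Newton-polygon bound with $Q=w^2u^2P^{hom}$), and your final "consistency check" ($\sum_i z_i=1$, balancing at the vertex, and Lemma \ref{inv primitive}) shows the total deficit equals $\sum_i r_{\delta_i}$, forcing equality edge by edge. Promoted from a check to an argument, that computation plus the per-edge bounds would close this gap.
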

\begin{proof}
The only thing which does not follow straightforwardly
 from Proposition \ref{max infl points} and Lemma
\ref{inv primitive} is the difference $3z_i-z_{H'_C,i}$. However,
this difference 
corresponds exactly to the common roots of the truncation of $P(z,w)$ and
$H_{P}(z,w)$ along the corresponding edge of $\Delta(P)$ and
$\Delta(H_{P})$, which have been computed in the proof of
Proposition \ref{max infl points} and Lemma 
\ref{inv primitive}.
\end{proof}

\subsection{Localization}\label{auxiliary}

In this whole section $C$ is non-singular tropical curve in $\RR^2$
with Newton polygon the triangle $T_d$, 
and $P(z,w)$ is a  polynomial of degree $d$ 
in $\KK[z,w]$ such that 
$Trop(V(P))=C$.

The proof of next Lemma is the same 
as the one of \cite[Proposition
  2.1]{Br15}. 
\begin{lemma}\label{subd}
Let $F$ be a cell of the dual subdivision of $C$. Then the Newton
polygon $F'$ of 
$H_{P_{\CC, F}}(z,w)$
is a cell of the dual subdivision 
of the
  tropical curve $Trop(V(H_P))$, and
$(H_P)_{\CC,F'}(z,w)=H_{P_{\CC,F}}(z,w)$.
\end{lemma}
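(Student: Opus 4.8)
The plan is to deduce the statement from a single equivariance property of the construction $P\mapsto H_P$ under the diagonal torus rescaling that defines $P_{\CC,F}$, and then to let $t$ tend to $0$.

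Recall that if $\lambda_{\Delta_F}(i)=\gamma_1 i_1+\gamma_2 i_2+\alpha$, then $P_{\CC,F}$ is the value at $t=0$ of $\widetilde P(z,w)=t^\alpha P(t^{\gamma_1}z,t^{\gamma_2}w)$, whose coefficients are all regular at $t=0$. First I would record how the Hessian transforms under the substitution $\phi_t\colon(z,w,u)\mapsto(t^{\gamma_1}z,t^{\gamma_2}w,u)$. For any homogeneous polynomial $Q$ in three variables, differentiating twice and then pulling the factors $t^{\gamma_1},t^{\gamma_2},t^0$ out of the three rows and the three columns of the Hessian matrix gives
\[ Hess_{Q\circ\phi_t}=t^{2(\gamma_1+\gamma_2)}\,\bigl(Hess_Q\circ\phi_t\bigr). \]
Since $\widetilde P^{hom}=t^\alpha(P^{hom}\circ\phi_t)$ while $\phi_t$ multiplies $z^2w^2u^2$ by $t^{2\gamma_1+2\gamma_2}$, one gets $\widetilde P^{h}=t^{\alpha-2\gamma_1-2\gamma_2}(P^h\circ\phi_t)$; combining this with the displayed identity and with $Hess_{cQ}=c^3 Hess_Q$, and finally setting $u=1$, yields the key relation
\[ H_{\widetilde P}(z,w)=t^{\,3\alpha-4\gamma_1-4\gamma_2}\,H_P(t^{\gamma_1}z,t^{\gamma_2}w). \]

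Next I would use that $P\mapsto H_P$ is given by polynomials (of degree three) in the coefficients of $P$: it is obtained by homogenizing, multiplying by the fixed monomial $z^2w^2u^2$, taking a determinant of second derivatives, and dehomogenizing. As the coefficients of $\widetilde P$ are regular at $t=0$, so are those of $H_{\widetilde P}$, and passing to the limit commutes with the construction, so that $\lim_{t\to 0}H_{\widetilde P}=H_{P_{\CC,F}}$. On the other hand, the right-hand side of the key relation is precisely of the form $t^{\alpha'}H_P(t^{\gamma'_1}z,t^{\gamma'_2}w)$ with $(\gamma'_1,\gamma'_2,\alpha')=(\gamma_1,\gamma_2,3\alpha-4\gamma_1-4\gamma_2)$, i.e. it is exactly the rescaling of $H_P$ used in the excerpt to build the complex polynomials attached to the faces of $Trop(V(H_P))$. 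Since its limit at $t=0$ exists and equals the nonzero polynomial $H_{P_{\CC,F}}$ (nonzero since $V(P_{\CC,F})$ is not a line, cf. Lemma \ref{inv primitive}), the affine form $\gamma'_1 j_1+\gamma'_2 j_2+\alpha'$ is an admissible supporting function of the dual subdivision of $H_P$. Its contact cell is the Newton polygon $F'$ of the limit; hence $F'$ is a cell of the dual subdivision of $Trop(V(H_P))$, and the limit is by definition $(H_P)_{\CC,F'}$. Comparing the two expressions for $\lim_{t\to0}H_{\widetilde P}$ gives both assertions at once.

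The computational core, namely the two displayed identities, is routine multilinear algebra; it may be viewed as the coefficient-sensitive refinement of the purely combinatorial equality $H_{P^\delta}=(H_P)^{\delta_{he}}$ already used in the proof of Proposition \ref{max infl points}. The only point requiring care is the last step: one must check that a finite nonzero limit $\lim_{t\to0}\sum_j b_j t^{\nu(j)}z^j$ genuinely produces the complex truncation $(H_P)_{\CC,F'}$ of a bona fide cell. This is the standard fact that the locus where a lower-supporting affine function meets the height function $j\mapsto -val(b_j)$ is a single face of the dual subdivision containing all its own lattice points, together with the independence of $(H_P)_{\CC,F'}$ from the particular admissible affine form chosen to define it; this is exactly the content carried over from \cite[Proposition 2.1]{Br15}.
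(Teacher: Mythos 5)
Your overall strategy --- equivariance of $P\mapsto H_P$ under the diagonal rescaling that defines truncations, followed by passage to the limit $t\to 0$ --- is the natural one, and it is what the paper has in mind when it refers to \cite[Proposition 2.1]{Br15}; your key relation $H_{\widetilde P}(z,w)=t^{3\alpha-4\gamma_1-4\gamma_2}H_P(t^{\gamma_1}z,t^{\gamma_2}w)$ is correct, and so is the final admissibility/contact-cell step. The gap is in the sentence ``passing to the limit commutes with the construction, so that $\lim_{t\to0}H_{\widetilde P}=H_{P_{\CC,F}}$''. The construction $Q\mapsto H_Q$ is \emph{not} a polynomial map of the coefficients independent of the Newton polygon: its first step homogenizes $Q$ to its own degree. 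Now $\widetilde P$ has degree $d$, while $P_{\CC,F}$ has degree $d'=\max\{i+j:(i,j)\in\Delta_F\}$, and $d'<d$ for every cell $\Delta_F$ not meeting the hypotenuse of $T_d$ --- that is, for most cells. What the limit actually produces is $Hess_{u^{d-d'}(P_{\CC,F})^h}(z,w,1)$, the Hessian computed with the degree-$d$ homogenization, and this genuinely differs from $H_{P_{\CC,F}}$: for $P_{\CC,F}=1+z+w$ inside a conic ($d=2$, $d'=1$), the coefficient of $z^7w^4$ is $3\cdot2\cdot3\cdot7=126$ in the former and $3\cdot2\cdot2\cdot6=72$ in the latter (the vertex coefficient of the Hessian of a monomial $z^aw^bu^c$ being $abc(a+b+c-1)$), and the two polynomials are not even proportional. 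So, as written, your argument proves the lemma only for cells touching the edge $\{i+j=d\}$; for the others it proves the statement with $H_{P_{\CC,F}}$ replaced by the degree-$d$-homogenized Hessian. To close the gap you must either fix the convention that all homogenizations are taken to the ambient degree $d$ (under that reading your proof goes through verbatim), or compare the two Hessians explicitly: the vertex computation above shows they have the same Newton polygon $3\Delta_F+(4,4)$, which rescues the first assertion of the lemma, but the second assertion is precisely an identity of coefficients and cannot be obtained by commuting the limit through the homogenization. Note that you diagnose the ``only point requiring care'' as the final dual-subdivision step, which is in fact the unproblematic part; the delicate point is this commutation.

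A second, smaller error: your justification that the limit is nonzero (``$V(P_{\CC,F})$ is not a line, cf.\ Lemma \ref{inv primitive}'') is not available. The lemma concerns \emph{all} cells of the dual subdivision, and for a non-singular $C$ many cells are translates of $T_1$, or edges parallel to an edge of $T_1$, or points; for these, $P_{\CC,F}$ does define a line (or a single monomial). Nonvanishing holds nevertheless, but the reason is the factor $z^2w^2u^2$ in the definition of $P^h$: every exponent vector $(a,b,c)$ occurring in $P^h$ has $a,b,c\ge 2$, so each vertex $3v-(2,2,2)$ of $3\Delta(P^h)-(2,2,2)$ carries the nonzero coefficient $abc(a+b+c-1)$ times the cube of the corresponding coefficient of $P^h$. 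This computation is what you need in any case to identify the Newton polygon of the limit, so it should be made explicit rather than replaced by an appeal to Lemma \ref{inv primitive}.
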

It follows easily from Lemma \ref{subd}  that
 the tropical curve $Trop(V(H_P))$ has the same
underlying set 
as $C$, and all its edges are of weight 3.

Let $H_C'$  (resp. $W$) be the tropical modification of
$Trop(V(H_P))$ (resp. $\RR^2$) given by 
$z^2w^2P(z,w)$. It follows from Lemma \ref{subd} that $H'_C$ lies
entirely in $\pi_{|W}^{-1}(C)$.
Following Lemma \ref{intersection}, Theorem \ref{main}
reduces to
estimating the weight and the direction of all vertical ends of $H_C'$.

\begin{lemma}\label{vertex modif 1}
Let $v$ be a vertex of $C$ with $\Delta_v\ne T_1$. Then $H'_C$
has a vertex $v'$ with $\pi(v')=v$ and 
which is also a vertex of $W$. Moreover if $B$ is a ball centered in
$v'$ of radius
$\varepsilon$ small
enough, then 
$B\cap H'_C$ is equal to a translation of $B'\cap H'_{C_v}$,
 where $H'_{C_v}$ is the tropical modification of 
$Trop(V(H_{P_{\CC,v}}))$ 
given by $P_{\CC,v}(z,w)$ and $B'$ is a ball centered in
the origin of radius
$\varepsilon$.
\end{lemma}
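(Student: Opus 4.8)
The plan is to bring $v$ to the origin by the multiplicative change of coordinates that defines the truncation $P_{\CC,v}$, and then to observe that the germ of $H'_C$ at $v'$ is governed entirely by the $t\to 0$ limit of the rescaled polynomial. Concretely, let $\lambda_{\Delta_v}(i,j)=\gamma_1 i+\gamma_2 j+\alpha$ be the linear-affine function attached to the cell $\Delta_v$ in section \ref{standard trop}, and set $\widetilde P(z,w)=t^{\alpha}P(t^{\gamma_1}z,t^{\gamma_2}w)$, so that $\widetilde P|_{t=0}=P_{\CC,v}$. The scalar factor $t^{\alpha}$ leaves $V(P)$ unchanged, while the monomial substitution is an automorphism of $(\KK^*)^2$ whose effect on valuations is a translation of $\RR^2$; choosing this translation to send $v$ to the origin, I may assume that the coefficients of $\widetilde P$ indexed by the vertices of $\Delta_v$ have valuation $0$ and that all the others have negative valuation. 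This operation translates $C$ and $Trop(V(H_P))$ (which share the same support) together with the modifications $W$ and $H'_C$. Since $v$ is a vertex of $C=Trop(V(z^2w^2P))$ and $W$ is the graph over $\RR^2$ of a tropical polynomial whose corner locus is $C$, the point $v$ lifts to a unique vertex of $W$ over $v$; as $H'_C\subset W$ projects onto $C$, the lift of $v$ in $H'_C$ is exactly this point $v'$, which is therefore a vertex of both $W$ and $H'_C$ with $\pi(v')=v$. This settles the first assertion.

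Next I would use Lemma \ref{subd} to localize the Hessian. The vertex $v$ is dual to the cell $F'$ of the dual subdivision of $Trop(V(H_P))$ equal to the Newton polygon of $H_{P_{\CC,v}}$, and $(H_P)_{\CC,F'}=H_{P_{\CC,v}}$. After the normalization above this means that the coefficients of $H_P$ supported on $F'$ have valuation $0$ and reduce at $t=0$ to $H_{P_{\CC,v}}$, while every other coefficient has strictly negative valuation. Hence the germ of $Trop(V(H_P))$ at $v$ coincides with the germ of $Trop(V(H_{P_{\CC,v}}))$ at its vertex, and, up to the harmless monomial factor $z^2w^2$ (which contributes only a fixed affine map of the last coordinate and is absorbed into the normalization), the divisor cut out on $V(H_P)$ by $z^2w^2P$ near $v$ is precisely the one cut out by $P_{\CC,v}$.

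Finally I would conclude by the locality of the modification. By Lemma \ref{intersection} the vertical ends of $H'_C$ lie over $Trop(V(H_P)\cap V(P))$, that is, over the tropicalized inflection points of $V(P)$, and by Corollary \ref{nonsingular}, together with the fact that $v'$ is a genuine vertex of $W$, the intersection $B\cap H'_C$ in a small ball $B$ is determined by the germ at $v$ of the base $Trop(V(H_P))$ and of the divisor of $z^2w^2P$. By the previous paragraph both germs are produced by $P_{\CC,v}$, so $B\cap H'_C$ is, up to the translation implementing the normalization, the corresponding germ $B'\cap H'_{C_v}$. The hard part is exactly this last step: one must rule out both the disappearance of an inflection point of valuation $v$ and the spurious creation of one outside the truncation. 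This is where the hypothesis $\Delta_v\neq T_1$ enters through Lemma \ref{inv primitive}, which guarantees that $V(P_{\CC,v})$ has exactly $i_{\Delta_v}$ inflection points in $(\CC^*)^2$; these persist with valuation $v$ and account for the full weight of the unique vertical end over $v'$, so that nothing is lost or gained and the claimed local identification holds.
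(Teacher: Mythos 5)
Your normalization and your use of Lemma \ref{subd} match the paper's setup, but the concluding step --- which you yourself flag as ``the hard part'' --- does not go through as written. The appeal to Corollary \ref{nonsingular} is invalid: that corollary requires the curve being modified to be \emph{non-singular}, whereas here the base of the modification is $Trop(V(H_P))$, all of whose edges have weight $3$ (this is exactly what follows from Lemma \ref{subd}). For such a base there is no combinatorial determinacy: by Lemma \ref{weight modif} a weight-$3$ edge may carry up to three edges of the modification, at heights and slopes that depend on higher-order terms of the coefficients, so the germ of $H'_C$ at $v'$ is \emph{not} determined by the germ of $Trop(V(H_P))$ together with the tropicalized divisor. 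This matters because the lemma asserts an identification of the full local picture --- including directions and weights of the non-vertical edges of $H'_C$ over the edges of $C$ adjacent to $v$, which is precisely the data (the slopes $z_{H'_C,i}$ of Lemma \ref{constant coeff}) used later in Lemmas \ref{edge 1} and \ref{edge 2}; your inflection-point count, even if granted, only controls the vertical end over $v'$.

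Second, the persistence assertion itself (``these persist with valuation $v$ and account for the full weight'') is stated without proof, and it is the actual content of the lemma; Lemma \ref{inv primitive} counts inflection points of the complex curve $V(P_{\CC,v})$ but says nothing by itself about which inflection points of the curve over $\KK$ have valuation $v$. The paper's mechanism is the initial degeneration principle applied to the curve in $(\KK^*)^3$ cut out by the system $H_P(z,w)=0$, $u-P(z,w)=0$: after your normalization, the intersection of $H'_C$ with a small ball around the vertex of $W$ equals the tropicalization of the curve obtained by plugging $t=0$ into this system, which by Lemma \ref{subd} is precisely the modification $H'_{C_v}$ of $Trop(V(H_{P_{\CC,v}}))$ given by $P_{\CC,v}$. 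This single step yields both assertions of the lemma at once. (Note also that your claim that the fiber of $H'_C$ over $v$ is exactly the point $v'$ is false whenever $i_{\Delta_v}>0$: by Lemma \ref{intersection} that fiber then contains a vertical end.) To repair your argument you would need to prove this local degeneration statement --- for instance via conservation of local intersection multiplicities for $X(t)\cap Hess_{X(t)}$ in a compact region of $(\CC^*)^2$ --- rather than cite Corollary \ref{nonsingular}.
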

\begin{proof}
The tropical curve $H'_C$ is the tropicalization of the curve
in $(\KK^*)^3$ given by the system of equations
\begin{equation}\label{equ modif}
\left\{\begin{array}{l}
H_P(z,w)=0 
\\
\\ u-P(z,w)=0
\end{array}\right.
\end{equation}
(the coordinates in  $(\KK^*)^3$ are $z,w,$ and $u$).
Without loss of generality,
we may assume that $v=(0,0)$, that the point $v''=(0,0,0)$ is a vertex
of $W$, and that the coefficients of the monomials of $P$ corresponding to the vertex of $\Delta_v$ have valuation $0$.
 Hence if $B$ is a small ball
centered in $v''$, we have that 
 $B\cap H'_C$ is given by the tropicalization of the curve obtained by
plugging $t=0$ in the system (\ref{equ modif}). According to Lemma
\ref{subd}, this tropical curve is exactly the tropical modification of
$Trop(V(H_{P_{\CC,v}}))$ 
given by $P_{\CC,v}(z,w)$. 
\end{proof}
Lemma \ref{vertex modif 1} implies that given $v$ 
 a vertex of $C$ such that $\Delta_v\ne T_1$, if $B_v\subset\RR^2$ is
 a ball small enough centered in $v$, 
then the tropical curve $H_C'$ is completely determined in
$B_v\times\RR$
 by the tropical modification of $V(H_{P_{\CC,v}})$ 
given by $P_{\CC,v}(z,w)$. Since this modification is given in Lemma
\ref{constant coeff},  we see that 
the curve $C$ determines the curve $H'_C$ in 
$\cup_v B_v\times\RR$. Hence it remains 
to study $H'_C$ in
$\RR^3\setminus (\cup_v B_v\times\RR)$.

Let $v$ be a vertex of $C$, and $L$ be a tropical line with vertex $v$
and such that $C\cap L$ contains an inflection component of multiplicity at least
3 which is not reduced to $v$. We denote by $E$ the connected
component of $C\cap L$ containing $v$.
If $\Delta_v\ne T_1$ we define
$E'=\overline E\setminus\{v\}$, and we define
$E'=\overline E$ otherwise.

\begin{lemma}\label{edge 1}
The number of inflection points of $\overline X$ with valuation
in
$E'$ is 6 if 
$E$ is made of 3 bounded edges
of $C$, and 3 otherwise.
\end{lemma}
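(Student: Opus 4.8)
The plan is to read Lemma \ref{edge 1} entirely through the tropical modification $H'_C$ of $Trop(V(H_P))$ defined by $z^2w^2P(z,w)$, together with the companion modification $W$ of $\RR^2$. Since the inflection points of $\overline X$ lying in the torus are exactly the points of $V(H_P)\cap V(P)$, and $V(z^2w^2P)=V(P)$ there, Lemma \ref{intersection} identifies the number of inflection points (counted with multiplicity) tropicalizing to a point $q\in C$ with the weight of the vertical end of $H'_C$ over $q$. Hence the quantity $N$ to compute is the total weight of the vertical ends of $H'_C$ lying over $E'$. By Lemma \ref{subd}, $Trop(V(H_P))$ has the same support as $C$ with all edges of weight $3$, and $H'_C\subset\pi_{|W}^{-1}(C)$, so $H'_C$ projects onto $C$ and the problem is genuinely one--dimensional over $C$.

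The computation rests on two local inputs. First, near every vertex $w$ of $C$ with $\Delta_w\neq T_1$, Lemmas \ref{vertex modif 1} and \ref{constant coeff} give the aggregate vertical drift of $H'_C$ in an edge direction $u$ as $3z^W_u+c_u$, where $z^W_u$ is the slope of $P_{trop}$ along that edge and the correction $c_u$ equals $0$, $-1$ or $-2$ exactly as in Lemma \ref{constant coeff}; in particular $c_u=-1$ for $u\in\{(1,0),(0,1),(-1,-1)\}$ and $c_u=-2$ for $u\in\{(-1,0),(0,-1),(1,1)\}$, the two triples being swapped by $u\mapsto -u$. Second, writing $u_1,u_2,u_3$ for the primitive directions of the edges of $E$ pointing away from $v$, the balancing of $W$ at the vertex over $v$ yields
$$\sum_i z^W_{u_i}=Area(\Delta_v)=1,$$
since the vertical end of $W$ over $v$ has weight $Area(\Delta_v)$ and $C$ is non-singular; this one identity is the only place where the value of $\Delta_v$ enters the drift computation.

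I then apply the balancing condition to the part of $H'_C$ over the region made of the open edges of $E$ together with $v$: as all vertical ends point downward, $N$ equals the total outward vertical flux across the boundary of this region. An end of a bounded edge at a far vertex $w$ with $\Delta_w\neq T_1$ contributes, in the outward direction $+u$, the amount $-(3z^W_{-u}+c_{-u})=3z^W_{u}-c_{-u}$, using $z^W_{-u}=-z^W_u$. For a single bounded edge (so $\Delta_v\neq T_1$, both ends of this type) the two $3z^W$ terms cancel and $N=-c_u-c_{-u}=3$, since $\{c_u,c_{-u}\}=\{-1,-2\}$. For three edges meeting at a $T_1$-vertex $v$ (interior to the region), the directions $u_i$ lie in $\{(-1,0),(0,-1),(1,1)\}$, so $c_{-u_i}=-1$ and each bounded far vertex contributes $3z^W_{u_i}+1$; an unbounded edge instead exits at infinity and, by the truncations of $P$ and $H_P$ along the corresponding edge of $T_d$ (Proposition \ref{max infl points}), contributes $3z^W_{u_i}-2$. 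Summing and invoking $\sum_i z^W_{u_i}=1$ gives $3\cdot 1+3=6$ when all three edges are bounded and $3\cdot 1+(1+1-2)=3$ when exactly two are. This is $6$ for three bounded edges and $3$ otherwise, as claimed; the single unbounded edge is excluded since there $\I_L=\{v\}$.

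The main obstacle is the flux bookkeeping at the non--interior ends: one must check that the $3z^W$ contributions assemble through the single identity $\sum_i z^W_{u_i}=1$, and must obtain the drift of $H'_C$ along an unbounded end not from a local vertex model but from the truncations along the relevant edge of $T_d$, i.e. from Proposition \ref{max infl points}. One also has to be careful with attribution at the far vertices $v_{e_i}$: the inflection points with valuation exactly $v_{e_i}$ belong to the inflection component of $v_{e_i}$ and are counted by Lemma \ref{inv primitive} through the line at $v_{e_i}$, which is why the region is taken open there and why $E'$ removes $v$ only when $\Delta_v\neq T_1$; the complementary adjacencies, such as an edge joining two $T_1$-vertices, are treated in Lemma \ref{edge 2}. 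Pleasantly, the $T_1$-vertex enters only through the combinatorial type of $E$ and through $Area(\Delta_v)=1$, so no separate analysis of the Hessian at a $T_1$-vertex is needed.
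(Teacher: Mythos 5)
Your overall strategy is the paper's own: you pass to the modification $H'_C$ of $Trop(V(H_P))$ by $z^2w^2P$, identify inflection counts with weights of vertical ends via Lemma \ref{intersection}, and evaluate the total by a balancing (flux) computation whose boundary inputs are the drift corrections of Lemma \ref{constant coeff}, localized at the non-$T_1$ far vertices by Lemma \ref{vertex modif 1}, together with the identity $\sum_i z^W_{u_i}=1$ at the central vertex. For the two compact cases (a single bounded edge, and three bounded edges) your bookkeeping is correct and agrees with the paper's computation, and your remark about attribution at the far vertices $v_{e_i}$ (those inflection points belong to the vertex components and are counted by Lemmas \ref{inv primitive} and \ref{vertex modif 1}) is the right reading of the statement.

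The genuine gap is in the case where $E$ contains an unbounded edge $e_3$. You assert that the ends of $H'_C$ lying over $e_3$ have total vertical drift exactly $3z^W_{u_3}-2$, citing the truncation analysis behind Proposition \ref{max infl points}. That analysis only accounts for the \emph{forced} part of the drop: the deficiency $3z^W_{u_3}-\sum_j w_j\widetilde z_j$ equals the intersection multiplicity of $\overline X$ with the closure of $V(H_P)$ at the point $q$ of the toric boundary where the end of $X$ meets the axis, and this multiplicity is $2+g$, where the $2$ is the spurious contact created by the monomial factor $z^2w^2u^2$ in $P^h$ (present even when $q$ is not an inflection point), and $g$ is the intersection multiplicity of $\overline X$ with its honest Hessian at $q$, i.e.\ the number of inflection points of $\overline X$ sitting at $q$. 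Nothing in your argument rules out $g>0$; such points lie in $E'=\overline E\subset\TT P^2$ and must be counted by the lemma (indeed this is exactly the situation, allowed by Proposition \ref{inclusion}, where the inflection component is the whole edge $e_3$ and the inflection points slide out to infinity). So your flux computation, as written, yields the number $3-g$ of inflection points with valuation in $E'\cap\RR^2$, not the total; the claimed $3$ is recovered only after adding the $g$ boundary points, and the identity ``deficiency $=2+g$'' is precisely the step that is missing. This is also why the paper's proof does not assign a numerical value to the drift over $e_3$: it carries the term $3z_e-\sum_i w(\widetilde e_i)\widetilde z_i$ through the balancing computation as the contribution of the point at infinity, so that the count is insensitive to where on $\overline{e_3}$ the inflection points sit. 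To repair your version you need the local statement at the toric boundary comparing $H_P=Hess_{P^h}$ with $Hess_{P^{hom}}$ along $\overline X$ (namely, that at a transverse axis crossing $H_P$ vanishes on $\overline X$ to order exactly $2$ plus the inflection multiplicity there); with that in hand, your torus count and the boundary count recombine to give $3$.
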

\begin{proof}
Let $e_1,\ldots,e_r$ (resp. $\widetilde e_1,\ldots,\widetilde e_s$) 
be the vertical ends  of $H'_C$ (resp.
unbounded edges of $H'_C$ which are contained in 
$\pi_{C'}^{-1}(E)$
and
not vertical), 
and let $(\widetilde x_i,\widetilde
y_i,\widetilde z_i)$  be the primitive
integer direction of  $\widetilde e_i$ 
 pointing to
infinity. 
 If $s>0$, then there exists a unique 
 unbounded
edge $e$ of $E$ such that 
 $\pi_{W}^{-1}(e)$ contains all edges $\widetilde e_1,\ldots,\widetilde e_s$.
Let $(x_{e},y_{e},z_{e})$ be the primitive
integer direction  pointing to infinity of $e$. 
The number of inflection points of $\overline X$ with valuation
in  
$E'$ is equal to 
$$\sum_{i=1}^r w(e_i) + 3z_{e}-\sum_{i=1}^s w(\widetilde e_i) \widetilde z_{i}.$$ 
According to Proposition \ref{inclusion}, the balancing condition,
Lemma \ref{constant coeff}, and Lemma \ref{vertex modif 1}, 
this sum is precisely  equal to 6 if
$E$ is made of 3 bounded edges
of $C$, and to 3 otherwise.
\end{proof}

So far we have proved Theorem \ref{main} in all cases except when
$E$ is made of three bounded edges and contains exactly 2 inflection
components in its relative interior.

\begin{lemma}\label{edge 2}
Suppose that 
$E$ is made of three bounded edges and contains exactly 2 inflection
components $p_1$ and $p_2$ in its relative interior. 
Then  $\overline X$ has exactly 3 inflection points with valuation
in $p_i$, $i=1,2$.
\end{lemma}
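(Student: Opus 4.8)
The plan is to compute, for each $i=1,2$, the total weight $n_i$ of the vertical ends of the Hessian modification $H'_C$ lying above $p_i$; by Lemma \ref{intersection} this weight is exactly the number, counted with multiplicity, of inflection points of $\overline X$ with valuation $p_i$. By Proposition \ref{inclusion} every inflection point with valuation in the relative interior of $E$ tropicalizes to $p_1$ or $p_2$, and by Lemma \ref{edge 1} their total number is $6$ (we are in the case where $E$ is made of three bounded edges, and since $\Delta_v=T_1$ the component $v$ carries no inflection point). Hence $n_1+n_2=6$, and it suffices to prove $n_1=n_2=3$.

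To isolate each $p_i$ I would track the slope in the vertical ($z$-)direction of $H'_C$ along the edge $e_i$. Since $H'_C$ lies in $\pi_{|W}^{-1}(C)$ (Lemma \ref{subd}) and, by Proposition \ref{inclusion}, the only inflection component in the relative interior of $e_i$ is $p_i$, the curve $H'_C$ has no vertical end over either of the two open segments into which $p_i$ divides $e_i$. Consequently its $z$-slope is constant on each of these two segments, and the balancing condition at the points of $\pi^{-1}(p_i)$ shows that $n_i$ equals the jump of this $z$-slope across $p_i$, that is, the difference between the slope on the segment towards $v_{e_i}$ and the slope on the segment towards $v$. The slope on the segment towards $v_{e_i}$ is furnished by Lemma \ref{vertex modif 1} together with Lemma \ref{constant coeff} applied at the vertex $v_{e_i}$, whose dual polygon is primitive and distinct from $T_1$.

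The main obstacle is the slope of $H'_C$ on the segment towards $v$, because the localization Lemma \ref{vertex modif 1} is \emph{not} available at $v$: its hypothesis $\Delta_v\neq T_1$ fails, and the naive local model degenerates. Normalizing $P_{\CC,v}=1+z+w$ (permissible since $\Delta_v=T_1$ is primitive), a direct computation gives a factorization of the form $H_{P_{\CC,v}}(z,w)=c\,z^4w^4\,(1+z+w)\,Q(z,w)$ with $c\neq 0$ and $1+z+w\nmid Q$; thus $V(H_{P_{\CC,v}})$ \emph{contains} the line $V(P_{\CC,v})$, and the intersection defining the inflection points is non-proper. This reflects the classical fact that a line carries no isolated inflection point, and it is precisely the reason the inflection points of $\overline X$ migrate away from $v$ onto the interiors of the edges. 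To compute the slope near $v$ I would therefore retain the first subdominant terms of $P$ along $e_1,e_2,e_3$; equivalently, I would resolve the degenerate intersection by the auxiliary modification of $C$ by the tangent line $T$ (whose tropicalization $L$ has vertex $v$), exactly as in the proof of Proposition \ref{inclusion}, where the transversality Lemmas \ref{no tang} and \ref{no infl} and the local count of Lemma \ref{propE1} pin down the residual contribution emanating from $v$.

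Feeding these two slopes into the balancing relation at $p_i$ yields $n_i=3$ for $i=1,2$, in agreement with $n_1+n_2=6$ and with the assigned multiplicity $\mu_\E=3$ of each of the two inflection components. The delicate point, as indicated, is controlling the contribution near $v$ in the degenerate $T_1$ situation; once the factorization of $H_{P_{\CC,v}}$ and the exact position of $p_i$ from Proposition \ref{inclusion} are in hand, the rest is a routine application of the balancing condition and of Lemma \ref{constant coeff}.
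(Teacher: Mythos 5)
Your reduction to computing the two one-sided slopes of $H'_C$ along $e_i$ is sound as a frame, and the half of it you can actually do (the slope on the side of $v_{e_i}$, via Lemmas \ref{vertex modif 1} and \ref{constant coeff}) matches the paper. The gap is that the other slope, on the segment between $v$ and $p_i$, is never computed: you correctly observe that Lemma \ref{vertex modif 1} fails at $v$ because $H_{P_{\CC,v}}$ is divisible by $P_{\CC,v}$, but neither of the remedies you invoke produces this slope. ``Retaining the first subdominant terms of $P$'' is not data visible in $C$: those terms depend on the realization $X$, so one would first have to prove the resulting slope is independent of them, which is essentially the statement being proved. And the auxiliary modification by a tangent line $T$, as in Proposition \ref{inclusion}, controls $Trop(X\cap T)$ for a single tangent line: it constrains \emph{where} inflection points may sit (this is how $p_1,p_2$ were located), but it says nothing about \emph{how many} inflection points sit over each $p_i$, hence nothing about the slope of the Hessian modification near $v$. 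Worse, knowing that slope is equivalent to knowing $n_i$ (it differs from the known $v_{e_i}$-side slope exactly by the jump $n_i$), so as structured the argument is circular at precisely the decisive point. A smaller imprecision: the reason no inflection point has valuation $v$ is not that $\Delta_v=T_1$ (when the three lengths are equal, $v$ \emph{is} an inflection component, of multiplicity $6$), but the hypothesis that the only inflection components in the relative interior of $E$ are $p_1$ and $p_2$, combined with Proposition \ref{inclusion}.

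The paper's proof never determines the structure of $H'_C$ over $v$; its cited ingredients combine so as to bypass it. By Lemmas \ref{vertex modif 1} and \ref{constant coeff}, over each $e_i$ all sheets of $H'_C$ emanate from the single vertex of $W$ over $v_{e_i}$, with total slope deficiency $1$ relative to the graph edge of $W$; by Proposition \ref{inclusion} the only vertical ends over the relative interior of $E$ lie over $p_1$ and $p_2$, at distance $\frac{l(e_i)-l(e_3)}{3}$ from $v$. Now apply the \emph{horizontal} part of the balancing condition at each point of the vertical ray of $W$ over $v$: since the primitive directions of $e_1,e_2,e_3$ sum to zero and are pairwise independent, the sheets arriving from the three directions must attach with equal weights at each height, i.e.\ the three arrival-depth distributions on that ray coincide. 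Tracking the total weighted depth of the sheets below the graph of $W$ along $e_i$ (its derivative equals the slope deficiency, namely $1$ before $p_i$ and $1-n_i$ after, and it is unchanged when crossing the fiber over $p_i$) gives
$$l(e_1)-n_1\,\frac{l(e_1)-l(e_3)}{3} \;=\; l(e_2)-n_2\,\frac{l(e_2)-l(e_3)}{3} \;=\; l(e_3),$$
whence $n_1=n_2=3$. The moral is that the exact positions of $p_1,p_2$ furnished by Proposition \ref{inclusion}, which your scheme uses only to locate the jumps, are precisely what determines the sizes of the jumps; that is the idea missing from your proposal.
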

\begin{proof}
The 
number of inflection points we are looking for is the weight of
the vertical end $e_i$ (if any) with $\pi(e_i)=p_i$. According to 
 Proposition \ref{inclusion}, the
balancing condition, Lemma \ref{constant coeff}, and Lemma \ref{vertex modif 1},
one sees easily that
there is no other possibility for this weight other than  to
be equal to 3. 
\end{proof}

\section{Examples}\label{constructions}
In this section we use Theorem \ref{main real} to construct some
examples of maximally inflected real curves. In Proposition
\ref{classification d=4} we classify all possible distributions of
real inflection points among the connected components of a real
quartic. In Proposition \ref{one component}, we construct maximally
inflected curves with many connected components
 and all real inflection points on only one of them.

\vspace{1ex}
Before stating Proposition \ref{classification d=4}, let us introduce
the following notation: we say that a non-singular real algebraic
curve $\RR X$ in $\mathbb R P^2$ has
inflection type $(n_1,\ldots,n_k)$ if $\RR X$ has exactly $k$ ovals
$O_1,\ldots, O_k$ and $O_i$ contains exactly $n_i$ real inflection points.
Note that a maximally 
inflected real quartic 
made of two nested ovals automatically has inflection type
$(8,0)$. 
\begin{prop}\label{classification d=4}
The inflection types realized by 
 maximally inflected quartics in $\mathbb R P^2$ are exactly
$$(8),\ (8,0), \ (6,2),\ (4,4), \ (8,0,0), \ (6,2,0), \ (4,2,2),
 \ (4,4,0),$$
$$ (6,2,0,0), \ (4,2,2,0),\ (4,4,0,0), \ (2,2,2,2).$$
\end{prop}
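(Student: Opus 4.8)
The plan is to prove the two inclusions separately: every type in the list is realized by some maximally inflected quartic, and no maximally inflected quartic realizes a type outside the list. Throughout I use that a maximally inflected quartic has exactly $d(d-2)=8$ real inflection points, that the degree $4$ is even so every connected component is an oval bounding a disk, and that (classically) a non-singular real quartic has either at most $4$ ovals or else exactly two nested ovals $1\langle 1\rangle$.

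For the \textbf{restriction} I would first record the elementary numerical constraints. Each oval is a smooth closed curve, so along it the number of points where the Hessian vanishes, i.e. the number of real inflection points, is even; hence every $n_i$ is even and $\sum_i n_i=8$. This reduces the possibilities to the even partitions of $8$ into at most $4$ parts, and two of these must still be discarded. The nested scheme is handled by B\'ezout exactly as in the note preceding the statement: if the inner oval carried an inflection point $p$, its tangent $T$ would meet that oval with multiplicity at least $3$ and, lying inside the outer oval, would meet the outer oval in at least $2$ further points, forcing $(X\circ T)\ge 5>4$; so the inner oval is convex and the type is $(8,0)$. The genuinely delicate exclusion is $(8,0,0,0)$, which amounts to the claim that \emph{a non-singular quartic has at most two convex (inflection-free) ovals}. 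I expect this to be the main obstacle.

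For the \textbf{realization} I would invoke Theorem \ref{main real}. For a generic non-singular tropical quartic with Newton polygon $T_4$ every inflection component $\E$ has $\mu_\E=3$, hence $\mu_\E^\RR=1$, and by Proposition \ref{number infl combina} there are exactly $8$ such components; by Theorem \ref{main real} each carries exactly one real inflection point of the patchworked curve $\overline X$, accounting for all $8$. The combinatorial location of $\E$ on $C$ is dictated by the edge lengths through the recipe of section \ref{locate}, while Viro's patchworking reads off, from the chosen signs $\alpha_{i,j}\in\RR$, which oval of $\overline X(t)$ each region of $C$ produces. Realizing a prescribed type $(n_1,\dots,n_k)$ then means exhibiting a non-singular tropical quartic together with a sign distribution for which the real scheme is the desired one and the $8$ inflection components are distributed among the ovals in the prescribed numbers. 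I would do this case by case over the finitely many target types, starting from the honeycomb quartics of Figures \ref{honey compl}, \ref{honey real} and \ref{honey real 2} and modifying the subdivision by flips and the signs to move inflection components between ovals; as an independent check one can also use Hilbert's method, smoothing real nodes with two locally convex branches (each smoothing producing two real inflections, as recalled in the introduction) to deposit inflection points into chosen ovals.

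The hard part is thus twofold. First, a clean proof that at most two ovals can be convex: here I would combine a B\'ezout/convexity argument — three disjoint convex ovals admit no common transversal, since a line meeting all three would cut the quartic in at least $6>4$ points, which strongly constrains their mutual position — with the sign behaviour of the Hessian, a polynomial of even degree $6$ and hence of well-defined sign on $\RR P^2$, which is of constant sign along each convex oval and changes sign across an oval carrying inflections. Balancing these sign data against the $4$-oval configuration should yield the contradiction excluding $(8,0,0,0)$. Second, the bookkeeping ensuring that the tropical constructions simultaneously achieve the correct topological type and the correct inflection distribution for all twelve listed types; this is routine in each individual case but must be organized so that no case is missed.
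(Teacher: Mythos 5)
Your reduction of the restriction half is correct and is essentially the paper's implicit one: parity of inflection points on each oval, Harnack's bound together with the classification of quartic real schemes, and the B\'ezout argument for the nested scheme leave exactly one candidate to eliminate, namely $(8,0,0,0)$; your realization plan (perturbing a union of two conics, plus patchworking via Theorem \ref{main real}) is also the same route the paper takes, though you leave all twelve constructions as unexecuted bookkeeping where the paper exhibits explicit tropical quartics and sign choices (Figure \ref{quartic real 1}). The genuine gap is precisely the step you yourself flag as the main obstacle: the exclusion of $(8,0,0,0)$. What you propose for it is not a proof, and neither of its two ingredients can be made to work. The common-transversal observation --- no line meets three disjoint ovals of a quartic --- is a B\'ezout consequence valid for \emph{every} quartic with four ovals, in particular for the realizable types $(4,4,0,0)$, $(4,2,2,0)$ and $(2,2,2,2)$, so it carries no information capable of singling out $(8,0,0,0)$. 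The Hessian-sign bookkeeping is equally inconclusive: since $8$ is even, there is no parity obstruction to one oval crossing the degree-$6$ Hessian curve eight times while the Hessian keeps a constant sign near each of the three convex ovals; no contradiction can be extracted from signs alone.

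The paper's exclusion (due to Kharlamov) rests on genuinely different, global input. By Klein's theorem \cite{Kle3}, the rigid isotopy class of a nonsingular real quartic is determined by its isotopy type in $\RR P^2$; consequently the configuration of real bitangents of \emph{any} quartic with four ovals coincides with the one computed on a small perturbation of a union of two conics. There, all $28$ complex bitangents are real and each is tangent to two \emph{distinct} ovals: $24$ joining pairs of ovals, plus $4$ further lines which cut $\RR P^2$ into three quadrangles and four triangles, one oval per triangle. Hence no oval of a four-oval quartic admits four bitangents touching it at two of its own points, whereas an oval carrying eight real inflection points would have to admit at least four such self-bitangents. Some input of this kind --- rigid-isotopy invariance of the bitangent configuration, or an equivalent substitute --- is indispensable; without it your proposal does not establish the proposition.
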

\begin{proof}
\begin{figure}[h]
\centering
\begin{tabular}{cccc}
\includegraphics[height=4cm, angle=0]{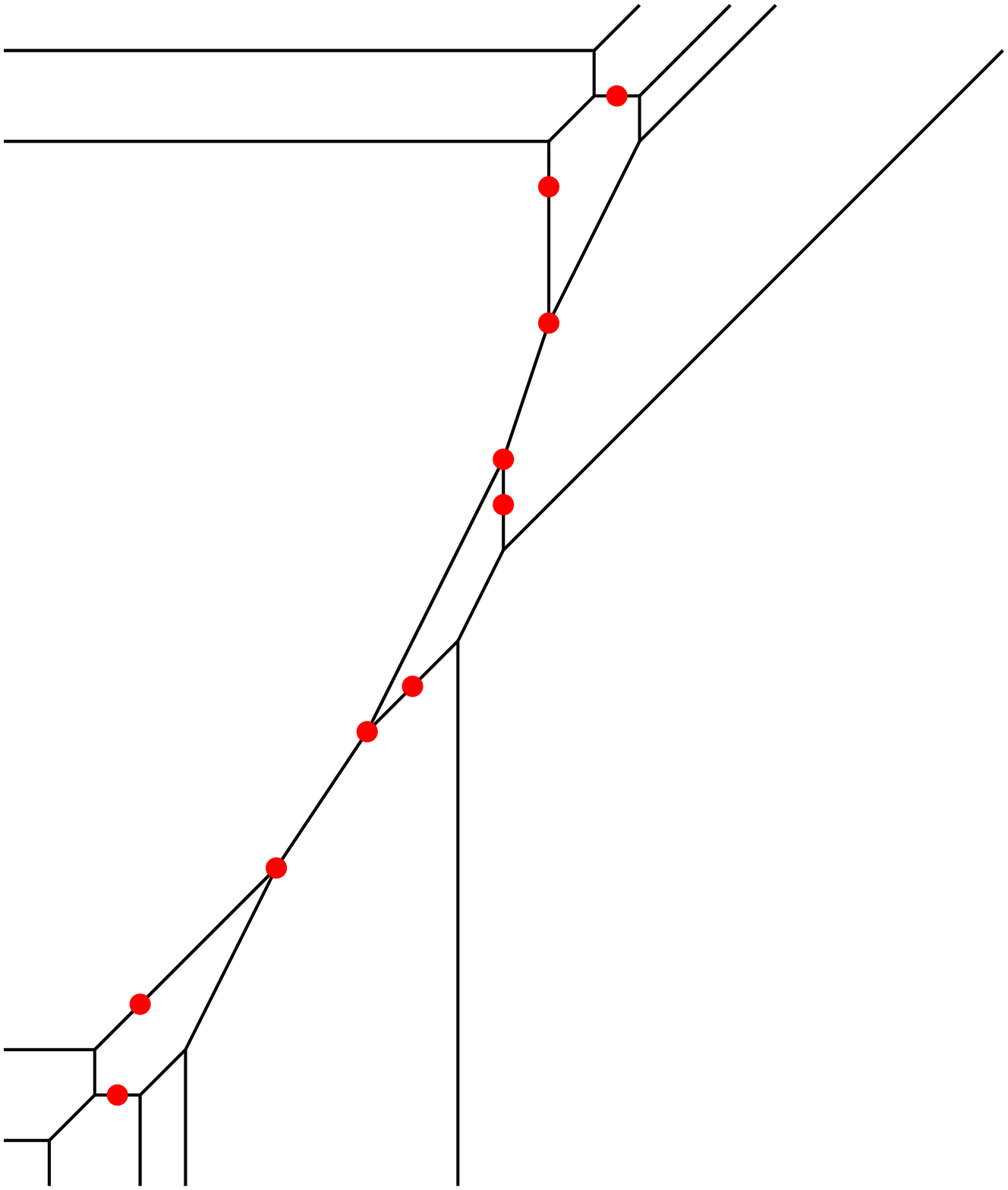}&
\includegraphics[height=4cm, angle=0]{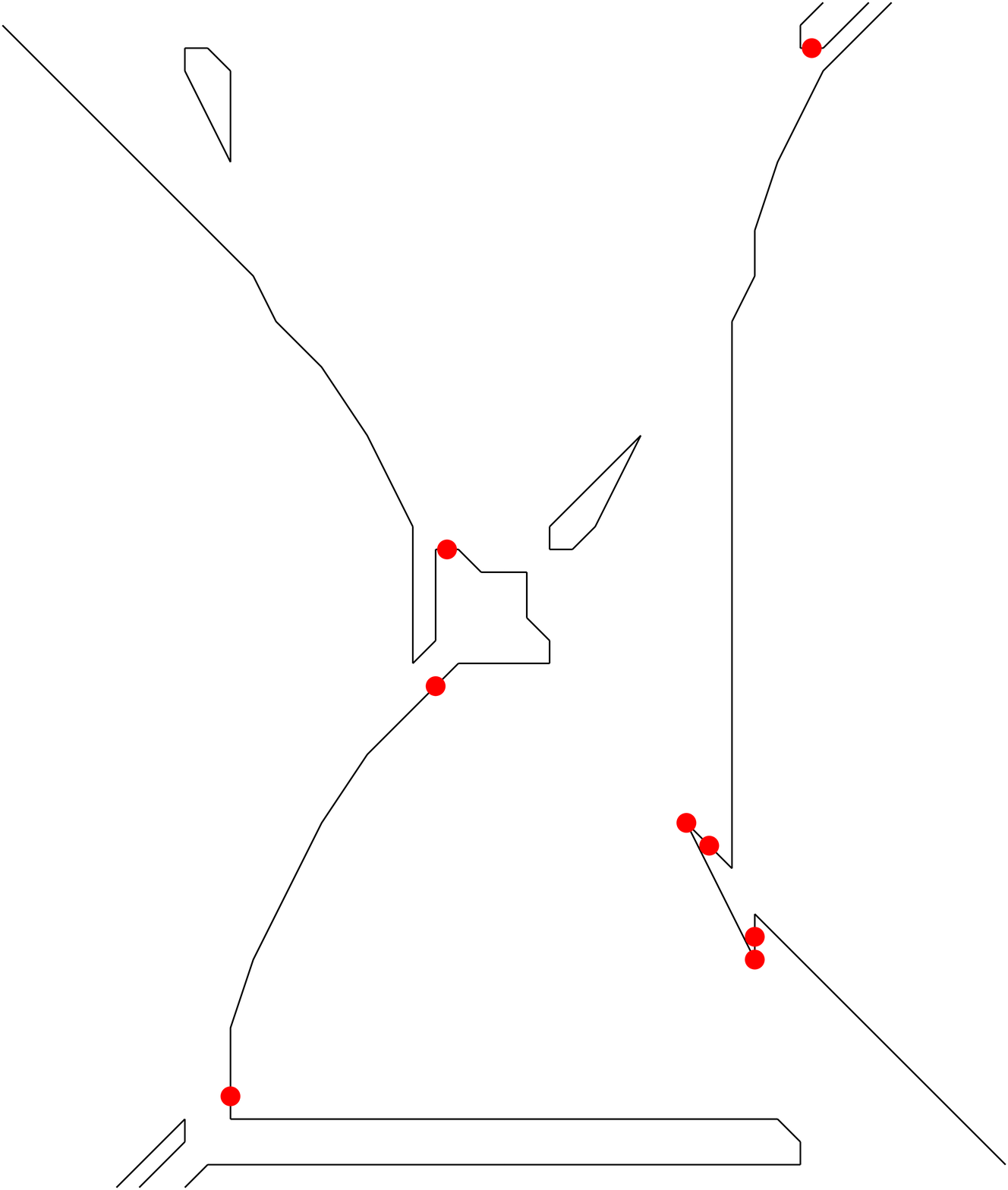}&
\includegraphics[height=4cm, angle=0]{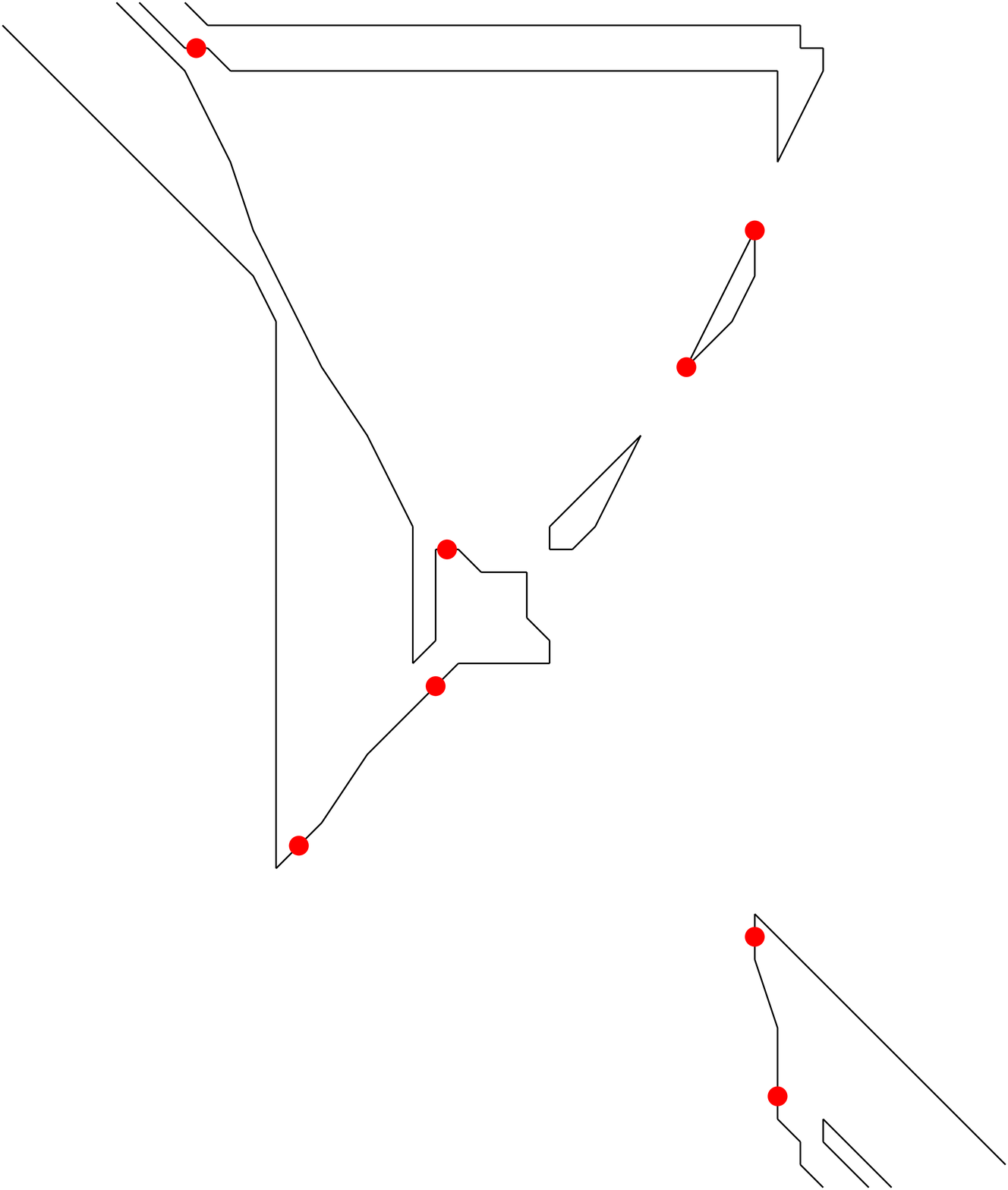}&
\includegraphics[height=4cm, angle=0]{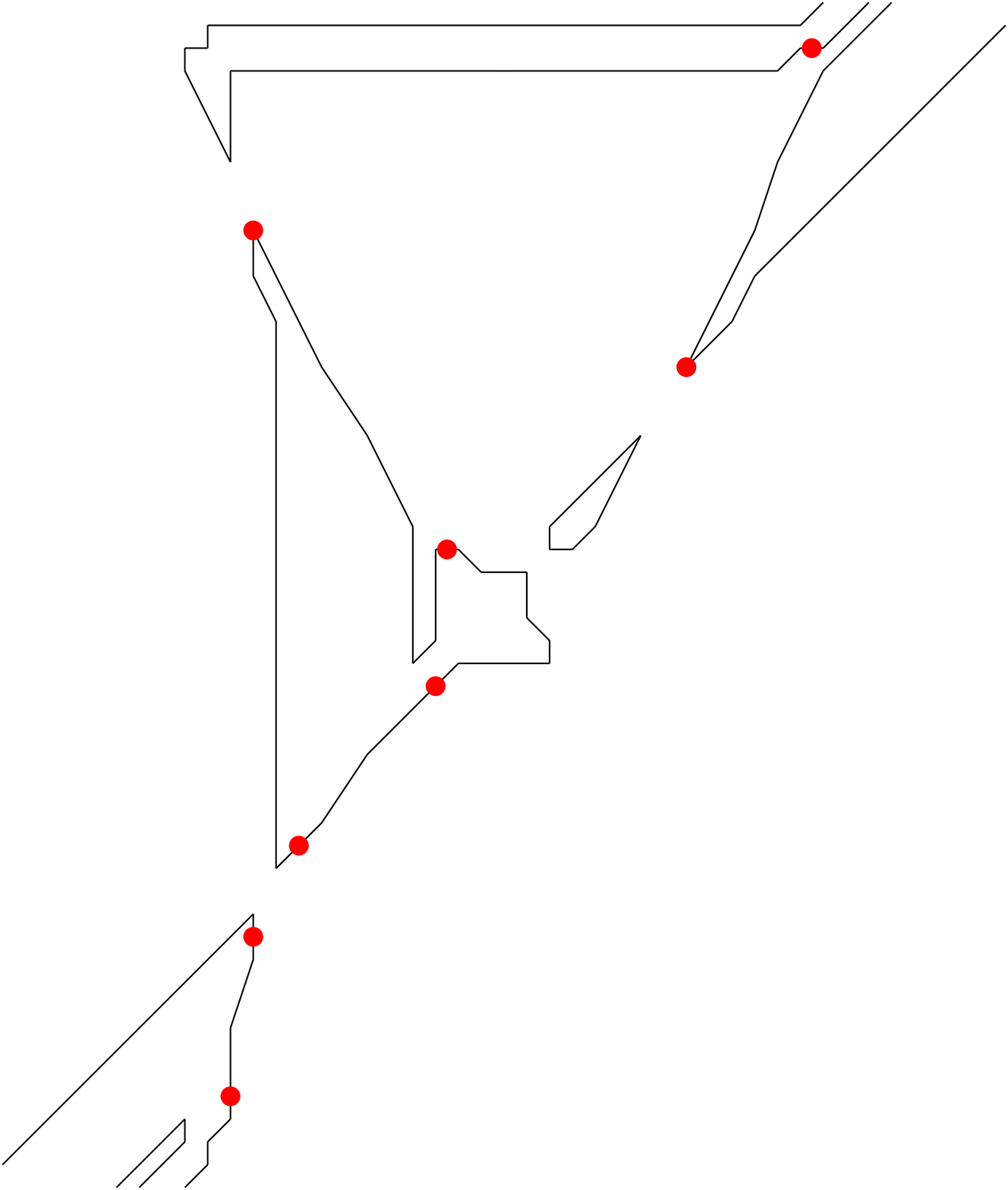}
\\ a) & b) & c) & d)
\\ \\ 
\end{tabular}
\begin{tabular}{cccc}
\includegraphics[height=4cm, angle=0]{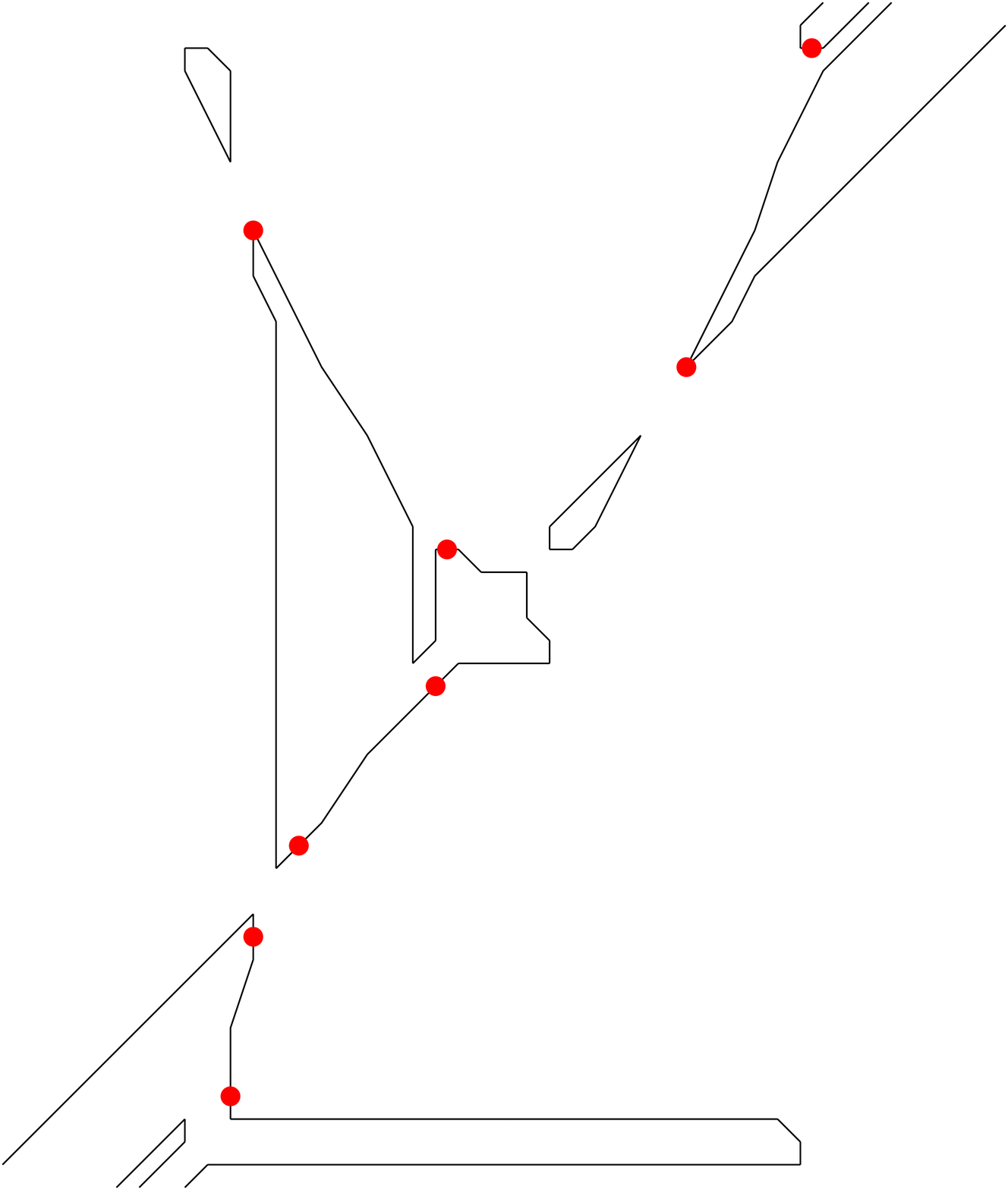}&
\includegraphics[height=4cm, angle=0]{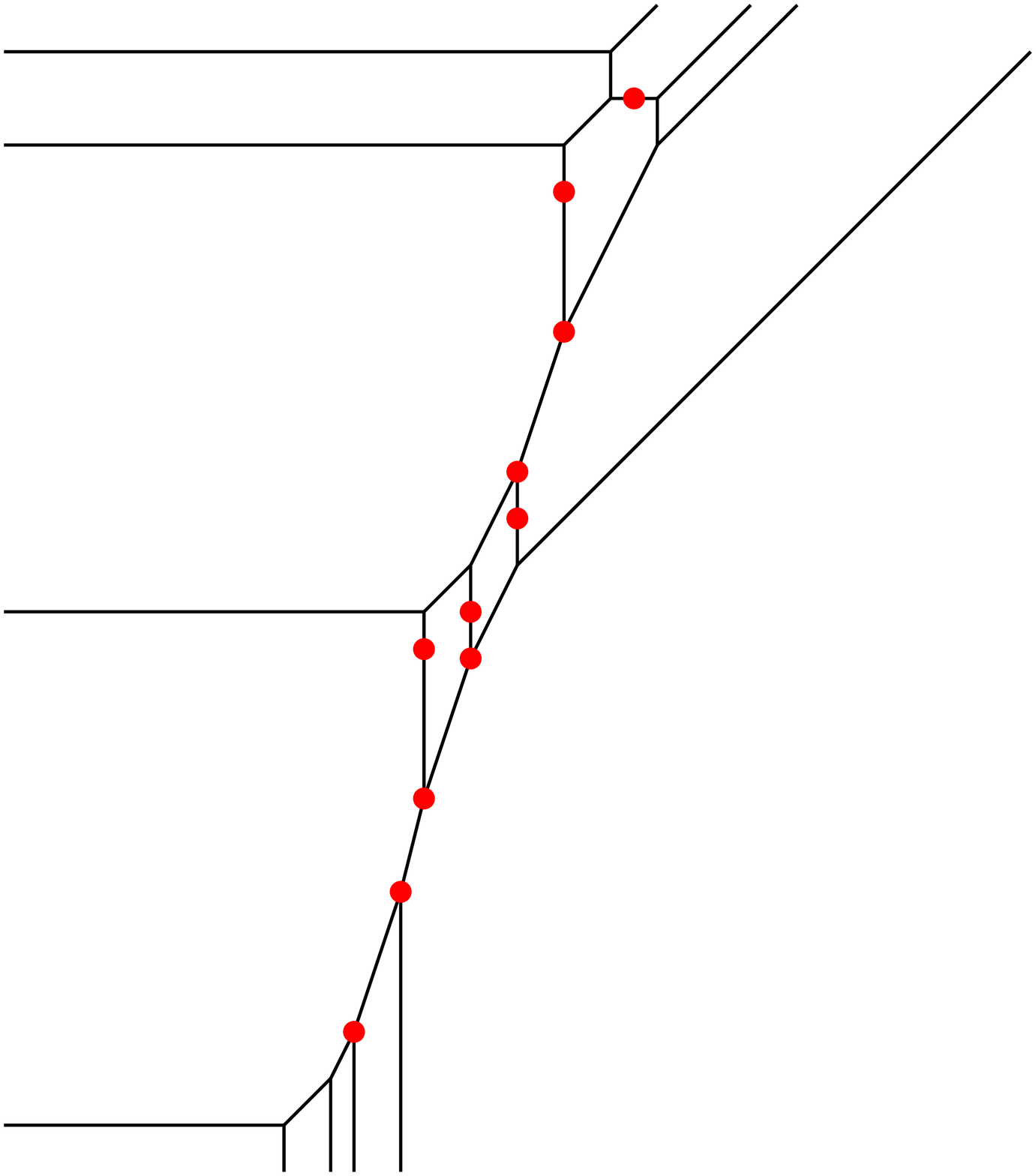}&
\includegraphics[height=4cm, angle=0]{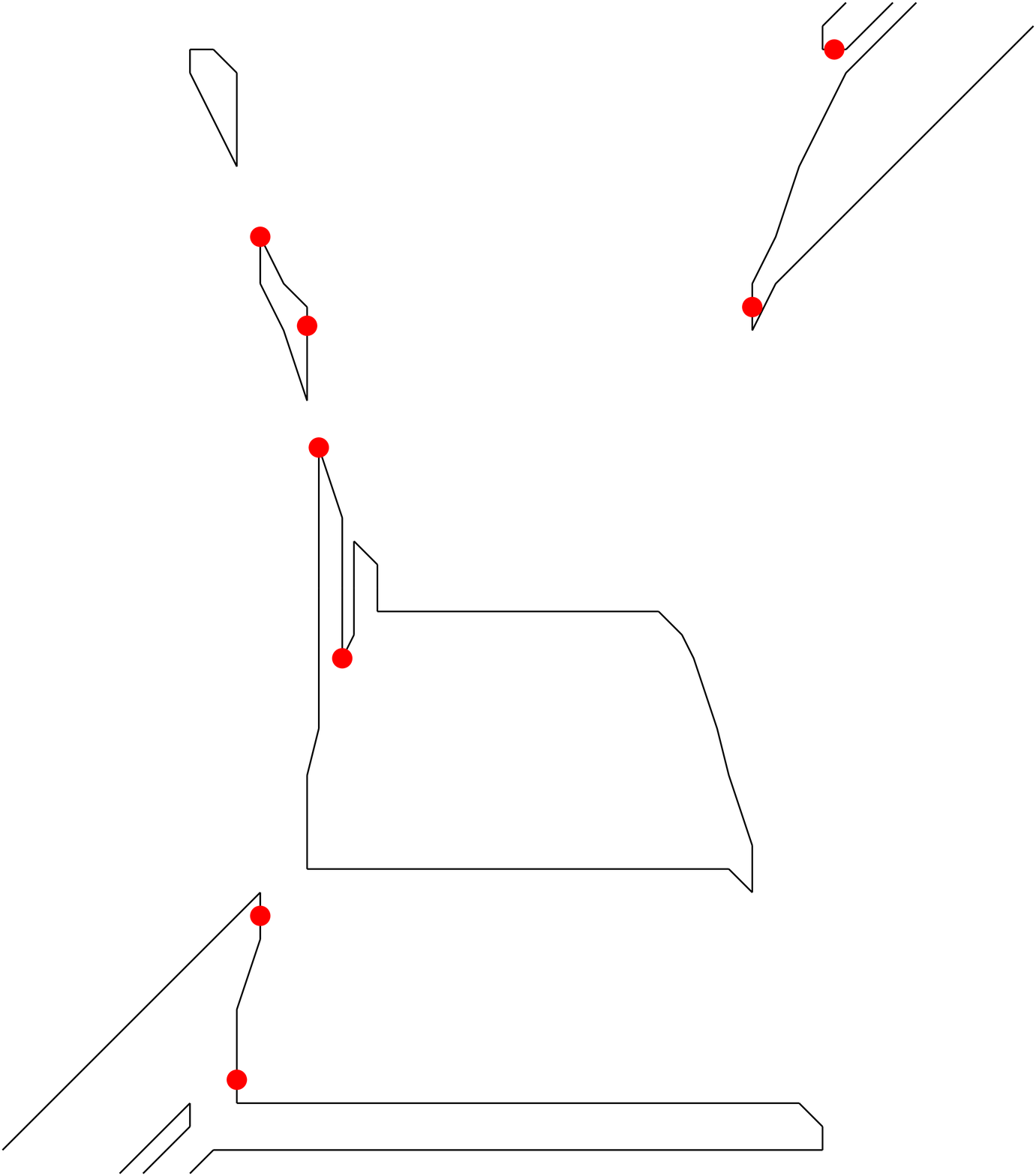}&
\includegraphics[height=4cm, angle=0]{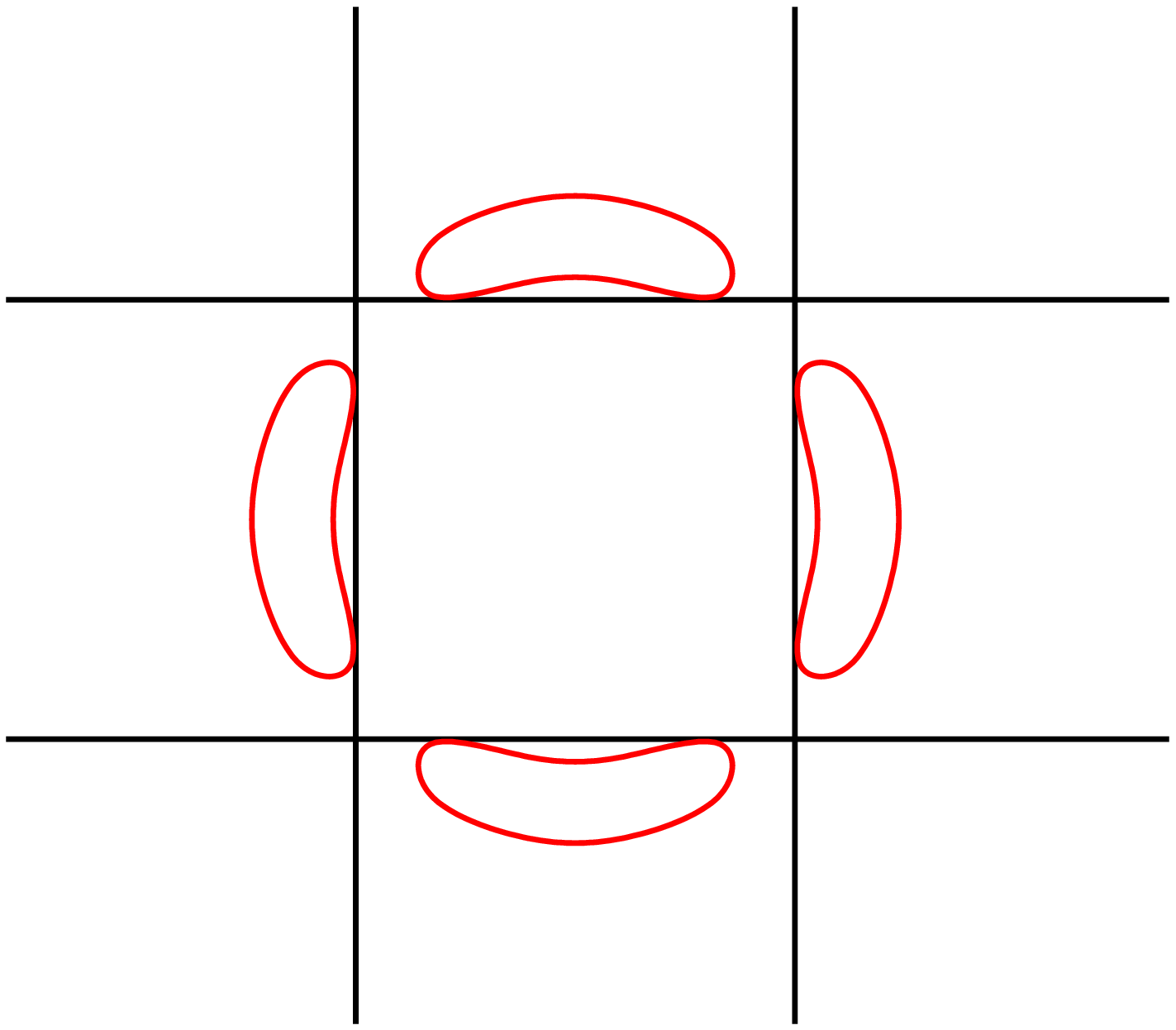}
\\ e) &  f) & g) &h)
\end{tabular}
\caption{Maximally inflected quartics}
\label{quartic real 1}
\end{figure}
The inflection types $(8), (8,0),  (6,2), (4,4),(4,2,2)$ and $(2,2,2,2)$ are
realized by perturbing the union of two ellipses intersecting in 4
real points. The inflection type $(6,2,2,0)$ is realized by the
Harnack quartic constructed in Figure \ref{honey real}.
The inflection types $(8,0,0)$, $(6,2,0)$, $(4,4,0)$, and $(4,4,0,0)$  are
 realized out of the tropical curve depicted in Figure 
\ref{quartic real 1}a
by the patchworkings depicted respectively in Figures \ref{quartic
  real 1}b,c,  d, and e.
The inflection type $(4,2,2,0)$ is
 realized out of the tropical curve depicted in Figure 
\ref{quartic real 1}f
by the patchworking depicted  in Figure \ref{quartic
  real 1}g.
Note that some of these inflection types can also be realized by
smoothing maximally inflected rational quartics from \cite{Kha4}.

Hence it remains to prove that the inflection type $(8,0,0,0)$ is not
realizable by any quartic. The following argument is due to Kharlamov
and simplifies considerably our original proof of this fact.
It is a Theorem by Klein (\cite{Kle3}) that the rigid isotopy class of a
non-singular real quartic curve is determined by its isotopy type in
$\RR P^2$. Moreover, it is easy to see that this isotopy type also
determines the number of real bitangents to the quartic. In the case of
real quartics with 4 ovals, one sees by perturbing the union of two
conics (see Figure \ref{quartic real 1}h) 
that all the 28 complex bitangents to these curves are in fact
real: 24 bitangents tangent to two distinct ovals, and 4 remaining
bitangents. These latter subdivide $\RR P^2$ into 3
quadrangles and 4 triangles, each of these triangles containing exactly
one oval of the quartic (see Figure \ref{quartic real 1}h). 
In particular, no oval 
 has 4 bitangents, which implies that no oval
contains 8 real inflection points.
\end{proof}

\begin{prop}\label{one component}
Given $k\ge 1$, there exists a maximally inflected real algebraic
curve of degree $2k$ with one oval containing all real inflection points
and $(k-1)^2$ other convex ovals, and there exists a maximally
inflected real algebraic 
curve of degree $2k+1$ with the pseudo-line containing all real inflection points
and $k^2$ other convex ovals.
\end{prop}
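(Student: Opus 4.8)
The plan is to produce, for each $k$, an explicit real tropical curve (equivalently, a combinatorial patchworking on $T_{2k}$, resp. $T_{2k+1}$) whose real part $\RR X$ has the prescribed topology, and then to read off the inflection data from Theorem \ref{main real}. The guiding principle is elementary: a locally strictly convex oval has no real inflection point, so every convex oval of $\RR X$ contributes $0$ to the inflection count. Since Theorem \ref{main real} guarantees that $\overline X$ is maximally inflected, i.e. carries exactly $d(d-2)$ real inflection points with $d=2k$ (resp. $2k+1$), it suffices to build a patchworking in which all but one component are convex ovals; the remaining component is then forced to carry all $d(d-2)$ inflection points. Recall moreover that a non-singular real curve of even degree has only two-sided components (ovals), while one of odd degree has exactly one one-sided component (the pseudo-line); thus in the even case the distinguished component is automatically an oval, and in the odd case it is automatically the pseudo-line, so no extra work is needed to identify its type.

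The heart of the argument is therefore the combinatorial construction. First I would fix a primitive regular triangulation of $T_d$ together with a distribution of signs on its lattice points, designed so that the associated $T$-curve has exactly $(k-1)^2$ (resp. $k^2$) small convex ovals, each localized near an interior lattice point whose sign is isolated from its neighbours, while the remaining charts assemble into a single non-convex component. I expect the cleanest route to be recursive in $k$: since $(k-1)^2$ and $k^2$ grow by consecutive odd numbers, each step of the recursion (raising the degree by $2$) must introduce a ``layer'' of the triangulation creating exactly that many new sign-isolated interior points, without disconnecting the distinguished component. The base cases are the classical pictures that the statement generalizes: for $k=1$ the even curve is a convex conic with no inflection point, and the odd curve is a non-singular cubic whose $3$ inflection points all lie on the pseudo-line while the single oval is convex; for $k=2$ the even curve is the nested pair of ovals of inflection type $(8,0)$ already produced in Proposition \ref{classification d=4}, the inner oval being the convex one.

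With such a patchworking in hand, I would perturb the edge lengths of the tropical curve so that the genericity hypothesis of Theorem \ref{main real} holds (every vertex dual to $T_1$ with three bounded edges has three distinct edge-lengths); this is an open dense condition and does not affect the topology of $\RR X$. Applying Theorem \ref{main real} then yields exactly $d(d-2)$ real inflection points on $\overline X$. Since each of the designed convex ovals carries none of them, all $d(d-2)$ inflection points lie on the one remaining component, which is the required oval (even case) or pseudo-line (odd case). Note that this argument bypasses the need to track, via the patchworking geometry, which real component each inflection point with a prescribed valuation lands on: the total count together with the convexity of the auxiliary ovals forces the conclusion.

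The main obstacle is the explicit construction and the verification of its topology: one must exhibit a sign distribution on a regular triangulation of $T_d$ that simultaneously (i) produces precisely $(k-1)^2$ (resp. $k^2$) convex ovals, (ii) keeps all remaining pieces glued into a single connected component, and (iii) keeps that component distinct from the convex ovals, so that the total number of components is $(k-1)^2+1$ (resp. $k^2+1$). Controlling the global topology of a patchworked curve from local sign data is the delicate point; I would handle it by the recursive ``layering'' above, tracking at each step how the added row of triangles both creates the required new convex ovals and attaches to, without breaking, the distinguished component. Once the topology is under control, the inflection statement is immediate from Theorem \ref{main real} together with the convexity principle.
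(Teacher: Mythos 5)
Your overall strategy (patchwork a tropical curve with many small ovals, then invoke Theorem \ref{main real}) is the same as the paper's, but the step on which your whole argument pivots has a genuine gap: the assertion that the auxiliary ovals can be taken \emph{convex} ``by design'', so that the total count $d(d-2)$ forces all real inflection points onto the remaining component. Patchworking --- combinatorial or tropical --- only controls the isotopy type of $\RR X$ in $\RR P^2$; convexity of an oval, equivalently the absence of real inflection points on it, is not an isotopy invariant and cannot be read off from a sign distribution on a triangulation. Worse, in your specific construction the claim is genuinely in doubt: an oval produced by an isolated sign at an interior lattice point $v$ has valuation image equal to the cycle of $C$ bounding the face dual to $v$, and that cycle passes through vertices of $C$ near which the inflection components sit (for a vertex dual to a $T_1$-triangle with edge lengths $l_1>l_2>l_3$, the two odd-multiplicity inflection components lie \emph{on} two of its adjacent edges, at distance $(l_i-l_3)/3$ from the vertex). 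Over an edge of $C$ of weight $1$ the real curve has two branches, typically one on the small oval and one on the big component; Theorem \ref{main real} locates the unique real inflection point over such a component only up to valuation, so it cannot decide which of the two branches --- hence which real component --- carries it. Proving that the small ovals carry no inflection points is therefore exactly the location problem you claim to bypass; as structured, the argument is circular. The cubic already illustrates the issue: the oval and the pseudo-line both project onto the hexagonal cycle, the inflection components lie on that cycle, and the fact that all three real inflection points lie on the pseudo-line (i.e.\ that the oval is convex) is a classical theorem, not a consequence of the count.

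The paper closes precisely this gap by an explicit construction: it exhibits a concrete fragment $\mathcal F$, inserts $k^2$ (resp.\ $(k-1)^2$) disjoint copies of it into a non-singular tropical curve of degree $2k+1$ (resp.\ $2k$), and patchworks each fragment in a prescribed way (Figure \ref{1 oval}) so that one can see, using the valuation-location statement of Theorem \ref{main real} together with the local structure of the patchworking around each odd inflection component, that every real inflection point lands on the distinguished component while each fragment contributes one separate oval. If you want to repair your argument, replace ``convex by design'' by the checkable geometric requirement that the cycles of $C$ supporting the small ovals, together with the choice of signs, keep the real branches of those ovals away from all odd-multiplicity inflection components; once that is arranged, Theorem \ref{main real} places all $d(d-2)$ real inflection points on the remaining component directly, and your counting-plus-convexity detour becomes unnecessary. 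Your parity remark (oval vs.\ pseudo-line) and your base cases are fine; the recursion and connectivity control you left as a sketch is what the paper's explicit fragment insertion accomplishes.
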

\begin{proof}
\begin{figure}[h]
\centering
\begin{tabular}{ccccc}
\includegraphics[height=2cm, angle=0]{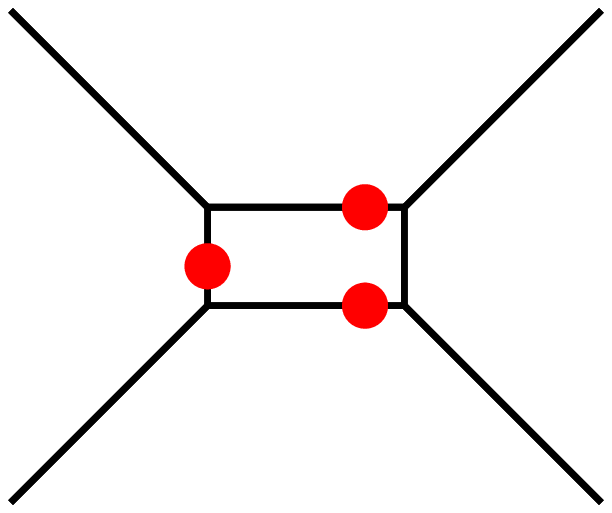} & \hspace{5ex} &
\includegraphics[height=4cm, angle=0]{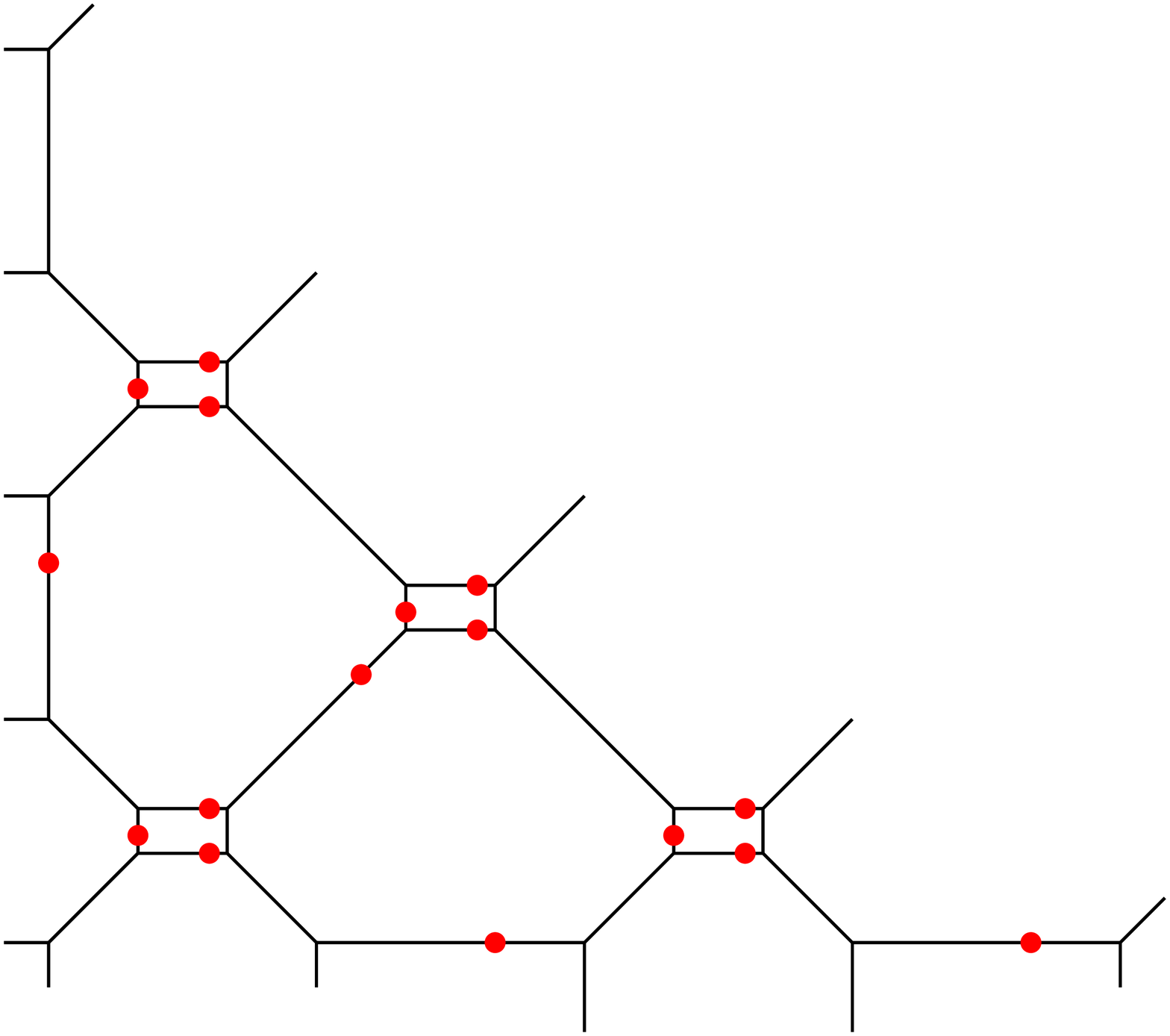}& \hspace{5ex} &
 \includegraphics[height=4cm, angle=0]{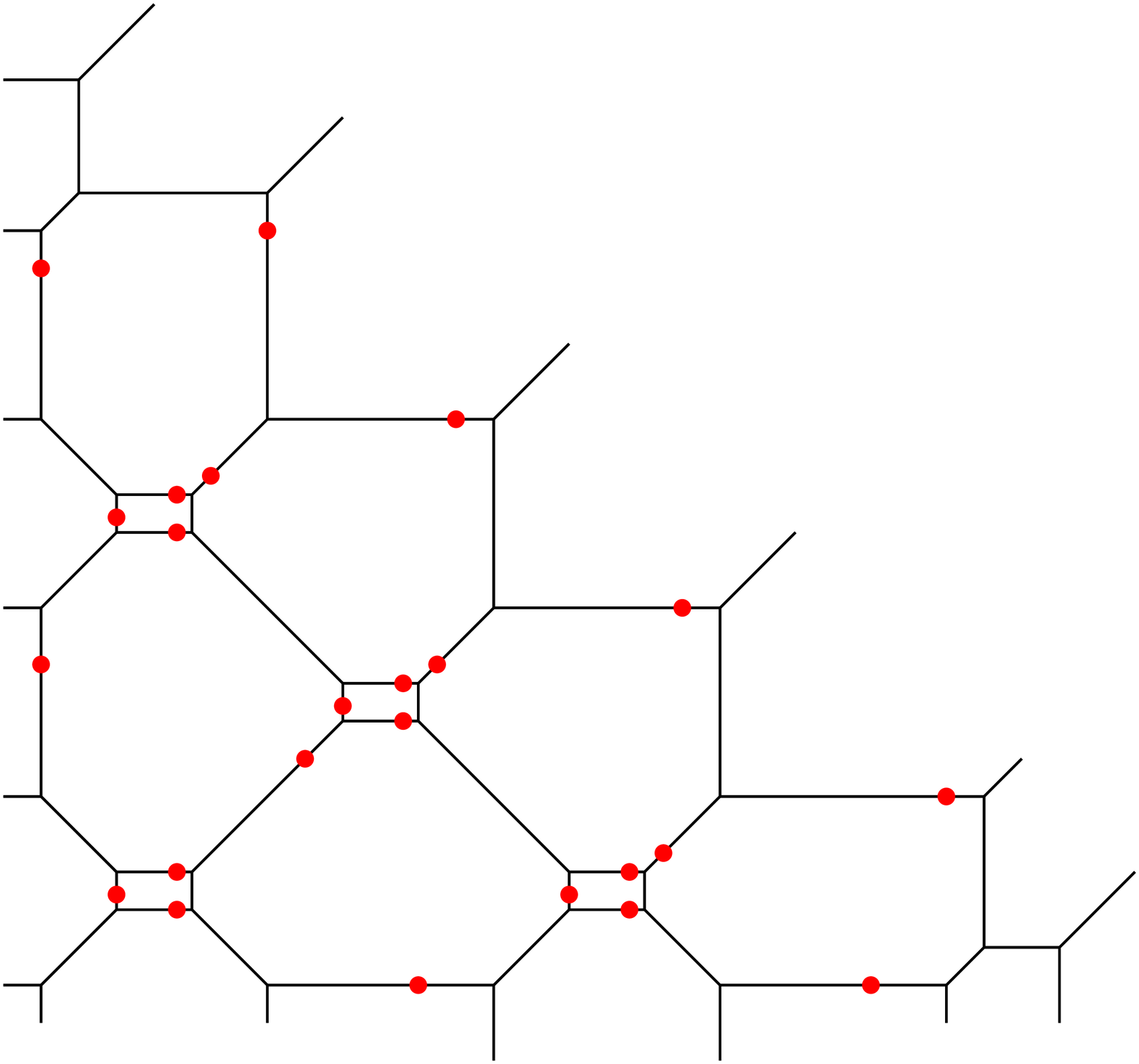}
\\ a) Fragment && b) $d=5$ && c) $d=6$
\\
\\
\includegraphics[height=3cm, angle=0]{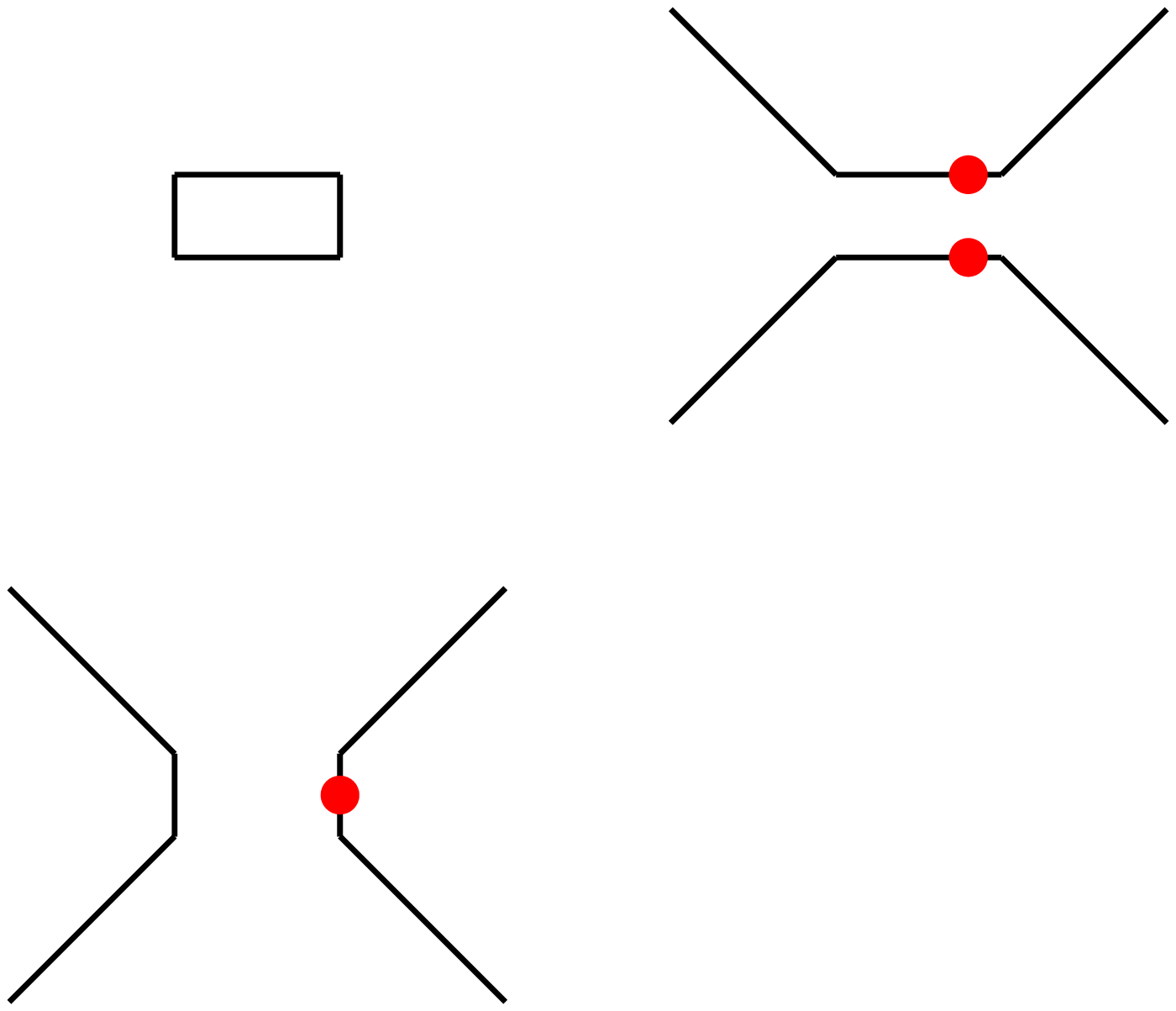}& \hspace{5ex} &
\includegraphics[height=4cm, angle=0]{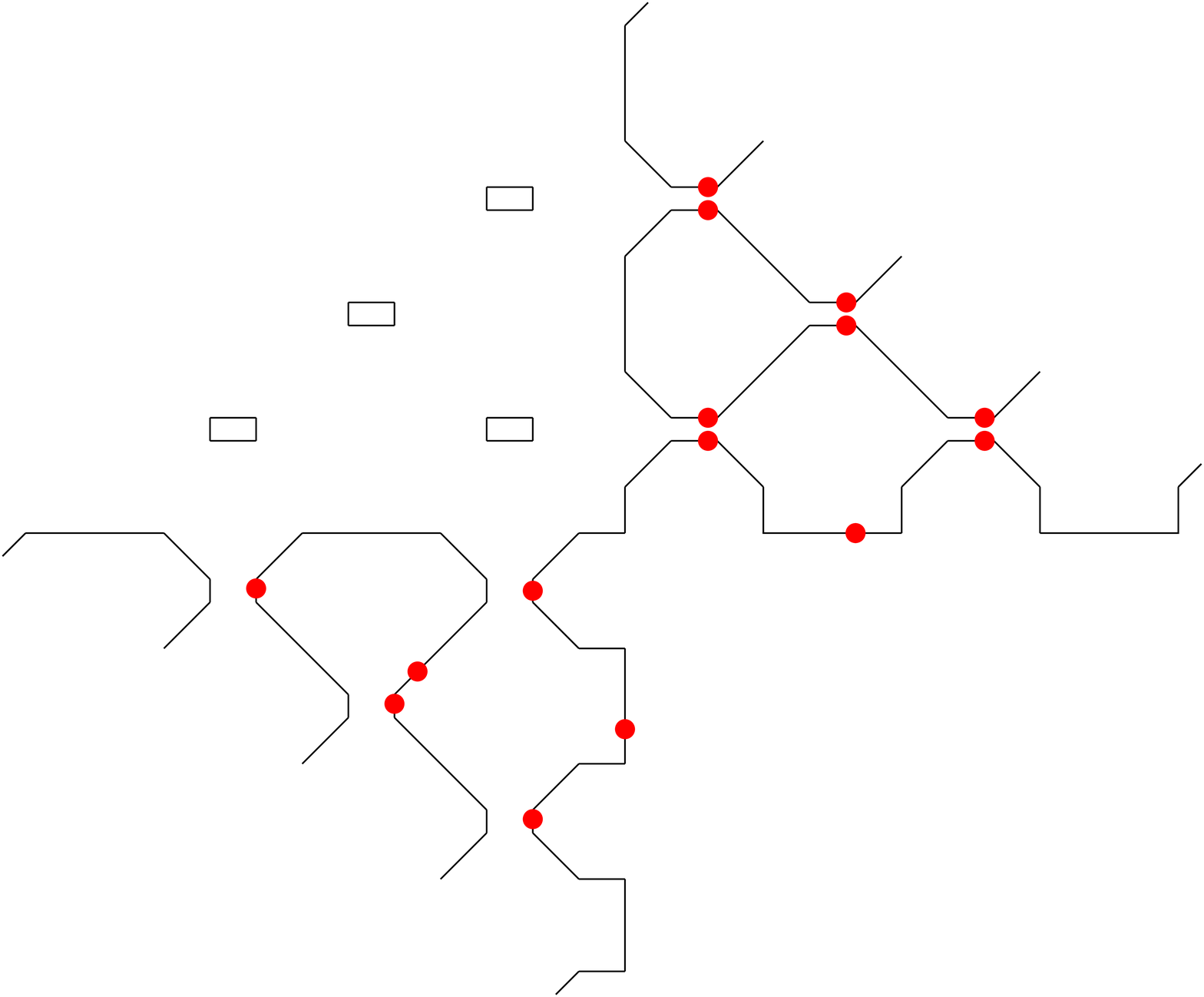} & \hspace{5ex} &
\includegraphics[height=4cm, angle=0]{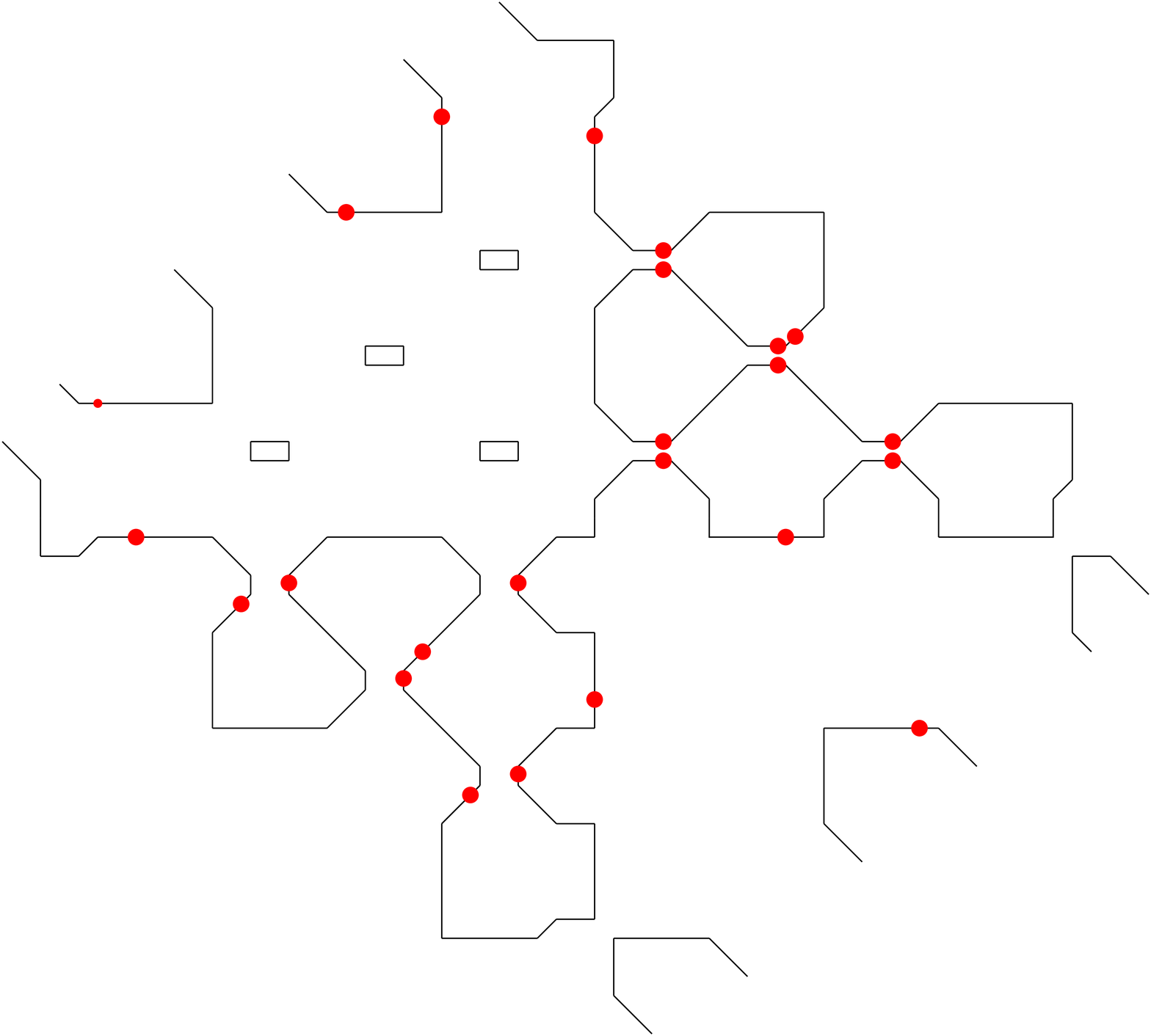}
\\d) Patchworked fragment && e) $d=5$ && f) $d=6$
\end{tabular}
\caption{Maximally inflected curves with all inflection points on a
  single component}
\label{1 oval}
\end{figure}
Let us consider a non-singular tropical curve of degree $2k+1$
(resp. $2k$) which contains $k^2$ (resp. $(k-1)^2$) copies of the
fragment $\mathcal F$ 
depicted in
Figure \ref{1 oval}a (see Figure \ref{1 oval}b in the case $2k+1=5$,
and Figure \ref{1 oval}c in the case $2k=6$). The curves whose
existence is claim in the proposition can easily be constructed by
patchworking all fragments  $\mathcal F$ as depicted in Figure \ref{1
  oval}d (see Figure \ref{1 oval}e in the case $2k+1=5$,
and Figure \ref{1 oval}f in the case $2k=6$).
\end{proof}

\small

\def\rightmark{\em Bibliography}

\bibliographystyle{alpha}

\begin{thebibliography}{RGST05}

\bibitem[AN]{Ale1}
A.~Alessandrini and M.~Nessi.
\newblock On the tropicalization of the {H}ilbert scheme.
\newblock arXiv:0912.0082.

\bibitem[A]{AllSmooth}
L. Allermann.
\newblock Tropical intersection products on smooth varieties.
\newblock 	arXiv:0904.2693v2.

\bibitem[BB10]{Br15}
B.~Bertrand and E.~Brugall\'e.
\newblock On the number of connected component of the parabolic curve.
\newblock {\em Comptes Rendus de l'Académie des Sciences de Paris},
  348(5-6):287--289, 2010.

\bibitem[BBM]{Br9}
B.~Bertrand, E.~Brugall\'e, and G.~Mikhalkin.
\newblock Genus 0 characteristic numbers of the tropical projective plane.
\newblock In preparation.

\bibitem[BG84]{BiGr}
R.~Bieri and J.~Groves.
\newblock The geometry of the set of characters induced by valuations.
\newblock {\em J. Reine Angew. Math.}, 347:168--195, 1984.

\bibitem[BJS{\etalchar{+}}07]{St6}
T~Bogart, A.~Jensen, D.~Speyer, B.~Sturmfels, and R.~Thomas.
\newblock Computing tropical varieties.
\newblock {\em J. Symbolic Comput.}, 42(1-2):54--73, 2007.

\bibitem[BM]{Br12}
E.~Brugall\'e and G.~Mikhalkin.
\newblock Realizability of superabundant curves.
\newblock In preparation.

\bibitem[BPS08]{BIT}
N.~Berline, A.~Plagne, and C.~Sabbah, editors.
\newblock {\em Géométrie tropicale}. Éditions de l'École Polytechnique,
  Palaiseau, 2008.
\newblock available at http://www.math.polytechnique.fr/xups/vol08.html.

\bibitem[Bru09]{Br11}
E.~Brugallé.
\newblock Un peu de géométrie tropicale.
\newblock {\em Quadrature}, (74):10--22, 2009.
\newblock available at \\
  http://people.math.jussieu.fr/$\sim$brugalle/articles/Quadrature/Quadrature.%
pdf, solutions of the exercises at
  http://people.math.jussieu.fr/$\sim$brugalle/articles/Quadrature/Corrections%
$\underline{\ }$Quadrature.pdf.

\bibitem[IMS07]{Mik9}
I.~Itenberg, G~Mikhalkin, and E.~Shustin.
\newblock {\em Tropical Algebraic Geometry}, volume~35 of {\em Oberwolfach
  Seminars Series}.
\newblock Birkhäuser, 2007.

\bibitem[Kap00]{Kap1}
M.~Kapranov.
\newblock Amoebas over non-archimedean fields.
\newblock Preprint, 2000.

\bibitem[Kle76a]{Kle2}
F~Klein.
\newblock Eine neue {R}elation zwischen den {S}ingularit\"aten einer
  algebraischen {C}urve.
\newblock {\em Math. Ann.}, 10(2), 199--209, 1876.

\bibitem[Kle76b]{Kle3},
  Klein, F.
\newblock Über den Verlauf der Abelschen Integrale bei den Kurven
vierten Grades. 
\newblock {\em Math. Ann.}, 10, 365--397, 1876.

\bibitem[KS03]{Kha4}
 Kharlamov, V. and Sottile, F.
\newblock   Maximally inflected real rational curves,
\newblock   {\em Mosc. Math. J.}, 3(3), 947--987, 1199--1200, 2003.

\bibitem[Lop06]{Lop}
Lopez de Medrano, L.
\newblock Courbure totale des variétés algébriques réelles projectives.
\newblock Th\`ese doctorale, 2006, (French).

\bibitem[Mik04]{Mik12}
G.~Mikhalkin.
\newblock Decomposition into pairs-of-pants for complex algebraic
  hypersurfaces.
\newblock {\em Topology}, 43:1035--106, 2004.

\bibitem[Mik06]{Mik3}
G.~Mikhalkin.
\newblock Tropical geometry and its applications.
\newblock In {\em International Congress of Mathematicians. Vol. II}, pages
  827--852. Eur. Math. Soc., Z\"urich, 2006.

\bibitem[OP]{Pay1}
O.~Osserman and S.~Payne.
\newblock Lifting tropical intersections.
\newblock arXiv:1007.1314.

\bibitem[R]{Rab1}
Rabinoff, J.
\newblock Tropical analytic geometry, {N}ewton polygons, and tropical intersections.
\newblock arXiv:1007.2665.

\bibitem[RGST05]{St2}
J.~Richter-Gebert, B.~Sturmfels, and T.~Theobald.
\newblock First steps in tropical geometry.
\newblock In {\em Idempotent mathematics and mathematical physics}, volume 377
  of {\em Contemp. Math.}, pages 289--317. Amer. Math. Soc., Providence, RI,
  2005.

\bibitem[Ri]{Ris}
Risler, J. J.
\newblock Total curvature of tropical curves.
\newblock In preparation.

\bibitem[Ron98]{Ron1}
F.~Ronga.
\newblock Klein's paper on real flexes vindicated.
\newblock In {W}.~{P}awlucki B.~{J}akubczyk and {J}. {S}tasica, editors, {\em
  Singularities Symposium - {L}ojasiewicz 70}, volume~44. Banach Center
  Publications, 1998.

\bibitem[Sch04]{Schuh}
F.~Schuh.
\newblock An equation of reality for real and imaginary plane curves with
  higher singularities.
\newblock {\em Proc. section of sciences of the {Royal Academy of Amsterdam}},
  6:764--773, 1903-1904.

\bibitem[Sha94]{Sha}
I.~R. Shafarevich.
\newblock {\em Basic algebraic geometry. 1}.
\newblock Springer-Verlag, Berlin, second edition, 1994.

\bibitem[S]{Kris1}
K. M. Shaw.
\newblock A tropical intersection product in matroidal fans.
\newblock 	arXiv:1010.3967v1.

\bibitem[Vir82]{V1},
 Viro, O. Ya.
\newblock Gluing of plane real algebraic curves and constructions of
              curves of degrees {$6$} and {$7$}.
\newblock {\em Topology (Leningrad, 1982)}, Lecture Notes in Math.,
    1060, 187--200, Springer, Berlin,1984.

\bibitem[Vir84]{V3}
Viro, O. Ya.
\newblock Progress in the topology of real algebraic varieties over
the last six years.
\newblock {\em Russian Math. Surveys}, 84(41), 55--82, 1984.


\bibitem[Vir88]{V10}
O.~Ya. Viro.
\newblock Some integral calculus based on {E}uler characteristic.
\newblock In {\em Lecture Notes in Math.}, volume 1346, pages 127--138.
  Springer Verlag, 1988.

\bibitem[Vir01]{V9}
O.~Viro.
\newblock Dequantization of real algebraic geometry on logarithmic paper.
\newblock In {\em European Congress of Mathematics, Vol. I (Barcelona, 2000)},
  volume 201 of {\em Progr. Math.}, pages 135--146. Birkh\"auser, Basel, 2001.

\end{thebibliography}

\newcommand{\etalchar}[1]{$^{#1}$}

\end{document}